\newtheorem{assm}{Assumption}
\newcommand{\R}{\mathbb{R}}
\newcommand{\E}{\mathbb{E}}
\newcommand{\bigO}{\mathcal{O}}
\newcommand{\IdxB}{\mathcal{B}}
\newcommand{\Otil}{\widetilde{\mathcal{O}}}
\newcommand{\bigOmega}{\Upomega}
\newcommand{\Ps}{P^{(s)}_k}
\newcommand{\Psin}{(P^{(s)}_k)^{-1}}
\newcommand{\Lmc}{\mathcal{L}}
\newcommand{\Model}{\mathcal{M}}
\newcommand{\gammaMax}{\gamma_{u}^{\textup{max}}}
\newcommand{\gammaMin}{\gamma_{\ell}^{\textup{min}}}
\newcommand{\gammatlm}{{\tilde{\gamma}_{\ell}}^{\textrm{min}}}
\newcommand{\deff}{d^{\nu}_{\textup{eff}}}
\newcommand{\nys}{Nystr\"{o}m}
\newcommand{\diag}{\textrm{diag}}
\newcommand{\ssn}{\textsc{SSN}}
\newcommand{\nyssn}{\textsc{NySSN}}
\newcommand{\sassn}{\textsc{SASSN}}
\newcommand{\sassnr}{\textsc{SASSN-R}}
\newcommand{\sassnc}{\textsc{SASSN-C}}
\newcommand{\diagssn}{\textsc{DiagSSN}}
\newcommand{\pssn}{\mathcal{P}_{\textrm{ssn}}}
\newcommand{\pnyssn}{\mathcal{P}_\textrm{nyssn}}
\newcommand{\psassnr}{\mathcal{P}_\textrm{sassn-r}}
\newcommand{\psassnc}{\mathcal{P}_\textrm{sassn-c}}
\newcommand{\pdiagssn}{\mathcal{P}_\textrm{diagssn}}
\newcommand{\cmark}{\ding{51}}%
\newcommand{\xmark}{\ding{55}}%
\newcommand{\q}{\mathfrak{q}}
\newcommand{\qbar}{\bar{\mathfrak{q}}}
\newcommand{\qmax}{\mathfrak{q}_{\textup{max}}}
\newcommand{\kappaMax}{\kappa_{\textup{max}}}
\newcommand{\LMax}{L_{\textup{max}}}
\newcommand{\Nstar}{\mathcal{N}_{\varepsilon_0}(w_\star)}
\newcolumntype{L}[1]{>{\raggedright\let\newline\\\arraybackslash\hspace{0pt}}m{#1}}
\newcolumntype{C}[1]{>{\centering\let\newline\\\arraybackslash\hspace{0pt}}m{#1}}
\newcolumntype{R}[1]{>{\raggedleft\let\newline\\\arraybackslash\hspace{0pt}}m{#1}}
\algnewcommand\algorithmicinit{\textbf{Initialize:}}
\algnewcommand\Init{\item[\algorithmicinit]}
\crefname{assm}{Assumption}{Assumptions}
\newtheorem{theorem}{Theorem}
\newtheorem{lemma}[theorem]{Lemma} 
\newtheorem{proposition}[theorem]{Proposition} 
\newtheorem{corollary}[theorem]{Corollary}
\newtheorem{definition}[theorem]{Definition}
 \newcommand{\znote}[1]{}
\newcommand{\pnote}[1]{}
 \newcommand{\snote}[1]{}
 \newcommand{\mnote}[1]{}
\renewcommand{\znote}[1]{\textcolor{blue}{\textbf{[ZF: #1]}}}
\renewcommand{\pnote}[1]{\textcolor{red}{\textbf{[PR: #1]}}}
\renewcommand{\snote}[1]{\textcolor{green}{\textbf{[SZ: #1]}}}
\renewcommand{\mnote}[1]{\textcolor{purple}{\textbf{[MU: #1]}}}
\newif\ifpreprint
\begin{document}

\title{PROMISE: Preconditioned Stochastic Optimization Methods by Incorporating Scalable Curvature Estimates}

\author{\name Zachary Frangella$^{*}$ \email zfran@stanford.edu \\
       \addr Stanford University \\
       %Stanford, CA 94305, USA
       %\AND
       %\name Michael I.\ Jordan \email jordan@cs.berkeley.edu \\
       %\addr Division of Computer Science and Department of Statistics\\
       %University of California\\
       %Berkeley, CA 94720-1776, USA}
       \AND
        \name Pratik Rathore$^{*}$ \email pratikr@stanford.edu \\
       \addr Stanford University \\
       \AND 
       \name Shipu Zhao  \email sz533@cornell.edu \\
       \addr Cornell University
       \AND
       \name Madeleine Udell \email udell@stanford.edu \\
       \addr Stanford University}

\editor{}

\maketitle

\begin{abstract}%   <- trailing '%' for backward compatibility of .sty file
Ill-conditioned problems are ubiquitous in large-scale machine learning:
as a dataset grows to include more and more features correlated with the labels,
the condition number increases.
Yet traditional stochastic gradient methods
converge slowly on these ill-conditioned problems,
even with careful hyperparameter tuning.
This paper introduces PROMISE (\textbf{Pr}econditioned Stochastic \textbf{O}ptimization \textbf{M}ethods by \textbf{I}ncorporating \textbf{S}calable Curvature \textbf{E}stimates), a suite of sketching-based preconditioned stochastic gradient algorithms 
that deliver fast convergence on ill-conditioned large-scale convex optimization problems arising in machine learning.
PROMISE includes preconditioned versions of SVRG, SAGA, and Katyusha;
each algorithm comes with a strong theoretical analysis and
effective default hyperparameter values. 
% In contrast, traditional stochastic gradient methods 
% require careful hyperparameter tuning to succeed, 
% and degrade in the presence of ill-conditioning, 
% a ubiquitous phenomenon in machine learning.
Empirically, we verify the superiority of the proposed algorithms 
by showing that, using default hyperparameter values, 
they outperform or match popular \emph{tuned} stochastic gradient optimizers 
on a test bed of $51$ ridge and logistic regression problems 
assembled from benchmark machine learning repositories.
On the theoretical side, this paper introduces the notion of \emph{quadratic regularity}
in order to establish linear convergence of all proposed methods
even when the preconditioner is updated infrequently. 
The speed of linear convergence is determined by the \emph{quadratic regularity ratio}, 
which often provides a tighter bound on the convergence rate compared to the condition number, 
both in theory and in practice, 
and explains the fast global linear convergence of the proposed methods. 
\end{abstract}

\begin{keywords}
  Stochastic optimization, preconditioning, randomized low-rank approximation, lazy Hessians
\end{keywords}

\newpage

\ifpreprint
\startcontents[sections] % Starts separate table of contents
\section*{Table of Contents}
\printcontents[sections]{}{1}{} % Prints out the separate table of contents

\clearpage
\else
\fi

\section{Introduction}
\label{section:Introduction}
Modern machine learning (ML) poses significant challenges for optimization, owing to the sheer scale of the problems. 
Modern datasets are both enormous and high-dimensional, 
often with millions of samples and features.
As a consequence, classic methods such as gradient descent and L-BFGS, 
which make a full pass through the data at each iteration, 
are prohibitively expensive.
In this context, stochastic gradient descent (SGD) and its variants,
which operate on only a small mini-batch of data at each iteration,
have become the dominant optimization methods for modern ML.

When the problem is well-conditioned, 
SGD quickly finds models that are nearly optimal.
Further, although classic SGD converges to a ball around the optimum (with fixed learning rate) or sublinearly (with decaying learning rate) \citep{moulines2011non,gower2019sgd}, 
\emph{variance reduction} techniques like SVRG, SAGA, Katyusha, and L-Katyusha significantly improve performance on convex problems,
and converge linearly to the optimum for strongly convex problems \citep{johnson2013accelerating,defazio2014saga,allenzhu2018katyusha,kovalev2020lkatyusha}.

\begin{figure}[h]
    \centering
    \includegraphics[scale=0.7]{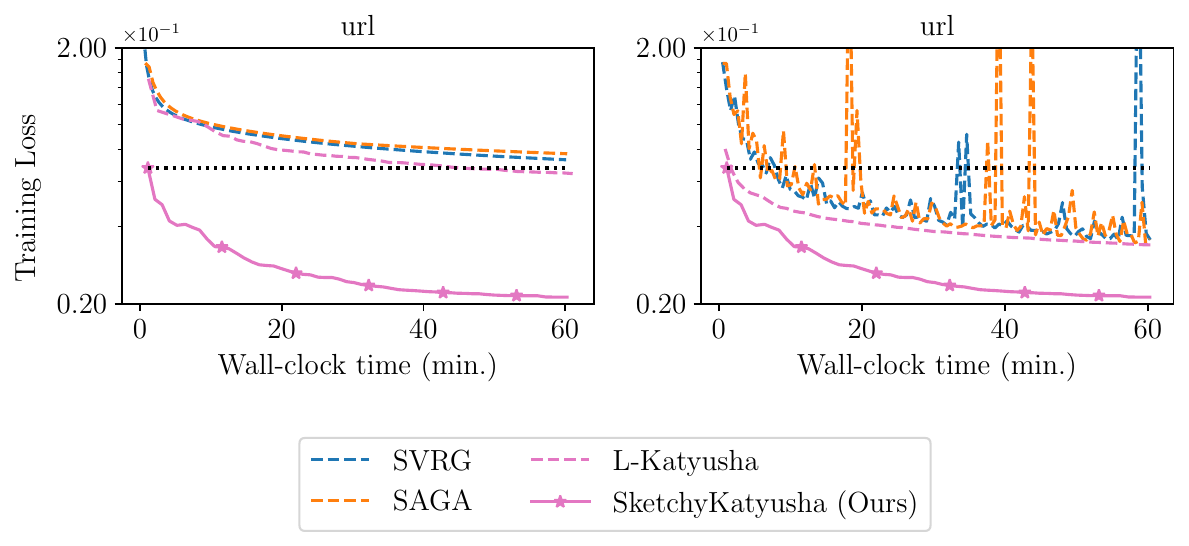}
    \caption{SketchyKatyusha (an algorithm in the PROMISE suite, see \cref{alg:skkat}) with its default hyperparameters 
    outperforms standard stochastic gradient optimizers
    with both default (left) and tuned (right) hyperparameters. 
    The loss curves start after a single epoch of training has been completed; the black dotted line indicates the training loss attained by SketchyKatyusha after a single epoch. 
    Each optimizer is allotted $1$ hour of runtime.}
    \label{fig:url_intro}
\end{figure}

Unfortunately SGD and related algorithms are difficult to tune and converge slowly  
when the data is poorly conditioned.
Parameters like the learning rate are difficult to choose and important to get right: 
slow convergence or divergence loom on either side of the best parameter choice \citep{nemirovski2009robust}.
Even with the best learning rate, in the worst case,
variance-reduced stochastic gradient methods require  at least $\bigO(\left(n+\sqrt{n\kappa}\right) \log (1/\epsilon))$ stochastic gradient evaluations to reach $\epsilon$ accuracy \citep{woodworth2016tight}, 
 which is problematic as the condition number for many ML problems is typically on the order of $10^{4}$ to $10^{8}$ (see Figure 13 in \cite{frangella2023sketchysgd}).
Hence the convergence of stochastic gradient methods can be excruciatingly slow (see \cref{fig:url_intro})
and popular stochastic optimizers often provide low-quality solutions even with generous computational budgets.

How should the challenges of ill-conditioning and sensitivity to the learning rate be addressed? 
Classical optimization wisdom suggests using second-order information based on the Hessian.
Second-order methods converge locally at superlinear rates under mild assumptions and 
beat first-order methods in practice \citep{nocedal1999numerical, boyd2004convex}.

Many authors have attempted to develop second-order methods 
that use stochastic gradients and Hessians, 
but major difficulties remain; 
see \cref{section:related_work} and especially \cref{table:2nd-ord-comp} for details.
First, no previous stochastic second-order method delivers fast local-linear convergence 
without vanishing noise in the gradient estimates,
as noted in \cite{kovalev2019stochastic,frangella2023sketchysgd}.
The strategems used to reduce this noise,
including exponentially increasing gradient batchsizes and Hessian batchsizes 
that may depend on the condition number,
lead to exceedingly slow iterations as the algorithm converges.
Second, even the methods % like \citep{bollapragada2018progressive,roosta2019sub,na2022hessian}
that allow a stochastic Hessian require an expensive (and slow) 
new estimate of the Hessian at every iteration.
Finally, most of these methods are difficult to deploy in real-world ML pipelines,
as they introduce new hyperparameters without practical guidelines for choosing them.

% Estimating the Hessian at each iteration leads to a noticeable slowdown in wall-clock time, particularly on large problem instances.
% Consequently, stochastic second-order methods must update the Hessian approximation infrequently to remain competitive with popular first-order methods on large-scale problems.

In this paper, we introduce PROMISE, 
a suite of preconditioned stochastic gradient methods that use 
scalable curvature estimates to directly address each of these problems.
PROMISE methods estimate second-order information
from minibatch data (i.e., stochastic Hessians)
to avoid difficulties with hyperparameter selection and ill-conditioning, 
and they use infrequent (``lazy'') Hessian updates. 
The resulting algorithms, such as SketchyKatyusha (\cref{alg:skkat}), 
are fast enough per-iteration to compete with first-order methods,
yet converge much faster on ill-conditioned problems
with minimal or no hyperparameter tuning.

\Cref{fig:url_intro} illustrates the benefits of PROMISE by applying  SketchyKatyusha to a malicious link detection task using the url dataset ($n$ = 2,396,130, $p$ = 3,231,961), which yields a large $l^2$-regularized logistic regression problem. 
Popular stochastic optimizers, using the default learning rates, perform poorly.
In contrast, with default hyperparameters, 
the proposed method SketchyKatyusha achieves a loss three times smaller 
than the best competing method after an hour of training!
In fact, even after an hour of training, 
the other optimizers barely match the training loss that 
SketchyKatyusha achieves after a single epoch (pass through the data).
Even with extensive hyperparameter tuning 
(which, in practice, increases the cost of optimization by orders of magnitude), 
the other first-order methods cannot 
match the performance of SketchyKatyusha with its default hyperparameters.

PROMISE also improves on preexisting theory for stochastic second-order methods. 
In contrast to prior approaches, 
PROMISE methods achieve linear convergence with lazy updates to the preconditioner 
and without large batchsizes for the gradient and Hessian.
Significantly, PROMISE methods come with default hyperparameters 
(including the learning rate) that enable them to work out-of-the-box 
and outperform or match popular stochastic optimizers \emph{tuned} 
to achieve their best performance. 
Numerical experiments on a test bed of 51 ridge and logistic-regression problems
verify this claim.
Hence our methods avoid the usual theory-practice gap: 
our theoretical advances yield practical algorithms.

In order to show linear convergence under lazy updates, 
our analysis introduces a new analytic quantity  
we call the \emph{quadratic regularity ratio} that  
controls the convergence rate of all PROMISE methods.
The quadratic regularity ratio generalizes the condition number to the Hessian norm.
Unlike the condition number, the quadratic regularity ratio equals one for quadratic objectives and approaches one as the iterate approaches the optimum for any objective with a Lipschitz Hessian. 
Hence the quadratic regularity ratio often gives tighter convergence rates than the condition number and explains why the proposed methods empirically exhibit fast global linear convergence and outperform the competition. 

\subsection{PROMISE}

PROMISE methods solve convex finite-sum minimization (FSM) problems of the form
 \begin{equation}
 \tag{FSM}
 \label{eq:EmpRiskProb}
     \underset{w\in \mathbb{R}^p}{\textrm{minimize}}~F(w) := \frac{1}{n}\sum_{i=1}^{n}f_i(w)+\frac{\nu}{2}\|w\|^2,
 \end{equation}
where each $f_i$ is real-valued, smooth, and convex, and $\nu >0$.

We provide a high-level overview of the PROMISE methods in the \hyperref[alg:PSGM]{Meta-algorithm} and \cref{table:inputs_description}. 
Each iteration consists of two phases: 
a (lazy) preconditioner update and a parameter update.
By default, 
PROMISE methods update the preconditioner at a fixed frequency, 
such as once per epoch,
using a stochastic Hessian estimate at the current iterate $w_k$. 
The learning rate is then recomputed to adapt to the new preconditioner.
For the parameter update, 
our methods compute a stochastic gradient $g_k$ and 
preconditioned direction $v_k = P^{-1} g_k$.
Our methods then use a parameter update subroutine $\mathcal S$ 
to compute the next iterate $w_{k+1}$. 
The $*$ in the call to $\mathcal S$ denotes additional arguments 
to perform variance reduction and acceleration.

\begin{algorithm}[h]
\scriptsize
	\centering
	\caption*{\textbf{Meta-algorithm:} PROMISE}
	\label{alg:PSGM}
	\begin{algorithmic}
		% \Require{%stochastic gradient oracle, hvp oracle, 
		% initialization $w_0$, stochastic gradient algorithm $\mathcal A_{g}$, stochastic Hessian oracle $\mathcal O_{H}$, preconditioner construction routine $\mathcal P$, sketch size $s$, preconditioner update frequency $u$}
        \Require{%stochastic gradient oracle, hvp oracle, 
		initial iterate $w_0$, stochastic gradient oracle $\mathcal O_{g}$, stochastic Hessian oracle $\mathcal O_{H}$, gradient and Hessian batchsizes $b_g$ and $b_H$, preconditioner object $\mathcal P$, preconditioner update times $\mathcal U \subseteq \mathbb{N}$, parameter update subroutine $\mathcal S$}

        \vspace{0.5pc}
  
        \For{$k = 0, 1, \ldots$}
		% \Repeat
		%\State {Sample independent batches $S_k$ and $B_k$}
		%\State {Compute stochastic gradient $g_{B_{k}}(w_k)$}
		%\Comment{via AD}
        %\State{$\hat \nabla F(w_k) = \mathcal O_{\hat \nabla F}(w_k)$} \Comment{Compute stochastic gradient}
        
        % \vspace{0.5pc}
        
        \State {\textbf{\# Preconditioner update}}
        \If{$k \in \mathcal U$}
		% \State {$P = \mathcal P(w_k, \mathcal O_{H},s)$}
        \State {$\mathcal P.\texttt{update}(\mathcal O_{H}(w_k, b_H))$} \Comment{Update preconditioner $P$ via stochastic Hessian}
        \State {$\eta = \mathcal P.\texttt{get\_learning\_rate}()$} \Comment{Compute learning rate based on $P$}
        \EndIf
		%\State {Compute $v_k = (\hat{H}_{S_k}+\rho_k I)^{-1}g_{B_k}(w_k)$ via \eqref{eq:NysSMWSolve} } \Comment{Get search direction}
		% \State {$w_{k+1} = \texttt{preconditioned\_sgd\_update}(w_{k},\mathcal O_{g},P,\eta)$}

        \vspace{0.5pc}
        
        \State {\textbf{\# Parameter update}}
        \State $g_k = \mathcal{O}_g(w_k, b_g)$ \Comment{Compute stochastic gradient}
        \State $v_k = \mathcal P.\texttt{direction}(g_k)$ \Comment{Compute $v_k = P^{-1} g_k$}
        \State $w_{k+1} = \mathcal S(w_k, g_k, v_k, *)$ \Comment{Compute next iterate}
        % \State {$w_{k+1} = w_k - \eta [\mathcal P.\texttt{direction}(g_k)]$} \Comment{Update $w_{k+1} = w_k - \eta P^{-1} g_k$}
		% \Until{convergence}
        \EndFor
	\end{algorithmic}
\end{algorithm}

\begin{table}[h]
\centering
    \begin{tabular}{C{2cm}C{10cm}}
        Input & Description \\ \hline
        $w_0$ & Initial iterate, typically set to $0$. \\ \hline
        $\mathcal{O}_g$ & Computes a stochastic gradient. \\ \hline
        $\mathcal{O}_H$ & Used for computing a stochastic/subsampled Hessian. Does not compute the entire subsampled Hessian in practice. \\ \hline
        $b_g, b_H$ & Batchsizes for computing stochastic gradients and Hessians. Used as inputs to $\mathcal{O}_g$ and $\mathcal{O}_H$. \\ \hline
        $\mathcal{P}$ & Preconditioner object. Examples provided in \cref{subsection:precond}. \\ \hline
        $\mathcal U$ & Times at which to update the preconditioner. \\ \hline
        $\mathcal S$ & Subroutine that updates the iterate. May include calculations related to variance reduction and acceleration.
    \end{tabular}
    \caption{Inputs to the \hyperref[alg:PSGM]{Meta-algorithm}.}
    \label{table:inputs_description}
\end{table}

The finite-sum structure of the objective \eqref{eq:EmpRiskProb} 
makes it easy to construct unbiased estimators of the gradient $\nabla F(w)$ and the Hessian $\nabla^2 F(w)$,
given batchsizes $b_g$ and $b_H$, as
  \begin{equation*}
    \widehat \nabla F(w) = \frac{1}{b_g}\sum_{i\in \mathcal B_g} \nabla f_i(w) +\nu w, \quad \widehat \nabla^2 F(w) = \frac{1}{b_H}\sum_{i\in \mathcal B_H} \nabla^2 f_i(w) +\nu I,
 \end{equation*}
 where $\mathcal B_g$ and $\mathcal B_H$, with size $b_g$ and $b_H$ respectively, are
 sampled independently and uniformly from $\{1,\dots,n\}$.
 As a concrete example, consider a generalized linear model (GLM) with $f_i(w) = \phi_i(a^T_iw)$.
 Then
  \begin{equation*}
    \nabla f_i(w) = \phi'_i(a^T_iw)a_i, \quad \nabla^2 f_i(w) = \phi''_i(a^T_iw)a_i a^T_i.
 \end{equation*}

 The stochastic Hessian as written is a $p \times p$ matrix: rather large! 
 But none of the methods we discuss instantiate such a matrix. 
 Instead, they take advantage of the low-rank structure of the preconditioner to 
 compute the approximate Newton direction $P^{-1} g_k$ efficiently using the Woodbury formula (see \cref{table:precond_comp}).

\paragraph{Contributions.} We summarize the contributions of this work as follows:
\begin{enumerate}
    \item We propose preconditioned versions of SVRG, SAGA, and Katyusha, which we call SketchySVRG, SketchySAGA, and SketchyKatyusha. 
    These methods use stochastic approximations to the Hessian to perform preconditioning.
    \item We formally describe a wide array of preconditioners that are compatible with our methods.
    We show that any preconditioner that approximates the Hessian sufficiently well is compatible with our theoretical convergence results.
    \item We define the quadratic regularity ratio, which generalizes the condition number globally to the Hessian norm, 
    and use this ratio to prove our methods converge linearly to the optimum despite lazy updates to the preconditioner.
    \item We show global linear convergence, independent of the condition number, 
    for SketchySVRG, SketchySAGA, and SketchyKatyusha applied to ridge regression. 
    We also show local linear convergence, independent of the condition number, for SketchySVRG on any strongly convex finite-sum problem with Lipschitz Hessians.
    \item We provide default hyperparameters and a heuristic to automatically compute a good learning rate for our proposed methods.
    \item We present extensive experiments demonstrating that SketchySVRG, SketchySAGA, and SketchyKatyusha, equipped with their default hyperparameters and learning rate heuristic, outperform popular stochastic optimizers for GLMs.
\end{enumerate}

\subsection{Roadmap}
\cref{section:precond_techniques} introduces several scalable preconditioning techniques that are compatible with the PROMISE framework; we provide both implementation details and theoretical results for these preconditioners.
\cref{section:algs} presents the algorithms that comprise the PROMISE framework, along with default hyperparameters and algorithmic recommendations for various GLMs.
\cref{section:related_work} reviews the literature on preconditioning and stochastic second-order methods and places PROMISE in the context of these existing works.
\cref{section:theory} establishes linear convergence of all of the proposed methods for strongly convex machine learning problems. 
\cref{section:experiments} demonstrates the superior performance of the algorithms in PROMISE over popular \emph{tuned} stochastic optimizers through extensive numerical experiments.

\subsection{Notation}
Define $[n] \coloneqq \{1,\ldots,n\}$.
Throughout the paper, $\mathcal B_{k}$ and $\mathcal S_{k}$ denote subsets of $[n]$ 
that are sampled independently and uniformly without replacement.
The corresponding (unregularized) minibatch gradient and Hessian are given by
\begin{align*}\widehat \nabla f(w) = \frac{1}{b_{g_k}}\sum_{i\in \mathcal B_{k}}\nabla f_{i}(w),
\\
\widehat \nabla^2 f(w) = \frac{1}{b_{h_k}}\sum_{i\in \mathcal S_{k}}\nabla^{2} f_{i}(w),
\end{align*}
where $b_{g_k} \coloneqq |\mathcal B_k|, b_{h_k} \coloneqq |\mathcal S_k|.$
We abbreviate positive-semidefinite as psd and use $\mathbb S_p^+ (\R)$ to denote the convex cone of psd matrices in $\R^{p \times p}$. 
The symbol $\preceq$ denotes the Loewner order on the convex cone of psd matrices: 
$A\preceq B$ means $B-A$ is psd.
Given a psd matrix $A\in \mathbb{R}^{p\times p}$, 
its eigenvalues in decreasing order are $\lambda_{1}(A)\geq \lambda_2(A)\geq \cdots \geq \lambda_p(A)$. 
For a psd matrix $A \in \R^{p \times p}$, define the condition number $\kappa_2(A) \coloneqq \lambda_1(A) / \lambda_p(A)$.
Define $B(w,r)$ to be the closed Euclidean norm ball of radius $r$, centered at $w$.

Throughout the remainder of this paper, we assume we have access to some GLM $\Model$. 
We note our theoretical convergence results do not require the objective to be a GLM, but the implementation of the SASSN preconditioner (\cref{subsection:precond}) requires this structure.
% , however our SASSN preconditioner (\cref{subsection:precond}) result requires this structure. 
Moreover, most convex machine learning problems arising in practice are GLMs, so we specialize our implementation to GLMs.
Given that $F$ is a GLM, we assume access to oracles for obtaining the regularization parameter $\nu$, row subsamples of the data matrix $A \in \R^{n \times p}$, the diagonal of the stochastic Hessian of $F$ (excluding the regularization $\nu$), stochastic gradients of $F$, and full gradients of $F$. We present the names, inputs, and outputs of these oracles in \cref{table:m_funcs}. 

\begin{table}[H]
\scriptsize
\centering
    \begin{tabular}{c c}
        Oracle & Output\\ \hline
        $\Model.\texttt{get\_reg}()$ & $\nu$ \\ \hline
        $\Model.\texttt{get\_data}(\mathcal{B})$ & $A_\mathcal{B}$ \\ \hline
        $\Model.\texttt{get\_hessian\_diagonal}(\mathcal{B},w)$ & $\Phi''(A_{\mathcal B}w)$ \\ \hline
        $\Model.\texttt{get\_stoch\_grad}(\mathcal{B}, w)$ & $\frac{1}{|\mathcal{B}|} \sum_{i \in \mathcal{B}} \nabla f_i(w) + \nu w$ \\ \hline
        $\Model.\texttt{get\_full\_grad}(w)$ & $\nabla F(w)$ \\ \hline
    \end{tabular}
    \caption{Oracles associated with the GLM $\Model$. $\mathcal{B} \subseteq [n]$ is a batch of indices and $w \in \R^p$.}
    \label[table]{table:m_funcs}
\end{table}

The oracles \texttt{get\_data} and  \texttt{get\_hessian\_diagonal} output $A_{\mathcal B}w$ and $\Phi''(A_{\mathcal B}w)$, respectively, for minibatch $\mathcal B = \{i_1, i_2, \ldots, i_{|\mathcal B|}\} \subseteq [n]$,
% is a batch used for subsampling the Hessian. 
% $A_{\mathcal B}$ is a matrix consisting of the rows of $A$ with indices in $\mathcal B$ and $\Phi''(A_{\mathcal B}w)$ is a vector of $\phi_j''(a_j^T w)$ for $j$ in $\mathcal B$. 
where
\begin{align*}
    A_{\mathcal{B}} \coloneqq 
    % \begin{pmatrix}
    %     e_{i_1}^T \\
    %     e_{i_2}^T \\
    %     \vdots \\
    %     e_{i_{|\mathcal{B}|}}^T \\
    % \end{pmatrix}
    % A = 
    \begin{pmatrix}
        a_{i_1}^T \\
        a_{i_2}^T \\
        \vdots \\
        a_{i_{|\mathcal{B}|}}^T \\
    \end{pmatrix},
\qquad
    \Phi''(A_{\mathcal{B}} w) \coloneqq 
    \diag
    \left(
    \begin{pmatrix}
        \phi_{i_1}''\left(a_{i_1}^T w\right) \\
        \phi_{i_2}''\left(a_{i_2}^T w\right) \\
        \vdots \\
        \phi_{i_{|\mathcal{B}|}}''\left(a_{i_{|\mathcal{B}|}}^T w\right)
    \end{pmatrix}
    \right)
    .
\end{align*}
The oracles \texttt{get\_reg}, \texttt{get\_data} and \texttt{get\_hessian\_diagonal} are used in the preconditioners described in \cref{subsection:precond}, while the oracles \texttt{get\_stoch\_grad} and \texttt{get\_full\_grad} are used in the optimization algorithms in \cref{section:algs}.
In practice, \texttt{get\_hessian\_diagonal} returns the diagonal as a vector, not a matrix.
% Suppose $\mathcal{B} = \{i_1, i_2, \ldots, i_{|\mathcal{B}|}\}$. 
% Mathematically, we define 
% \begin{align*}
%     A_{\mathcal{B}} \coloneqq 
%     \begin{pmatrix}
%         e_{i_1}^T \\
%         e_{i_2}^T \\
%         \vdots \\
%         e_{i_{|\mathcal{B}|}}^T \\
%     \end{pmatrix}
%     A = 
%     \begin{pmatrix}
%         a_{i_1}^T \\
%         a_{i_2}^T \\
%         \vdots \\
%         a_{i_{|\mathcal{B}|}}^T \\
%     \end{pmatrix}
% \end{align*}
% and
% \begin{align*}
%     \Phi''(A_{\mathcal{B}} w) \coloneqq 
%     \begin{pmatrix}
%         \phi_{i_1}''\left(a_{i_1}^T w\right) \\
%         \phi_{i_2}''\left(a_{i_2}^T w\right) \\
%         \vdots \\
%         \phi_{i_{|\mathcal{B}|}}''\left(a_{i_{|\mathcal{B}|}}^T w\right)
%     \end{pmatrix}.
% \end{align*}
\section{Scalable preconditioning techniques}
\label{section:precond_techniques}
\ifpreprint
We present four scalable preconditioning techniques: Subsampled Newton (\ssn{}), \nys{} Subsampled Newton (\nyssn{}), Sketch-and-Solve Subsampled Newton (\sassn{}), and Diagonal Subsampled Newton (\diagssn{}), all of which are based on stochastic approximations of the Hessian. 
\else
We present three scalable preconditioning techniques: Subsampled Newton (\ssn{}), \nys{} Subsampled Newton (\nyssn{}), and Sketch-and-Solve Subsampled Newton (\sassn{}), all of which are based on stochastic approximations of the Hessian. 
\fi
% The key driver behind these methods that enables scalable use of second-order information, is that randomized sampling and randomized low-rank approximation provide cheap, robust, quality estimates of the curvature.
The key driver behind the scalability of these methods is that subsampling and randomized low-rank approximation provide cheap, reliable estimates of the curvature.

\ifpreprint
For general convex, finite-sum objectives, the \ssn{}, \nyssn{}, and \diagssn{} preconditioners require access to a stochastic Hessian, while the \sassn{} preconditioner requires access to the square root of the stochastic Hessian.
\else
For general convex, finite-sum objectives, the \ssn{} and \nyssn{} preconditioners require access to a stochastic Hessian, while the \sassn{} preconditioner requires access to the square root of the stochastic Hessian.
\fi
Fortunately, the stochastic Hessian in GLMs, $\frac{1}{b_H}A^{T}_\IdxB \Phi''(A_\IdxB w) A_\IdxB$, has a structure that allows us to compute all of these preconditioners using its square root, $\frac{1}{\sqrt{b_H}} [\Phi''(A_\IdxB w)]^{1/2} A_\IdxB$. 
\ifpreprint
An overview of the proposed preconditioners is provided in \cref{table:precond_overview}.
\else
\fi

\ifpreprint
\begin{table}[H]
\centering
    \begin{tabular}{C{3cm}C{7cm}C{4cm}}
        Preconditioner & Key ideas & Oracle requirements \\ \hline
        \ssn{} \newline (\cref{subsubsection:ssn}) & Subsampling & Stochastic Hessian \\ \hline
        \nyssn{} \newline (\cref{subsubsection:nyssn}) & Subsampling, randomized low-rank approximation, randomized \nys{} approximation & Stochastic Hessian \\ \hline
        \sassn{} \newline (\cref{subsubsection:sassn}) & Subsampling, randomized low-rank approximation, sparse embedding, Newton sketch & Square root of stochastic Hessian \\ \hline
        \diagssn{} \newline (\cref{subsubsection:diagssn}) & Subsampling, diagonal approximation & Stochastic Hessian
    \end{tabular}
    \caption{Overview of proposed preconditioners.}
    \label{table:precond_overview}
\end{table}

We present mathematical and algorithmic formulations of the four preconditioning techniques in \cref{table:precond_overview} and \cref{subsection:precond}. 
\else

We present mathematical and algorithmic formulations of the four preconditioning techniques in \cref{subsection:precond}. 
\fi
We then compare the computational costs associated with each preconditioner and provide preconditioner recommendations for various problem regimes in \cref{subsection:precond_comp}. 
\ifpreprint
Finally, we analyze the approximation quality of the proposed preconditioners (with the exception of \diagssn{}) in \cref{subsec:precond_qual}.
\else
Finally, we analyze the approximation quality of the proposed preconditioners in \cref{subsec:precond_qual}.
Any proofs not provided in this section can found in the \href{https://arxiv.org/abs/2309.02014v2}{arxiv report}.
\fi

\subsection{Mathematical and algorithmic formulation of preconditioners}
\label{subsection:precond}
\ifpreprint
This section provides mathematical formulations for the preconditioners in \cref{table:precond_overview} in the GLM setting,
and object-oriented pseudocode.
The preconditioners have two key methods: $\texttt{update}$ and $\texttt{direction}$. 
The $\texttt{update}$ method constructs the preconditioner and 
updates an estimate of the preconditioned smoothness constant 
(effectively combining the \texttt{update} and \texttt{get\_learning\_rate} methods in the \hyperref[alg:PSGM]{Meta-algorithm}), 
while the $\texttt{direction}$ method computes a preconditioned stochastic gradient (similar to \texttt{direction} in the \hyperref[alg:PSGM]{Meta-algorithm}). 
\else
This section provides mathematical formulations for each proposed preconditioner in the GLM setting,
and object-oriented pseudocode for \ssn{}.
Each preconditioner has \texttt{update} and \texttt{direction} methods.
The \texttt{update} method constructs the preconditioner and estimates the preconditioned smoothness constant 
(i.e., it combines \texttt{update} and \texttt{get\_learning\_rate} methods in \hyperref[alg:PSGM]{Meta-algorithm}), while the $\texttt{direction}$ method applies the preconditioner to a vector (similar to \texttt{direction} in \hyperref[alg:PSGM]{Meta-algorithm}).
\fi
These preconditioners play a critical role in the optimization algorithms presented in \cref{section:algs}. 

\subsubsection{Subsampled Newton (\ssn{})}
\label{subsubsection:ssn}
The first preconditioning method we present is \ssn{} \citep{erdogdu2015convergence,roosta2019sub}. 
\ssn{} forms a preconditioner using the Hessian of a random subsample of the terms in the finite-sum objective \eqref{eq:EmpRiskProb}. 
Given a point $w\in \R^p$, \ssn{} constructs the preconditioner
\begin{equation}
\label{eq:ssn_precond}
    P = \frac{1}{b_H}\sum_{i\in \mathcal B} \nabla^2 f_i(w)+\rho I,
\end{equation}
where $\rho \geq \nu$ and $\mathcal B$ consists of $b_H$ elements sampled uniformly at random from $[n]$. 
For GLMs, $f_i = \phi_i(a_i^Tw)$, so \cref{eq:ssn_precond} simplifies to
\begin{equation}
\label{eq:ssn_precond_glm}
    P = \frac{1}{b_H}\sum_{i\in \mathcal B} \phi_i''(a_i^{T}w)a_i a^{T}_i+\rho I = \frac{1}{b_H}A_{\mathcal B}^{T}\Phi''(A_{\mathcal B}w) A_{\mathcal B}+\rho I.    
\end{equation}
If $b_H \geq p$, we may form and factor $P$ (via Cholesky) in $\bigO(b_H p^2 + p^3)$ time and compute $P^{-1} v$ for $v \in \R^p$ in $\bigO(p^2)$ time via triangular solves.
When $b_H \leq p$ (as is typical), we compute the Cholesky factorization $LL^T = \Phi''(A_{\mathcal B} w)^{1/2} A_{\mathcal B} A_{\mathcal B}^T \Phi''(A_{\mathcal B} w)^{1/2} + \rho I$ and 
\[
P^{-1}v = (v - A_{\mathcal B}^T \Phi''(A_{\mathcal B} w)^{1/2} L^{-T} L^{-1} \Phi''(A_{\mathcal B} w)^{1/2} A_{\mathcal B} v)/\rho
\]
via the Woodbury formula \citep{higham2002accuracy}.
\cref{lemma:ssn_cost} summarizes the operational costs.
% we have the following result which uses the Woodbury formula \citep{higham2002accuracy} to show $P^{-1}$ can be efficiently constructed and applied to vectors.
\begin{lemma}
Let $v \in \R^p$ and let $P$ be as in \eqref{eq:ssn_precond_glm}. If $b_H\leq p$, then the Cholesky factorization can be constructed in $\bigO(b_H^2 p + b_H^3)$ time and $P^{-1}v$ can be computed in $\bigO(b_Hp)$ time. Furthermore, if the data matrix $A$ is row-sparse with sparsity parameter $s$, the computational cost of $P^{-1}v$ can be reduced to $\bigO(b_Hs)$ time.
%     \[
%     P^{-1}v = \frac{1}{\rho}\left( v - A_{\mathcal B}^T \Phi''(A_{\mathcal B} w)^{1/2} \left(\Phi''(A_{\mathcal B} w)^{1/2} A_{\mathcal B} A_{\mathcal B}^T \Phi''(A_{\mathcal B} w)^{1/2} + \rho I \right)^{-1} \Phi''(A_{\mathcal B} w)^{1/2} A_{\mathcal B} v \right). 
%     \]
% Consequently, after forming and factoring $\Phi''(A_{\mathcal B} w)^{1/2} A_{\mathcal B} A_{\mathcal B}^T \Phi''(A_{\mathcal B} w)^{1/2} + \rho I$ at a cost of $O(pb_H^{2}+b^3_H)$, $P^{-1}$ may be applied to vectors in $O(b_H p)$ time.
        % \[
        % P^{-1}v = \frac{1}{\rho}\left(v-A_{\mathcal B_H}D_{\mathcal B_H}^{1/2}\left(A_{\mathcal B_H}D_{\mathcal B_H}A^{T}_{\mathcal B}+\rho I\right)^{-1}D_{\mathcal B_H}^{1/2}A^{T}_{\mathcal B_H}v\right).
        % \]
        % Consequently, after forming and factoring $A_{\mathcal B_H}D_{\mathcal B_H}A^{T}_{\mathcal B}+\nu I$ at a cost of $O(pb_H^{2}+b^3_H)$, $P^{-1}$ may be applied to vectors in $O(b_H p)$ time.
\label[lemma]{lemma:ssn_cost}
\end{lemma}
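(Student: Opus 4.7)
The plan is to verify the two cost bounds by bookkeeping the operations in each step of the Woodbury-based formula
\[
P^{-1}v \;=\; \frac{1}{\rho}\Bigl(v \,-\, A_{\mathcal B}^{T}\,\Phi''(A_{\mathcal B} w)^{1/2}\,L^{-T} L^{-1}\,\Phi''(A_{\mathcal B} w)^{1/2}\,A_{\mathcal B} v\Bigr),
\]
where $LL^{T} = \Phi''(A_{\mathcal B}w)^{1/2} A_{\mathcal B} A_{\mathcal B}^{T}\,\Phi''(A_{\mathcal B}w)^{1/2} + \rho I$. Correctness of this identity is an immediate consequence of the Sherman--Morrison--Woodbury formula applied to \eqref{eq:ssn_precond_glm}, so the only real content is counting flops.

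For the factorization bound, I would first form the $b_H \times b_H$ Gram matrix $A_{\mathcal B} A_{\mathcal B}^{T}$ in $\bigO(b_H^{2} p)$ time (each of the $b_H^{2}$ entries is an inner product of two $p$-vectors), then apply the diagonal scaling by $\Phi''(A_{\mathcal B} w)^{1/2}$ on both sides in $\bigO(b_H^{2})$ time, add $\rho I$ in $\bigO(b_H)$ time, and finally compute the Cholesky factor of a $b_H \times b_H$ matrix in $\bigO(b_H^{3})$ time. Summing these gives $\bigO(b_H^{2} p + b_H^{3})$, matching the claim.

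For the application cost, I would walk through the Woodbury expression from right to left. Computing $u_1 := A_{\mathcal B} v$ takes $\bigO(b_H p)$ flops; the diagonal scale $u_2 := \Phi''(A_{\mathcal B} w)^{1/2} u_1$ is $\bigO(b_H)$; the two triangular solves $u_3 := L^{-T} L^{-1} u_2$ are $\bigO(b_H^{2})$, which under the hypothesis $b_H \le p$ is absorbed into $\bigO(b_H p)$; a second diagonal scaling is $\bigO(b_H)$; the multiplication $A_{\mathcal B}^{T} u_3$ is again $\bigO(b_H p)$; and the final subtraction and scalar division are $\bigO(p)$. Adding these yields the advertised $\bigO(b_H p)$ bound.

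Finally, for the row-sparse case I would simply replace the two matrix--vector products involving $A_{\mathcal B}$ and $A_{\mathcal B}^{T}$ by their sparse counterparts: since each row of $A_{\mathcal B}$ has at most $s$ nonzeros, each such multiplication costs $\bigO(b_H s)$ rather than $\bigO(b_H p)$, and all other steps (diagonal scalings, triangular solves of size $b_H$, and the length-$p$ axpy) are of lower order in the regime of interest. The only mild subtlety is that the final vector update $v - (\cdot)$ still touches all $p$ coordinates, but in typical usage $v$ (a preconditioned minibatch gradient) is already sparse with support contained in the column-support of $A_{\mathcal B}^{T}$, so this step can be restricted to that support, yielding the claimed $\bigO(b_H s)$ complexity. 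There is no substantive obstacle here; the proof is essentially a careful flop count, and writing it out cleanly is the main work.
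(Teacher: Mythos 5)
Your proof is correct and follows the same route the paper intends: a direct flop count of the Woodbury-based application formula and of forming and factoring the $b_H\times b_H$ Gram matrix $\Phi''(A_{\mathcal B}w)^{1/2}A_{\mathcal B}A_{\mathcal B}^T\Phi''(A_{\mathcal B}w)^{1/2}+\rho I$. You also correctly flag the one genuine subtlety in the sparse case (the length-$p$ update and the $\bigO(b_H^2)$ triangular solves must be treated as lower-order terms), which the lemma statement itself glosses over, so nothing further is needed.
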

% By the Woodbury formula \citep{higham2002accuracy}, we have the following result which shows $P^{-1}$ can be efficiently applied to vectors. \mnote{I might 1) state the formula, and then 2) state the O() result from Higham as a one-liner with reference. I like separating algo from theory more firmly.}
% \begin{lemma}
% Let $v \in \R^p$ and let $P$ be as in \eqref{eq:ssn_precond_glm}. Then
%     \begin{enumerate}
%         \item If $b_H \geq p$, we may form and factor $P$ in $O(b_Hp^2+p^3)$ time, and can compute $P^{-1}v$ in $O(p^2)$ time.
%         \item If $b_H\leq p$, then 
%         \[
%         P^{-1}v = \frac{1}{\rho}\left(v-A_{\mathcal B_H}D_{\mathcal B_H}^{1/2}\left(A_{\mathcal B_H}D_{\mathcal B_H}A^{T}_{\mathcal B}+\rho I\right)^{-1}D_{\mathcal B_H}^{1/2}A^{T}_{\mathcal B_H}v\right).
%         \]
%         Consequently, after forming and factoring $A_{\mathcal B_H}D_{\mathcal B_H}A^{T}_{\mathcal B}+\nu I$ at a cost of $O(pb_H^{2}+b^3_H)$, $P^{-1}$ may be applied to vectors in $O(b_H p)$ time.
%     \end{enumerate}
% \label[lemma]{lemma:ssn_cost}
% \end{lemma}

The $\pssn$ class (\cref{tab:ssn_attr,alg:ssn_upd,alg:ssn_dir})
provides an implementation of the \ssn{} preconditioner. 
The attributes of the $\pssn$ class are given in \cref{tab:ssn_attr} and pseudocode for the \texttt{update} and \texttt{direction} methods is provided in \cref{alg:ssn_upd,alg:ssn_dir}, respectively.

\ifpreprint
    \begin{table}[H]
\else
    \begin{table}[H]
    \scriptsize
    \centering
    \begin{tabular}{c c}
        Attribute & Description \\ \hline
        $\rho$ & Regularization for preconditioner \\ \hline
        $b$ & Size of Hessian batch used for preconditioner construction \\ \hline
        $X$ & Square root of subsampled Hessian (excluding $l^2$-regularization) \\ \hline
        $L$ & Lower-triangular Cholesky factor for storing preconditioner \\ \hline
        $\lambda_\mathcal{P}$ & Estimate of preconditioned smoothness constant
    \end{tabular}
    \caption{Attributes of the $\pssn$ class.}
    \label{tab:ssn_attr}
\end{table}
\fi

% \begin{algorithm}[H]
%     \centering
%     \caption{\ssn{} constructor}
%     \label{alg:ssn}
%     \begin{algorithmic}
%         \Require regularization $\rho_0$
%         \State {\textbf{class} \ssn{}}
%         \Indent
%         \State \textbf{local variable} $\rho$ (float) \Comment{Regularization}
%         \State \textbf{local variable} $X$ (matrix) \Comment{Square root of subsampled Hessian}
%         \State \textbf{local variable} $b$ (int) \Comment{Number of rows of $X$}
%         \State \textbf{local variable} $L$ (matrix) \Comment{Cholesky factor of $X^T X + \rho I$ or $X X^T + \rho I$}

%         \vspace{0.5pc}

%         \Function{\ssn{}}{$\rho_0$} \Comment{Constructor}
%         \State $\rho \gets \rho_0$
%         \State $X \gets \textrm{None}$
%         \State $b \gets \textrm{None}$
%         \State $L \gets \textrm{None}$
%         \EndFunction
        
%         \EndIndent
%     \end{algorithmic}
% \end{algorithm}

\ifpreprint
    \begin{algorithm}[H]
\else
    \begin{algorithm}[t]
    \scriptsize
    \centering
    \caption{Update $\pssn$ preconditioner and preconditioned smoothness constant}
    \label{alg:ssn_upd}
    \begin{algorithmic}
        \Require{$\pssn$ object with attributes $\rho, b, X, L, \lambda_\mathcal{P}$}
        \Function{$\pssn$.\texttt{update}}{$\Model, \mathcal{B}_1, \mathcal{B}_2, w$}
        \State $\rho \gets \pssn.\rho$ \Comment{Get attributes}
        \State
        \State \# Phase 1: Update preconditioner
        % \State $\pssn.b \gets |\mathcal{B}_1|$
        \State $A_{\textrm{sub}} \gets \Model.\texttt{get\_data}(\mathcal{B}_1)$
        \State $d_{\textrm{sub}} \gets \Model.\texttt{get\_hessian\_diagonal}(\mathcal{B}_1, w)$
        \State $X \gets \diag(\sqrt{d_{\textrm{sub}}}) A_{\textrm{sub}}$ \Comment{Square root of subsampled Hessian}
        % \State $p \gets \texttt{length}(w)$
        \If {$|\mathcal{B}_1| \geq p$}
        \State $L \gets \texttt{cholesky}(X^T X + \rho I)$
        \Else
        \State $L \gets \texttt{cholesky}(X X^T + \rho I)$
        \EndIf

        \State 
        \State \# Phase 2: Update estimated preconditioned smoothness constant
        \State $A_{\textrm{sub}} \gets \Model.\texttt{get\_data}(\mathcal{B}_2)$
        \State $d_{\textrm{sub}} \gets \Model.\texttt{get\_hessian\_diagonal}(\mathcal{B}_2, w)$
        \State $Z \gets A_{\textrm{sub}}^T \diag(d_{\textrm{sub}}) A_{\textrm{sub}} + \Model.\texttt{get\_reg}() I$ \Comment{Subsampled Hessian}
        \State $\lambda_\mathcal{P} \gets \texttt{eig}(Z (X^T X + \rho I)^{-1}, k = 1)$ \Comment{Compute largest eigenvalue}
        \State
        \State $\pssn.b \gets |\mathcal B_1|, \pssn.X \gets X, \pssn.L \gets L, \pssn.\lambda_\mathcal{P} \gets \lambda_\mathcal{P}$ \Comment{Set attributes}
        \EndFunction
    \end{algorithmic}
\end{algorithm}
\fi
The \texttt{update} method takes a GLM $\Model$, Hessian batches $\mathcal{B}_1, \mathcal{B}_2$, and vector $w \in \R^p$ as input. 
In the first phase, this method constructs the \ssn{} preconditioner $P$ at $w$ by computing the square root of the subsampled Hessian, followed by an appropriate Cholesky factorization. 
The matrix used in the Cholesky factorization changes depending on the Hessian batchsize in order to obtain the computational costs in \cref{lemma:ssn_cost}. 
In the second phase, this method estimates the preconditioned smoothness constant by computing $\lambda_1(P^{-1/2} \widehat{\nabla}^2 F(w) P^{-1/2}) = \lambda_1(\widehat{\nabla}^2 F(w) P^{-1})$.
We never instantiate the subsampled Hessian to perform this calculation. 
Instead, we define matrix-vector products with the subsampled Hessian and inverse preconditioner and compute the largest eigenvalue via powering (our implementation uses \texttt{scipy.sparse.linalg.eigs}).

The \texttt{direction} method takes a vector $g \in \R^p$ (typically a stochastic gradient) as input. 
This method then computes $P^{-1}g$ using the Cholesky factor $L$ and the square root of the subsampled Hessian $X$ (as necessary). 
The reason for having two cases is to achieve the computational complexity in \cref{lemma:ssn_cost} by taking advantage of the Woodbury formula.

\ifpreprint
    \begin{algorithm}[H]
\else
    \begin{algorithm}[t]
    \scriptsize
    \centering
    \caption{Compute $\pssn$ direction}
    \label{alg:ssn_dir}
    \begin{algorithmic}
        \Require{$\pssn$ object with attributes $\rho, b, X, L, \lambda_\mathcal{P}$}
        \Function{$\pssn$.\texttt{direction}}{$g$}
        % \State $p \gets \texttt{length}(g)$
        \State $b \gets \pssn.b, L \gets \pssn.L, X \gets \pssn.X$ \Comment{Get attributes}
        \If {$b \geq p$}
        \State $v \gets L^{-1} g$ \Comment{Triangular solve}
        \State $v \gets L^{-T} v$ \Comment{Triangular solve}
        \State \Return $v$
        \Else
        \State $v \gets X g$
        \State $v \gets L^{-1} v$ \Comment{Triangular solve}
        \State $v \gets L^{-T} v$ \Comment{Triangular solve}
        \State $v \gets X^T v$
        \State \Return $(g - v) / \rho$
        \EndIf
        \EndFunction
    \end{algorithmic}
\end{algorithm}
\fi

% \begin{algorithm}[H]
%     \centering
%     \caption{Compute largest eigenvalue after $\pssn$ preconditioning}
%     \label{alg:ssn_eig}
%     \begin{algorithmic}
%         \Require{$\pssn$ object with attributes $\rho, b, \lambda_\mathcal{P}, X, L$}
%         \Function{$\pssn$.\texttt{largest\_eig}}{$\Model, \mathcal{B}, w$}
%         \State $A_{\textrm{sub}} \gets \Model.\texttt{get\_data}(\mathcal{B})$
%         \State $d_{\textrm{sub}} \gets \Model.\texttt{get\_hessian\_diagonal}(\mathcal{B}, w)$
%         \State $Z \gets A_{\textrm{sub}}^T \diag(d_{\textrm{sub}}) A_{\textrm{sub}}$ \Comment{Subsampled Hessian}
%         \State $(\lambda, \sim) \gets \texttt{eig}(Z ((\self.X)^T \self.X + (\self.\rho) I)^{-1}, k = 1)$ \Comment{Compute largest eigenval. w/o eigenvecs.}
%         \State \Return $\lambda$
%         \EndFunction
%     \end{algorithmic}
% \end{algorithm}

\subsubsection{\nys{} Subsampled Newton (\nyssn{})}
\label{subsubsection:nyssn}
\nyssn{} combines the \ssn{} preconditioner with randomized low-rank approximation, specifically the randomized Nystr{\"o}m approximation \citep{williams2000using,gittens2016revisiting,tropp2017fixed}.
This approach was previously developed by in \cite{frangella2023sketchysgd} to precondition stochastic gradient descent. 
Given a symmetric psd matrix $H \in \R^{p\times p}$, the randomized \nys{} approximation with respect to a random test matrix $\Omega \in \R^{p\times r}$ is given by
\begin{equation}
\label{eq:RandNysAppx}
    \hat H = (H\Omega)\left(\Omega^{T}H\Omega\right)^{\dagger}(H\Omega)^{T}.
\end{equation}
Common choices for $\Omega$ include standard normal random matrices, subsampled randomized Hadamard transforms, and sparse sign embeddings \citep{tropp2017fixed}. 
The benefit of the latter two test matrices is that computation of the sketch $H\Omega$ becomes cheaper. 

For a minibatch $\IdxB$ ($|\IdxB| = b_H$) and query point $w\in \R^p$, \nyssn{} takes $H = \frac{1}{b_H}A^{T}_\IdxB \Phi''(A_\IdxB w) A_\IdxB$ in \eqref{eq:RandNysAppx}
and produces a randomized low-rank approximation $\hat{H}$ of (the un-regularized portion of) the subsampled Hessian.

\ifpreprint
A practical algorithm (\cref{alg:nyssn_upd}) for constructing the randomized low-rank approximation outputs $\hat{H}$ in the factored form $U \hat{\Lambda} U^T$, where $U \in \R^{p \times r}$ is an orthogonal matrix containing approximate eigenvectors and $\hat{\Lambda} \in \R^{r \times r}$ is a diagonal matrix containing approximate eigenvalues.
\else
A practical algorithm for constructing the randomized low-rank approximation outputs $\hat{H}$ in the factored form $U \hat{\Lambda} U^T$, where $U \in \R^{p \times r}$ is an orthogonal matrix containing approximate eigenvectors and $\hat{\Lambda} \in \R^{r \times r}$ is a diagonal matrix containing approximate eigenvalues.
\fi
We emphasize that this algorithm never forms $\frac{1}{b_H}A^{T}_\IdxB \Phi''(A_\IdxB w) A_\IdxB$ explicitly.
The resulting preconditioner is
\begin{equation}
\label{eq:nyssn_precond}
    P = \hat{H} + \rho I = U \hat{\Lambda} U^T + \rho I.
\end{equation}

The dominant costs in the (practical) construction of $P$ are computing the sketch $H \Omega$ and a SVD of a $p \times r$ matrix.
Furthermore, we can compute $P^{-1}v$ via the Woodbury formula, which yields
\[
P^{-1}v = U\left(\hat{\Lambda}+\rho I\right)^{-1}U^T v + \frac{1}{\rho}(v - UU^T v).
\]
We summarize the costs of these operations in \cref{lemma:nyssn_cost}.
\begin{lemma}
    Let $v \in \R^p$ and let $P$ be as in \eqref{eq:nyssn_precond}. Then $P$ can be constructed in $\bigO(b_H r p + r^2 p)$ time and $P^{-1} v$ can be computed in $\bigO(rp)$ time. For GLMs, $b_H \geq r$, so the construction cost is reduced to $\bigO(b_H r p)$.
\label[lemma]{lemma:nyssn_cost}
\end{lemma}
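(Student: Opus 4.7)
\medskip

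\noindent\textbf{Proof proposal.} The plan is to verify the two stated costs by tracking the work of a standard sketch-and-solve implementation of the randomized Nystr\"om approximation, exploiting the fact that the un-regularized subsampled Hessian $H = \tfrac{1}{b_H} A_{\IdxB}^T \Phi''(A_{\IdxB} w) A_{\IdxB}$ has a known square-root factorization and is never formed explicitly. I will then specialize to the GLM case where $b_H \geq r$.

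\emph{Construction cost.} I would build $P = U\hat{\Lambda}U^T + \rho I$ using the following pipeline, and add the costs. First, form the sketch $Y = H\Omega \in \R^{p\times r}$ by computing $A_{\IdxB}\Omega$ in $\bigO(b_H p r)$ time, scaling the rows by the diagonal $\Phi''(A_{\IdxB}w)$ in $\bigO(b_H r)$ time, and multiplying by $A_{\IdxB}^T$ in $\bigO(b_H p r)$ time; the total is $\bigO(b_H p r)$. Second, form the small matrix $\Omega^T Y \in \R^{r\times r}$ in $\bigO(p r^2)$ time and take its Cholesky factor $C$ in $\bigO(r^3)$ time (possibly after a small shift for numerical stability, absorbed into $\hat\Lambda$ at the end). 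Third, compute $B = Y C^{-T}$ in $\bigO(p r^2)$ time via triangular solves, and take a thin SVD $B = U \Sigma V^T$ in $\bigO(p r^2)$ time; the approximate eigenvalues $\hat\Lambda = \Sigma^2$ and eigenvectors $U$ give the factored form of $\hat H$. Summing yields $\bigO(b_H p r + p r^2)$, matching the claim.

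\emph{Application cost.} For the inverse apply, the Woodbury-derived formula
\[
P^{-1}v \;=\; U(\hat\Lambda + \rho I)^{-1} U^T v \;+\; \tfrac{1}{\rho}\bigl(v - U U^T v\bigr)
\]
requires one matrix–vector product $U^T v$ in $\bigO(rp)$, a diagonal scale in $\bigO(r)$, one product by $U$ in $\bigO(rp)$, and two vector operations in $\bigO(p)$; the total is $\bigO(rp)$, as claimed.

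\emph{GLM specialization.} Finally, in the GLM setting each term of the subsampled Hessian is the rank-one matrix $\phi_i''(a_i^T w) a_i a_i^T$, so $\mathrm{rank}(H) \le b_H$. Since the Nystr\"om approximation targets a rank-$r$ surrogate of $H$, sketch sizes with $r > b_H$ are wasteful and we always take $r \le b_H$. This implies $p r^2 \le b_H p r$, so the second term in $\bigO(b_H p r + p r^2)$ is absorbed and the construction cost simplifies to $\bigO(b_H p r)$. The only subtlety worth checking carefully is the Woodbury derivation using $U^T U = I_r$, which together with the orthogonality of $U$ produced by the thin SVD justifies the closed-form inverse used above; this is the only step that is not purely bookkeeping.
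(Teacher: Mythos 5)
Your proposal is correct and follows essentially the same route as the paper: the paper's (implicit) justification is precisely that the dominant construction costs are the sketch $H\Omega$ at $\bigO(b_H r p)$ (never forming $H$, only its square root $\Phi''(A_\IdxB w)^{1/2}A_\IdxB$) plus the $p\times r$ thin SVD in the stabilized Nystr\"om procedure at $\bigO(r^2 p)$, with the apply cost coming from the same Woodbury identity $P^{-1}v = U(\hat\Lambda+\rho I)^{-1}U^Tv + \rho^{-1}(v-UU^Tv)$. Your accounting of the intermediate steps (Cholesky of the $r\times r$ core, triangular solves) and the observation that $r\le b_H$ for GLMs absorbs the $r^2p$ term match the paper's argument.
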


The main advantage of \nyssn{} over \ssn{} is in the setting where the data matrix $A$ is dense and has rapid spectral decay. 
When $A$ has rapid spectral decay, we can use a relatively small value of $r$ ($r \leq 10$) to construct the \nyssn{} preconditioner. 
With this small value of $r$, the $\bigO(rp)$ cost of applying the \nyssn{} preconditioner to a vector is usually cheaper than the $\bigO(b_Hp)$ cost of applying the \ssn{} preconditioner.
% Then applying the \nyssn{} preconditioner to a vector takes $\bigO(rp)$ time, which is typically much cheaper than applying the \ssn{} preconditioner, which takes $\bigO(b_Hp)$ time. 
On the other hand, when $A$ is row-sparse with sparsity parameter $s$, the cost of applying the \ssn{} preconditioner to a vector is reduced to $\bigO(b_Hs)$, which negates the speedups provided by the \nyssn{} preconditioner.

\ifpreprint
An implementation of the \nyssn{} preconditioner is provided in the $\pnyssn$ class (\cref{tab:nyssn_attr,alg:nyssn_upd,alg:nyssn_dir}). 
The attributes of the $\pnyssn$ class are given in \cref{tab:nyssn_attr}, and pseudocode for the \texttt{update} and \texttt{direction} methods is provided in \cref{alg:nyssn_upd,alg:nyssn_dir}, respectively.

\begin{table}[H]
    \centering
    \begin{tabular}{c c}
        Attribute & Description \\ \hline
        $r$ & Rank for constructing preconditioner \\ \hline
        $\rho$ & Regularization for preconditioner \\ \hline
        $d$ & Approximate eigenvalues in \nys{} approximation \\ \hline
        $U$ & Approximate eigenvectors in \nys{} approximation \\ \hline
        $\lambda_\mathcal{P}$ & Estimate of preconditioned smoothness constant
    \end{tabular}
    \caption{Attributes of the $\pnyssn$ class.}
    \label{tab:nyssn_attr}
\end{table}

The \texttt{update} method takes a GLM $\Model$, Hessian batches $\mathcal{B}_1, \mathcal{B}_2$, and vector $w \in \R^p$ as input. 

\begin{algorithm}[H]
    \centering
    \caption{Update $\pnyssn$ preconditioner and preconditioned smoothness constant}
    \label{alg:nyssn_upd}
    \begin{algorithmic}
        \Require{$\pnyssn$ object with attributes $r, \rho, d, U, \lambda_{\mathcal{P}}$}
        \Function{$\pnyssn$.\texttt{update}}{$\Model, \mathcal{B}_1, \mathcal{B}_2, w$}
        \State $r \gets \pnyssn.r, \rho \gets \pnyssn.\rho$ \Comment{Get attributes}
        \State
        \State \# Phase 1: Update preconditioner
        \State $A_{\textrm{sub}} \gets \Model.\texttt{get\_data}(\mathcal{B}_1)$
        \State $d_{\textrm{sub}} \gets \Model.\texttt{get\_hessian\_diagonal}(\mathcal{B}_1, w)$
        \State $X \gets \diag(\sqrt{d_{\textrm{sub}}}) A_{\textrm{sub}}$ \Comment{Square root of subsampled Hessian}
        % \State $p \gets \texttt{length}(w)$
        \State $\Omega \gets \texttt{randn}(p, r)$ \Comment{Generate random Gaussian embedding}
        \State $\Omega \gets \texttt{qr\_econ}(\Omega)$ \Comment{Improves numerical stability}
        \State $Y \gets X^T (X \Omega)$ \Comment{Sketch of subsampled Hessian}
        \State $\delta \gets \sqrt{p} \texttt{eps}(\texttt{norm}(Y))$ \Comment{Compute shift for stability}
        \State $Y_{\delta} \gets Y + \delta \Omega$ \Comment{Shifted sketch}
        \State $C \gets \texttt{cholesky}(\Omega^T Y_{\delta})$ \Comment{Lower triangular Cholesky factor}
        \State $S \gets Y_{\delta} C^{-T}$ \Comment{Triangular solve}
        \State $(U, \Sigma, \sim) \gets \texttt{svd}(S)$ \Comment{Thin SVD}
        \State $d \gets \max(\Sigma^2 - \delta, 0)$ \Comment{Remove shift, compute eigenvalues}

        \State
        \State \# Phase 2: Update estimated preconditioned smoothness constant
        \State $A_{\textrm{sub}} \gets \Model.\texttt{get\_data}(\mathcal{B}_2)$
        \State $d_{\textrm{sub}} \gets \Model.\texttt{get\_hessian\_diagonal}(\mathcal{B}_2, w)$
        \State $Z \gets A_{\textrm{sub}}^T \diag(d_{\textrm{sub}}) A_{\textrm{sub}} + \Model.\texttt{get\_reg}() I$ \Comment{Subsampled Hessian}
        % \State $(\sim, \sigma, \sim) \gets \texttt{svd}(X, k = 1)$ \Comment{Compute largest sing. value w/o sing. vectors}
        \State $\lambda_\mathcal{P} \gets \texttt{eig}(Z (U (\diag(d) + \rho I)^{-1} U^T + \frac{1}{\rho} (I - U U^T)), k = 1)$ \Comment{Compute largest eigenvalue}
        \State
        \State $\pnyssn.d \gets d, \pnyssn.U \gets U, \pnyssn.\lambda_\mathcal{P} \gets \lambda_\mathcal{P}$ \Comment{Set attributes}
        \EndFunction
    \end{algorithmic}
\end{algorithm}

In the first phase, this method constructs a randomized low-rank approximation $\hat{H}$ to the subsampled Hessian at $w$; updating this low-rank approximation is equivalent to updating the preconditioner since $P = \hat{H} + \rho I$.
% the \nyssn{} preconditioner $P$ at $w$.
The first phase starts by computing the square root of the subsampled Hessian, $X$. 
From this point forward, we follow the stabilized procedure in \cite{tropp2017fixed} to compute $U$ and $d$, which leads to a factored form of the randomized low-rank approximation, $\hat{H} = U \diag(d) U^T$.
In the second phase, this method estimates the preconditioned smoothness constant by taking a similar approach to the $\pssn$ class.

The \texttt{direction} method takes a vector $g \in \R^p$ (typically a stochastic gradient) as input. 

\begin{algorithm}[H]
    \centering
    \caption{Compute preconditioned $\pnyssn$ direction}
    \label{alg:nyssn_dir}
    \begin{algorithmic}
        \Require{$\pnyssn$ object with attributes $r, \rho, d, U, \lambda_{\mathcal{P}}$}
        \Function{$\pnyssn$.\texttt{direction}}{$g$}
        \State $U \gets \pnyssn.U, d \gets \pnyssn.d, \rho \gets \pnyssn.\rho$ \Comment{Get attributes}
        \State $v \gets U^T g$
        \State $v \gets (1 / (d + \rho) - 1 / \rho) v$ \Comment{Elementwise division, subtraction, and multiplication}
        \State \Return $g / \rho + U v$
        \EndFunction
    \end{algorithmic}
\end{algorithm}

The method then computes $P^{-1}g$ using the Woodbury formula with the preconditioner factors $d$ and $U$; this computation has complexity $\mathcal{O}(rp)$.
\else
Our code repository, linked in \cref{section:experiments}, implements the \nyssn{} preconditioner.
\fi
% \begin{algorithm}[H]
%     \centering
%     \caption{Compute largest eigenvalue after $\pnyssn$ preconditioning}
%     \label{alg:nyssn_eig}
%     \begin{algorithmic}
%         \Require{$\pnyssn$ object with attributes $s, \rho, d, U$}
%         \Function{$\pnyssn$.\texttt{largest\_eig}}{$\Model, \mathcal{B}, w$}
%         \State $A_{\textrm{sub}} \gets \Model.\texttt{get\_data}(\mathcal{B})$
%         \State $d_{\textrm{sub}} \gets \Model.\texttt{get\_hessian\_diagonal}(\mathcal{B}, w)$
%         \State $X \gets \diag(\sqrt{d_{\textrm{sub}}}) A_{\textrm{sub}}$ \Comment{Square root of subsampled Hessian}
%         \State $X \gets X ((\self.U) (\diag(\self.d) + (\self.\rho) I)^{-1/2} (\self.U)^T) + (\self.\rho)^{-1/2} (I - (\self.U)(\self.U)^T)$
%         \State $(\sim, \sigma, \sim) \gets \texttt{svd}(X, k = 1)$ \Comment{Compute largest sing. value w/o sing. vectors}
%         \State \Return $\sigma^2$
%         \EndFunction
%     \end{algorithmic}
% \end{algorithm}

\subsubsection{Sketch-and-Solve Subsampled Newton (\sassn{})}
\label{subsubsection:sassn}
The next preconditioning technique we discuss is \sassn{}. 
Similar to \nyssn{}, the fundamental goal of \sassn{} is to reduce the cost of the \ssn{} preconditioner by replacing it with a randomized low-rank approximation.
However, instead of using the randomized Nystr{\"o}m approximation, \sassn{} computes an approximation in the style of the Newton Sketch \citep{pilanci2017newton,lacotte2021adaptive}.
To start, observe that the subsampled Hessian $\widehat \nabla^2 F(w)$ has the form
\[
\widehat \nabla^2 f(w)+\nu I = R^T R+\nu I,
\]
where $R = \Phi''(A_{\IdxB}w)^{1/2}A_{\IdxB}$.
Hence, given a test matrix $\Omega \in \R^{r\times b_H}$, we construct the preconditioner
\begin{equation}
\label{eq:sassn_preconditioner}
    P = R^{T}\Omega^{T}\Omega R+\rho I.
\end{equation}

The dominant costs in the construction of $P$ are computing the sketch $\Omega R$ and a Cholesky factorization of $\Omega R (\Omega R)^{T}+\rho I$. 
Taking $\Omega$ to be a column-sparse or row-sparse (LESS-uniform) embedding \citep{derezinski2021newtonless}, $\Omega R$ can be computed in $\bigO(b_Hp)$ time.
% \mnote{Why is LESS particularly useful for SASSN?}
A preconditioner generated by a column-sparse embedding is referred to as SASSN-C, while a preconditioner generated by a row-sparse embedding is referred to as SASSN-R. 
% \mnote{say which is better}
SASSN-R tends to be better than SASSN-C because it is cheaper to apply to vectors when the data matrix $A$ is row-sparse.
Similar to the \nyssn{} preconditioner, we can compute $P^{-1}v$ via the Woodbury Formula, which yields
\[
P^{-1}v= \frac{1}{\rho}\left(v-(\Omega R)^{T}\left(\Omega R (\Omega R)^{T}+\rho I\right)^{-1}(\Omega R)v\right).
\] 
We summarize the costs of these operations in \cref{lemma:sassn_cost}.
\begin{lemma}
Let $v \in \R^p$ and let $P$ be as in \eqref{eq:sassn_preconditioner}. Then $P$ can be constructed in $\bigO(b_Hp + r^2p + r^3)$ time and $P^{-1}$ may be applied to vectors in $\bigO(rp)$ time. Furthermore, if the data matrix $A$ is row-sparse with sparsity parameter $s$, the computational cost of $P^{-1}v$ for SASSN-R can be reduced to $\bigO(rs)$ time.
\label[lemma]{lemma:sassn_cost}
\end{lemma}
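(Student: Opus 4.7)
The proof is essentially a bookkeeping exercise tracking the computational cost of each matrix operation when one is careful to exploit the factored and sparse structure. I would organize it in three stages matching the three claims in the lemma.

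First, I would handle the construction cost. Writing $R = \Phi''(A_\IdxB w)^{1/2} A_\IdxB \in \R^{b_H \times p}$, the dominant subtask is forming $M := \Omega R \in \R^{r \times p}$. For a column-sparse embedding (SASSN-C) or a row-sparse LESS-uniform embedding (SASSN-R), $\Omega$ has $\bigO(b_H)$ nonzeros total, so computing $\Omega R$ takes $\bigO(b_H p)$ time (each nonzero of $\Omega$ triggers an $\bigO(p)$ row update). Rather than explicitly forming the $p \times p$ matrix $P$, I would only form the $r \times r$ matrix $MM^T + \rho I$, which costs $\bigO(r^2 p)$ via standard dense multiplication, and then Cholesky-factor it in $\bigO(r^3)$. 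Summing these three contributions yields the advertised $\bigO(b_H p + r^2 p + r^3)$.

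Second, for $P^{-1} v$, I would invoke the Woodbury identity
\begin{equation*}
    P^{-1} v = \frac{1}{\rho}\Bigl(v - M^T (MM^T + \rho I)^{-1} M v\Bigr),
\end{equation*}
and evaluate it right-to-left using the stored factors: one dense multiply $Mv$ in $\bigO(rp)$, two triangular solves with the stored Cholesky factor in $\bigO(r^2)$, and one dense multiply $M^T(\cdot)$ in $\bigO(rp)$, plus $\bigO(p)$ for the final axpy. Under the reasonable regime $r \leq p$, this totals $\bigO(rp)$.

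Third, for the row-sparse refinement, I would note that row sparsity of $A$ with parameter $s$ means each row of $A_\IdxB$ has at most $s$ nonzeros, so the same holds for $R$ (since left-multiplication by a diagonal matrix preserves sparsity pattern). For SASSN-R, each row of $\Omega$ has $\bigO(1)$ nonzeros by the LESS-uniform construction, so each row of $M = \Omega R$ is a combination of $\bigO(1)$ rows of $R$ and thus has $\bigO(s)$ nonzeros, giving $\bigO(rs)$ total nonzeros in $M$. Both $Mv$ and $M^T(\cdot)$ therefore cost $\bigO(rs)$, the triangular solves cost $\bigO(r^2)$, and in the typical regime $r \lesssim s$ the overall cost collapses to $\bigO(rs)$.

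The only genuinely nontrivial step is the sparsity-preservation argument for SASSN-R; the rest is direct from standard matrix-multiplication and Woodbury accounting. I would cite \cite{derezinski2021newtonless} for the $\bigO(1)$-per-row nonzero count of LESS-uniform embeddings and otherwise leave the calculation as presented.
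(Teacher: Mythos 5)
Your proposal is correct and follows exactly the route the paper takes: form the sketch $\Omega R$ exploiting the $\bigO(1)$ nonzeros per column (or row) of the embedding, Cholesky-factor the $r\times r$ matrix $\Omega R(\Omega R)^T+\rho I$, and apply $P^{-1}$ through the Woodbury identity, with the sparse refinement for \sassnr{} coming from the fact that each row of $\Omega R$ combines $\bigO(1)$ rows of $R$ and therefore inherits the $\bigO(s)$ row sparsity of $A$. The only cosmetic caveat, shared with the paper's own accounting for \ssn{}, is that the final $(v-\cdot)/\rho$ step formally touches all $p$ entries; this is conventionally absorbed, so there is nothing to fix.
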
 

Similar to \nyssn{}, the costs of constructing and applying the \sassn{} preconditioner are lower than the costs incurred by \ssn{}.
A potential advantage of \sassn{} over \nyssn{} is that \sassn{} requires $\bigO(b_Hp + r^2p + r^3)$ time to construct the preconditioner, whereas \nyssn{} requires $\bigO(b_Hrp)$. 
Furthermore, the \sassnr{} preconditioner takes $\bigO(rs)$ time to apply when the data matrix $A$ is row-sparse, whereas \nyssn{} takes $\bigO(rp)$.

However, our experiments (\cref{section:experiments}) suggest that the \sassn{} preconditioner tends to be of lower quality than the \nyssn{} preconditioner (i.e., it does not reduce the condition number as much), and the theoretical complexity advantage of \sassn{} is not always realized as the computations in \nyssn{} benefit from (embarassing) parallelism.
Concrete comparisons and recommendations between \ssn{}, \nyssn{}, \sassnc{}, and \sassnr{} are given in \cref{table:precond_comp,table:precond_when_to_use} below. 
\ifpreprint
An implementation of the \sassnc{}/\sassnr{} preconditioners is provided in \cref{subsection:methodology_sassn_appdx}.
\else
An implementation of the \sassnc{}/\sassnr{} preconditioners can be found in our code repository, which is linked in \cref{section:experiments}.
\fi

\ifpreprint
\subsubsection{Diagonal Subsampled Newton (\diagssn{})}
\label{subsubsection:diagssn}
The final preconditioning technique we discuss is \diagssn{}. 
\diagssn{} does exactly what its name suggests: it computes the diagonal of the subsampled Hessian (excluding regularization) $\widehat \nabla^2 f(w) = \frac{1}{b_H}A^{T}_\IdxB \Phi''(A_\IdxB w) A_\IdxB$, and constructs the preconditioner 
\[P = \diag(\widehat \nabla^2 f(w) + \rho I).\]
The resulting preconditioner can be applied to vectors in $\mathcal{O}(p)$ time.
\diagssn{} is a stochastic generalization of Jacobi preconditioning, a classical preconditioning strategy in optimization \citep{nocedal1999numerical,jambulapati2020fast,qu2022optimal}. 
It serves as a baseline in this work 
to illustrate that low-rank approximation is much more powerful than diagonal approximation
for the subsampled Hessian:
\diagssn{} can be computed quickly but its performance is poor in practice.
The experiments in \cref{section:experiments} show that the performance of our PROMISE methods  using \diagssn{} is worse than with any of the other preconditioners.
An implementation of the \diagssn{} preconditioner is provided in \cref{subsection:methodology_diagssn_appdx}.
\else
\fi

\subsection{Preconditioner defaults and comparisons}
\label{subsection:precond_comp}
\ifpreprint
We provide default values for rank $r$ and regularization $\rho$ for each preconditioner in \cref{table:precond_hyperparams}; we use these defaults for $r$ and $\rho$ in the experiments in \cref{section:experiments}.
We also summarize the costs to construct and apply each preconditioner in \cref{table:precond_comp}.
These costs assume that the Hessian batchsize $b_H = \lfloor \sqrt{n} \rfloor$ in the PROMISE algorithms; we provide motivation for this selection in \cref{subsec:precond_qual} and use this selection in our experiments in \cref{section:experiments}.
We also provide guidelines for which preconditioner to use as the problem size varies in \cref{table:precond_when_to_use}. 
We do not recommend using \diagssn{} or \sassnc{} in practice and so we omit them from \cref{table:precond_when_to_use}.

\begin{table}[H]
\centering
    \begin{tabular}{C{4.5cm}C{1.5cm}C{1.5cm}C{1.5cm}}
        Preconditioner & $r$ & $\rho$ \\ \hline
        \ssn{} \newline (\cref{alg:ssn_upd,alg:ssn_dir}) & N/A & $10^{-3}$ \\ \hline
        \nyssn{} \newline (\cref{alg:nyssn_upd,alg:nyssn_dir}) & $10$ & $10^{-3}$ \\ \hline
        \sassnc{} \newline (\cref{alg:sassnc_gen,alg:sassn_upd,alg:sassn_dir}) & $10$ & $10^{-3}$ \\ \hline
        \sassnr{} \newline (\cref{alg:sassnr_gen,alg:sassn_upd,alg:sassn_dir}) & $10$ & $10^{-3}$ \\ \hline
        \diagssn{} \newline (\cref{alg:diagssn_upd,alg:diagssn_dir}) & N/A & $10^{-3}$ \\
    \end{tabular}
    \caption{Default values of $r$ and $\rho$ for each preconditioner. The default for $\rho$ assumes that $\nu \leq 10^{-3}$, which is common in practice and is always the case in our experiments; if $\nu > 10^{-3}$, this default for $\rho$ should be increased.
    }
    \label{table:precond_hyperparams}
\end{table}
\else
All of the proposed preconditioners require a regularization $\rho$, and the \nyssn{}, \sassnr{}, and \sassnc{} preconditioners require a rank $r$ for forming the low-rank approximation. 
We recommend setting $\rho = 10^{-3}$ and $r = 10$.
We also summarize the costs to construct and apply each preconditioner in \cref{table:precond_comp}.
These costs assume the Hessian batchsize $b_H = \lfloor \sqrt{n} \rfloor$ in the PROMISE algorithms; we provide motivation for this selection in \cref{subsec:precond_qual}. 
We also provide guidelines for which preconditioner to use as the problem size varies in \cref{table:precond_when_to_use}; we do not recommend \sassnc{} in practice.
\fi

\ifpreprint
\begin{table}[H]
\scriptsize
\centering
    \begin{tabular}{C{4.5cm}C{4.0cm}C{1.6cm}C{2.2cm}}
        Preconditioner & Construction cost  & Cost to apply & Cost to apply (sparse) \\ \hline
        \ssn{} \newline (\cref{alg:ssn_upd,alg:ssn_dir}) & $\mathcal{O}(np+n^{3/2})$ & $\mathcal{O}(\sqrt{n}p)$ & $\mathcal{O}(\sqrt{n}s)$ \\ \hline
        \nyssn{} \newline (\cref{alg:nyssn_upd,alg:nyssn_dir}) & $\mathcal{O}(\sqrt{n}rp)$ & $\mathcal{O}(rp)$ & $\mathcal{O}(rp)$ \\ \hline
        \sassnc{} \newline (\cref{alg:sassnc_gen,alg:sassn_upd,alg:sassn_dir}) & $\mathcal{O}(\sqrt{n}p)$ & $\mathcal O(rp)$ & $\mathcal O(rp)$ \\ \hline
        \sassnr{} \newline (\cref{alg:sassnr_gen,alg:sassn_upd,alg:sassn_dir}) & $\mathcal{O}(\sqrt{n}p)$ & $\mathcal O(rp)$ & $\mathcal O(rs)$ \\ \hline
        \diagssn{} \newline (\cref{alg:diagssn_upd,alg:diagssn_dir}) & $\mathcal{O}(\sqrt{n}p)$ & $\mathcal{O}(p)$ & $\mathcal{O}(p)$
    \end{tabular}
    \caption{Summary of costs of proposed preconditioners. $s$ denotes the row sparsity of the data matrix $A$.}
    \label{table:precond_comp}
\end{table}
\else
\begin{table}[H]
\scriptsize
\centering
    \begin{tabular}{C{4.5cm}C{4.0cm}C{1.6cm}C{2.2cm}}
        Preconditioner & Construction cost  & Cost to apply & Cost to apply (sparse) \\ \hline
        \ssn{} & $\mathcal{O}(np+n^{3/2})$ & $\mathcal{O}(\sqrt{n}p)$ & $\mathcal{O}(\sqrt{n}s)$ \\ \hline
        \nyssn{} & $\mathcal{O}(\sqrt{n}rp)$ & $\mathcal{O}(rp)$ & $\mathcal{O}(rp)$ \\ \hline
        \sassnc{} & $\mathcal{O}(\sqrt{n}p)$ & $\mathcal O(rp)$ & $\mathcal O(rp)$ \\ \hline
        \sassnr{} & $\mathcal{O}(\sqrt{n}p)$ & $\mathcal O(rp)$ & $\mathcal O(rs)$ \\ \hline
        \ifpreprint
        \diagssn{} & $\mathcal{O}(\sqrt{n}p)$ & $\mathcal{O}(p)$ & $\mathcal{O}(p)$
        \else
        \fi
    \end{tabular}
    \caption{Summary of costs of proposed preconditioners. $s$ denotes the row sparsity of the data matrix $A$.}
    \label{table:precond_comp}
\end{table}
\fi

\begin{table}[H]
\scriptsize
\centering
    \begin{tabular}{C{3cm}C{1.5cm}C{1.5cm}C{2cm}}
        Regime & \ssn{} & \nyssn{} & SASSN-R \\ \hline
        $n\gg p$ (dense) & 2 & $\mathbf{1}$ & 3\\ \hline
        $n\gg p$ (sparse) & $\mathbf{1}$ & 2 & 3 \\ \hline
        $n\sim p$ (dense) & 3  & $\mathbf{1}$ & 2 \\ \hline
        $n\sim p$ (sparse) & $\mathbf{1}$ & 3 & 2 \\ \hline
        $n\ll p$ (sparse) & $\mathbf{1}$  & $3$ & 2 \\ 
    \end{tabular}
    \caption{Guidelines for selecting a preconditioner. 
    The best preconditioner for each regime is assigned a rank of 1. \nyssn{} is effective for dense problems, but \ssn{} generally works better for sparse problems because it preserves the sparsity of the data.
    }
    \label{table:precond_when_to_use}
\end{table}

\subsection{Quality of the preconditioners}
\label{subsec:precond_qual}
We now analyze the quality of the \ssn{}, \nyssn{}, and \sassn{} preconditioners that were introduced in \cref{subsection:precond}.
Our goal is to show that these preconditioners satisfy the following \textit{$\zeta$-spectral approximation property} with high probability.
\begin{definition}[$\zeta$-spectral approximation]
\label[definition]{def:good_precond}
    Let $w\in \R^p$, $\zeta \in (0,1)$. Then we say $P$ is a \emph{$\zeta$-spectral approximation} of $\nabla^2 F(w)$ if the following relation holds:
    \begin{equation}
       (1-\zeta)P\preceq  \nabla^2F(w) \preceq (1+\zeta)P. 
    \end{equation}
\end{definition}
If $P$ satisfies \cref{def:good_precond}, then 
\[
\kappa_2(P^{-1/2}\nabla^2 F(w)P^{-1/2})\leq \frac{1+\zeta}{1-\zeta}.
\]
Hence preconditioning $\nabla^2F(w)$ by $P$ results in a good (small) condition number for moderate $\zeta$ (e.g., $\zeta \leq .9$).
Moreover, as $P^{-1/2}\nabla^2F(w)P^{-1/2}$ is nearly the identity, $P^{-1}$ is close to $\nabla^2F(w)^{-1}$, which ensures the approximate Newton direction computed with $P^{-1}$ is close to the true Newton direction.
As a consequence of this last observation, essentially all works on approximate Newton methods require the Hessian approximation to satisfy the conditions of \cref{def:good_precond} \citep{pilanci2017newton,roosta2019sub,marteau2019globally,ye2021Appx}.  

 \subsubsection{Preliminaries on sampling}
To establish the $\zeta$-approximation property for the preconditioners, we require some fundamental concepts from matrix approximation via random sampling, which we now review. 
We start with the definition of ridge leverage scores \citep{cohen2017input,li2020subsampled}.
\begin{definition}[Ridge leverage scores]
\label[definition]{def:RLS}
Let $\nu\geq 0$ and $i\in [n]$. Then the $ith$ \emph{ridge leverage score} of a matrix $A\in 
\R^{n\times p}$ is given by
\[l_i^{\nu}(A) \coloneqq a_i^{T}\left(A^TA+n\nu I\right)^{\dagger}a_i.\]
where $a^{T}_i$ is the $i{th}$ row of $A$ and the \emph{maximum} ridge leverage score is
$l^{\nu}_\infty(A) = \max_{1\leq i\leq n}l_{i}^{\nu}(A).$

\end{definition}
The $ith$ ridge leverage score measures the importance of row $i$ in the matrix $A$. 
These scores play a crucial role in determining how well the matrix $\frac{1}{n}A^{T}A+\nu I$ may be approximated via uniform sampling.
To understand this relation, we recall the notions of effective dimension and ridge leverage incoherence. 

\begin{definition}[Effective dimension and ridge leverage coherence]
\label[definition]{def:EffDimRLSCoh}
    Given $A\in \mathbb{R}^{n\times p}$ and $\nu\geq 0$, the \emph{effective dimension} of $A$ is given by
    \begin{equation}
        d^\nu_{\textup{eff}}(A) \coloneqq \sum_{i=1}^{n}l_i^{\nu}(A) = \sum^{p}_{j=1}\frac{\sigma_j^2(A)}{\sigma^2_j(A)+n\nu} = \sum^{p}_{j=1}\frac{\lambda_j(A^TA)}{\lambda_j(A^TA)+n\nu}.
    \end{equation}
    If $H\in \mathbb{S}^{+}_{p}(\R)$ with $H = A^{T}A$, then we overload notation and define $\deff(H) \coloneqq \deff(A)$. \\
    The \emph{ridge leverage coherence} is given by
    \begin{equation}
        \chi^\nu(A) \coloneqq \frac{n}{d^\nu_{\textup{eff}}(A)}l_\infty^{\nu}(A).
    \end{equation}
    Similarly, if $H\in \mathbb{S}^{+}_{p}(\R)$ with $H = A^{T}A$, we overload notation and define $\chi^\nu(H) \coloneqq \chi^\nu(A)$.
\end{definition}

\paragraph{Effective dimension: discussion.} The effective dimension $d^\nu_{\textup{eff}}(A)$ has an intuitive interpretation: it provides a smoothed count of the eigenvalues greater than or equal to the regularization $\nu$.
In the regularized setting, only directions associated with eigenvalues larger than $\nu$ matter, so $d^\nu_{\textup{eff}}(A)$ rather than $p$ is the relevant measure of degrees of freedom for the problem. 
Consequently, the effective dimension often appears in fields that deal with $l^2$-regularized problems, including non-parametric learning, RandNLA, and statistical learning \citep{caponnetto2007optimal,hsu2014random,marteau2019beyond}.
As many data matrices have fast spectral decay \citep{derezinski2020precise}, or obtain it through some algorithmic transformation, such
as the celebrated random features method of \cite{rahimi2007random},
$d^{\nu}_{\textrm{eff}}(A)$ is often much smaller than $\min\{n,p\}$.

When the loss function $f$ belongs to the GLM family and $A$ has polynomial spectral decay, the following lemma demonstrates that the effective dimension of the Hessian, $A^T\Phi''(A w)A$, is much smaller than the ambient dimension $p$.
Various results of this form are well-known in the literature, see for instance \cite{caponnetto2007optimal,bach2013sharp,marteau2019globally}.
 
\begin{lemma}[Effective dimension under polynomial decay] 
\label[lemma]{lemma:EffDimBnd}
Let $f$ be a GLM loss satisfying $\sup_{w\in \R^p}\phi''(w)\leq B$, with data matrix $A \in \mathbb{R}^{n\times p}$ and regularization $\nu$.
Suppose the matrix $A$ has polynomial (or faster) spectral decay:
\[\sigma_j(A) = \bigO(j^{-\beta}) \quad (1 \leq j \leq p),\]
for some $\beta \in \mathbb Z_+$ satisfying $\beta \geq 1$.
    Then for any $w\in \R^{p}$,
    \[
    d_\textup{eff}^\nu(A^T\Phi''(A w)A)\leq \frac{\pi/(2\beta)}{\sin(\pi/(2\beta))}\left(\frac{C}{n\nu}\right)^{1/{2\beta}}.\]
    Hence, if $\nu = \bigO (\frac{1}{n})$ we have
    \[
     d_\textup{eff}^\nu(A^T\Phi''(A w)A) = \bigO \left(\sqrt{n}\right),
    \]
\end{lemma}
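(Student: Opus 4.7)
The plan is to bound $\deff^\nu(H)$, where $H = A^T\Phi''(Aw)A$, by first transferring the polynomial spectral decay of $A$ to the eigenvalues of $H$, and then collapsing the resulting sum to the classical integral $\int_0^\infty du/(1+u^{2\beta})$.

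First I would invoke the GLM structure: since $0\le \phi''\le B$ pointwise, the diagonal matrix $\Phi''(Aw)$ satisfies $\Phi''(Aw)\preceq BI$, hence $H\preceq B\,A^TA$ in the Loewner order. Weyl's monotonicity inequality for eigenvalues under $\preceq$ then gives $\lambda_j(H)\le B\,\sigma_j^2(A)$ for every $j\in[p]$. Combined with the hypothesis $\sigma_j(A)\le c_0 j^{-\beta}$, this yields $\lambda_j(H)\le C\,j^{-2\beta}$ with $C\coloneqq B c_0^2$, which is the constant that will appear in the final bound.

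Next, I would rewrite the effective dimension using the eigenvalue representation from \cref{def:EffDimRLSCoh}, namely $\deff^\nu(H)=\sum_{j=1}^p \lambda_j(H)/(\lambda_j(H)+n\nu)$. The map $t\mapsto t/(t+n\nu)$ is increasing on $[0,\infty)$, so the eigenvalue bound transfers termwise:
\[
\deff^\nu(H)\;\le\;\sum_{j=1}^p \frac{Cj^{-2\beta}}{Cj^{-2\beta}+n\nu}\;=\;\sum_{j=1}^p \frac{1}{1+(n\nu/C)j^{2\beta}}.
\]
Extending the finite sum to an infinite sum of nonnegative terms and comparing with the integral of the decreasing function $x\mapsto 1/(1+(n\nu/C)x^{2\beta})$ through the usual $\sum_{j\ge 1} f(j)\le \int_0^\infty f(x)\,dx$ for monotone decreasing $f$, I obtain
\[
\deff^\nu(H)\;\le\;\int_0^\infty \frac{dx}{1+(n\nu/C)x^{2\beta}}.
\]

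The final step is the substitution $u=(n\nu/C)^{1/(2\beta)}x$, which pulls out a factor $(C/(n\nu))^{1/(2\beta)}$ and reduces the remaining integral to the classical one $\int_0^\infty du/(1+u^{2\beta})=(\pi/(2\beta))/\sin(\pi/(2\beta))$, valid for $\beta\ge 1$ and provable by contour integration or via the Beta-function identity $B(s,1-s)=\pi/\sin(\pi s)$ with $s=1/(2\beta)$. Combining yields exactly the claimed bound, and the asymptotic corollary follows by substituting the assumed scaling of $\nu$ into the explicit expression.

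There is no serious obstacle here: the argument is a standard effective-dimension calculation. The only subtle point is justifying the pointwise operator inequality $H\preceq B A^TA$ (which uses that $\Phi''(Aw)$ is diagonal and nonnegative, from the GLM assumption $0\le\phi''\le B$); after that, monotonicity of eigenvalues and the sum–integral comparison do all the work, with the closed-form integral cited rather than rederived.
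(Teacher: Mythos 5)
Your proposal is correct and follows essentially the same route as the paper's proof: bound $\lambda_j(H)\leq B\sigma_j^2(A)=\bigO(j^{-2\beta})$ via the Loewner order, push this through the monotone map $t\mapsto t/(t+n\nu)$, compare the sum with $\int_0^\infty dx/(1+(n\nu/C)x^{2\beta})$, and evaluate via the substitution and the classical identity $\int_0^\infty du/(1+u^{2\beta})=\frac{\pi/(2\beta)}{\sin(\pi/(2\beta))}$. The paper's own argument is exactly this calculation, so there is nothing to add.
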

%As we cannot point to an identical result in the literature, a proof of \cref{lemma:EffDimBnd} is given in \cref{subsec:EffDimPf}.
Given mild hypotheses, \cref{lemma:EffDimBnd} shows that the effective dimension of the Hessian for GLMs is no larger than $\bigO(\sqrt{n})$.
Thus, we generally expect the effective dimension of the Hessian, $d_\textup{eff}^\nu(A^T\Phi''(A w)A)$, to be significantly smaller than the ambient dimension of the problem, $p$. 
The ``smallness'' of the effective dimension has been exploited in numerous works to develop fast algorithms for solving a variety of machine learning problems \citep{bach2013sharp,alaoui2015fast,rudi2017falkon,marteau2019globally,lacotte2021adaptive,zhao2022nysadmm,frangella2023randomized}.
Similar to these prior works, we will also exploit the small effective dimension of the Hessian to develop effective preconditioners that can be constructed at negligible cost.  

\ifpreprint
\paragraph{Ridge leverage coherence: discussion.} 
When $\nu = 0$, the ridge leverage coherence $\chi^\nu(A)$ is equivalent to the coherence parameter from compressed sensing and matrix completion \citep{candes2007sparsity,candes2012exact}, and its formulation in \cref{def:EffDimRLSCoh} is the proper generalization to the regularized setting.  
The ridge leverage coherence measures the uniformity of ridge leverage scores, which determines how challenging it is to approximate $(1/n)A^{T}A+\nu I$ through row sampling from $A$.
When $\chi^{\nu}(A) = \bigO(1)$, the ridge leverage scores are uniform, with each row contributing relatively equally to $d^\nu_{\textrm{eff}}(A)$. 
As a result, $A$ has no distinguished rows carrying more weight than the rest, so uniform sampling will perform well. 

A classic family of matrices that have small ridge-leverage coherence are matrices whose left singular vectors are uniformly distributed on the Stiefel manifold $\mathcal S_p(\R^n)$ (the set of all $p$-dimensional orthonormal frames in $\R^n$ \citep{milman2009asymptotic}).
Concretely, let $A\in \R^{n\times p}$ with SVD $A = U\Sigma V^{T}$, $U\in \R^{n\times p}$, $\Sigma, V\in \R^{p\times p}$, and suppose $U$ is uniformly distributed on $\mathcal S_p(\R^n)$. Then with high probability, $\chi^{\nu}(A) = \Otil(1)$.
When $\nu = 0$, in which case the ridge-leverage coherence is simply the coherence, this result was established \cite{candes2012exact}. 
We now give a sketch of the argument for $\nu>0$.
To this end, first observe that a routine calculation reveals
\[l_i^{\nu}(A) = u_i^{T}(\Sigma^2(\Sigma^2+n\nu I)^{-1})u_i,\]
where $u_i$ is the ith row of $U$.
From this relation we find
\begin{align*}
    \E[l^{\nu}_i(A)] &= \E[u_i^{T}(\Sigma^2(\Sigma^2+n\nu I)^{-1})u_i] = \frac{1}{n}\textrm{trace}(\Sigma^2(\Sigma^2+n\nu I)^{-1}) = \frac{1}{n}\deff(A).
\end{align*}
To control the deviation of $l^{\nu}_i(A)$ from its expectation, we may apply the Hanson-Wright inequality \citep{rudelson2013hanson} to find $|l^{\nu}_i(A)-\deff(A)/n| = \Otil(\deff(A)/n)$ with high probability.
Applying a union bound, we conclude $l^{\nu}_\infty(A) = \Otil(\deff(A)/n)$, which immediately implies $\chi^{\nu}(A) = \Otil(1)$.
Hence matrices whose left-singular vectors are uniformly distributed on the Stiefel manifold are ridge-leverage incoherent with high probability.

% and so $\E[l^{\nu}_i(A)] = \frac{1}{n}\trace\left(\right)$

In contrast, when $\chi^{\nu}(A) = \bigOmega(n)$, a small number of rows significantly contribute to $d^\nu_{\textrm{eff}}(A)$, leading to highly non-uniform ridge leverage scores.
In this setting, uniform sampling is unlikely to yield satisfactory results, as it may fail to sample the important rows.
As an example, consider the family of matrices $A\in \R^{n\times p}$ whose rows satisfy 
\[a_1 = nu, \quad a_j = u, \quad \text{for all}~2\leq j\leq n, \quad \text{and}~\|u\|_2 = 1.\]
Note for any $n$, $A$ is rank-1 matrix with $\lambda_{1}(A^TA) = n^2+n-1$, and that the first row is a heavy-hitter.
Suppose $\nu = \frac{1}{n}$, then direct calculation shows
\[l^{\nu}_{\infty}(A) = \frac{n^2}{n^2+n},\quad \deff(A) = \frac{n^2+n-1}{n^2+n}.\]
Hence, by the definition of ridge leverage coherence,
\[
\chi^{\nu}(A) = \frac{n}{1+n^{-1}+n^{-2}}.
\]
Thus, even though the matrix is rank-$1$, uniform sampling must effectively sample every row of $A$ to obtain a good approximation. 
This example shows only one outlier row is needed to make uniform sampling ineffective. 

The preceding observations are made rigorous in the following lemma, which  is well-known in the literature \citep{alaoui2015fast,cohen2017input,li2020subsampled}.
\begin{lemma}[Uniform sampling: spectral approximation]
\label[lemma]{lemma:ridgesamplingappx}
    Let $A\in \mathbb{R}^{n\times p}$. Suppose we sample the rows of $A$ according to the set $\mathcal{J}$, where $\mathcal{J} \subseteq [n]$ is drawn uniformly at random and $|\mathcal J| =  \Upomega\left(\frac{\chi^\nu(A) d^\nu_{\textup{eff}}(A)\log\left(\frac{d^\nu_{\textup{eff}}(A)}{\delta}\right)}{\zeta^2}\right)$.
    % Suppose we sample $\mathcal{J}\subset\{1,\cdots,n\}$ with $|\mathcal J| =  \Upomega\left(\frac{\chi^\nu(A) d^\nu_{\textup{eff}}(A)\log\left(\frac{d^\nu_{\textup{eff}}(A)}{\delta}\right)}{\zeta^2}\right)$ rows of $A$ uniformly at random.
    Then with probability at least $1-\delta$
    \begin{equation}
        (1-\zeta)\left(\frac{1}{|\mathcal J|}A^{T}_\mathcal{J}A_\mathcal{J}+\nu I\right){\preceq} \left(\frac{1}{n}A^TA+\nu I\right) \preceq (1+\zeta)\left(\frac{1}{|\mathcal J|}A_\mathcal{J}^TA_\mathcal{J}+\nu I\right).
    \end{equation}
\end{lemma}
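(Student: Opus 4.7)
The plan is to reduce the claim to a matrix Bernstein argument and extract the effective dimension via the intrinsic-dimension refinement. Set $M := \frac{1}{n}A^{T}A + \nu I$ and $\tilde M_{\mathcal J} := \frac{1}{|\mathcal J|}A_{\mathcal J}^{T}A_{\mathcal J} + \nu I$. The two-sided Loewner inequality in the statement is equivalent (up to rescaling $\zeta$ by absolute constants) to the whitened statement $\|M^{-1/2}\tilde M_{\mathcal J}M^{-1/2} - I\| \le \zeta$. Since $\E[\tfrac{1}{|\mathcal J|}A_{\mathcal J}^{T}A_{\mathcal J}] = \tfrac{1}{n}A^{T}A$ under uniform sampling, the whitened error decomposes as a sum of mean-zero self-adjoint summands $X_j$ indexed by the $|\mathcal J|$ draws, where $X_j = \frac{1}{|\mathcal J|}\bigl(M^{-1/2}a_{i}a_{i}^{T}M^{-1/2} - (I - \nu M^{-1})\bigr)$ whenever row $i$ is selected.

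Next I would bound the almost-sure norm and the matrix variance of each summand in terms of ridge leverage quantities. The operator-norm bound follows from $\|M^{-1/2}a_{i}a_{i}^{T}M^{-1/2}\| = a_{i}^{T}M^{-1}a_{i} = n\,l_{i}^{\nu}(A)$, giving $\|X_j\| \le O\bigl(n l_{\infty}^{\nu}(A)/|\mathcal J|\bigr)$. For the variance, a direct computation gives $\sum_{j}\E[X_{j}^{2}] \preceq \tfrac{l_{\infty}^{\nu}(A)}{|\mathcal J|}(I - \nu M^{-1})$ after absorbing one factor of $a_{i}^{T}M^{-1}a_{i}$ into $n l_{\infty}^{\nu}(A)$. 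The crucial structural identity is $\mathrm{trace}(I - \nu M^{-1}) = \sum_{j}\lambda_{j}(A^{T}A)/(\lambda_{j}(A^{T}A) + n\nu) = \deff(A)$, so the intrinsic dimension of the variance proxy is at most $\deff(A)$, not $p$.

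Applying the intrinsic-dimension matrix Bernstein inequality (Tropp, 2015) with the above operator-norm and variance bounds yields
\begin{equation*}
\Prob\bigl[\|M^{-1/2}\tilde M_{\mathcal J}M^{-1/2} - I\| \ge \zeta\bigr] \le 4\deff(A)\exp\!\left(-\frac{c\,\zeta^{2}|\mathcal J|}{n l_{\infty}^{\nu}(A) + n l_{\infty}^{\nu}(A)\zeta/3}\right).
\end{equation*}
Using $n l_{\infty}^{\nu}(A) = \chi^{\nu}(A)\deff(A)$ and solving for the sample size that drives the right-hand side below $\delta$ reproduces the stated requirement $|\mathcal J| = \bigOmega\bigl(\chi^{\nu}(A)\deff(A)\log(\deff(A)/\delta)/\zeta^{2}\bigr)$, after which undoing the whitening yields the Loewner sandwich.

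The main technical obstacle is justifying a matrix Bernstein bound under sampling \emph{without} replacement. Two standard routes handle this: either invoke Tropp's matrix Chernoff bound for sampling without replacement, which admits the same intrinsic-dimension refinement, or use the classical fact that a sum of bounded random matrices sampled without replacement is dominated in the Laplace-transform order by the corresponding i.i.d. sum, so the with-replacement tail bound transfers. A secondary subtlety is the careful handling of the variance so that only $l_{\infty}^{\nu}(A)$ appears (rather than a global spectral quantity), which is precisely what produces the coherence factor $\chi^{\nu}(A)$ instead of an ambient dimension factor.
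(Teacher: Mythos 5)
The paper offers no proof of this lemma---it is stated as well known and deferred to the cited references \citep{alaoui2015fast,cohen2017input,li2020subsampled}---and your argument (whitening by $M^{-1/2}$, bounding $\|M^{-1/2}a_ia_i^TM^{-1/2}\| = n\,l_i^\nu(A)$, extracting the effective dimension from $\mathrm{trace}(I-\nu M^{-1})=\deff(A)$, and applying intrinsic-dimension matrix Bernstein with the without-replacement issue handled via Tropp's Chernoff bound or Gross--Nesme domination) is precisely the standard proof in those references and is correct. Two small repairs: your displayed variance bound drops a factor of $n$ (it should read $\sum_j\E[X_j^2]\preceq \frac{n\,l_\infty^\nu(A)}{|\mathcal J|}(I-\nu M^{-1})$, consistent with the $n\,l_\infty^\nu(A)$ that correctly appears in your final exponent), and the intrinsic dimension of the variance proxy equals $\deff(A)\cdot\frac{\lambda_1(A^TA)+n\nu}{\lambda_1(A^TA)}$ rather than being at most $\deff(A)$, so you should note that this is $O(\deff(A))$ in the non-degenerate regime $\lambda_1(A^TA)\gtrsim n\nu$ and that the complementary regime is trivial because the exponential factor already beats the ambient-dimension prefactor.
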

\Cref{lemma:ridgesamplingappx} shows that for an incoherent matrix with $\chi^\nu(A) = \bigO(1)$, it only takes $\Otil\left(d^\nu_{\textrm{eff}}(A)\right)$ rows to obtain a good spectral approximation to $(1/n)A^{T}A+\nu I$. 
In contrast, for a coherent matrix with $\chi^\nu(A) = \bigO(n)$, as many as $\bigO(n)$ rows may be needed to obtain a good spectral approximation. 
Thus, when the coherence is large, uniform sampling cannot exploit low effective dimensionality.
The sample complexity for coherent matrices can be reduced using approximate leverage score sampling (ALS) procedures such as BLESS \citep{rudi2018fast}, which sample rows according to their importance.
However, since uniform sampling performs very well in our experiments, we omit ALS procedures, which can increase algorithmic complexity and computational overhead.
%as ALS increases algorithmic complexity and computational overhead. 
% However, because uniform sampling performs very well in practice, we have not felt the need to consider ALS, as it increases algorithmic complexity and brings additional computational overhead.
\else
\fi

We now establish that the various preconditioning methods introduced in \cref{subsection:precond} 
provide a $\zeta$-spectral approximation with high probability. 

\subsubsection{Subsampled Newton}
\ssn{} yields a $\zeta$-spectral approximation for GLMs with high probability,
formalized below.
% The conditions under which Subsampled Newton yields a $(\theta,\zeta)$-good preconditioner are presented in the following proposition. 
\begin{proposition}
\label[proposition]{proposition:SSNPrecond}
     Let $w\in \mathbb{R}^{p}$, $\zeta_0 \in (0,1)$, and suppose $f$ is a GLM. 
     Construct the subsampled Hessian with  batchsize $b_H =  \bigOmega\left(\frac{\chi^\rho(\nabla^2 f(w)) d^\rho_{\textup{eff}}(\nabla^2 f(w))\log\left(\frac{d^\rho_{\textup{eff}}(\nabla^2 f(w))}{\delta}\right)}{\zeta_0^2}\right)$.
     Then for $\zeta = 1-(1-\zeta_0)\nu/\rho$, with probability at least $1-\delta$,
     \begin{equation}
        (1-\zeta)(\widehat{\nabla}^2f(w)+\rho I) \preceq \nabla^2 f(w)+\nu I \preceq (1+\zeta)(\widehat{\nabla}^2f(w)+\rho I).
    \end{equation}
\end{proposition}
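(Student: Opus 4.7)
The plan is to reduce the statement to a single application of \cref{lemma:ridgesamplingappx} with the regularization level $\rho$ (rather than $\nu$), and then to convert between the $\rho I$ and $\nu I$ shifts using the condition $\nu \leq \rho$ (which is implicit, since we defined $\rho \geq \nu$ for the SSN preconditioner).

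First, I would write the GLM Hessian in the ``matrix-squared'' form that the lemma requires. Set $\tilde A = \Phi''(Aw)^{1/2} A$ so that $\nabla^2 f(w) = (1/n)\tilde A^T \tilde A$ and $\widehat \nabla^2 f(w) = (1/b_H)\tilde A_{\mathcal B}^T \tilde A_{\mathcal B}$, where $\mathcal B$ is the uniform subsample of indices. Since $\widehat \nabla^2 f(w)$ is obtained by sampling rows of $\tilde A$ uniformly at random, \cref{lemma:ridgesamplingappx} (applied with regularization $\rho$ to the matrix $\tilde A$, and with tolerance $\zeta_0$) is directly applicable; the hypothesis on $b_H$ is exactly what the lemma asks for, up to the overloading of ridge leverage quantities for $\nabla^2 f(w)$ in \cref{def:EffDimRLSCoh}. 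The conclusion is
\begin{equation*}
(1-\zeta_0)\bigl(\widehat \nabla^2 f(w)+\rho I\bigr) \preceq \nabla^2 f(w)+\rho I \preceq (1+\zeta_0)\bigl(\widehat \nabla^2 f(w)+\rho I\bigr)
\end{equation*}
with probability at least $1-\delta$.

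Next, I would convert the middle term from $\nabla^2 f(w)+\rho I$ to $\nabla^2 f(w)+\nu I$. The upper bound is easy: since $\nu \leq \rho$, we have $\nabla^2 f(w)+\nu I \preceq \nabla^2 f(w)+\rho I$, so
\begin{equation*}
\nabla^2 f(w)+\nu I \preceq (1+\zeta_0)\bigl(\widehat \nabla^2 f(w)+\rho I\bigr) \preceq (1+\zeta)\bigl(\widehat \nabla^2 f(w)+\rho I\bigr),
\end{equation*}
where the last step uses $\zeta \geq \zeta_0$ (which follows from $\nu/\rho \leq 1$ and the definition $\zeta = 1-(1-\zeta_0)\nu/\rho$). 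For the lower bound, I would multiply the spectral-approximation lower bound by $\nu/\rho \in (0,1]$, giving
\begin{equation*}
(1-\zeta_0)\tfrac{\nu}{\rho}\bigl(\widehat \nabla^2 f(w)+\rho I\bigr) \preceq \tfrac{\nu}{\rho}\bigl(\nabla^2 f(w)+\rho I\bigr) = \tfrac{\nu}{\rho}\nabla^2 f(w) + \nu I \preceq \nabla^2 f(w)+\nu I,
\end{equation*}
where the last inequality uses $\nu/\rho \leq 1$ and $\nabla^2 f(w) \succeq 0$. By the definition of $\zeta$, the leftmost factor is exactly $1-\zeta$, which gives the desired lower bound.

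The only real substance is the first step, namely verifying that \cref{lemma:ridgesamplingappx} applies verbatim to the reweighted data matrix $\tilde A$; the shift conversion is a one-line Loewner-order manipulation. I do not anticipate any serious obstacle, but I would be careful to state the overloading convention clearly, so that $\chi^\rho(\nabla^2 f(w))$ and $d^\rho_{\text{eff}}(\nabla^2 f(w))$ unambiguously refer to the corresponding quantities of $\tilde A$ as defined in \cref{def:EffDimRLSCoh}.
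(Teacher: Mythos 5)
Your proposal is correct and follows essentially the same route as the paper's proof: rewrite the GLM Hessian as $(1/n)\tilde A^T\tilde A$ with $\tilde A = \Phi''(Aw)^{1/2}A$, apply \cref{lemma:ridgesamplingappx} at regularization level $\rho$ to get the $\zeta_0$-spectral approximation with the $\rho I$ shift, and then convert the shift from $\rho I$ to $\nu I$ using $\nu \le \rho$. The paper phrases the final step as taking $\zeta = \max\{1-\tfrac{\nu}{\rho}(1-\zeta_0),\,\zeta_0\}$ and observing the max is attained at the first term, which is exactly your observation that $\zeta \ge \zeta_0$.
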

\cref{proposition:SSNPrecond} is well-known in the literature \citep{li2020subsampled}.
It shows that when $\chi^\rho(\nabla^2 f(w)) = \bigO(1)$, a batchsize of $b_H = \Otil(d^{\rho}_{\textrm{eff}}(\nabla^2 f(w)))$ is sufficient to ensure that the subsampled Hessian is a $\zeta$-spectral approximation.
Furthermore, when the data matrix exhibits polynomial spectral decay, applying \cref{lemma:EffDimBnd} reduces this requirement to $b_H = \Otil(\sqrt{n})$.
This latter reduction motivates our default hyperparameter setting $b_H = \lfloor \sqrt{n} \rfloor$ for the PROMISE algorithms in \cref{section:algs}.

\cref{proposition:SSNPrecond} should be contrasted with the Hessian batchsize requirements of works where the $f_i$'s are taken to be arbitrary convex functions.
To facilitate this comparison, we first state the following simple lemma. 
\begin{lemma}[$\deff$ vs. $\kappaMax$ for GLMs]
\label[lemma]{lem:deff_vs_kappa}
    Let $f$ be a GLM, $\nu>0$, and $\kappaMax = \LMax/\nu$. Then
    \[
    \chi^\nu(\nabla^2 f(w))\deff(\nabla^2 f(w))\leq \kappaMax.
    \]
\end{lemma}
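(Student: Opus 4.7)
The plan is to unwind the definition of $\chi^\nu$ and bound the maximum ridge leverage score directly using the trivial spectral bound on the inverse of a regularized Gram matrix, then identify the resulting quantity with the per-sample smoothness constant.

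First, I would observe that the product telescopes cleanly via the definition of ridge leverage coherence:
\begin{equation*}
\chi^\nu(\nabla^2 f(w)) \cdot d_{\textup{eff}}^\nu(\nabla^2 f(w)) \;=\; n \cdot l^\nu_\infty(\nabla^2 f(w)),
\end{equation*}
so the task reduces to bounding $n \cdot l^\nu_\infty(\nabla^2 f(w))$.

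Next, I would write the Hessian in the GLM factored form $\nabla^2 f(w) = B^T B$, where $B$ is (up to an appropriate $1/\sqrt n$ normalization consistent with the overloaded ridge leverage convention of \cref{def:RLS}--\cref{def:EffDimRLSCoh}) the matrix whose $i$-th row is $b_i = \sqrt{\phi_i''(a_i^T w)}\, a_i$. By definition of the ridge leverage scores of $B$, and the fact that $B^T B + n\nu I \succeq n\nu I$, we get the pointwise bound
\begin{equation*}
l_i^\nu(\nabla^2 f(w)) \;=\; b_i^T\bigl(B^T B + n\nu I\bigr)^{-1} b_i \;\le\; \frac{\|b_i\|^2}{n\nu} \;=\; \frac{\phi_i''(a_i^T w)\,\|a_i\|^2}{n\nu}.
\end{equation*}
Taking the maximum over $i$ and multiplying by $n$ yields
\begin{equation*}
n \cdot l^\nu_\infty(\nabla^2 f(w)) \;\le\; \frac{\max_{i \in [n]} \phi_i''(a_i^T w)\,\|a_i\|^2}{\nu}.
\end{equation*}

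Finally, I would identify the numerator with $L_{\textup{max}}$. Since for a GLM $\nabla^2 f_i(w) = \phi_i''(a_i^T w)\, a_i a_i^T$ is a rank-one psd matrix with unique nonzero eigenvalue $\phi_i''(a_i^T w)\,\|a_i\|^2$, this quantity is precisely the smoothness constant of $f_i$, so its maximum over $i$ is at most $L_{\textup{max}}$. Combining the displays gives $\chi^\nu(\nabla^2 f(w)) d_{\textup{eff}}^\nu(\nabla^2 f(w)) \le L_{\textup{max}}/\nu = \kappa_{\textup{max}}$.

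The only subtle point, and the one worth verifying carefully, is the normalization convention used to pass between $\nabla^2 f(w)$ and its square-root factor $B$, since $\chi^\nu$ and the ridge leverage scores depend on the factorization rather than just on the psd matrix itself. Once that bookkeeping is settled consistently with the earlier overloading in \cref{def:EffDimRLSCoh}, the inequality $(B^T B + n\nu I)^{-1} \preceq (n\nu)^{-1} I$ is the only nontrivial ingredient, and it is tight up to constants precisely when one of the $f_i$'s saturates the smoothness bound, which is why the inequality is sharp in general.
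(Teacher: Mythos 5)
Your proposal is correct and follows essentially the same route as the paper's proof: reduce the product to $n\,l^\nu_\infty$ via the definition of coherence, bound each leverage score of the square-root factor $\Phi''(Aw)^{1/2}A$ by $\|b_i\|^2/(n\nu)$ using $(B^TB+n\nu I)^{-1}\preceq (n\nu)^{-1}I$, and identify $\phi_i''(a_i^Tw)\|a_i\|^2 = \|\nabla^2 f_i(w)\|\le L_i\le L_{\textup{max}}$. Your remark on the normalization convention is the right thing to check, and it resolves exactly as you describe.
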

As stated above, some works \citep{roosta2019sub,ye2021Appx,derezinski2022stochastic} assume the $f_i$'s possess no structure aside from convexity, which leads to the batchsize requirement $b_H = \bigO(\kappaMax\log(p/\delta)/\zeta^2_0)$.
\cref{{lem:deff_vs_kappa}} shows that $\kappaMax\geq  \chi^\nu(\nabla^2 f(w))\deff(\nabla^2 f(w))$, 
so the needed batchsize is always larger than the one prescribed by \cref{proposition:SSNPrecond}.
Moreover, when the data matrix $A$ is ill-conditioned, the required batchsize is significantly larger.
As a concrete example, consider the setting of \cref{lemma:EffDimBnd} with ridge leverage incoherent $A$: $\chi^{\nu}(\nabla^2f(w))\deff(\nabla^2f(w)) = \bigO(\sqrt{n})$, while $\kappaMax = \bigO(n)$.
Hence, \cref{proposition:SSNPrecond} predicts a small Hessian batchsize, while the requirement based on $\kappaMax$ states the full Hessian must be used.
Thus, the Hessian batchsize required for convex GLMs is considerably smaller than the one for a sum of arbitrary convex functions.

\subsubsection{Nystr{\"o}m Subsampled Newton}
\nyssn{} yields a $\zeta$-spectral approximation for GLMs with high probability,
formalized below.
\ifpreprint
\begin{proposition}
\label[proposition]{proposition:NysSSNPrecond}
        Let $w\in \mathbb{R}^{p}$, $\zeta_0\in (0,1)$, and suppose $f$ is a GLM.
        Construct the subsampled Hessian with  batchsize $b_H =  \bigOmega\left(\frac{\chi^\nu(\nabla^2 f(w)) d^\nu_{\textup{eff}}(\nabla^2 f(w))\log\left(\frac{d^\nu_{\textup{eff}}(\nabla^2 f(w))}{\delta}\right)}{\zeta_0^2}\right)$. 
     Further, assume either:
     \begin{enumerate}
         \item $\Omega$ is a Gaussian random matrix with $r = \bigOmega\left(\frac{d^{\rho}_{\textup{eff}}(\widehat \nabla^2 f(w))+\log(\frac{1}{\delta})}{{\zeta_0^2}}\right)$ columns.
         \item $\Omega$ is an SRHT matrix with $ r =\bigOmega\left(\frac{d^{\rho}_{\textup{eff}}(\widehat \nabla^2 f(w))+\log(\frac{1}{\zeta_0 \delta})\log\left(\frac{d^{\rho}_{\textup{eff}}(\widehat \nabla^2 f(w))}{\delta}\right)}{{\zeta_0^2}}\right)$ columns.
         \item $\Omega$ is an sparse sign embedding with sparsity $s = \bigOmega\left(\frac{\log\left(\frac{d^{\rho}_{\textup{eff}}(\widehat \nabla^2 f(w))}{\delta}\right)}{{\zeta_0}}\right)$ and $r =\bigOmega\left(\frac{d^{\rho}_{\textup{eff}}(\widehat \nabla^2 f(w))\log\left(\frac{d^{\rho}_{\textup{eff}}(\widehat \nabla^2 f(w))}{\delta}\right)}{{\zeta_0^2}}\right)$ columns.
     \end{enumerate}
     Then with probability at least $1-\delta$
     \begin{equation}
        (1-\zeta)(\hat H+\rho I)\preceq \nabla^2f(w)+\nu I\preceq (1+\zeta)(\hat H+\rho I),
    \end{equation}
    where $\zeta = 1-(1-\zeta_0)\nu/\rho$.
\end{proposition}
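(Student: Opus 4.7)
The plan is to decompose the approximation into two independent steps: first, that the subsampled Hessian $\widehat \nabla^2 f(w)$ approximates the true Hessian $\nabla^2 f(w)$ in the Loewner sense; and second, that the randomized \nys{} approximation $\hat H$ captures enough of the spectrum of $\widehat \nabla^2 f(w)$ that $\hat H + \rho I$ is a spectral approximation of $\widehat \nabla^2 f(w) + \rho I$. Composing the two via transitivity of $\preceq$, and accounting for the gap between $\nu$ and $\rho$ exactly as in the proof of \cref{proposition:SSNPrecond}, yields the claimed parameter $\zeta = 1 - (1-\zeta_0)\nu/\rho$.

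For the first step, the hypothesis on $b_H$ is identical (up to a constant) to the one imposed by \cref{proposition:SSNPrecond}. Running the proof of that proposition at tolerance $\zeta_0/c$ for some modest absolute constant $c > 1$ produces a high-probability event on which
\[(1 - \zeta_0/c)(\widehat \nabla^2 f(w) + \nu I) \preceq \nabla^2 f(w) + \nu I \preceq (1 + \zeta_0/c)(\widehat \nabla^2 f(w) + \nu I),\]
and the subsequent rescaling argument (inflating the regularization from $\nu$ to $\rho$) guarantees that $\widehat \nabla^2 f(w) + \rho I$ is a $\zeta_1$-spectral approximation of $\nabla^2 f(w) + \nu I$ with $\zeta_1 = 1 - (1 - \zeta_0/c)\nu/\rho$.

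For the second step, invoke the standard spectral analysis of the randomized \nys{} approximation \eqref{eq:RandNysAppx} applied to $\widehat \nabla^2 f(w)$. The relevant complexity measure is $d^\rho_{\textup{eff}}(\widehat \nabla^2 f(w))$, because $\rho$ is the regularization added on top of $\hat H$. Under each of the three stated choices of $\Omega$ (Gaussian, SRHT, sparse sign), the column count $r$ (and sparsity $s$, in the sparse-sign case) is calibrated so that with probability at least $1 - \delta/2$, $\Omega$ is a subspace embedding of the top-$d^\rho_{\textup{eff}}$ eigenspace of $\widehat \nabla^2 f(w)$ at tolerance $\zeta_0/c$. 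Plugging this subspace embedding property into the \nys{} residual formula yields
\[0 \preceq \widehat \nabla^2 f(w) - \hat H \preceq (\zeta_0/c)\, \rho I,\]
so that after adding $\rho I$ throughout and using $\hat H \succeq 0$ we obtain $\hat H + \rho I \preceq \widehat \nabla^2 f(w) + \rho I \preceq (1 + \zeta_0/c)(\hat H + \rho I)$.

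Concatenating these two spectral inequalities, taking a union bound over the two failure events, and tuning $c$ so that the composed approximation parameter equals exactly $\zeta = 1 - (1-\zeta_0)\nu/\rho$ finishes the proof. The main technical hurdle is the second step: the three sketches require three different effective-dimension-based subspace embedding arguments --- Gaussian concentration in (i), a chaining bound for the SRHT in (ii), and a moment/sparsification argument for sparse sign embeddings in (iii) --- and one must check that each of the stated sample complexities indeed certifies a subspace embedding at tolerance $\zeta_0/c$ and then propagates cleanly into the \nys{} residual bound. The remaining Loewner manipulations in the first and third steps are routine.
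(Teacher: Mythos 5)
Your proposal is correct and follows essentially the same route as the paper: the paper also splits the argument into (i) a subsampling step handled by \cref{proposition:SSNPrecond} with the $\nu\to\rho$ inflation, and (ii) a \nys{} residual bound $0\preceq \widehat\nabla^2 f(w)-\hat H\preceq \zeta_0\rho I$ obtained from an effective-dimension subspace-embedding guarantee combined with the regularized-\nys{} error lemma of Alaoui--Mahoney, followed by a union bound and a constant-factor rescaling of $\zeta_0$. The only cosmetic difference is that the paper converts the residual bound into the two-sided Loewner relation via Weyl's inequalities and conjugation by $P^{-1/2}$, whereas you add $\rho I$ and use $\hat H\succeq 0$ directly; both are equivalent.
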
 
\else
\begin{proposition}
\label[proposition]{proposition:NysSSNPrecond}
        Let $w\in \mathbb{R}^{p}$, $\zeta_0\in (0,1)$, and suppose $f$ is a GLM.
        Construct the subsampled Hessian with  batchsize $b_H =  \bigOmega\left(\frac{\chi^\nu(\nabla^2 f(w)) d^\nu_{\textup{eff}}(\nabla^2 f(w))\log\left(\frac{d^\nu_{\textup{eff}}(\nabla^2 f(w))}{\delta}\right)}{\zeta_0^2}\right)$. Further, assume $\Omega$ is a Gaussian random matrix with $r = \bigOmega\left(\frac{d^{\rho}_{\textup{eff}}(\widehat \nabla^2 f(w))+\log(\frac{1}{\delta})}{{\zeta_0^2}}\right)$ columns. 
        Then for $\zeta = 1-(1-\zeta_0)\nu/\rho$, with probability at least $1-\delta,$
     \begin{equation}
        (1-\zeta)(\hat H+\rho I)\preceq \nabla^2f(w)+\nu I\preceq (1+\zeta)(\hat H+\rho I).
    \end{equation}
\end{proposition}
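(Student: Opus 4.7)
The plan is to split the approximation $\hat H + \rho I \approx \nabla^2 f(w) + \nu I$ into two independent randomized pieces — a subsampling step and a Nystr{\"o}m sketching step — and then chain them via a union bound.

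First I would apply \cref{proposition:SSNPrecond} to the subsampled Hessian. Its hypothesis on $b_H$ is already implied by the present one, since $d^{\rho}_{\textup{eff}}(\nabla^2 f(w)) \leq d^\nu_{\textup{eff}}(\nabla^2 f(w))$ whenever $\rho \geq \nu$ (and the coherences compare similarly up to a constant). Running it with tolerance $\zeta_0/2$ (and absorbing the factor $2$ into the $\bigOmega$-notation on $b_H$) gives, with probability at least $1-\delta/2$,
\begin{equation*}
(1-\zeta_1)(\widehat\nabla^2 f(w) + \rho I) \preceq \nabla^2 f(w) + \nu I \preceq (1+\zeta_1)(\widehat\nabla^2 f(w) + \rho I),
\end{equation*}
where $\zeta_1 = 1 - (1-\zeta_0/2)\nu/\rho$. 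Next I would invoke the standard spectral approximation guarantee for the randomized Nystr{\"o}m approximation with a Gaussian test matrix: for sketch size $r = \bigOmega\left(\left(d^\rho_{\textup{eff}}(\widehat\nabla^2 f(w)) + \log(1/\delta)\right)/\zeta_0^2\right)$, with probability at least $1-\delta/2$,
\begin{equation*}
\hat H + \rho I \preceq \widehat\nabla^2 f(w) + \rho I \preceq (1 + \zeta_0/2)(\hat H + \rho I),
\end{equation*}
the lower inequality being automatic because $\hat H = (H\Omega)(\Omega^T H \Omega)^{\dagger}(H\Omega)^T \preceq H$ for $H = \widehat\nabla^2 f(w)$, and the upper inequality being the content of the Nystr{\"o}m bound with a Gaussian sketch.

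A union bound makes both sandwich relations hold simultaneously with probability at least $1-\delta$. Chaining them transitively sandwiches $\nabla^2 f(w) + \nu I$ between scalar multiples of $\hat H + \rho I$, and a short calculation shows that the composed constants collapse to $1 \pm \zeta$ with $\zeta = 1 - (1-\zeta_0)\nu/\rho$, as claimed. The main obstacle is purely bookkeeping: composing two Loewner sandwiches multiplies the tolerances, so one must split the budget $\zeta_0$ between the two randomized steps (above I chose $\zeta_0/2$ for each) and verify that the resulting product of expansion factors simplifies to the advertised $\zeta$ after the $\rho \to \nu$ calibration on the middle term (which uses exactly the same trick as in \cref{proposition:SSNPrecond}: $\nu \leq \rho$ together with $\nabla^2 f(w) \succeq 0$). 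No new probabilistic ingredient beyond \cref{proposition:SSNPrecond} and the Gaussian Nystr{\"o}m guarantee is needed.
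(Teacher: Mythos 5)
Your proposal is correct and follows essentially the same route as the paper: the paper likewise sandwiches $\widehat\nabla^2 f(w)+\rho I$ between $\hat H+\rho I$ and $(1+\zeta_0)(\hat H+\rho I)$ (deriving the upper bound from the Nystr\"{o}m error bound $\|\widehat\nabla^2 f(w)-\hat H\|\leq\zeta_0\rho$ via Weyl's inequalities, and the lower bound from $\hat H\preceq\widehat\nabla^2 f(w)$), chains this with \cref{proposition:SSNPrecond} and the $\nu\le\rho$ calibration, and absorbs the resulting $(1+\zeta_0)^2$ factor by rescaling $\zeta_0$ by a constant exactly as you split the tolerance budget.
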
 
\fi
The regularization parameter $\rho$ controls how much we may truncate the rank parameter $r$. As $\rho$ increases, $d^{\rho}_{\textup{eff}}(\widehat \nabla^2 f(w))$ decreases, so we can use a smaller value of $r$ to construct the preconditioner; conversely, as $\rho$ approaches $\nu$, we must use a larger value of $r$.
 
We observe a trade-off: a smaller rank parameter leads to faster computation and less storage, 
but potentially a less effective preconditioner and slower convergence.
In practice, this tradeoff is not as dramatic as the theory might suggest, 
and we find a rank of $r = 10$ provides excellent performance in a wide range of applications (\cref{section:experiments}). 

\subsubsection{Sketch-and-solve Subsampled Newton} 
\ifpreprint
An analogous result for the \sassn{} preconditioners can be found in \cref{subsec:samp_lr_pfs}.
\else
An analogous result for the \sassn{} preconditioners can be found in the \href{https://arxiv.org/abs/2309.02014v2}{arxiv report}.
\fi

\section{Algorithms}
\label{section:algs}
\ifpreprint
In this section, we introduce the PROMISE algorithms SketchySGD (\cref{subsection:sksgd}), SketchySVRG (\cref{subsection:sksvrg}), SketchySAGA (\cref{subsection:sksaga}), and SketchyKatyusha (\cref{subsection:skkat}). 
SketchySGD was first proposed in \cite{frangella2023sketchysgd}; the other algorithms are new.
\else
In this section, we introduce the PROMISE algorithms SketchySVRG (\cref{subsection:sksvrg}), SketchySAGA (\cref{subsection:sksaga}), and SketchyKatyusha (\cref{subsection:skkat}). 
\fi
A summary of these algorithms in provided in \cref{table:algo_summary}.
\ifpreprint
Each algorithm is compatible with all five preconditioning methods (\ssn{}, \nyssn{}, \sassnc{}, \sassnr{}, \diagssn{}) described in \cref{subsection:precond}. 
\else
Each algorithm is compatible with all four preconditioning methods (\ssn{}, \nyssn{}, \sassnc{}, \sassnr{}) described in \cref{subsection:precond}. 
\fi
Each algorithm comes with default hyperparameters, which we use in the empirical evaluation in \cref{section:experiments}. 
In particular, we describe how to automatically compute the learning rate for each algorithm using the estimated preconditioned smoothness constant.
The learning rate is hard to tune in stochastic optimization, and it is remarkable that this automated selection works across a wide range of problems (\cref{section:experiments}).
Finally, we recommend the best algorithm to use for two important applications, ridge and $l^2$-regularized logistic regression (\cref{subsection:algo_recs}).

\begin{table}[H]
\scriptsize
\centering
    \begin{tabular}{C{2.9cm}C{4.6cm}C{1.6cm}C{2.2cm}C{1.75cm}}
        Algorithm & Base Algorithm & Variance reduction & Acceleration & Stochastic gradients only? \\ \hline
        \ifpreprint
        SketchySGD \newline (\cite{frangella2023sketchysgd}) & SGD & \textcolor{purple}{\xmark} & \textcolor{purple}{\xmark} & \textcolor{blue}{\cmark} \\ \hline
        \else
        \fi
        SketchySVRG \newline (\cref{alg:sksvrg}) & SVRG \newline \citep{johnson2013accelerating} & \textcolor{blue}{\cmark} & \textcolor{purple}{\xmark} & \textcolor{purple}{\xmark} \\ \hline
        SketchySAGA \newline (\cref{alg:sksaga}) & b-nice SAGA \newline \citep{gazagnadou2019mbsaga} & \textcolor{blue}{\cmark} & \textcolor{purple}{\xmark} & \textcolor{blue}{\cmark} \\ \hline
        SketchyKatyusha \newline (\cref{alg:skkat}) & Loopless Katyusha \newline \citep{kovalev2020lkatyusha} & \textcolor{blue}{\cmark} & \textcolor{blue}{\cmark} & \textcolor{purple}{\xmark} \\
    \end{tabular}
    \caption{Summary of algorithms in PROMISE. \textcolor{blue}{\textbf{Ticks}} are pros while \textcolor{purple}{\textbf{crosses}} are cons. SVRG and Katyusha require some full gradients rather than stochastic gradients only.}
    \label{table:algo_summary}
\end{table}

\subsection{Notation in algorithms}
\label{subsection:algo_notation}
Throughout this section, $\Model$ denotes a GLM with the oracles defined in \cref{table:m_funcs}. 
\ifpreprint
We use $P$ to denote a preconditioner object, which is a member of one of the five preconditioner classes ($\pssn, \pnyssn, \psassnc, \psassnr, \pdiagssn$).
\else
We use $P$ to denote a preconditioner object, which is a member of one of the four preconditioner classes ($\pssn, \pnyssn, \psassnc, \psassnr$).
\fi
$\mathcal U \subseteq \mathbb N$ denotes a (possibly infinite) set of times that indicate when to update the preconditioner.
We also use the index $j$ to track the time when the preconditioner is constructed: every time the preconditioner is updated, the index $j$ is updated to  the most recently used element of $\mathcal U$.
% We also use the index $j$ to track the number of updates to the preconditioner over the run of the algorithm: 
% every time the preconditioner is updated, the index $j$ is incremented by $1$.
This index does not have to be tracked in the implementations of these algorithms, but it plays a key role in the theoretical analysis of the proposed algorithms (\cref{section:theory}).

\ifpreprint
\subsection{SketchySGD}
\label{subsection:sksgd}
We present SketchySGD in \cref{alg:sksgd}.

\begin{algorithm}
    \caption{SketchySGD}
    \label{alg:sksgd}
    \begin{algorithmic}
        \Require{initialization $w_0$, gradient and Hessian batchsizes $b_g$ and $b_H$, preconditioner object $P$, model $\Model$, preconditioner update times $\mathcal U$, learning rate multiplier $\alpha$}
        
        \vspace{0.5pc}
        
        \For{$k = 0, 1, \ldots$}
        \If{$k \in \mathcal U$} \Comment{Update preconditioner \& learning rate}
            \State Sample independent batches $\mathcal{S}_k^1, \mathcal{S}_k^2$
            \Comment{$|\mathcal{S}_k^1| = |\mathcal{S}_k^2| = b_H$}
            \State $P.\texttt{update}(\Model, \mathcal{S}_k^1, \mathcal{S}_k^2, w_k)$ \Comment{Compute preconditioner $P_j$ at $w_k$ \& update $P.\lambda_{\mathcal{P}}$}
            \State $\eta \gets \alpha / P.\lambda_{\mathcal{P}}$ \Comment{Update learning rate} 
        \EndIf
        \State Sample batch $\mathcal{B}_k$ \Comment{$|\mathcal{B}_k| = b_g$}
        \State $g_k \gets \Model.\texttt{get\_stoch\_grad}(\mathcal{B}_k, w_k)$
        \State $v_k \gets P.\texttt{direction}(g_k)$ \Comment{Get approx. Newton step $P_j^{-1} g_k$}
        \State $w_{k+1} \gets w_k - \eta v_k$ \Comment{Update parameters}
        \EndFor
    \end{algorithmic}
\end{algorithm}

\paragraph{Explanation of algorithm} SketchySGD is a preconditioned version of SGD that was originally proposed in \cite{frangella2023sketchysgd}. 
Whenever the number of iterations $k \in \mathcal U$, 
both the preconditioner and learning rate are updated.
SketchySGD uses a preconditioned stochastic gradient $v_k$ to update the parameters
rather than performing an SGD step at every iteration using $g_k$.

\paragraph{Default hyperparameters} The main hyperparameters in SketchySGD are the gradient and Hessian batchsizes $b_g$ and $b_H$, preconditioner update times $\mathcal U$, and learning rate multiplier $\alpha$. 
The gradient batchsize $b_g$ can be set to a wide range of values (our experiments on medium-scale datasets use $b_g = 256$, while our experiments on large-scale datasets use $b_g = 4096$); we recommend setting the Hessian batchsize $b_H = \lfloor \sqrt{n} \rfloor$.
The selection $b_H = \lfloor \sqrt{n} \rfloor$ is motivated by the discussion in \cref{subsec:precond_qual}.
We recommend setting $\mathcal U$ according to a constant update frequency $u$, i.e., $\mathcal U = \{0, u, 2u, 3u, \ldots \}$. 
For least squares/ridge regression, which has a constant Hessian, we recommend setting $u = \infty$, which results in the preconditioner being computed exactly once and held constant for the remainder of the optimization. 
For problems with a non-constant Hessian, such as logistic regression, we recommend setting $u = \lceil n/b_g \rceil$, which is equivalent to updating the preconditioner after each pass through the training set.
Based on numerical experiments, we recommend setting the multiplier $\alpha = 1/2$.
\else
\fi

\subsection{SketchySVRG} 
\label{subsection:sksvrg}
We formally introduce SketchySVRG in \cref{alg:sksvrg}.

\begin{algorithm}
\scriptsize
    \caption{SketchySVRG}
    \label{alg:sksvrg}
    \begin{algorithmic}
        \Require{
            initialization $\hat{w}_0$, gradient and Hessian batchsizes $b_g$ and $b_H$, preconditioner object $P$, model $\Model$, preconditioner update times $\mathcal U$, learning rate multiplier $\alpha$, snapshot update frequency $m$
        }
        \Init snapshot $\hat{w} \gets \hat{w}_0$
        
        \vspace{0.5pc}
        
        \For{$s = 0, 1, \ldots$} \Comment{Outer loop}
        \State $\bar{g} \gets \Model.\texttt{get\_full\_grad}(\hat{w})$ \Comment{Full gradient at snapshot}
        \State $w_0 \gets \hat{w}$
        \For{$k = 0, 1, \ldots, m - 1$} \Comment{Inner loop}
        \If{$ms+k \in \mathcal U$} \Comment{Update preconditioner \& learning rate}
        \State Sample independent batches $\mathcal{S}_k^1, \mathcal{S}_k^2$ \Comment{$|\mathcal{S}_k^1| = |\mathcal{S}_k^2| = b_H$}
        \State $P.\texttt{update}(\Model, \mathcal{S}_k^1, \mathcal{S}_k^2, w_k)$ \Comment{Compute preconditioner $P_j$ at $w_k$ \& update $P.\lambda_{\mathcal{P}}$}
        \State $\eta \gets \alpha / P.\lambda_{\mathcal{P}}$ \Comment{Update learning rate} 
        \EndIf
        \State Sample batch $\mathcal{B}_k$ \Comment{$|\mathcal{B}_k| = b_g$}
        \State $\widehat{\nabla}F(w_k) \gets \Model.\texttt{get\_stoch\_grad}(\mathcal{B}_k, w_k)$
        \State $\widehat{\nabla}F(\hat{w}) \gets \Model.\texttt{get\_stoch\_grad}(\mathcal{B}_k, \hat{w})$
        \State $g_k \gets \widehat{\nabla}F(w_k) - \widehat{\nabla}F(\hat{w}) + \bar{g}$ \Comment{Unbiased estimate of $\nabla F(w_k)$}
        \State $v_k \gets P.\texttt{direction}(g_k)$ \Comment{Get approx. Newton step $P_j^{-1} g_k$}
        \State $w_{k+1} \gets w_k - \eta v_k$ \Comment{Update parameters}
        \EndFor
        \State \textbf{Option I:} $\hat{w} \gets w_m$ \Comment{Update snapshot to final inner iterate}
        \State \textbf{Option II:}
        $\hat{w} \gets w_t$ for $t \sim \textrm{Unif}(\{0, 1, \ldots, m - 1\})$ \Comment{Update snapshot randomly}
        \EndFor
    \end{algorithmic}
\end{algorithm}

\ifpreprint
\paragraph{Explanation of algorithm} SketchySVRG is a preconditioned version of SVRG \citep{johnson2013accelerating}.
Similar to SVRG, SketchySVRG consists of an ``outer'' and ``inner'' loop indexed by $s$ and $k$, respectively. 

The algorithm starts in the outer loop by computing a full gradient $\bar{g}$ at the snapshot $\hat{w}$, which is critical for performing variance reduction. The algorithm then sets the first iterate in the inner loop, $w_0$, equal to $\hat{w}$.

The inner loop of the algorithm updates the parameters with a preconditioned, variance-reduced stochastic gradient, $v_k$. 
Similar to SketchySGD, SketchySVRG uses the preconditioner update times $\mathcal U$ to determine when the preconditioner and learning rate should be updated. 

After $m$ iterations of the inner loop, the algorithm returns to the outer loop and updates the snapshot $\hat{w}$ by either using the final inner iterate $w_m$ (Option I) or sampling the previous $m$ iterates uniformly randomly (Option II). 
In practice, we use Option I, but the theoretical analysis is conducted using Option II (\cref{subsection:sksvrg_theory}). 
This discrepancy also appears in the original SVRG analysis \citep{johnson2013accelerating} and is therefore not a drawback of the analysis in this paper.

\paragraph{Default hyperparameters}  The main hyperparameters in SketchySVRG are the gradient and Hessian batchsizes $b_g$ and $b_H$, preconditioner update times $\mathcal U$, learning rate multiplier $\alpha$, and snapshot update frequency $m$. 
We recommend setting $b_g$, $b_H$, and $\mathcal U$ similar to SketchySGD.
We recommend setting $\alpha \in [1/3, 1/2]$; our practical implementation uses the SAGA-inspired update rule $\eta \gets \max \{ 1/(2(\nu n + P.\lambda_{\mathcal{P}})), 1/(3 P.\lambda_{\mathcal{P}}) \}$.
We recommend computing a full gradient every one or two passes through the dataset, i.e., setting $m \in [n/b_g , 2n/b_g]$ or so. Our experiments use $m = \lceil n/b_g \rceil$, which corresponds to computing a full gradient every pass through the training set.
\else
\paragraph{Explanation of algorithm} SketchySVRG is a preconditioned version of SVRG \citep{johnson2013accelerating}.
Similar to SVRG, SketchySVRG consists of an ``outer'' and ``inner'' loop indexed by $s$ and $k$, respectively. 

The algorithm starts in the outer loop by computing a full gradient $\bar{g}$ at the snapshot $\hat{w}$, which is critical for performing variance reduction. The algorithm then sets the first iterate in the inner loop, $w_0$, equal to $\hat{w}$.

The inner loop of the algorithm updates the parameters with a preconditioned, variance-reduced stochastic gradient, $v_k$. 
\ifpreprint
Similar to SketchySGD, SketchySVRG uses the preconditioner update times $\mathcal U$ to determine when the preconditioner and learning rate should be updated. 
\else
SketchySVRG uses the preconditioner update times $\mathcal U$ to determine when the preconditioner and learning rate should be updated. 
\fi

After $m$ iterations of the inner loop, the algorithm returns to the outer loop and updates the snapshot $\hat{w}$ by either using the final inner iterate $w_m$ (Option I) or sampling the previous $m$ iterates uniformly randomly (Option II). 
In practice, we use Option I, but the theoretical analysis is conducted using Option II (\cref{subsection:sksvrg_theory}). 
This discrepancy also appears in the original SVRG analysis \citep{johnson2013accelerating} and is therefore not a drawback of the analysis in this paper.

\paragraph{Default hyperparameters}
SketchySVRG's key hyperparameters include gradient and Hessian batch sizes $b_g$ and $b_H$, preconditioner update times $\mathcal U$, learning rate multiplier $\alpha$, and snapshot update frequency $m$.
For gradient batch size $b_g$, we suggest 256 for medium and 4096 for large datasets, while $b_H$ should be $\lfloor \sqrt{n} \rfloor$, as motivated in \cref{subsec:precond_qual}. 
We recommend setting $\mathcal{U} = \{0, u, 2u, \ldots\}$, where $u = \infty$ for problems with a constant Hessian (i.e., we only update the preconditioner once in total), like least squares/ridge regression, and $u = \lceil n/b_g \rceil$ (i.e., update the preconditioner after reach pass through the dataset) for problems with a non-constant Hessian, such as logistic regression.
We recommend $\alpha \in [1/3, 1/2]$; our practical implementation uses the SAGA-inspired update rule $\eta \gets \max \{ 1/(2(\nu n + P.\lambda_{\mathcal{P}})), 1/(3 P.\lambda_{\mathcal{P}}) \}$.
We recommend snapshot update frequency $m \in [n/b_g , 2n/b_g]$ to compute a full gradient every one or two passes through the dataset; our experiments set $m = \lceil n/b_g \rceil$.
\fi

\subsection{SketchySAGA}
We formally introduce SketchySAGA in \cref{alg:sksaga}.

\label{subsection:sksaga}
\begin{algorithm}
\scriptsize
    \caption{SketchySAGA}
    \label{alg:sksaga}
    \begin{algorithmic}
        \Require{
            initialization $w_0$, gradient and Hessian batchsizes $b_g$ and $b_H$, preconditioner object $P$, model $\Model$, preconditioner update times $\mathcal U$, learning rate multiplier $\alpha$
        }
        \Init gradient table $\psi_0 \gets 0 \in \R^{p \times n}$, table avg. $x_0 \gets \frac{1}{n} \psi_0 \mathbf{1}_n \in \R^p$
        
        \vspace{0.5pc}

        \For{$k = 0, 1, \ldots$}
        \If{$k \in \mathcal U$} \Comment{Update preconditioner \& learning rate}
        \State Sample independent batches $\mathcal{S}_k^1, \mathcal{S}_k^2$ \Comment{$|\mathcal{S}_k^1| = |\mathcal{S}_k^2| = b_H$}
        \State $P.\texttt{update}(\Model, \mathcal{S}_k^1, \mathcal{S}_k^2, w_k)$ \Comment{Compute preconditioner $P_j$ at $w_k$ \& update $P.\lambda_{\mathcal{P}}$}
        \State $\eta \gets \alpha / P.\lambda_{\mathcal{P}}$ \Comment{Update learning rate} 
        \EndIf
        \State Sample batch $\mathcal{B}_k$ \Comment{$|\mathcal{B}_k| = b_g$}
        \State $\mathrm{aux} \gets \sum_{i \in \mathcal{B}_k} (\Model.\texttt{get\_stoch\_grad}(i, w_k) - \psi_k^i$)
        \State $g_k \gets x_k + \frac{1}{|\mathcal{B}_k|} \mathrm{aux}$ \Comment{Unbiased estimate of $\nabla F(w_k)$}
        \State $x_{k+1} \gets x_k + \frac{1}{n} \mathrm{aux}$ \Comment{Update table average}
        \State $\psi_{k+1}^i \gets 
        \begin{cases}
        \psi_k^i, & i \notin \mathcal{B}_k \\
        \Model.\texttt{get\_stoch\_grad}(i, w_k), & i \in \mathcal{B}_k
        \end{cases}$ \Comment{Update table columns for all $i \in [n]$}
        \State $v_k \gets P.\texttt{direction}(g_k)$ \Comment{Get approx. Newton step $P_j^{-1} g_k$}
        \State $w_{k+1} \gets w_k - \eta v_k$ \Comment{Update parameters}
        \EndFor
    \end{algorithmic}
\end{algorithm}

\ifpreprint
\paragraph{Explanation of algorithm} SketchySAGA is a preconditioned version of a minibatch variant of SAGA \citep{gazagnadou2019mbsaga}. 

Similar to SketchySGD, SketchySAGA updates the preconditioner and learning rate at each preconditioner update time in $\mathcal U$.
At each iteration, the algorithm computes the stochastic gradient at every index in the batch $\mathcal{B}_k$. 
These stochastic gradients are used to update $\mathrm{aux}$, an auxiliary vector used to reduce computation. 

The auxiliary vector is then used to update the variance-reduced stochastic gradient $g_k$ and table average $x_{k+1}$.
The gradient table $\psi$ is then updated: if $i$ is in the batch, then the corresponding row of $\psi$ is updated with the appropriate stochastic gradient; otherwise it is left unchanged. This update to $\psi$ is a critical step for variance reduction.

After updating the table, SketchySAGA computes the preconditioned version of the variance-reduced stochastic gradient, $v_k$, and uses this quantity to update the parameters.
Note that SketchySAGA does not require full gradient computations, making this algorithm well-suited for large-scale GLMs.

The memory requirement of SketchySAGA is dominated by the gradient table $\psi$, which requires $\mathcal{O}(np)$ storage in a naive implementation. 
Similar to \cite{defazio2014saga}, the storage of $\psi$ can be reduced to $\mathcal{O}(n)$ for GLMs, since $\nabla f_i(w)$ is a multiple of $a_i$. 
For example, in least squares regression, $\nabla f_i(w) = \nabla (a_i^T w_i - b_i)^2/2 = (a_i^T w_i - b_i) a_i$, so we can simply store $a_i^T w_i - b_i$ in place of $(a_i^T w_i - b_i) a_i$. 
Fully modifying the algorithm to reduce the storage of $\psi$ requires fairly simple changes to the updates of $\mathrm{aux}$ and $\mathrm{\psi}$, while also taking care to decouple the regularization-induced term $\nu w$ from the stochastic gradient calculation. 
We use this improved version of the algorithm in our experiments. %, but we do not present it here for the sake of simplicity.

\paragraph{Default hyperparameters} The main hyperparameters in SketchySAGA are the gradient and Hessian batchsizes $b_g$ and $b_H$, preconditioner update times $\mathcal U$, and learning rate multiplier $\alpha$. We recommend setting $b_g$, $b_H$, and $\mathcal U$ similar to SketchySGD. Similar to SketchySVRG, we recommend setting $\alpha \in [1/3, 1/2]$, although our practical implementation again uses $\eta \gets \max \{ 1/(2(\nu n + P.\lambda_{\mathcal{P}})), 1/(3 P.\lambda_{\mathcal{P}}) \}$.
\else
\paragraph{Explanation of algorithm} 
SketchySAGA, a minibatch variant of SAGA with preconditioning, updates the preconditioner and learning rate at specified times in $\mathcal U$. Each iteration involves computing stochastic gradients for each index in batch $\mathcal{B}_k$, which then update an auxiliary vector $\mathrm{aux}$. This auxiliary vector aids in updating the variance-reduced stochastic gradient $g_k$ and the table average $x_{k+1}$. The gradient table $\psi$ is updated accordingly; if an index $i$ is in the batch, its row in $\psi$ gets updated with the stochastic gradient (essential for variance reduction), otherwise it remains unchanged.

SketchySAGA then calculates the preconditioned variance-reduced stochastic gradient $v_k$ for parameter updates, eliminating the need for full gradient computations and making it efficient for large-scale GLMs. The memory usage is dominated by the gradient table $\psi$, which typically requires $\mathcal{O}(np)$ storage. However, this can be reduced to $\mathcal{O}(n)$ for GLMs \citep{defazio2014saga}. Implementing this storage optimization involves straightforward modifications to the updates of $\mathrm{aux}$ and $\mathrm{\psi}$, and separating the regularization term $\nu w$ from the stochastic gradient calculation. This improved algorithm is used in our experiments.

\paragraph{Default hyperparameters} The main hyperparameters in SketchySAGA are the gradient and Hessian batchsizes $b_g$ and $b_H$, preconditioner update times $\mathcal U$, and learning rate multiplier $\alpha$. We recommend setting $b_g$, $b_H$, and $\mathcal U$ similar to SketchySVRG. Furthermore, we recommend setting $\alpha \in [1/3, 1/2]$, although our practical implementation again uses $\eta \gets \max \{ 1/(2(\nu n + P.\lambda_{\mathcal{P}})), 1/(3 P.\lambda_{\mathcal{P}}) \}$.
\fi

% \begin{algorithm}[H]
% 	\centering
% 	\caption{SketchySAGA}
% 	\label{alg:sksaga}
% 	\begin{algorithmic}
% 		\Require{
% 		initialization $w_0$, (sketch size, preconditioner update index) pairs $\{(s_j, u_j)\}$, regularization $\rho$, stochastic Hessian oracle $\mathcal{O}_{\hat{H}}$, preconditioner construction routine $\mathcal{P}$
%         }
%         \State {\textbf{Initialize:} $\psi_0 \in \R^{n \times p}, u_0 = \frac{1}{n} \psi_0 e, k = 0$}
%         \Repeat
%         \State Sample minibatch $B_k$
%         \State $\textrm{aux} = \sum_{i \in B_k} (\nabla f_i(w_k) - \psi_k^i$)
%         \State $g_k = u_k + \frac{1}{|B_k|} \textrm{aux}$
%         \State $u_{k+1} = u_k + \frac{1}{n} \textrm{aux}$
%         \State $\psi_{k+1}^i = 
%         \begin{cases}
%         \psi_k^i & i \notin B_k \\
%         \nabla f_i(w_k) & i \in B_k
%         \end{cases}$
%         \If{$k = u_j$ for some $j$}
%         \State Sample independent minibatches $S_k^1, S_k^2$
%         \State $P_k = \mathcal{P} (\mathcal{O}_{\hat{H}}(S_k^1, w_k), s_j, \rho)$
%         \State $\eta_k = \texttt{get\_learning\_rate}(\mathcal{O}_{\hat{H}}(S_k^2, w_k), P_k)$
%         \Else
%         \State $P_k, \eta_k = P_{k - 1}, \eta_{k - 1}$
%         \EndIf
%         \State $w_{k+1} = \texttt{scaled\_sgd\_update}(w_k, g_k, P_k, \eta_k) = w_k - \eta_k P_k^{-1} g_k$
%         \State $k \mathrel{+}= 1$
% 		\Until{convergence}
% 	\end{algorithmic}
% \end{algorithm} 

\subsection{SketchyKatyusha}
\label{subsection:skkat}
We formally introduce SketchyKatyusha in \cref{alg:skkat}.

\begin{algorithm}
\scriptsize
    \centering
    \caption{SketchyKatyusha}
    \label{alg:skkat}
    \begin{algorithmic}
        \Require{
        initialization $w_0$, gradient and Hessian batchsizes $b_g$ and $b_H$, preconditioner object $P$, model $\Model$, preconditioner update times $\mathcal U$, momentum multiplier $\alpha$, momentum parameter $\theta_2$, snapshot update probability $\pi$, strong convexity parameter $\mu$ 
        } 
        \Init snapshot $y \gets w_0$, $z_0 \gets w_0$, full gradient $\bar{g} \gets \Model.\texttt{get\_full\_grad}(w_0)$
        
        \vspace{0.5pc}
        
        \For{$k = 0, 1, \ldots$}
        \If{$k \in \mathcal U$} \Comment{Update preconditioner \& learning rate}
        \State Sample independent batches $\mathcal{S}_k^1, \mathcal{S}_k^2$ \Comment{$|\mathcal{S}_k^1| = |\mathcal{S}_k^2| = b_H$}
        \State $P.\texttt{update}(\Model, \mathcal{S}_k^1, \mathcal{S}_k^2, w_k)$ \Comment{Compute preconditioner $P_j$ at $w_k$ \& update $P.\lambda_{\mathcal{P}}$}
        \State $L \gets P.{\lambda_{\mathcal{P}}}$
        \State $\sigma \gets \mu/L$ \Comment{Estimate of inverse condition number}
        \State $\theta_1 \gets \min(\sqrt{\alpha n \sigma}, 1/2)$ \Comment{Update momentum parameter}
        \State $\eta \gets \frac{\theta_2}{(1 + \theta_2) \theta_1}$ \Comment{Update learning rate}
        \EndIf
        \State $x_k \gets \theta_1 z_k + \theta_2 y + (1 - \theta_1 - \theta_2) w_k$ \Comment{``Negative momentum'' step}
        \State Sample batch $\mathcal{B}_k$ \Comment{$|\mathcal{B}_k| = b_g$}
        \State $\widehat{\nabla} F(x_k) \gets \Model.\texttt{get\_stoch\_grad}(\mathcal{B}_k, x_k)$
        \State $\widehat{\nabla} F(y) \gets \Model.\texttt{get\_stoch\_grad}(\mathcal{B}_k, y)$
        \State $g_k \gets \widehat{\nabla} f(x_k) - \widehat{\nabla} f(y) + \bar{g}$ \Comment{Unbiased estimate of $\nabla F(x_k)$}
        \State $v_k \gets P.\texttt{direction}(g_k)$ \Comment{Get approx. Newton step $P_j^{-1} g_k$}
        \State $z_{k+1} \gets \frac{1}{1 + \eta \sigma} (\eta \sigma x_k + z_k - \frac{\eta}{L} v_k)$
        \State $w_{k+1} \gets x_k + \theta_1 (z_{k+1} - z_k)$ \Comment{Update parameters}
        \State Sample $\mathcal{U} \sim \textrm{Unif}([0, 1])$
        \If{$\mathcal{U} \leq \pi$} \Comment{Update snapshot \& full gradient with probability $\pi$}
        \State $y \gets w_k$
        \State $\bar{g} \gets \Model.\texttt{get\_full\_grad}(y)$
        \EndIf
        \EndFor
    \end{algorithmic}
\end{algorithm}

\ifpreprint
\paragraph{Explanation of algorithm} SketchyKatyusha is a preconditioned version of Loopless Katyusha \citep{kovalev2020lkatyusha}. The following explanation of the algorithm is adapted from both \cite{allenzhu2018katyusha} and \cite{kovalev2020lkatyusha}.

Similar to SketchySGD, SketchyKatyusha uses the preconditioner update times $\mathcal U$ to determine when the preconditioner and learning rate should be updated. 
However, the update to the learning rate no longer depends on just the learning rate multiplier and estimated preconditioned smoothness $P.\lambda_{\mathcal{P}}$. 
The learning rate for SketchyKatyusha is calculated by estimating the inverse condition number $\sigma$ to determine momentum parameter $\theta_1$, which is followed by using the momentum parameters $\theta_1, \theta_2$ to determine $\eta$. 

The vectors $x_k$ and $z_k$ are key to performing acceleration. 
At each iteration, SketchyKatyusha performs a ``negative momentum'' step, which computes $x_k$ using a convex combination of $z_k$, the snapshot $y$, and current iterate $w_k$.
This step counteracts the momentum provided by $z_k$ by using the snapshot $y$ as a ``magnet'' that prevents $x_k$ from moving too far away from $y$. 
By using negative momentum, we are able to obtain the benefits of both variance reduction and acceleration at the same time.

After the negative momentum step, SketchyKatyusha computes the preconditioned version of the variance-reduced stochastic gradient, $v_k$. 
SketchyKatyusha then computes $z_{k+1}$, which is followed by a Nesterov momentum-like step to update the parameters $w_{k+1}$.

Finally, SketchyKatyusha randomly updates the snapshot $y$ and full gradient $\bar{g}$ with probability $\pi$. 
This random updating allows us to use just one loop in the implementation, unlike the original formulation of Katyusha \citep{allenzhu2018katyusha}, which uses two loops.

\paragraph{Default hyperparameters}
The main hyperparameters in SketchyKatyusha are the gradient and Hessian batchsizes $b_g$ and $b_H$, preconditioner update times $\mathcal U$, momentum multiplier $\alpha$, momentum parameter $\theta_2$, snapshot update probability $\pi$, and strong convexity parameter $\mu$. We recommend setting $b_g$, $b_H$, and $\mathcal U$ similar to SketchySGD. We recommend setting $\alpha = 2/3$, $\theta_2 = 1/2$, $\pi = b_g/n$, and $\mu = \nu$, where $\nu$ is the regularization parameter in the GLM.
\else
\paragraph{Explanation of algorithm} 
SketchyKatyusha, a preconditioned version of Loopless Katyusha \citep{kovalev2020lkatyusha}, updates the preconditioner and learning rate at specified times in $\mathcal{U}$.
The keys to its acceleration are the vectors $x_k$ and $z_k$; at each iteration, a ``negative momentum'' step computes $x_k$ as a convex combination of $z_k$, snapshot $y$, and current iterate $w_k$, which moderates $x_k$'s deviation from $y$. 
This approach merges the advantages of variance reduction and acceleration.

Following this, SketchyKatyusha calculates the preconditioned variance-reduced stochastic gradient $v_k$ and then $z_{k+1}$, and proceeds with a Nesterov momentum-like step to update the parameters $w_{k+1}$. It sporadically updates the snapshot $y$ and full gradient $\bar{g}$ based on a probability $\pi$, enabling a simpler, single-loop implementation instead of Katyusha's original double-loop design \cite{allenzhu2018katyusha}.

\paragraph{Default hyperparameters}
The main hyperparameters in SketchyKatyusha are the gradient and Hessian batchsizes $b_g$ and $b_H$, preconditioner update times $\mathcal U$, momentum multiplier $\alpha$, momentum parameter $\theta_2$, snapshot update probability $\pi$, and strong convexity parameter $\mu$. We recommend setting $b_g$, $b_H$, and $\mathcal U$ similar to SketchySVRG. We recommend setting $\alpha = 2/3$, $\theta_2 = 1/2$, $\pi = b_g/n$, and $\mu = \nu$, where $\nu$ is the regularization parameter in the GLM.
\fi

\subsection{Algorithm recommendations}
\label{subsection:algo_recs}
We present recommended algorithms for ridge regression and $l^2$-regularized logistic regression in \cref{table:alg_to_use_lsq_reg,table:alg_to_use_log_reg}, respectively. 

% \begin{table}[H]
%  \footnotesize
%  \centering
%      \begin{tabular}{C{2.5cm}C{2.7cm}C{2.7cm}C{2.7cm}C{2.25cm}}
%          Regime & Recommendation \newline (full gradients) & Recommendation \newline (streaming $\leq$ 10 epochs) & Recommendation \newline (streaming $>$ 10 epochs) & Preconditioner \\ \hline
%          $n\gg p$ (dense) & SketchyKatyusha & SketchySGD & SketchySAGA & \nyssn{} \\ \hline
%          $n\gg p$ (sparse) & SketchyKatyusha & SketchySGD & SketchySAGA & \ssn{} \\ \hline
%          $n\sim p$ (dense) & SketchyKatyusha & SketchySGD & SketchySAGA & \nyssn{} \\ \hline
%          $n\sim p$ (sparse) & SketchyKatyusha & SketchySGD & SketchySAGA & \ssn{}  \\ \hline
%          $n\ll p$ (sparse) & SketchyKatyusha & SketchySGD & SketchySAGA & \ssn{} \\ 
%      \end{tabular}
%      \caption{Recommended algorithms for ridge regression. 
%      We recommend SketchySGD for streaming settings with limited computation ($\leq$ 10 epochs) as SketchySAGA offers no significant advantage over short durations.}
%      \label{table:alg_to_use_lsq_reg}
% \end{table}

\begin{table}[H]
 \footnotesize
 \centering
     \begin{tabular}{C{2.5cm}C{2.7cm}C{2.7cm}C{2.7cm}C{2.25cm}}
         Data Regime & Recommendation \newline (full gradients) & Recommendation \newline (streaming $\leq$ 10 epochs) & Recommendation \newline (streaming $>$ 10 epochs) & Preconditioner \\ \hline
         Dense & SketchyKatyusha & SketchySGD & SketchySAGA & \nyssn{} \\ \hline
         Sparse & SketchyKatyusha & SketchySGD & SketchySAGA & \ssn{} \\ \hline
     \end{tabular}
     \caption{Recommended algorithms for ridge regression. 
     We recommend SketchySGD for streaming settings with limited computation ($\leq$ 10 epochs) as SketchySAGA offers no significant advantage over short durations.}
     \label{table:alg_to_use_lsq_reg}
\end{table}

% \begin{table}[H]
%  \footnotesize
%  \centering
%      \begin{tabular}{C{2.5cm}C{2.7cm}C{2.7cm}C{2.7cm}C{2.25cm}}
%          Regime &  Recommendation \newline (full gradients) & Recommendation \newline (streaming $\leq$ 10 epochs) & Recommendation \newline (streaming $>$ 10 epochs) & Preconditioner \\ \hline
%          $n\gg p$ (dense) & SketchyKatyusha \newline SketchySAGA & SketchySGD & SketchySAGA & \nyssn{} \\ \hline
%          $n\gg p$ (sparse) & SketchyKatyusha \newline SketchySAGA & SketchySGD & SketchySAGA & \ssn{} \\ \hline
%          $n\sim p$ (dense) & SketchyKatyusha \newline SketchySAGA & SketchySGD & SketchySAGA & \nyssn{} \\ \hline
%          $n\sim p$ (sparse) & SketchyKatyusha \newline SketchySAGA & SketchySGD & SketchySAGA & \ssn{}  \\ \hline
%          $n\ll p$ (sparse) & SketchyKatyusha \newline SketchySAGA & SketchySGD & SketchySAGA & \ssn{} \\ 
%      \end{tabular}
%      \caption{Recommended algorithms for $l^2$-regularized logistic regression.
%      We recommend SketchySGD for streaming settings with limited computation ($\leq$ 10 epochs) as SketchySAGA offers no significant advantage over short durations.}
%      \label{table:alg_to_use_log_reg}
% \end{table}

\begin{table}[H]
 \scriptsize
 \centering
     \begin{tabular}{C{2.5cm}C{2.7cm}C{2.7cm}C{2.7cm}C{2.25cm}}
         Data Regime &  Recommendation \newline (full gradients) & Recommendation \newline (streaming $\leq$ 10 epochs) & Recommendation \newline (streaming $>$ 10 epochs) & Preconditioner \\ \hline
         Dense & SketchyKatyusha \newline SketchySAGA & SketchySGD & SketchySAGA & \nyssn{} \\ \hline
         Sparse & SketchyKatyusha \newline SketchySAGA & SketchySGD & SketchySAGA & \ssn{} \\ \hline
     \end{tabular}
     \caption{Recommended algorithms for $l^2$-regularized logistic regression.
     We recommend SketchySGD for streaming settings with limited computation ($\leq$ 10 epochs) as SketchySAGA offers no significant advantage over short durations.}
     \label{table:alg_to_use_log_reg}
\end{table}

\section{Related work}
\label{section:related_work}
% We review the literature on stochastic second-order and preconditioned stochastic gradient methods for solving the finite-sum optimization \eqref{eq:EmpRiskProb}, with emphasis on work that assumes strong convexity. 
We review the literature on stochastic second-order and preconditioned stochastic gradient methods for solving \eqref{eq:EmpRiskProb}, with emphasis on work that assumes strong convexity. 

\subsection{Stochastic second-order and preconditioned stochastic gradient methods}
The deficiencies of the stochastic first-order methods presented in \cref{section:Introduction} are well-known within the optimization and machine learning communities.
Indeed, in the past decade or so, research on stochastic second-order methods and stochastic preconditioning techniques for finite-sum optimization has exploded.
Roughly, these methods can be divided into three categories: 1) stochastic second-order methods that use full gradients, 2) stochastic second-order methods that use stochastic gradients, and 3) preconditioned stochastic gradient methods.
The dividing line between stochastic second-order methods and preconditioned methods is not always clear, as many preconditioners use second-order information, including the PROMISE framework. 
We review the literature on these three approaches in detail below.

\ifpreprint
\paragraph{Stochastic second-order methods with full gradients.}
We begin with stochastic second-order methods that use full gradients and a stochastic approximation to the Hessian.
To the authors' knowledge, the earliest method of this form targeting \eqref{eq:EmpRiskProb} is \cite{byrd2011use}.
\cite{byrd2011use} subsample the Hessian and use this stochastic approximation in conjunction with an L-BFGS style update. \cite{erdogdu2015convergence,roosta2019sub} independently investigated the application of Newton's method to solve \eqref{eq:EmpRiskProb}, where the Hessian is replaced with an approximation constructed through subsampling.
The subsampled Newton method, as pioneered in these works, serves as the foundation for many subsequent developments in stochastic second-order methods.

In addition to introducing new algorithms, the works discussed above also provide analysis that lead to various convergence guarantees, which we now review. 
The earliest analysis of \cite{byrd2011use} is quite coarse, only showing their method converges to the global optimum, provided the objective is strongly convex and the subsampled Hessian is always positive definite.
\cite{byrd2011use} provides neither a convergence rate nor a theoretical advantage over first-order methods.
The analyses of \cite{erdogdu2015convergence} and \cite{roosta2019sub} yield considerably stronger results.
Both works establish linear convergence in the strongly convex setting. 
Furthermore, \cite{roosta2019sub} prove local superlinear convergence of Subsampled Newton, albeit under certain unattractive assumptions such as exponentially growing the Hessian batchsize $b_H$.  
Despite these assumptions, the results of \cite{roosta2019sub} point to potential benefits of stochastic second-order methods over first-order methods.
We also note the analyses of these papers have been refined by \cite{ye2021Appx,na2022hessian}.
In particular, \cite{na2022hessian} propose a novel averaging scheme for the subsampled Hessian, which achieves local superlinear convergence without requiring a growing Hessian batchsize.
Unfortunately, this approach is limited to settings where the dimension $p$ of the feature vectors is modest,
as it requires forming the subsampled Hessian for averaging, at a computational cost of $O(b_Hp^2)$ and a storage cost of $O(p^2).$
  
As an alternative to subsampling, some methods use sketching to construct a stochastic approximation to the full Hessian \citep{pilanci2017newton,gower2019rsn,lacotte2021adaptive}. 
Sketching the Hessian has two main benefits over subsampling: (i) it generally produces higher-accuracy approximations to the Hessian \citep{martinsson2020randomized}, and (ii) it is robust to the origins of the data. 
By robust, we mean that with an appropriate sketching matrix, the sketch size required to ensure the $\zeta$-spectral approximation property is independent of the ridge leverage coherence.
In detail, if the Hessian approximation is constructed from a sketching matrix belonging to an appropriate random ensemble, the sketch size required to ensure the $\zeta$-spectral approximation property holds with high probability, is only $\Otil(\deff(A))$ \citep{lacotte2021adaptive}.
In contrast, the Hessian batchsize required by subsampling to ensure the $\zeta$-spectral approximation property depends upon the ridge leverage coherence, which can be quite large when the data contains outliers.

However, sketching has several disadvantages relative to subsampling. 
A notable disadvantage of existing sketching-based methods is that they require a full pass through the data to approximate the Hessian, while subsampling methods do not. 
It is desirable to minimize full passes through the data when solving large-scale problems, which limits the usefulness of existing sketching-based methods in this setting.
Another limitation of sketching-based methods, is that they may only be applicable to problems with certain structure.
As a concrete example, the Newton Sketch \citep{pilanci2017newton,lacotte2021adaptive} requires access to a matrix $R$ such that $\nabla^2 f(w) = R^{T}R$. 
While such a matrix is always available when $f$ is a GLM, this is not the case for more general losses. 
In contrast, subsampling can always be used to approximate the Hessian of a finite-sum objective, regardless of the form of the loss function. 
Last, we note that if $f$ is a GLM, then the Newton Sketch with a row-sampling sketching matrix is equivalent to Subsampled Newton.

The analysis guarantees of sketching-based (approximate) second-order methods are similar to their subsampled counterparts. 
\cite{pilanci2017newton,lacotte2021adaptive} focus on self-concordant functions for their global convergence analysis and show fast linear convergence independent of the condition number. 
Additionally, \cite{pilanci2017newton} show local superlinear convergence for smooth and strongly convex objectives with Lipschitz Hessians,  but require a sketch size that depends on the condition number of the problem, which can be larger than $n$ when the problem is ill-conditioned. 
However, in the setting where $p$ is moderate in size, this issue may be resolved by using the Hessian averaging scheme of \cite{na2022hessian}.
\cite{gower2019rsn} prove convergence for functions that are relatively smooth and relatively convex, a generalization of smoothness and strong convexity to the local Hessian norm. 
They establish linear convergence with a rate that depends upon the \emph{relative condition number} and the smallest non-zero eigenvalue of an expected projection matrix.
When the objective is quadratic, the relative condition number equals $1$ and so the convergence rate is independent of the condition number, which shows an improvement of over first-order methods.
\else
\fi

\ifpreprint
\paragraph{Stochastic second-order methods with full gradients for kernel methods.} 
In machine learning, the primary metric of interest is generalization error, i.e., the error made by the model on unseen data. 
Motivated by this, a line of work beginning with \cite{rudi2017falkon} and culminating with \cite{marteau2019globally}, has developed fast algorithms for $l^2$-regularized kernel methods, for which low generalization error is the main goal.  
These approaches may be viewed as sketching-based Newton methods to solve a reduced form of the objective.
Both \cite{rudi2017falkon}, and \cite{marteau2019beyond} leverage the small effective dimension of the kernel operator to reduce the original problem involving an $n\times n$ kernel matrix to a smaller problem with an $n\times \bigO(\deff)$ kernel matrix.
However, the resulting optimization problem is highly ill-conditioned, and $n$ is very large, so that exact Newton's method is prohibitively expensive.
To address this issue, \cite{rudi2017falkon} and \cite{marteau2019globally} use preconditioned Newton Conjugate Gradient (Newton-CG). 
In particular, they exploit that $n\gg \deff$, so that sketch-and-precondition style preconditioners \citep{martinsson2020randomized} may be used with PCG, which leads to a fast solution of the Newton system at each iteration.
This approach is closely related to the Newton Sketch. 
Indeed, the Newton Sketch directly solves an approximate Newton system with an approximate Hessian, while \cite{rudi2017falkon,marteau2019globally} solves the exact Newton system with PCG, using the approximate Hessian as the preconditioner.
Significantly, \cite{rudi2017falkon,marteau2019globally} both show that their proposed algorithms achieve optimal statistical rates, despite reducing the original problem to a smaller problem more amenable to computation.
When statistical error is the primary metric of interest, the algorithms of \cite{rudi2017falkon,marteau2019globally} can deliver significant computational gains.
The main deficit of these approaches is that it can be difficult to select the number of centers a priori, as the effective dimension is unknown.
This can be problematic in practice, as if too few centers are chosen, the statistical performance will be much worse than that obtained by solving the exact problem \citep{diaz2023robust}.
Last, when $n$ is large enough, even computing full gradients involving the reduced kernel matrix is too expensive, which necessitates the use of stochastic gradients.
\else
\fi

\paragraph{Stochastic second-order methods with stochastic gradients for solving \eqref{eq:EmpRiskProb}.}

In scenarios where both $n$ and $p$ are large, full gradients become prohibitively expensive. 
Consequently, a scalable second-order method must use both stochastic gradients and stochastic Hessian approximations. 
To address this challenge, many methods that employ fully stochastic first- and second-order information have been proposed for solving \eqref{eq:EmpRiskProb}.
All such proposals employ subsampling-based approximations to the Hessian \citep{byrd2016stochastic,moritz2016linearly,gower2016stochastic,bollapragada2018progressive, roosta2019sub, bollapragada2019exact,derezinski2022stochastic}.
These methods can be further categorized based on whether they directly compute the search direction by applying the inverse subsampled Hessian \citep{roosta2019sub,bollapragada2019exact,derezinski2022stochastic}, or by using the subsampled Hessian to stabilize an L-BFGS style update \citep{byrd2016stochastic,moritz2016linearly,gower2016stochastic,bollapragada2018progressive}.
Hence most stochastic second-order methods that use stochastic gradients have their roots in the full-gradient methods of \cite{byrd2011use,erdogdu2015convergence,roosta2019sub}. 

The convergence guarantees of existing proposals vary greatly, often leaving much to be desired. 
\cite{byrd2016stochastic} established an $\bigO(1/k)$-rate for their stochastic L-BFGS method assuming strong convexity. 
However, their analysis relies on two restrictive assumptions: (i) bounded variance of stochastic gradients and (ii) strict positive definiteness of the subsampled Hessian. 
The first is known to be false for strongly convex functions, unless the iterates lie in a compact set, and the second fails in common applications such as GLMs, where the subsampled Hessian is singular unless $b_H \geq p$.
\cite{roosta2019sub} present a range of convergence results for a variety of settings, including when $F$ is strongly convex, for which they establish fast local linear convergence. 
%Specifically, they show with high probability that after $\bigO(\kappa^2)$ iterations, the iterates enter a small neighborhood of the optimum, after which they converge linearly to the optimum at a rate independent of the condition number.
Hence the Subsampled Newton method enjoys the fast local convergence of Newton's method, albeit at a linear rather than quadratic rate.
Nevertheless, the fast local convergence rate shows an advantage over stochastic first-order methods, whose convergence depends upon the condition number, regardless of how close the iterates are to the optimum. 
Unfortunately, in order to achieve their fast local convergence result, \cite{roosta2019sub} require an exponentially increasing gradient batchsize, and that the subsampled Hessian batchsize satisfy $b_H = \Otil(\kappa/\epsilon^2)$, where $\epsilon \in (0,1)$. 
Hence the theoretical analysis requires rapidly growing gradient batchsizes and large Hessian batchsizes, which is antithetical to the purpose of stochastic methods.

The first work to obtain a linear rate of convergence for solving \eqref{eq:EmpRiskProb} without requiring large/growing gradient and Hessian batchsizes was \cite{moritz2016linearly}.
\cite{moritz2016linearly} combine the stochastic L-BFGS method of \cite{byrd2016stochastic} with SVRG to reduce the variance of the stochastic gradients without growing the gradient batchsize. 
Although \cite{moritz2016linearly} proves global linear convergence, fast local linear convergence is not established, and no theoretical benefit over SVRG is demonstrated.
Similar remarks hold for the stochastic L-BFGS methods of \cite{gower2016stochastic,bollapragada2018progressive}.
More recently, \cite{derezinski2022stochastic} proposed Stochastic Variance Reduced Newton (SVRN), which combines Subsampled Newton with SVRG. 
\cite{derezinski2022stochastic} shows SVRN exhibits fast local linear convergence.
However, the analysis requires the gradient batchsize to satisfy $b_g = \Otil(\kappa)$, which is still very large, 
and can easily exceed $n$ for ill-conditioned problems.

\paragraph{Preconditioned stochastic gradient methods.}

Taking a general viewpoint, the stochastic second-order methods discussed above are all special cases of preconditioned stochastic gradient methods, where the current preconditioner $P_k$ is based on an approximation to the Hessian matrix.
An early notable proposal is the preconditioned SVRG algorithm by \cite{gonen2016solving}, which employs a preconditioner obtained through low-rank approximation to the Hessian using the randomized block Krylov method \citep{musco2015randomized}. 
Despite exhibiting improvements over SVRG, this approach requires multiple passes through the data matrix, hindering its suitability for larger problems. Another method, SVRG2 by \cite{gower2018tracking}, combines SVRG with a preconditioner based on a randomized Nyström approximation to the Hessian, but also requires a costly full pass through the data at every outer iteration. 
\cite{liu2019acceleration} propose preconditioned variants of SVRG and Katyusha using either the covariance matrix or a diagonal approximation. 
However, the former is impractical for large-scale settings, and the latter, although scalable, may perform poorly. 
Additionally, their fixed preconditioner approach may lead to suboptimal performance for non-quadratic problems. 

\paragraph{Relation to PROMISE.}
The methods most closely related to PROMISE are Subsampled Newton \citep{roosta2019sub} and SVRN \citep{derezinski2022stochastic}.
When a PROMISE method uses the \ssn{} preconditioner, it may be viewed as combining Subsampled Newton with the corresponding stochastic gradient algorithm.
Variance reduction stabilizes PROMISE iterations to allow linear convergence without exponentially growing gradient batchsizes,
unlike the batchsizes required by Subsampled Newton \citep{roosta2019sub}. % requires to achieve linear convergence.
As SVRN is simply Subsampled Newton combined with SVRG, it follows that SketchySVRG equipped with the \ssn{} preconditioner is equivalent to SVRN. 
However, despite this equivalence, the aims of this work and those of \cite{derezinski2022stochastic} are quite different. 
\cite{derezinski2022stochastic} focuses on proving fast local linear convergence under the hypotheses of large gradient minibatches: their results require $b_g = \Otil(\kappa)$.
Moreover, \cite{derezinski2022stochastic} only suggests using SVRN to finish off the optimization, and that in the beginning, Subsampled Newton with full gradients should be used to get the iterates sufficiently close to the optimum.
This recommendation contrasts with this work, which shows global linear convergence, allows for lazy updates, and admits variance reduction schemes beyond SVRG.
We also prove fast local linear convergence of SketchySVRG with moderate gradient batchsizes, 
a significant theoretical and practical improvement over the requirements of \cite{derezinski2022stochastic}. 

To facilitate a straightforward comparison between PROMISE and prior work, we present \cref{table:2nd-ord-comp}. 
\cref{table:2nd-ord-comp} compares the properties of various stochastic second-order methods for solving \eqref{eq:EmpRiskProb} when $F$ is a GLM.
We select SketchySVRG as a representative for PROMISE. 
Inspection of \cref{table:2nd-ord-comp} shows SketchySVRG is the method that enjoys the best batchsize requirements, while still attaining fast local linear convergence.
Indeed, $\chi^\nu(\nabla^2f(w))\deff(\nabla^2f(w))$ and $\tau_\star^\nu$ are always smaller than $\kappaMax$ and $n$ (\cref{lem:deff_vs_kappa}, \cref{lem:hess_sim}).
Moreover, it is the only method in \cref{table:2nd-ord-comp} whose theory accounts for lazy updates to the preconditioner, which is essential for good practical performance.  

\ifpreprint
\subsection{Lazy updating and Newton's method.}
One of the major contributions of this work is to show linear convergence is still attainable (without harming the rate) even when the stochastic Hessian approximation is updated \emph{infrequently}. 
In the literature, this is known as \emph{lazy updating} or \emph{lazy Hessians} \citep{doikov2023second}. 
Despite being of immense practical interest, convergence of Newton's method under lazy updating in the \emph{full gradient} setting was only recently analyzed in \cite{doikov2023second}, which proved an optimal update frequency of $p$ iterations.
Our setting is quite different from \cite{doikov2023second}, as our focus is on solving the smooth convex finite-sum problem \eqref{eq:EmpRiskProb} with preconditioned stochastic gradient methods based on stochastic Hessian approximations. 
Hence the analysis in \cite{doikov2023second} is incompatible with PROMISE.
\else
\fi

\ifpreprint
\subsection{Fast global convergence in theory?}
Of all the stochastic second-order methods with stochastic gradients that we have discussed, none show a \emph{global improvement} over stochastic first-order methods---at best they show improved local convergence. 
Moreover, the worst-case global convergence rate of each method scales as $\Otil(\kappa^2)$, which is a factor of $\kappa$ worse than that of stochastic variance-reduced first-order methods.
This gap has been noticed in the literature: \cite{gower2020variance} discusses the disappointing convergence guarantees of existing second-order stochastic methods, noting that none of them have a better global convergence rate than that of SVRG.
Given that the optimal number of stochastic gradient queries for any method using a stochastic first-order oracle is achieved by Katyusha (with $b_g = 1$) \citep{woodworth2016tight,allenzhu2018katyusha}, it provides the more appropriate baseline. 
Making the previous substitution leads to the following problem, which to our knowledge, is open.

\paragraph{Open problem.} \textit{Suppose} $F$ \textit{is given by} \eqref{eq:EmpRiskProb}, \textit{is smooth, strongly convex, and each} $F_i$ \textit{has a Lipschitz Hessian.} \textit{Does there exist a a stochastic second-order method that matches or improves upon the} $\bigO\left((n+\sqrt{n\kappaMax})\log(1/\epsilon)\right)$ \textit{stochastic gradient queries of Katyusha?}

In this work we provide an affirmative answer to this question, albeit it is mainly of theoretical interest.
The solution is based on the fact that SketchySVRG exhibits fast local linear convergence, independent of the condition number.
To obtain the improved query complexity, we propose a hybrid algorithm that consists of two phases.
Phase 1 runs Katyusha with batchsize $1$ until the iterates reach the region of local convergence of SketchySVRG, at which point Phase 2 commences. 
Phase 2 simply runs SketchySVRG as in \cref{thm:sksvrg_loc_con}, until the desired precision is reached.
Now, with high probability, the number of queries required by Katyusha is $\bigO(n+\sqrt{n\kappaMax})$, while for SketchySVRG the number of queries is bounded as $\bigO(n\log(1/\epsilon))$, where we have used \cref{thm:sksvrg_loc_con} and the fact that $\tau_\star^\nu\leq n$. 
Hence, the total number of queries required hybrid algorithm is $\bigO(\sqrt{n\kappaMax}+n\log(1/\epsilon))$.
Thus, the proposed hybrid algorithm improves upon Katyusha by a logarithmic factor, providing an affirmative answer to the open question.
However, from a practical viewpoint, this algorithm is very unsatisfactory, as it does not match how stochastic second-order methods are actually used. 
Moreover, empirically SketchySVRG outperforms vanilla Katyusha, see \cref{section:experiments}.

Although the hybrid algorithm discussed above is purely theoretical, we believe it is unlikely that any other stochastic second-order method can significantly improve upon it in the worst-case. 
Our reasoning for this belief is rooted in work on lower bounds for second-order methods in the full gradient setting, where it has been known for some time that second-order methods do not converge faster than first-order methods in the worst-case.  
Indeed, in their seminal work, \cite{nemirovskij1983problem} note (without proof) that for an arbitrary smooth and strongly convex function $F$, second-order methods do not improve upon first-order methods.
More relevant to our setting is \cite{arjevani2017oracle}, who consider \eqref{eq:EmpRiskProb}, and in addition to assuming smoothness and strong convexity, allow $F$ to have a Lipschitz Hessian.
They show even under these hypotheses, that in the worst-case, second-order methods do not provide an improvement (up to logarithmic factors) over the convergence rate of accelerated gradient descent.
\cite{arjevani2017oracle}'s disappointing conclusion also applies to approximate schemes like Subsampled Newton methods \citep{erdogdu2015convergence,roosta2019sub,bollapragada2019exact}.
Hence, in the worst-case, when full gradients are used, no global improvement is possible from using second-order information.
These negative conclusions are quite surprising, as in practice Newton's method often converges in tens of iterations on problems for which first-order methods converge slowly.
Nevertheless, given these negative results for the full gradient setting, we hypothesize that (without further assumptions) no stochastic second-order method can significantly improve upon the query complexity of the proposed hybrid algorithm. 
It remains an interesting open question whether there is a stochastic second-order algorithm that achieves the complexity of the hybrid algorithm, while using second-order information throughout the optimization, which would match how second-order methods are used in practice. 
% At present, when full gradients are used, the optimal second-order scheme (for appropriate values of problem parameters) is simply to run AGD until the iterates reach the region of quadratic convergence and then switch to Newton's method \citep{arjevani2019oracle}, which is essentially a full gradient analogue of the proposed hybrid scheme.  
% This scheme is essentially first-order and does not match how second-order methods are used in practice.

The previous paragraph is pessimistic in its conclusions, but it does not imply that stochastic second-order methods do not yield faster global convergence in practice---they clearly do. 
The purpose of the preceding discussion is to delineate what kind of convergence guarantees are reasonable to expect from stochastic second-order methods. 
If the class of functions considered is not further restricted by adding more assumptions, the previous discussion shows it is unreasonable to expect an improvement over the global convergence rate of stochastic first-order methods. 
% Indeed, finding a (non-trivial) stochastic second-order scheme that matches Katyusha's rate seems daunting, given how in the full gradient setting, the optimal scheme is essentially AGD for the vast majority of the optimization.
Thus, in terms of theoretical guarantees, any reasonable stochastic second-order method should possess either (or both) of the following properties: (i) fast global linear convergence on quadratic objectives, (ii) fast local convergence, independent of the condition number, once the iterates are sufficiently close to the optimum.
Improved convergence results beyond these two items seem very unlikely, unless further assumptions are imposed.
We note the proposed PROMISE family satisfy these desiderata.
\else
\fi

\begin{table}[t] 
\scriptsize
\centering 
    \begin{tabular}{C{2cm}C{2cm}C{5cm}C{2cm}C{2cm}}
        Algorithm & $b_g$ & $b_H$/\newline Sketch size & Lazy preconditioner updates & Fast local-linear convergence \\ \hline
        SketchySVRG \newline (\cref{alg:sksvrg}) & $\Otil(\tau_\star^{\nu})$ & \newline $\Otil\left(\frac{\chi^\nu (A^T\Phi''(Aw)A)\deff(A^T\Phi''(Aw)A)}{\zeta^2_0}\right)$ \newline & \textcolor{blue}{\cmark} & \textcolor{blue}{\cmark} \\ \hline
        Subsampled Newton \newline \citep{roosta2019sub} & Exponentially \newline increasing & $\Otil(\frac{\kappaMax}{\zeta^2_0})$& \textcolor{purple}{\xmark} & \textcolor{blue}{\cmark}\\ \hline
        Newton Sketch \newline \citep{lacotte2021adaptive} & Full & $\Otil\left(\frac{d^{\nu}_{\textrm{eff}}(A^T\Phi''(Aw)A)}{\zeta^2_0}\right)$& \textcolor{purple}{\xmark} & \textcolor{blue}{\cmark} \\ \hline
        SVRN \newline \citep{derezinski2022stochastic} & $\Otil(\kappaMax)$ & $\Otil\left(\frac{\kappaMax}{\zeta_0^2}\right)$ & \textcolor{purple}{\xmark} & \textcolor{blue}{\cmark} \\ \hline
        SLBFGS \\ \citep{moritz2016linearly} & Constant & Constant & \textcolor{purple}{\xmark} & \textcolor{purple}{\xmark} \\ \hline
        Progressive \newline Batching L-BFGS \newline \citep{bollapragada2018progressive} & Increasing & Increasing & \textcolor{purple}{\xmark} & \textcolor{purple}{\xmark} \\
    \end{tabular}
    \caption{Comparison of preconditioned stochastic gradient methods for solving \eqref{eq:EmpRiskProb} when $F$ is a GLM. Here $b_g$ and $b_H$ are gradient and Hessian batchsizes, $\kappaMax = L_{\textrm{max}}/\nu$ is the condition number, $\chi^\nu(A^{T}\Phi(Aw)A)$ and $\deff(A^{T}\Phi(Aw)A)$ are the ridge-leverage coherence and effective dimension of the Hessian, while $\tau_\star^\nu$ denotes the Hessian dissimilarity.
    Note $\chi^\nu(A^{T}\Phi(Aw)A)\deff(A^{T}\Phi(Aw)A)$ and $\tau_\star^\nu$ are always smaller than $\kappaMax$. 
    Hence, of all the methods, SketchySVRG has the best required gradient and Hessian batchsizes, and is the only method whose theory accounts for lazy updates.} 
    \label{table:2nd-ord-comp}
\end{table}

% Another well-known alternative is the Newton Sketch \citep{pilanci2017newton,lacotte2021adaptive}. However, this approach does not make in the setting of stochastic gradients, as sketching the Hessian requires a whole pass through the data.
% The one exception is when we use a row sampling sketch, in which case Subsampled Newton and the Newton Sketch are the same algorithm. 
\section{Theory}
\label{section:theory}
In this section we establish (global and local) linear convergence results for the PROMISE methods on smooth, strongly convex, finite-sum objectives 
(including but not restricted to GLMs).
This section begins with our assumptions and then introduces the key concepts of quadratic regularity and the quadratic regularity ratio, which generalize the notions of strong convexity, smoothness, and condition number to the Hessian norm.
\ifpreprint
We then provide a simple theoretical example illustrating why $\zeta$-spectral approximations and quadratic regularity should improve convergence over vanilla gradient descent.
We follow this by introducing Hessian dissimilarity, which plays a key role in analyzing PROMISE methods with stochastic gradients, and preconditioner stability, which is important for proving convergence under lazy updating.
\else
We follow this by introducing Hessian dissimilarity, which plays a key role in analyzing PROMISE methods with stochastic gradients.
\fi
Finally, we state the main convergence theorems and provide a convergence proof for SketchySVRG which illustrates the techniques in our analysis. Our first theorem establishes global linear convergence, while our second theorem shows global SketchySVRG achieves fast, local convergence independent of the condition number.
Any proofs not provided in this section can found in the \href{https://arxiv.org/abs/2309.02014v2}{arxiv report}.
Linear convergence results for SketchySAGA and SketchyKatyusha are also available in the \href{https://arxiv.org/abs/2309.02014v2}{arxiv report}.
\ifpreprint
Proofs not presented in the main text are in \cref{sec:theory_pfs}.

As there are numerous ideas and results in this section, we provide \cref{table:theory_summary}, which lists the key ideas and results along with cross references to where they appear.

\begin{table}[t]
 \centering
     \begin{tabular}{C{4cm}C{6.5cm}C{2.5cm}}
     %\hline
         \textbf{Idea/result} & \textbf{Expression(s)} &\textbf{Reference} \\ \hline
         Quadratic regularity & $\gamma_u(\mathcal C), \gamma_\ell(\mathcal C), \q(\mathcal C)$   & \cref{def:QuadReg} \\ \hline
         Hessian dissimilarity & $\tau_\star^{\nu}(\mathcal C)$ & \cref{def:hess_sim} \\ \hline
         Preconditioner stability & $\|w_j-w_\star\|^2_{P_{j+1}}/\|w_j-w_\star\|^2_{P_j},$\newline $\|w_{j}-w_\star\|^2_{P_{j-1}}/\|w_{j}-w_\star\|^2_{P_{j}}$ & \cref{prop:stable_precond} \\ \hline
         Expected preconditioned smoothness & $\E\|\widehat \nabla F(w)-\widehat \nabla F(w')\|^2_{P^{-1}}$ & \cref{prop:precond_grad_var} \\ \hline
         SketchySVRG convergence & $F(w)-F(w_\star)$ & \cref{thm:conv_SketchySVRG} \\ \hline
         SketchySVRG fast local convergence & $F(w)-F(w_\star)$& \cref{thm:sksvrg_loc_con} \\ \hline
         SketchySAGA convergence & $\|w-w_\star\|^2_{P_k}$ & \cref{thm:conv_sksaga} \\ \hline
         SketchyKatyusha convergence &  $F(w)-F(w_\star), F(y)-F(w_\star), \|z-w_\star\|_{P_k}^2$ & \cref{thm:conv_skkat}  \\
         \hline
     \end{tabular}
     \caption{A summary of the main ideas and theoretical results of this work.}
     \label{table:theory_summary}
\end{table}
\else
\fi

\subsection{A subtlety in notation}
We index the preconditioner in two different ways in this section;
sometimes we denote the preconditioner by $P_j$, and other times we denote the preconditioner by $P_k$ (or in the case of SketchySVRG, $P_k^{(s)}$).
In this setting, $j$ indexes the iterate where the preconditioner is constructed, while $k$ (or $_k^{(s)}$) indexes the current iterate in the algorithm.

There is a simple way to map a $P_k$ to the corresponding $P_j$. 
If the preconditioner update indices $\mathcal U = \{u_1, u_2, \ldots, u_m \}$, then a given $P_k$ for $k \in \{u_i, u_i+1, \ldots, u_{i+1} - 1\}$ is the same as $P_j$ for $j = u_i$.
As a concrete example, suppose $\mathcal U = \{0, 4, 10\}$.
Then, for $k \in \{0, 1, 2, 3\}$, $P_k$ is the same as $P_j$ where $j = 0$; for $k \in \{4, 5, 6, 7, 8, 9\}$, $P_k$ is the same as $P_j$ where $j = 4$; for $k \in \{10, 11, \ldots\}$, $P_k$ is the same as $P_j$ where $j = 10$.

\subsection{Assumptions}
% Here we give some general assumptions that will be needed in the convergence analyses for all the algorithms that we consider.
Here we provide assumptions that will be needed in the convergence analyses of the PROMISE methods.
\begin{assm}[Smoothness and convexity]
\label{assm:SmoothSC}
For each $i\in [n]$, $f_i(w)$ is $L_i$-smooth and convex.
\end{assm}
The above assumption is standard in the analysis of stochastic gradient methods for solving \eqref{eq:EmpRiskProb}.

\begin{assm}[$\zeta$-spectral approximation]
\label{assm:GoodPrecond}
    If the preconditioner $P_{j}$ was constructed at $w_{j}$, where $j\in \mathcal U$, then
    \[
    (1-\zeta) P_{j} \preceq \nabla^2 f(w_j)+\nu I \preceq (1+\zeta) P_{j},
    \]
    where $ \zeta \in (0,1)$.
\end{assm}
\cref{assm:GoodPrecond} states each preconditioner constructed by the algorithm satisfies the $\zeta$-spectral approximation property, which is reasonable as PROMISE preconditioners satisfy this property with high probability.
\cref{assm:GoodPrecond} can be viewed as conditioning on the event that the preconditioners constructed by the algorithm satisfy the $\zeta$-spectral approximation property, which holds with high probability via union bound.
% Moreover, if new preconditioning techniques are developed that lead to better preconditioners satisfying the $\zeta$-spectral approximation property,
\cref{assm:GoodPrecond} makes it easy to establish convergence of any PROMISE method equipped with this preconditioner. 

\ifpreprint
\begin{assm}[Finite number of preconditioner updates]
\label{assm:FiniteUpates}
Let $P_k$ denote the preconditioner at iteration $k$. There exists $J\geq 0$, such that for all $k\geq J$ we have
\[
P_k = P_{J}.
\]
\end{assm}
This last assumption is somewhat technical in nature, and is needed only for SketchySAGA and SketchyKatyusha.
The need for \cref{assm:FiniteUpates} stems from the convergence analysis of SketchySAGA and SketchyKatyusha involving the metric $\|\cdot\|_{P_k}$, and having to compare it to the next (previous) metric $\|\cdot\|_{P_{k+1}} (\|\cdot\|_{P_{k-1}})$.
Consequently, if the number of preconditioner updates is not finite, we cannot guarantee that the quantities in $\mathcal E_P$ and $\mathcal B_P$ in \cref{prop:stable_precond} are finite, which prevents establishment of convergence.
By instating \cref{assm:FiniteUpates}, we remove this obstacle.
From the viewpoint of theory, this assumption isn't very limiting, as once the iterates are close enough to the optimum, the benefit from updating the preconditioner is negligible.
Nevertheless, we believe \cref{assm:FiniteUpates} is unnecessary in practice, and is an artifact of the analysis.
\else

\fi

\subsection{Technical preliminaries}
\subsubsection{Quadratic regularity}
We start by introducing the upper and lower quadratic regularity constants for a smooth, convex function $F:\mathcal C \mapsto \R$, where $\mathcal C$ is a closed convex subset of $\R^p$.
These ideas are crucial for establishing linear convergence under infrequent updating of the preconditioner.  
\begin{definition}[Quadratic regularity]
\label[definition]{def:QuadReg}
    % The \emph{upper quadratic regularity constant} is defined by
    %  \begin{equation}
    %     \gamma_u(\mathcal C) \coloneqq \sup_{w_0\in \mathcal{C}}\left(\sup_{w_1,w_2\in \mathcal{C}, w_1\neq w_2} \int^1_0 2(1 - t) \frac{\|w_2-w_1\|_{\nabla^2F(w_1+t(w_2-w_1))}^2}{\|w_2-w_1\|^2_{\nabla^2 F(w_0)}}dt\right).
    % \end{equation} 
    % Similarly, the \emph{lower quadratic regularity constant} is defined by
    %   \begin{equation}
    %     % \gamma_\ell = \inf_{w,w'\in \mathcal{C}}\left(\inf_{v\in \mathcal{C}_w\setminus\{0\}} \frac{\|v\|_{H(w)}^2}{\|v\|^2_{H(w')}}\right).
    %       \gamma_\ell(\mathcal C) \coloneqq \inf_{w_0\in \mathcal{C}}\left(\inf_{w_1,w_2\in \mathcal{C},w_1\neq w_2} \int^1_0 2(1 - t) \frac{\|w_2-w_1\|_{\nabla^2 F(w_1+t(w_2-w_1))}^2}{\|w_2-w_1\|^2_{\nabla^2 F(w_0)}}dt\right).
    % \end{equation}
    Let $F$ be a twice differentiable function, and $\mathcal C$ a closed convex set.
    Then we say $F$ is $\mathcal C$-upper quadratically regular, if there exists $0\leq \gamma_u(\mathcal C)<\infty$, such that for all $w_0,w_1,w_2\in \mathcal C$
    \[
    F(w_2)\leq F(w_1)+\langle \nabla F(w_1),w_2-w_1\rangle +\frac{\gamma_{u}(\mathcal C)}{2}\|w_2-w_1\|^2_{\nabla^2 F(w_0)}.
    \]
    Similarly, we say $F$ is $\mathcal C$-lower quadratically regular, if there exists $0<\gamma_{\ell}(\mathcal C)$,  such that for all $w_0,w_1,w_2\in \mathcal C$
    \[
        F(w_2)\geq F(w_1)+\langle \nabla F(w_1), w_2 -w_1\rangle+\frac{\gamma_\ell(\mathcal C)}{2}\|w_2-w_1\|_{\nabla^2 F(w_0)}^2.
    \]
    We say $F$ is $\mathcal{C}$-quadratically regular if $0<\gamma_\ell$ and $\gamma_u<\infty$.
    Further, if $F$ is $\mathcal{C}$-quadratically regular, we define the \emph{quadratic regularity ratio} to be 
    \[
    \q(\mathcal C) \coloneqq \frac{\gamma_u(\mathcal C)}{\gamma_\ell(\mathcal C)}.
    \]
    Moreover, if $F(w) = \frac{1}{n}\sum_{i=1}^{n}F_i(w)$, and each $F_i$ is $\mathcal C$-quadratically regular, we denote the corresponding quadratic regularity constants by $\gamma_{u_i}(\mathcal C)$ and $\gamma_{\ell_i}(\mathcal C)$, and we define
    \[
    \gammaMax(\mathcal C) \coloneqq \max_{i\in[n]} \gamma_{u_i}, \quad \gammaMin(\mathcal C) \coloneqq \min_{i\in [n]}\gamma_{\ell_i}.
    \]
\end{definition}
%The terminology in \cref{def:QuadReg} is inspired by the following proposition, which shows that if $F$ is quadratically regular in $\mathcal C$, then $F$ can be upper- and lower-bounded in terms of the Hessian at any $w_0 \in \mathcal C$.
Quadratic regularity holds whenever $F$ can be upper- and lower-bounded in terms of the Hessian at any $w_0 \in \mathcal C$. 
Hence the upper and lower quadratic regularity constants may be viewed as global generalizations of the smoothness and strong convexity constants to the Hessian norm $\|\cdot\|_{\nabla^2F(w)}$. 
Moreover, the quadratic regularity ratio $\q$ generalizes the condition number $\kappa$ to the Hessian norm. 
% \begin{proposition}
% \label[proposition]{prop:local_bnds}
% Let $F$ be $\mathcal C$-quadratically regular.
% Then for any $w_0,w_1,w_2 \in \mathcal C$,
% \begin{equation}
%     F(w_2)\leq F(w_1)+\langle \nabla F(w_1), w_2 -w_1\rangle+\frac{\gamma_u(\mathcal C)}{2}\|w_2-w_1\|_{\nabla^2 F(w_0)}^2,
% \end{equation}
% \begin{equation}
%     F(w_2)\geq F(w_1)+\langle \nabla F(w_1), w_2 -w_1\rangle+\frac{\gamma_\ell(\mathcal C)}{2}\|w_2-w_1\|_{\nabla^2 F(w_0)}^2, 
% \end{equation}

% \begin{equation}
%     \|\nabla F(w_2)-\nabla F(w_1)\|_{\nabla^2 F(w_0)^{-1}}\leq \gamma_u(\mathcal C)\|w_2-w_1\|_{\nabla^2 F(w_0)}.
% \end{equation}
% \end{proposition}
% The proof is given in \cref{subsec:quad_reg_pfs}.
Upper and lower quadratic regularity expand upon stable Hessians from \cite{karimireddy2018global} and its refinements relative smoothness and relative convexity from
\cite{gower2019rsn}. 
The relative smoothness and relative convexity parameters from \cite{gower2019rsn} are defined similarly to the quadratic regularity constants, except they lack the outer supremum and infimum present in the definitions of $\gamma_u$ and $\gamma_\ell$, respectively. 
Unfortunately, relative smoothness and relative convexity are insufficient for our analysis, which incorporates infrequent updating. 
Under infrequent updating, the preconditioner is constructed at a point $w_0\neq w_1,w_2$.
Relative smoothness only provides bounds in terms of $\nabla^2 F(w_1)$, whereas our analysis requires bounds in terms of $\nabla^2 F(w_0)$, i.e., the Hessian at the iterate where the preconditioner is constructed, which is exactly what quadratic regularity provides.

The additional power given by quadratic regularity could imply it only holds for a restrictive class of functions.
However, the following proposition shows this is not the case, as quadratic regularity holds under many standard hypotheses, including smoothness and strong convexity.
% The proof is provided in \cref{subsec:quad_reg_pfs}.
\ifpreprint
\begin{proposition}[Sufficient conditions for quadratic regularity]
\label[proposition]{proposition:QuadRegConds}
    The following conditions all imply $F$ is $\mathcal C$-quadratically regular: 
    \begin{enumerate}
        \item The function $F$ is $L$-smooth and $\mu$-strongly convex over $\mathcal C$. 
        Then $F$ is $\mathcal C$-quadratically regular with 
        \[
        \frac{\mu}{L}\leq \gamma_\ell(\mathcal C) \leq \gamma_u(\mathcal C) \leq \frac{L}{\mu}.
        \]
        \item The function $F$ is $\mu$-strongly convex and has a $M$-Lipschitz Hessian over $\mathcal C$, and $\mathcal C$ is compact with diameter $D$. 
        Then $F$ is $\mathcal C$-quadratically regular with 
        \[
        \left(1+\frac{MD}{\mu}\right)^{-1}\leq \gamma_\ell(\mathcal C) \leq \gamma_u(\mathcal C) \leq 1+\frac{M D}{\mu}.
        \]
        \item The function $F$ is $k$-generalized self-concordant \citep{bach2010self} over $\mathcal{C}$, and $\mathcal C$ is compact with diameter $D$. 
        Then $F$ is $\mathcal C$-quadratically regular with 
        \[
        \exp{(-kD)}\leq \gamma_{\ell}(\mathcal C)\leq \gamma_u(\mathcal C)\leq \exp{(kD)}.
        \]
        \item The function $F(w) = \frac{1}{n}\sum_{i=1}^n \phi_i(a_i^Tw)+\frac{\nu}{2}\|w\|^2$, that is $F$ is a \emph{GLM}. 
        Then $F$ is $\mathcal{C}$-quadratically regular with
        \[
        \frac{\ell \lambda_{1}(\hat \Sigma)+\nu}{u \lambda_{1}(\hat \Sigma)+\nu } \leq \gamma_\ell(\mathcal C) \leq \gamma_u(\mathcal C) \leq \frac{u \lambda_{1}(\hat \Sigma)+\nu}{\ell \lambda_{1}(\hat \Sigma)+\nu},
        \]
        where $\hat \Sigma = \frac{1}{n}A^TA$, $u = \sup_{1\leq i\leq n}\left(\sup_{w \in \mathcal C} \phi''_i(a^T_iw)\right)$, and $\ell =  \inf_{1\leq i\leq n}\left(\inf_{w \in \mathcal C} \phi''_i(a^T_iw)\right)$.
    \end{enumerate}
\end{proposition}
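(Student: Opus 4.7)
The common strategy across all four parts is to start from the second-order Taylor expansion with integral remainder,
\[
F(w_2) = F(w_1) + \langle \nabla F(w_1), w_2 - w_1\rangle + \int_0^1 (1-t)\, \|w_2-w_1\|^2_{\nabla^2 F(w_1 + t(w_2-w_1))}\, dt,
\]
and then show that for every point $\xi$ on the segment $[w_1,w_2]\subseteq \mathcal C$ and every $w_0 \in \mathcal C$, the two Hessians $\nabla^2 F(\xi)$ and $\nabla^2 F(w_0)$ are comparable in the Loewner order with the claimed constants. Once we prove two-sided spectral bounds of the form $c_\ell \nabla^2 F(w_0) \preceq \nabla^2 F(\xi) \preceq c_u \nabla^2 F(w_0)$ uniformly in $\xi$ and $w_0$, integrating against $(1-t)$ immediately yields $\gamma_\ell(\mathcal C) \geq c_\ell$ and $\gamma_u(\mathcal C) \leq c_u$. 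Thus each part reduces to a Hessian comparison lemma.

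For part 1 (smooth and strongly convex), the bounds $\mu I \preceq \nabla^2 F(w) \preceq L I$ at every $w\in \mathcal C$ give $\nabla^2 F(\xi) \preceq LI \preceq (L/\mu)\nabla^2 F(w_0)$ and symmetrically $\nabla^2 F(\xi) \succeq (\mu/L)\nabla^2 F(w_0)$. For part 2 (Lipschitz Hessian, bounded domain), the Lipschitz property yields $\|\nabla^2 F(\xi)-\nabla^2 F(w_0)\|_{\mathrm{op}} \leq M\|\xi - w_0\| \leq MD$, so $\nabla^2 F(\xi) \preceq \nabla^2 F(w_0) + MD\,I$; using $\mu I \preceq \nabla^2 F(w_0)$ converts the additive $MD\, I$ term into the multiplicative factor $MD/\mu$, giving $\nabla^2 F(\xi) \preceq (1+MD/\mu)\nabla^2 F(w_0)$. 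The matching lower bound $\nabla^2 F(\xi) \succeq (1+MD/\mu)^{-1}\nabla^2 F(w_0)$ follows by swapping the roles of $\xi$ and $w_0$ and inverting. For part 3 (generalized self-concordance), I would invoke the standard exponential Hessian comparison from \cite{bach2010self}, $e^{-k\|\xi-w_0\|}\nabla^2 F(w_0) \preceq \nabla^2 F(\xi) \preceq e^{k\|\xi-w_0\|}\nabla^2 F(w_0)$, and use $\|\xi - w_0\| \leq D$.

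Part 4 (GLM) is a direct computation: uniform bounds $\ell \leq \phi_i''(a_i^T w) \leq u$ on $\mathcal C$ yield $\ell \hat\Sigma + \nu I \preceq \nabla^2 F(w) \preceq u\hat\Sigma + \nu I$ at every $w\in\mathcal C$. Since $\hat \Sigma$ commutes with itself, the eigenvalues of $(u\hat\Sigma + \nu I)(\ell\hat\Sigma + \nu I)^{-1}$ are $(u\lambda_j(\hat\Sigma)+\nu)/(\ell\lambda_j(\hat\Sigma)+\nu)$, and the function $\lambda \mapsto (u\lambda+\nu)/(\ell\lambda+\nu)$ is monotone increasing in $\lambda\geq 0$ (its derivative equals $(u-\ell)\nu/(\ell\lambda+\nu)^2 > 0$), so the maximum is attained at $\lambda_1(\hat\Sigma)$. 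Chaining the two-sided bounds at $\xi$ and $w_0$ through the pair $\ell\hat\Sigma+\nu I,\ u\hat\Sigma+\nu I$ gives the stated ratio.

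The main obstacle I anticipate is bookkeeping rather than mathematical depth: I must make sure each Hessian comparison holds \emph{uniformly} over the triple $(w_0,w_1,w_2)\in\mathcal C^3$ (not just pointwise), which is exactly why the outer supremum/infimum in \cref{def:QuadReg} matters and where relative smoothness/convexity of \cite{gower2019rsn} would not suffice. In part 3 I must also check that the stated hypothesis of $k$-generalized self-concordance is enough to invoke the exponential Hessian bound in the form I need; if the definition used in the paper differs from Bach's by a normalization, the constant $k$ may need to be rescaled to preserve the claim $\exp(\pm kD)$.
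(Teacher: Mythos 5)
Your proposal is correct and follows essentially the same route as the paper: the paper packages your ``uniform Loewner comparison of Hessians, then integrate against $(1-t)$'' step as a quadratic stability constant $\Gamma(\mathcal C) = \sup_{w_0,w_1,d} \|d\|^2_{\nabla^2 F(w_1)}/\|d\|^2_{\nabla^2 F(w_0)}$ together with a helper lemma giving $\Gamma(\mathcal C)^{-1} \leq \gamma_\ell \leq \gamma_u \leq \Gamma(\mathcal C)$, and then bounds $\Gamma(\mathcal C)$ in each of the four cases exactly as you do (including the ODE/Gr\"onwall form of the self-concordance comparison in part 3 and the monotonicity of $\lambda \mapsto (u\lambda+\nu)/(\ell\lambda+\nu)$ in part 4).
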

The proof is given in \cref{subsec:quad_reg_pfs}.
\else
\begin{proposition}[Sufficient conditions for quadratic regularity]
\label[proposition]{proposition:QuadRegConds}
    The following conditions all imply $F$ is $\mathcal C$-quadratically regular: 
    \begin{enumerate}
        \item The function $F$ is $L$-smooth and $\mu$-strongly convex over $\mathcal C$. 
        Then $F$ is $\mathcal C$-quadratically regular with 
        \[
        \frac{\mu}{L}\leq \gamma_\ell(\mathcal C) \leq \gamma_u(\mathcal C) \leq \frac{L}{\mu}.
        \]
        \item The function $F$ is $\mu$-strongly convex and has a $M$-Lipschitz Hessian over $\mathcal C$, and $\mathcal C$ is compact with diameter $D$. 
        Then $F$ is $\mathcal C$-quadratically regular with 
        \[
        \left(1+\frac{MD}{\mu}\right)^{-1}\leq \gamma_\ell(\mathcal C) \leq \gamma_u(\mathcal C) \leq 1+\frac{M D}{\mu}.
        \]
\end{enumerate}
\end{proposition}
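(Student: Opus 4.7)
My plan is to reduce both items to a Hessian sandwich argument via Taylor's theorem with integral remainder. For any $w_1, w_2 \in \mathcal C$ with $w_t := w_1 + t(w_2 - w_1) \in \mathcal C$, convexity of $\mathcal C$ gives
\begin{equation*}
F(w_2) - F(w_1) - \langle \nabla F(w_1), w_2 - w_1 \rangle = \int_0^1 (1 - t)\, \|w_2 - w_1\|^2_{\nabla^2 F(w_t)}\, dt.
\end{equation*}
The task then reduces to producing, for each item, multiplicative spectral bounds of the form $c_\ell(\mathcal C)\, \nabla^2 F(w_0) \preceq \nabla^2 F(w_t) \preceq c_u(\mathcal C)\, \nabla^2 F(w_0)$, valid uniformly in $w_0, w_t \in \mathcal C$; substituting these into the integrand and using $\int_0^1 (1-t)\, dt = 1/2$ then yields $\gamma_u(\mathcal C) \leq c_u$ and $\gamma_\ell(\mathcal C) \geq c_\ell$, and the middle inequality $\gamma_\ell \leq \gamma_u$ follows automatically.

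For item (1), I would exploit $\mu I \preceq \nabla^2 F(w) \preceq L I$ on $\mathcal C$. Chaining the inequalities at $w_t$ and $w_0$ through $I$ gives $\nabla^2 F(w_t) \preceq L I \preceq (L/\mu)\, \nabla^2 F(w_0)$ and, symmetrically, $\nabla^2 F(w_t) \succeq \mu I \succeq (\mu/L)\, \nabla^2 F(w_0)$. Substituting these into the Taylor integral delivers the bounds $\gamma_u \leq L/\mu$ and $\gamma_\ell \geq \mu/L$ immediately.

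For item (2), the Lipschitz Hessian assumption combined with the diameter bound gives the additive sandwich
\[
\nabla^2 F(w_0) - MD\, I \preceq \nabla^2 F(w_t) \preceq \nabla^2 F(w_0) + MD\, I.
\]
The crux of the proof is converting this additive inequality into a multiplicative one. I would use strong convexity to write $I \preceq \mu^{-1} \nabla^2 F(\cdot)$ at whichever point makes the absorption work: applied at $w_0$, the upper half becomes $\nabla^2 F(w_t) \preceq (1 + MD/\mu)\, \nabla^2 F(w_0)$; applied at $w_t$, the lower half rearranges to $\nabla^2 F(w_0) \preceq (1 + MD/\mu)\, \nabla^2 F(w_t)$, equivalently $\nabla^2 F(w_t) \succeq (1 + MD/\mu)^{-1}\, \nabla^2 F(w_0)$. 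Plugging back into the Taylor integral yields the advertised $\gamma_u \leq 1 + MD/\mu$ and $\gamma_\ell \geq (1 + MD/\mu)^{-1}$. I expect this ``laundering'' step to be the main obstacle, since it is precisely the asymmetry between the outer point $w_0$ (at which the Hessian norm is measured) and the interior point $w_t$ (which the Taylor remainder hands us) that requires strong convexity to reconcile; once this trick is in place, both items are routine.
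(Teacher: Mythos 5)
Your proposal is correct and follows essentially the same route as the paper: the paper also reduces both items to a uniform multiplicative Hessian sandwich over $\mathcal C$ (packaged as a "quadratic stability constant" $\Gamma(\mathcal C)$ bounding the worst-case ratio $\|d\|^2_{\nabla^2 F(w_1)}/\|d\|^2_{\nabla^2 F(w_0)}$, with $1/\Gamma \leq \gamma_\ell \leq \gamma_u \leq \Gamma$), and your additive-to-multiplicative conversion via $I \preceq \mu^{-1}\nabla^2 F(\cdot)$ in item (2) is exactly the paper's computation $1 + d^T[H(w_1)-H(w_0)]d/\|d\|^2_{H(w_0)} \leq 1 + MD/\mu$. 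No gaps.
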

\fi
\Cref{proposition:QuadRegConds} is similar to Theorem 1 in \cite{karimireddy2018global}, which establishes analogous sufficient conditions for ensuring a stable Hessian. 
Moreover, the bounds attained on the quadratic regularity constants are identical to those attained in \cite{karimireddy2018global}.
Hence, the bounds on the quadratic regularity constants are no worse than those of \cite{karimireddy2018global}, even though they account for lazy updating, while Hessian stability does not. 

To better understand quadratic regularity and its properties, it is instructive to compare the bounds in \cref{proposition:QuadRegConds} and their implications.
Notably, the bounds guaranteed by smoothness and strong convexity are looser than the bounds guaranteed by the other conditions. 
To see why, suppose $F$ is an ill-conditioned quadratic.
Clearly, $\gamma_\ell = \gamma_u = 1$, but the bound given by $1.$ in \cref{proposition:QuadRegConds} is $\kappa^{-1}\leq \gamma_\ell\leq \gamma_u \leq \kappa$.
% , and note that $\gamma_\ell = \gamma_u = 1$ in this case. 
% Then the bound given by $1.$ in \cref{proposition:QuadRegConds} gives $\kappa^{-1}\leq \gamma_\ell\leq \gamma_u \leq \kappa$.
\ifpreprint
In contrast, items $2$--$4.$ in \cref{proposition:QuadRegConds} all yield $\gamma_\ell = \gamma_u = 1$. 
\else
In contrast, item $2.$ in \cref{proposition:QuadRegConds} yields $\gamma_\ell = \gamma_u = 1$. 
\fi
Thus, the bound given by $1.$ is pessimistic, which is unsurprising since the proof of $1.$ is based on simple worst-case norm conversion bounds.

\ifpreprint
\subsubsection{$\zeta$-spectral approximation and quadratic regularity in action}

This section provides intuition for why $\zeta$-spectral approximation and quadratic regularity lead to improved local convergence.
To do so, we will compare the one-step improvement of an (approximate) lazy Newton method to that of gradient descent. The convergence proofs for PROMISE use stochastic gradients, which adds a layer of complexity to their analysis, while the analysis in this section uses full gradients.
However, the ideas underlying the convergence proofs of PROMISE methods are similar to those in the following analysis.

We will assume the objective $F$ is strongly convex and has $M$-Lipschitz Hessians. 
Furthermore, we will assume that the iterates are localized to a ball that contains the optimum.
Under these assumptions, we show that an approximate, lazy version of Newton's method has a better one-step improvement than gradient descent. This is accomplished with the following sequence of results:
\begin{enumerate}
    \item Quadratic regularity leads to a ``good'' local model of $F$ in a neighborhood of the iterate where the Hessian is computed (\cref{lem:lazy_hess_good_model})
    \item Newton's method with lazy updates makes better progress compared to gradient descent (\cref{cor:one_step_lazy_newton})
    \item A $\zeta$-spectral approximation in place of the Hessian still provides a ``good'' local model of $F$ (\cref{lem:approx_hess_good_model})
    \item Approximate Newton's method with lazy updates still makes fast progress. (\cref{cor:one_step_approx_lazy_newton})
\end{enumerate}

\begin{lemma}[Lazy Hessians provide a good local model]
\label[lemma]{lem:lazy_hess_good_model}
        Let $w\in \R^p$, $\varepsilon\in (0,1)$. Suppose $F$ is strongly convex and has $M$-Lipschitz Hessians. Let 
 $\mathcal C = B(w,\frac{\varepsilon\mu}{2M})$.
    Then 
    \[ 
    \frac{1}{1+\varepsilon}\leq \gamma_\ell(\mathcal C)\leq \gamma_u(\mathcal C) \leq 1+\varepsilon,
    \]
    so the quadratic regularity ratio is moderate:
    \[
    \mathfrak{q}(\mathcal C) = \gamma_u / \gamma_l \leq (1+\varepsilon)^2.
    \]
    Furthermore, for any $w',w''\in \mathcal C$,
    \[
    F(w'')\leq F(w')+\langle \nabla F(w'),w''-w'\rangle +\frac{1+\varepsilon}{2}\|w''-w'\|^2_{\nabla^2 F(w)},
    \]
    \[
     F(w'')\geq F(w')+\langle \nabla F(w'),w''-w'\rangle +\frac{1}{2(1+\varepsilon)}\|w''-w'\|^2_{\nabla^2 F(w)}.
    \]
\end{lemma}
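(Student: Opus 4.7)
The plan is to establish both the quadratic-regularity constants and the explicit inequalities by combining Taylor's theorem with integral remainder and a uniform spectral comparison of Hessians across $\mathcal{C}$. First I would invoke the $M$-Lipschitz Hessian assumption: for any $u, v \in \mathcal{C}$, the diameter bound $\|u-v\| \leq \varepsilon \mu / M$ gives $\|\nabla^2 F(u) - \nabla^2 F(v)\|_{\mathrm{op}} \leq \varepsilon \mu$. Combining this with strong convexity $\nabla^2 F(v) \succeq \mu I$ (so that $\varepsilon \mu I \preceq \varepsilon \nabla^2 F(v)$) yields the two-sided Loewner bound $-\varepsilon \nabla^2 F(v) \preceq \nabla^2 F(u) - \nabla^2 F(v) \preceq \varepsilon \nabla^2 F(v)$, or equivalently
\[
\frac{1}{1+\varepsilon}\,\nabla^2 F(v) \;\preceq\; \nabla^2 F(u) \;\preceq\; (1+\varepsilon)\,\nabla^2 F(v).
\]

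Next I would apply Taylor's theorem with integral remainder to arbitrary $w', w'' \in \mathcal{C}$:
\[
F(w'') = F(w') + \langle \nabla F(w'), w''-w' \rangle + \int_0^1 (1-t)\, \|w''-w'\|^2_{\nabla^2 F(w' + t(w''-w'))}\, dt.
\]
Since $\mathcal{C}$ is convex, all intermediate points $w' + t(w''-w')$ lie in $\mathcal{C}$, so I can take $v = w$ in the spectral sandwich above with $u$ ranging over the segment and squeeze the integrand between $\tfrac{1}{1+\varepsilon}\|w''-w'\|^2_{\nabla^2 F(w)}$ and $(1+\varepsilon)\|w''-w'\|^2_{\nabla^2 F(w)}$. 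Using $\int_0^1 (1-t)\, dt = 1/2$ produces the two ``furthermore'' inequalities verbatim.

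Finally, the quadratic-regularity constants drop out immediately from \cref{def:QuadReg}: the upper inequality forces $\gamma_u(\mathcal{C}) \leq 1+\varepsilon$ and the lower one forces $\gamma_\ell(\mathcal{C}) \geq 1/(1+\varepsilon)$, with $\mathfrak{q}(\mathcal{C}) \leq (1+\varepsilon)^2$ following by division. Note that the argument actually proves the stronger uniform statement that the same inequalities hold with $\nabla^2 F(w_0)$ for \emph{any} $w_0 \in \mathcal{C}$ in place of $\nabla^2 F(w)$, which is precisely what the definition of $\mathcal{C}$-quadratic regularity demands. I do not foresee any real obstacle: the radius $\varepsilon \mu / (2M)$ is calibrated so that the Lipschitz-induced Hessian oscillation (at most $\varepsilon \mu$ over the full diameter) is exactly an $\varepsilon$-fraction of the strong-convexity floor $\mu$, and the rest is bookkeeping on Taylor's formula.
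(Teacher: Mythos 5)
Your proof is correct and follows essentially the same route as the paper: the paper's one-line proof simply invokes item 2 of \cref{proposition:QuadRegConds} with diameter $D = \varepsilon\mu/M$, and the proof of that proposition is exactly the Taylor-integral-remainder plus ``Lipschitz Hessian oscillation over strong-convexity floor'' argument you spell out. One small wording quibble: the sandwich $\frac{1}{1+\varepsilon}\nabla^2 F(v) \preceq \nabla^2 F(u) \preceq (1+\varepsilon)\nabla^2 F(v)$ is not \emph{equivalent} to the two-sided bound $-\varepsilon\nabla^2 F(v)\preceq \nabla^2 F(u)-\nabla^2 F(v)\preceq \varepsilon\nabla^2 F(v)$ (whose left half only yields the weaker factor $1-\varepsilon$); the $\tfrac{1}{1+\varepsilon}$ lower bound instead follows by applying the upper half with the roles of $u$ and $v$ interchanged, which your symmetric setup permits.
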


\begin{proof}
The result follows immediately by combining the definition of $\mathcal C$ with item 2 in \cref{proposition:QuadRegConds}.
\end{proof}

In \cref{lem:lazy_hess_good_model}, $w$ is the iterate where the Hessian is computed, while $w'$ and $w''$ are contained in a ball about $w$.
\cref{lem:lazy_hess_good_model} shows that within $\mathcal C$, $F$ is well-conditioned (i.e., $\nabla^2 F(w)$ provides a good local model) with respect to the $\nabla^2 F(w)$ norm, with condition number given by $\q(\mathcal C) \leq (1 + \varepsilon)^2$.

\begin{corollary}[One-step improvement for lazy Newton's method]
\label[corollary]{cor:one_step_lazy_newton}
     Instate the hypotheses of \cref{lem:lazy_hess_good_model}. Set $w'' = w'-\frac{1}{1+\varepsilon}\nabla^2 F(w)^{-1}\nabla F(w')$ and suppose $w', w'', w_\star \in B(w,\frac{\varepsilon\mu}{2M})$. Then
     \[
     F(w'')-F(w_\star)\leq \left[1-\frac{1}{(1+\varepsilon)^2}\right](F(w')-F(w_\star)).
     \]
\end{corollary}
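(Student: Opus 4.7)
The plan is to combine the two-sided quadratic regularity bounds from \cref{lem:lazy_hess_good_model} in a standard ``descent + PL-type'' pairing, but carried out entirely with respect to the Hessian norm $\|\cdot\|_{\nabla^2 F(w)}$ rather than the Euclidean norm. The key observation is that the Newton-like step $w'' = w' - \tfrac{1}{1+\varepsilon}\nabla^2 F(w)^{-1}\nabla F(w')$ is precisely the minimizer of the right-hand side of the upper quadratic regularity inequality in direction $w'' - w'$, up to the factor $(1+\varepsilon)$ that appears in the upper constant.

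First I would apply the upper bound from \cref{lem:lazy_hess_good_model} to the pair $(w',w'')$, which is legitimate since both points lie in $\mathcal C$ by assumption. Plugging in the explicit step and simplifying using $\|\nabla^2 F(w)^{-1}\nabla F(w')\|^2_{\nabla^2 F(w)} = \|\nabla F(w')\|^2_{\nabla^2 F(w)^{-1}}$, two terms in $\|\nabla F(w')\|^2_{\nabla^2 F(w)^{-1}}$ combine to give the one-step descent inequality
\[
F(w'') \leq F(w') - \tfrac{1}{2(1+\varepsilon)}\|\nabla F(w')\|^2_{\nabla^2 F(w)^{-1}}.
\]

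Second, I would apply the lower bound from \cref{lem:lazy_hess_good_model} to the pair $(w',w_\star)$, which is allowed since $w_\star \in \mathcal C$ by hypothesis. Rearranging gives
\[
F(w')-F(w_\star) \leq \langle \nabla F(w'), w'-w_\star\rangle - \tfrac{1}{2(1+\varepsilon)}\|w'-w_\star\|^2_{\nabla^2 F(w)}.
\]
Upper-bounding the right-hand side by its maximum over all perturbations in place of $w'-w_\star$ (the standard Fenchel/quadratic maximization trick, with maximizer $(1+\varepsilon)\nabla^2 F(w)^{-1}\nabla F(w')$) yields the Polyak-\L{}ojasiewicz-type inequality
\[
\|\nabla F(w')\|^2_{\nabla^2 F(w)^{-1}} \geq \tfrac{2}{1+\varepsilon}\bigl(F(w')-F(w_\star)\bigr).
\]

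Finally I would substitute this lower bound into the descent inequality from the first step, producing
\[
F(w'')-F(w_\star) \leq \Bigl[1 - \tfrac{1}{(1+\varepsilon)^2}\Bigr]\bigl(F(w')-F(w_\star)\bigr),
\]
as desired. There is no real obstacle here: the entire argument is algebraic once one has \cref{lem:lazy_hess_good_model}, and the $(1+\varepsilon)^2$ factor in the rate is exactly the quadratic regularity ratio $\q(\mathcal C)$ bound from that lemma, which is the whole point of the corollary.
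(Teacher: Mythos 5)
Your proof is correct and follows essentially the same route as the paper's: an exact descent inequality from upper quadratic regularity with the specified step, a Polyak--\L{}ojasiewicz-type bound from lower quadratic regularity via minimizing the quadratic model at $w_\star$, and a direct combination of the two. The constants and intermediate inequalities match the paper's argument exactly.
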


\ifpreprint
\begin{proof}
    The proof is similar to that of linear convergence of gradient descent for smooth, strongly convex functions.

    Since $w', w'' \in B(w,\frac{\varepsilon\mu}{2M})$,
    \begin{align*}
        F(w'') &\leq F(w') + \langle \nabla F(w'), w''-w' \rangle + \frac{1+\varepsilon}{2} \|w''-w'\|^2_{\nabla^2 F(w)} \\
        &= F(w') + \left \langle \nabla F(w'), - \frac{1}{1+\varepsilon}\nabla^2 F(w)^{-1} \nabla F(w') \right \rangle + \frac{1+\varepsilon}{2} \left\| -\frac{1}{1+\varepsilon}\nabla^2 F(w)^{-1} \nabla F(w') \right\|^2_{\nabla^2 F(w)} \\
        &= F(w') - \frac{1}{2 (1+\varepsilon)} \| 
 \nabla F(w') \|^2_{\nabla^2 F(w)^{-1}},
    \end{align*}
    where the inequality follows from upper quadratic regularity and the first equality follows from the definition of $w''$.

    To complete the proof, we use quadratic regularity to derive a lower bound on $\| 
 \nabla F(w') \|^2_{\nabla^2 F(w)^{-1}}$ in terms of objective suboptimality. Since $w', w_\star \in B(w,\frac{\varepsilon\mu}{2M})$,
 \begin{align*}
     F(w_\star) &\geq F(w') + \langle \nabla F(w'), w_\star - w' \rangle + \frac{1}{2(1 + \varepsilon)} \| w_\star - w' \|^2_{\nabla^2 F(w)} \\
     &\geq F(w') + \min_y ~ \langle \nabla F(w'), y \rangle + \frac{1}{2(1 + \varepsilon)} \| y \|^2_{\nabla^2 F(w)} \\
     &= F(w') - \frac{1 + \varepsilon}{2} \| F(w') \|^2_{\nabla^2 F(w)^{-1}},
 \end{align*}
 where the first inequality is due to lower quadratic regularity and the equality is due to minimizing the quadratic function with respect to $y$.

 Rearranging the previous display yields 
 \begin{align*}
      \frac{2}{1 + \varepsilon} (F(w') - F(w_\star)) \leq \| \nabla F(w') \|^2_{\nabla^2 F(w)^{-1}},
 \end{align*}
 which is precisely the lower bound we are seeking.

 Therefore,
 \begin{align*}
     F(w'') &\leq F(w') - \frac{1}{2 (1+\varepsilon)} \| \nabla F(w') \|^2_{\nabla^2 F(w)^{-1}} \\
     & \leq F(w') - \frac{1}{(1 + \varepsilon)^2} (F(w') - F(w_\star)).
 \end{align*}

 Adding $-F(w_\star)$ to both sides of this display yields the claimed result.

\end{proof}
\else 
\begin{proof}
    The proof is similar to the argument used for gradeint descent, and is omitted.
\end{proof}
\fi

We can compare the one-step improvement in \cref{cor:one_step_lazy_newton} with that of a gradient descent update $(w'' = w'-\frac{1}{L}\nabla F(w'))$, which has one-step improvement 
\[
F(w'')-F(w_\star)\leq \left[1-\frac{1}{\kappa}\right](F(w')-F(w_\star)).
\]

This comparison suggests that Newton's method with lazy Hessian updates (locally) converges linearly at a rate independent of the condition number, unlike gradient descent, whose convergence rate depends on the condition number.
% Newton's method with lazy Hessian updates converges linearly, independent of the condition number.
\begin{lemma}[$\zeta$-spectral approximations provide a good local model]
\label[lemma]{lem:approx_hess_good_model}
    Suppose $F$ is quadratically regular and $P$ is a $\zeta$-spectral approximation of $\nabla^2 F(w)$ and let $w', w'' \in \R^p$. Then
    \[
    F(w'')\leq F(w')+\langle \nabla F(w'),w''-w'\rangle+\frac{(1+\zeta)\gamma_u}{2}\|w''-w'\|_P^2, 
    \]
    \[
    F(w'')\geq F(w')+\langle \nabla F(w'),w''-w'\rangle+\frac{(1-\zeta)\gamma_\ell}{2}\|w''-w'\|_P^2.
    \]
\end{lemma}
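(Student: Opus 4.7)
The plan is to directly combine the quadratic regularity bounds with the $\zeta$-spectral approximation property. The argument is short: quadratic regularity already provides upper and lower bounds on $F(w'')$ in terms of the Hessian norm $\|\cdot\|_{\nabla^2 F(w)}$, so the only remaining step is to convert this norm to the preconditioner norm $\|\cdot\|_P$ using the spectral approximation.

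First, I would apply the definition of quadratic regularity with the base point $w_0 = w$ (where the preconditioner is constructed) and points $w_1 = w'$, $w_2 = w''$. This yields
\begin{align*}
F(w'') &\leq F(w') + \langle \nabla F(w'), w''-w' \rangle + \frac{\gamma_u}{2}\|w''-w'\|_{\nabla^2 F(w)}^2, \\
F(w'') &\geq F(w') + \langle \nabla F(w'), w''-w' \rangle + \frac{\gamma_\ell}{2}\|w''-w'\|_{\nabla^2 F(w)}^2.
\end{align*}

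Second, I would invoke the $\zeta$-spectral approximation property from Definition~\ref{def:good_precond}, namely $(1-\zeta)P \preceq \nabla^2 F(w) \preceq (1+\zeta)P$. Taking quadratic forms with the vector $w''-w'$ on both sides of these Loewner inequalities gives
\[
(1-\zeta)\|w''-w'\|_P^2 \;\leq\; \|w''-w'\|_{\nabla^2 F(w)}^2 \;\leq\; (1+\zeta)\|w''-w'\|_P^2.
\]
Substituting the upper bound on $\|w''-w'\|_{\nabla^2 F(w)}^2$ into the upper quadratic regularity inequality, and the lower bound into the lower quadratic regularity inequality, immediately yields the two claimed bounds.

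There is no real obstacle here — the lemma is essentially a norm-conversion statement, and the two ingredients (quadratic regularity and spectral approximation) are designed to plug together cleanly. The only mild subtlety is that the quadratic regularity definition nominally requires $w, w', w'' \in \mathcal{C}$; this is implicit in the lemma's setup and inherited from the global choice of $\mathcal{C}$ in the surrounding theory.
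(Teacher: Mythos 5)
Your proposal is correct and follows exactly the paper's own argument: apply quadratic regularity with base point $w$ to get bounds in the $\nabla^2 F(w)$ norm, then convert to the $P$ norm via the Loewner inequalities $(1-\zeta)P \preceq \nabla^2 F(w) \preceq (1+\zeta)P$. No gaps.
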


\ifpreprint
\begin{proof}
    Since $F$ is quadratically regular,
    \begin{align}
        F(w'') &\leq F(w') + \langle \nabla F(w'), w'' - w' \rangle + \frac{\gamma_u}{2} \|w'' - w'\|^2_{\nabla^2 F(w)}, \label{eq:qr_u} \\
        F(w'') &\geq F(w') + \langle \nabla F(w'), w'' - w' \rangle + \frac{\gamma_\ell}{2} \|w'' - w'\|^2_{\nabla^2 F(w)} \label{eq:qr_l}.
    \end{align}

    Since $P$ is a $\zeta$-spectral approximation of $\nabla^2 F(w)$, $(1 - \zeta) P \preceq \nabla^2 F(w) \preceq (1 + \zeta) P$, which implies
    \begin{align*}
        (1 - \zeta) \|w'' - w'\|^2_{P} \leq \|w'' - w'\|^2_{\nabla^2 F(w)} \leq (1 + \zeta) \|w'' - w'\|^2_{P}.
    \end{align*}

    Substituting the previous display into \cref{eq:qr_u,eq:qr_l} yields the claimed result.
\end{proof}
\else
\fi

Quadratic regularity tells us that $F$ has a condition number of $\gamma_u/\gamma_\ell = \q$ in the $\nabla^2 F(w)$ norm. \cref{lem:approx_hess_good_model} shows that we pay a multiplicative factor of $\frac{1 + \zeta}{1 - \zeta}$ on the quadaratic regularity ratio $\q$ to extend quadratic regularity to the $P$ norm, where $P$ is a $\zeta$-spectral approximation of $\nabla^2 F(w)$.
% $F$ has a condition number of $\frac{1 + \zeta}{1 - \zeta} \q$ in the $P$ norm. 
In other words, using a $\zeta$-spectral approximation in place of the Hessian still provides a reliable local model for $F$.

A concrete example is given by ridge regression, for which $\q = 1$.
If $P$ is a $0.9$-spectral approximation of $\nabla^2 F(w)$, then $\frac{1 + \zeta}{1 - \zeta} \q = 19$, which is better than the condition number $\kappa$ for most ridge regression problems.

\begin{corollary}[One-step improvement for approximate, lazy Newton's method]
\label[corollary]{cor:one_step_approx_lazy_newton}
    Instate the hypotheses of the \cref{lem:lazy_hess_good_model,lem:approx_hess_good_model}. Set $w'' = w'-\frac{1}{(1+\zeta)(1+\varepsilon)}P^{-1}\nabla F(w')$ and suppose $w', w'', w_\star \in B(w,\frac{\varepsilon\mu}{2M})$. 
    Then
    \[
        F(w'')-F(w_\star) \leq \left[1-\frac{1-\zeta}{1+\zeta}\frac{1}{(1+\varepsilon)^2}\right](F(w')-F(w_\star)).
    \]
\end{corollary}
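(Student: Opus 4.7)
The plan is to mirror the proof of Corollary~\ref{cor:one_step_lazy_newton} but substitute the $P$-norm quadratic regularity bounds from Lemma~\ref{lem:approx_hess_good_model} in place of the $\nabla^2 F(w)$-norm bounds. The relevant quadratic regularity constants come from Lemma~\ref{lem:lazy_hess_good_model}, which on $\mathcal C = B(w,\varepsilon\mu/(2M))$ supplies $\gamma_u \leq 1+\varepsilon$ and $\gamma_\ell \geq 1/(1+\varepsilon)$; these are legitimately applicable because the hypothesis guarantees $w',w'',w_\star \in \mathcal C$.

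First I would apply the upper bound of Lemma~\ref{lem:approx_hess_good_model} at the pair $(w',w'')$, with effective constant $(1+\zeta)\gamma_u \leq (1+\zeta)(1+\varepsilon)$. Substituting the update $w''-w' = -\frac{1}{(1+\zeta)(1+\varepsilon)}P^{-1}\nabla F(w')$, the linear term contributes $-\frac{1}{(1+\zeta)(1+\varepsilon)}\|\nabla F(w')\|^2_{P^{-1}}$ and the quadratic term contributes exactly half that magnitude with the opposite sign, yielding the descent estimate
\[
F(w'') \leq F(w') - \frac{1}{2(1+\zeta)(1+\varepsilon)}\|\nabla F(w')\|^2_{P^{-1}}.
\]

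Next I would use the lower bound of Lemma~\ref{lem:approx_hess_good_model} at the pair $(w',w_\star)$, with effective constant $(1-\zeta)\gamma_\ell \geq (1-\zeta)/(1+\varepsilon)$. Dropping $w_\star$ and minimizing the resulting convex quadratic over all displacements $y \in \R^p$ produces the Polyak--Lojasiewicz-style inequality
\[
\|\nabla F(w')\|^2_{P^{-1}} \geq \frac{2(1-\zeta)}{1+\varepsilon}\bigl(F(w')-F(w_\star)\bigr).
\]
Plugging this into the descent estimate and subtracting $F(w_\star)$ from both sides produces the contraction factor $1-\frac{1-\zeta}{(1+\zeta)(1+\varepsilon)^2}$ exactly as claimed.

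There is no genuine obstacle here: the argument is a direct constant-tracking exercise once the two lemmas are in place. The only delicate point is confirming that the $P$-norm quadratic regularity bounds may be invoked at the pairs $(w',w'')$ and $(w',w_\star)$, which is immediate from the containment $w',w'',w_\star \in B(w,\varepsilon\mu/(2M))$ assumed in the statement together with Lemma~\ref{lem:lazy_hess_good_model}.
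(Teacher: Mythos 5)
Your proposal is correct and follows essentially the same route as the paper's proof: combine Lemma~\ref{lem:lazy_hess_good_model} with Lemma~\ref{lem:approx_hess_good_model} to get the $P$-norm upper and lower quadratic bounds with constants $(1+\zeta)(1+\varepsilon)$ and $(1-\zeta)/(1+\varepsilon)$, derive the descent estimate $F(w'') \leq F(w') - \tfrac{1}{2(1+\zeta)(1+\varepsilon)}\|\nabla F(w')\|^2_{P^{-1}}$ from the update, obtain the PL-style bound $\|\nabla F(w')\|^2_{P^{-1}} \geq \tfrac{2(1-\zeta)}{1+\varepsilon}(F(w')-F(w_\star))$ by minimizing the lower quadratic, and combine. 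The constant-tracking in your argument checks out exactly.
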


\ifpreprint
\begin{proof}
    Combining \cref{lem:lazy_hess_good_model,lem:approx_hess_good_model}, we have
    \begin{align}
    F(w'') &\leq F(w')+\langle \nabla F(w'),w''-w'\rangle+\frac{(1+\zeta) (1 + \varepsilon)}{2}\|w''-w'\|_P^2, \label{eq:approx_qr_u} \\
    F(w_\star) &\geq F(w') + \langle \nabla F(w'), w_\star - w' \rangle + \frac{1 - \zeta}{2 (1 + \varepsilon)} \|w''-w'\|_P^2 \label{eq:approx_qr_l}.
    \end{align}

    The remainder of the proof is analogous to that of \cref{cor:one_step_lazy_newton}---we use \cref{eq:approx_qr_u} to derive $F(w'') \leq F(w') - \frac{1}{2 (1 + \zeta)(1 + \varepsilon)} \| \nabla F(w') \|^2_{P^{-1}}$ and \cref{eq:approx_qr_l} to derive $\frac{2 (1 - \zeta)}{1 + \varepsilon}(F(w') - F(w_\star)) \leq \| \nabla F(w') \|^2_{P^{-1}}$.
    Combining these two bounds yields the claim.
\end{proof}
\else
\fi

\cref{cor:one_step_approx_lazy_newton}, which analyzes the one-step improvement of approximate, lazy Newton's method, is best understood by comparing with \cref{cor:one_step_lazy_newton}, which analyzes the one-step improvement of (exact) lazy Newton's method.
First, the stepsize in \cref{cor:one_step_approx_lazy_newton} is $1/[(1 + \zeta)(1 + \varepsilon)]$, while the stepsize in \cref{cor:one_step_lazy_newton} is $1/(1+ \varepsilon)$.
The $\zeta$-spectral approximation multiplies the upper quadratic regularity constant by $1 + \zeta$ (\cref{lem:approx_hess_good_model}), which explains the reduced stepsize.
Furthermore, the multiplicative factor in \cref{cor:one_step_approx_lazy_newton} is $1-\left[(1-\zeta)/(1+\zeta)\right]/(1+\varepsilon)^2$, while the multiplicative factor in \cref{cor:one_step_lazy_newton} is $1 - 1/(1+\varepsilon)^2$.
This is also consistent---\cref{lem:approx_hess_good_model} multiplies $\q$ by $\frac{1 + \zeta}{1 - \zeta}$, so we would expect the multiplicative factor to be reduced from $1 - 1/\q$ to $1 - 1/\left[(1+\zeta)/(1-\zeta)\q\right] = 1 - \left[(1-\zeta)/(1+\zeta)\right]/\q$.

Despite this reduced improvement, we would expect approximate, lazy Newton's method to perform better than gradient descent. 
For example, if $\zeta = \varepsilon = 0.5$, then $\left[(1-\zeta)/(1+\zeta)\right]/(1+\varepsilon)^2 = 4/27$. 
Hence, if $\kappa \geq 7$ (which is common in ML problems), approximate, lazy Newton's method would be expected to (locally) converge faster than gradient descent.
\else
\fi

\subsubsection{Hessian dissimilarity}
The next important idea is the Hessian dissimilarity, which quantifies how large the gradient batchsize must be to realize the benefits of preconditioning. 
\begin{definition}
\label[definition]{def:hess_sim}
Let $\mathcal C$ be a closed convex subset of $\R^p$. The \emph{Hessian dissimilarity} is
    \[
    \tau_\star^\nu(\mathcal C) \coloneqq \sup_{w\in \mathcal C}\max_{1\leq i\leq n}\lambda_1\left((\nabla^2f(w)+\nu I)^{-1/2}(\nabla^2 f_i(w)+\nu I)(\nabla^2f(w)+\nu I)^{-1/2}\right).
    \]
\end{definition}
The Hessian dissimilarity bounds the worst-case value over $\mathcal C$, of the norm ratio $\|v\|^2_{\nabla^2 F_i(w)}/\|v\|^2_{\nabla^2 F(w)}$ between the full Hessian $\nabla^2 F(w)$ and the Hessian of any term $\nabla^2 F_i(w)$ in the sum. 
Hessian dissimilarity is analogous to the ridge leverage coherence in \cref{subsec:precond_qual}: while ridge leverage coherence measures the uniformity of the rows of a matrix, the Hessian dissimilarity measures the uniformity of constituent psd Hessians of a finite-sum convex function $F(w)$. 
Furthermore, for GLMs, we will see that the Hessian dissimilarity is controlled by a uniform version of the ridge leverage coherence.
% Furthermore, in the special case of GLMs we shall see the Hessian dissimilarity is controlled by the uniform ridge leverage coherence.

In our analysis of PROMISE, the Hessian dissimilarity parameter $\tau^\nu_\star$ appears in the bound on the preconditioned smoothness constant (\cref{prop:precond_grad_var}),
where it controls the gradient batchsize needed to ensure a good preconditioned smoothness constant.
More precisely, it is the gradient batchsize required to ensure the preconditioned smoothness constant is $\bigO(1)$ on average, when the gradients are sampled uniformly at random.
We believe this issue could be alleviated by importance sampling, but leave it as a direction for future work, as we have found uniform sampling to be sufficient in our experiments. 

The Hessian dissimilarity never exceeds $n$, as shown by the following lemma. 
\ifpreprint
\begin{lemma}[Hessian dissimilarity never exceeds $n$]
\label[lemma]{lem:hess_sim}
    The dissimilarity parameter satisfies 
    \begin{equation*}
        1\leq \tau^\nu_\star(\mathcal C) \leq \min\left\{n, 1+\kappaMax\right\}.
    \end{equation*}
\end{lemma}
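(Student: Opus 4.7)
The plan is to prove the three inequalities $\tau^\nu_\star(\mathcal C) \geq 1$, $\tau^\nu_\star(\mathcal C) \leq n$, and $\tau^\nu_\star(\mathcal C) \leq 1+\kappaMax$ separately by direct manipulation of the Loewner order, using that conjugation by $(\nabla^2 f(w)+\nu I)^{-1/2}$ is order-preserving and that a psd matrix $M$ with $M \preceq cI$ has $\lambda_1(M) \leq c$. Throughout, fix $w \in \mathcal C$ and let
\[
    M_i \coloneqq (\nabla^2 f(w)+\nu I)^{-1/2}(\nabla^2 f_i(w)+\nu I)(\nabla^2 f(w)+\nu I)^{-1/2}, \quad i \in [n].
\]

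For the lower bound, I would use the identity $\nabla^2 f(w) = \frac{1}{n}\sum_i \nabla^2 f_i(w)$ to conclude that
\[
    \frac{1}{n}\sum_{i=1}^n M_i = I.
\]
Each $M_i$ is psd (by convexity of the $f_i$ and $\nu > 0$), so taking traces on both sides yields $\sum_i \mathrm{tr}(M_i) = np$. Hence some index $i^\star$ satisfies $\mathrm{tr}(M_{i^\star}) \geq p$. Since $\mathrm{tr}(M) \leq p\,\lambda_1(M)$ for any psd matrix in $\R^{p\times p}$, this forces $\lambda_1(M_{i^\star}) \geq 1$, which gives $\tau^\nu_\star(\mathcal C) \geq 1$.

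For the bound $\tau^\nu_\star(\mathcal C) \leq n$, convexity of the $f_j$ gives $\nabla^2 f_i(w) \preceq \sum_{j=1}^n \nabla^2 f_j(w) = n \nabla^2 f(w)$, so $\nabla^2 f_i(w) + \nu I \preceq n\nabla^2 f(w) + \nu I \preceq n(\nabla^2 f(w) + \nu I)$ (using $n \geq 1$ and $\nu \geq 0$). Sandwiching by $(\nabla^2 f(w)+\nu I)^{-1/2}$ yields $M_i \preceq nI$. For $\tau^\nu_\star(\mathcal C) \leq 1+\kappaMax$, smoothness of $f_i$ gives $\nabla^2 f_i(w) \preceq L_i I \preceq \LMax I$, and $\nabla^2 f(w) + \nu I \succeq \nu I$ gives $(\nabla^2 f(w)+\nu I)^{-1} \preceq \nu^{-1} I$; combining these two inequalities, sandwich-wise, yields $M_i \preceq \tfrac{\LMax + \nu}{\nu} I = (1+\kappaMax) I$. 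Taking the supremum over $w \in \mathcal C$ and maximum over $i$ in both displays completes the proof.

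There is no real obstacle here; the only subtlety is the elementary trace inequality $\mathrm{tr}(M) \leq p\,\lambda_1(M)$ used in the lower bound, which I would state inline rather than as a separate lemma.
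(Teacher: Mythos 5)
Your proposal is correct, and both upper bounds are proved exactly as in the paper: $\nabla^2 f_i(w) \preceq n\nabla^2 f(w)$ and $\nabla^2 f_i(w) \preceq \LMax I$ together with $(\nabla^2 f(w)+\nu I)^{-1} \preceq \nu^{-1}I$, followed by conjugation. The only divergence is the lower bound, where you use a trace/pigeonhole argument ($\sum_i \mathrm{tr}(M_i) = np$ forces some $\mathrm{tr}(M_{i^\star}) \geq p$, hence $\lambda_1(M_{i^\star}) \geq 1$) in place of the paper's one-line subadditivity $1 = \lambda_1\bigl(\frac{1}{n}\sum_i M_i\bigr) \leq \frac{1}{n}\sum_i \lambda_1(M_i) \leq \max_i \lambda_1(M_i)$; both are valid and equally elementary.
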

The proof is given in \cref{subsec:hess_sim_pfs}.
\else
\begin{lemma}[Hessian dissimilarity never exceeds $n$]
\label[lemma]{lem:hess_sim}
    Define $\kappaMax$ is as in \cref{lem:deff_vs_kappa}. 
    The dissimilarity parameter satisfies 
    \begin{equation*}
        1\leq \tau^\nu_\star(\mathcal C) \leq \min\left\{n, 1+\kappaMax\right\}.
    \end{equation*}
\end{lemma}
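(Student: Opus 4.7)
\medskip

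The plan is to unpack the definition and reduce the claim to three short matrix inequalities. Fix $w \in \mathcal C$, and for brevity write $H := \nabla^2 f(w) + \nu I$ and $H_i := \nabla^2 f_i(w) + \nu I$; since $\nabla^2 f(w) = \frac{1}{n}\sum_i \nabla^2 f_i(w)$, we have the identity $\frac{1}{n}\sum_{i=1}^n H_i = H$, which is the key structural fact. Equivalently, setting $M_i := H^{-1/2} H_i H^{-1/2}$, we get $\sum_i M_i = n I$, and it suffices to bound $\max_i \lambda_1(M_i)$ above and below (uniformly in $w$).

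For the lower bound, I would use subadditivity of the maximum eigenvalue: since $M_i \succeq 0$ and $\sum_i M_i = nI$, we have
\[
n = \lambda_1(nI) = \lambda_1\!\left(\sum_{i=1}^n M_i\right) \le \sum_{i=1}^n \lambda_1(M_i),
\]
so by pigeonhole $\max_i \lambda_1(M_i) \ge 1$. Taking the supremum over $w \in \mathcal C$ gives $\tau_\star^\nu(\mathcal C) \ge 1$.

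For the bound $\tau_\star^\nu(\mathcal C) \le n$, observe that each $H_i \succeq 0$, so $H_i \preceq \sum_j H_j = nH$. Conjugating by $H^{-1/2}$ gives $M_i \preceq n I$, hence $\lambda_1(M_i) \le n$. For the bound $\tau_\star^\nu(\mathcal C) \le 1 + \kappaMax$, I would use \cref{assm:SmoothSC}, which gives $\nabla^2 f_i(w) \preceq L_i I \preceq L_{\max} I$, together with $\nabla^2 f(w) \succeq 0$. Thus $H_i \preceq (L_{\max} + \nu) I$ and $H \succeq \nu I$, so $H^{-1} \preceq \frac{1}{\nu} I$, and therefore
\[
\lambda_1(M_i) = \lambda_1(H^{-1} H_i) \le \|H^{-1}\|_2 \,\|H_i\|_2 \le \frac{L_{\max} + \nu}{\nu} = 1 + \kappaMax.
\]
Taking the sup over $w$ and the max over $i$ and combining with the previous bound yields $\tau_\star^\nu(\mathcal C) \le \min\{n, 1 + \kappaMax\}$.

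There is no real obstacle; the whole argument is direct manipulation of Loewner inequalities, and the only subtlety is remembering to invoke subadditivity of $\lambda_1$ (rather than a stronger statement like $\lambda_1(M_i) \ge 1$ for all $i$, which is false in general) to establish the lower bound.
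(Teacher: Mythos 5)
Your proof is correct and follows essentially the same route as the paper's: the lower bound via subadditivity of $\lambda_1$ applied to $\frac{1}{n}\sum_i H^{-1/2}H_iH^{-1/2} = I$, the bound by $n$ via $H_i \preceq nH$, and the bound by $1+\kappaMax$ via $H_i \preceq (L_{\max}+\nu)I$ together with $H \succeq \nu I$. No gaps.
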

\fi
\cref{lem:hess_sim} shows the the Hessian dissimilarity never exceeds $n$, and may be much smaller if the $f_i$'s are well-conditioned.
Unfortunately, for ill-conditioned problems, we can easily have $\tau^\nu_\star = n$, in which case there is 
no improvement in the gradient batchsize needed (in theory) over a full-gradient algorithm.
In practice, large gradient batchsizes are unnecessary in any of our numerical experiments, 
which suggests the bound in \cref{lem:hess_sim} is pessimistic.
In particular, this bound does not account for the structure of the objective.
For GLMs, we can derive a more informative bound on the Hessian dissimilarity (see \cref{prop:hess_sim_glm} below), 
which reveals a deep connection to ridge leverage scores and ridge leverage coherence.
\ifpreprint
\begin{proposition}[Hessian dissimilarity for GLMs] Consider a regularized GLM
\label[proposition]{prop:hess_sim_glm}
\[
F(w) = \frac{1}{n}\sum_{i=1}^{n}\phi_{i}(a_i^Tw)+\frac{\nu}{2}\|w\|^2.
\]
Further suppose that $\sup_{x\in \R}\phi_i^{''}(x)\leq B$, for some $B>0$. Then
\[
\tau_\star^{\nu}\leq 1+\chi^{\nu}_{\star}d_{\textup{eff}}^{\nu/B}(A),
\]
where $\chi^{\nu}_\star = \sup_{w\in \R^p} \chi^{\nu}(\Phi''(Aw)^{1/2}A)$ and $\Phi''(Aw) = \textup{diag}\left([\phi''_1(a_1^{T}w)\dots \phi''_n(a_n^{T}w)]\right)$.
In particular, for least squares and logistic regression, 
\[
\tau_\star^{\nu}\leq 1+\chi_{\star}^{\nu}d_{\textup{eff}}^{\nu}(A).
\]
\end{proposition}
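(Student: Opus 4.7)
The plan is to exploit the rank-one structure of the individual Hessians $\nabla^2 f_i(w) = \phi_i''(a_i^T w) a_i a_i^T$ together with the standard identity relating $a_i^T (\nabla^2 f(w)+\nu I)^{-1} a_i$ to ridge leverage scores. Write $H(w) \coloneqq \nabla^2 f(w) + \nu I = \tfrac{1}{n}A^T\Phi''(Aw)A + \nu I$ and expand
\[
H(w)^{-1/2}(\nabla^2 f_i(w) + \nu I) H(w)^{-1/2} = \phi_i''(a_i^T w)\, H(w)^{-1/2} a_i a_i^T H(w)^{-1/2} + \nu H(w)^{-1}.
\]
By Weyl's inequality the largest eigenvalue is bounded by $\phi_i''(a_i^T w)\, a_i^T H(w)^{-1} a_i + \nu \lambda_{\min}(H(w))^{-1}$. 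Because $H(w) \succeq \nu I$, the second term is at most $1$, which is where the additive $1$ in the bound comes from.

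Next I would identify the first term with a ridge leverage score of the reweighted design matrix $\tilde A(w) \coloneqq \Phi''(Aw)^{1/2} A$. Since the $i$th row of $\tilde A(w)$ is $\phi_i''(a_i^T w)^{1/2} a_i$ and $\tilde A(w)^T \tilde A(w) + n\nu I = n H(w)$, \cref{def:RLS} yields
\[
l_i^{\nu}(\tilde A(w)) = \phi_i''(a_i^T w)\, a_i^T (n H(w))^{-1} a_i = \frac{1}{n}\phi_i''(a_i^T w)\, a_i^T H(w)^{-1} a_i.
\]
Therefore $\max_i \phi_i''(a_i^T w)\, a_i^T H(w)^{-1} a_i = n\, l_\infty^{\nu}(\tilde A(w)) = \chi^{\nu}(\tilde A(w))\, d_{\textup{eff}}^{\nu}(\tilde A(w))$, using \cref{def:EffDimRLSCoh}.

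Taking a supremum over $w \in \R^p$, the coherence factor is absorbed into $\chi_\star^\nu$ by definition, and it only remains to pass from the effective dimension of $\tilde A(w)$ to that of $A$. Using $\Phi''(Aw) \preceq B\, I$ gives $\tilde A(w)^T \tilde A(w) \preceq B\, A^T A$, so by Weyl's monotonicity and the fact that $x \mapsto x/(x+c)$ is increasing for $c>0$,
\[
d_{\textup{eff}}^{\nu}(\tilde A(w)) = \sum_j \frac{\lambda_j(\tilde A(w)^T \tilde A(w))}{\lambda_j(\tilde A(w)^T \tilde A(w)) + n\nu} \leq \sum_j \frac{B\lambda_j(A^TA)}{B\lambda_j(A^TA) + n\nu} = d_{\textup{eff}}^{\nu/B}(A).
\]
Combining everything yields $\tau_\star^{\nu} \leq 1 + \chi_\star^{\nu} d_{\textup{eff}}^{\nu/B}(A)$. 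The specializations follow since $B = 1$ for least squares and $B = 1/4$ for logistic regression (for which $d_{\textup{eff}}^{4\nu}(A) \leq d_{\textup{eff}}^{\nu}(A)$, giving the slightly weaker but cleaner stated bound). The only mildly delicate step is recognizing the leverage score identity after the correct rescaling of $\nu$ between $H(w)$ and $\tilde A(w)^T \tilde A(w) + n\nu I$; once this bookkeeping is in place, everything else is monotonicity and Weyl.
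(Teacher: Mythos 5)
Your proposal is correct and follows essentially the same route as the paper's proof: split off the $\nu I$ term via Weyl, bound the rank-one piece by $\phi_i''(a_i^Tw)\,a_i^T(\nabla^2 f(w)+\nu I)^{-1}a_i = n\,l_i^{\nu}(\Phi''(Aw)^{1/2}A)$, convert $n\,l_\infty^{\nu}$ to $\chi^{\nu}d_{\textup{eff}}^{\nu}$, and pass to $d_{\textup{eff}}^{\nu/B}(A)$ by the Loewner bound $A^T\Phi''(Aw)A\preceq B\,A^TA$ and monotonicity of $x\mapsto x/(x+c)$. The only cosmetic differences are your unnormalized bookkeeping ($\tilde A^T\tilde A+n\nu I=nH(w)$ versus the paper's $R=\frac{1}{\sqrt{n}}\Phi''(Aw)^{1/2}A$) and your slightly more careful handling of logistic regression via $B=1/4$ and $d_{\textup{eff}}^{4\nu}(A)\leq d_{\textup{eff}}^{\nu}(A)$, where the paper simply takes $B=1$.
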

The proof is given in \cref{subsec:hess_sim_pfs}.
\else
\begin{proposition}[Hessian dissimilarity for GLMs] Consider minimizing a regularized GLM
\label[proposition]{prop:hess_sim_glm}
\[
F(w) = \frac{1}{n}\sum_{i=1}^{n}\phi_{i}(a_i^Tw)+\frac{\nu}{2}\|w\|^2.
\]
Further suppose that $\sup_{x\in \R}\phi_i^{''}(x)\leq B$, for some $B>0$. Then
\[
\tau_\star^{\nu}\leq 1+\chi^{\nu}_{\star}d_{\textup{eff}}^{\nu/B}(A),
\]
where $\chi^{\nu}_\star = \sup_{w\in \R^p} \chi^{\nu}(\Phi''(Aw)^{1/2}A)$ and $\Phi''(Aw) = \textup{diag}\left([\phi''_1(a_1^{T}w)\dots \phi''_n(a_n^{T}w)]\right)$.
In particular, for least squares and logistic regression, 
\[
\tau_\star^{\nu}\leq 1+\chi_{\star}^{\nu}d_{\textup{eff}}^{\nu}(A).
\]
\end{proposition}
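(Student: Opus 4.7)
The plan is to decompose $\nabla^2 f_i(w) + \nu I$ into its rank-one GLM Hessian piece and its regularization piece, and to handle the rank-one piece by reinterpreting it as a ridge leverage score of a reweighted data matrix. For any $w \in \mathcal C$ and $i \in [n]$, subadditivity of $\lambda_1$ on the psd cone yields
\[
\lambda_1\!\left((\nabla^2 f(w)+\nu I)^{-1/2}(\nabla^2 f_i(w)+\nu I)(\nabla^2 f(w)+\nu I)^{-1/2}\right) \leq T_i(w) + R(w),
\]
where $T_i(w) = \phi_i''(a_i^T w)\, a_i^T (\nabla^2 f(w)+\nu I)^{-1} a_i$ comes from the rank-one Hessian term (which has a single nonzero eigenvalue equal to $T_i(w)$) and $R(w) = \nu\,\lambda_1((\nabla^2 f(w)+\nu I)^{-1})$ comes from the regularizer. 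The term $R(w)$ is immediate: since $\nabla^2 f(w) \succeq 0$, we have $R(w) \leq \nu/\nu = 1$, which accounts for the additive ``$1$'' in the claimed bound.

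The key step is the ridge-leverage-score identification for $T_i(w)$. I would introduce the reweighted matrix $\tilde A(w) \coloneqq \Phi''(Aw)^{1/2} A$, which satisfies $\tilde A^T \tilde A = n \nabla^2 f(w)$ and whose $i$-th row is $\tilde a_i = \sqrt{\phi_i''(a_i^T w)}\, a_i$. A direct manipulation then gives
\[
T_i(w) = n\, \tilde a_i^T (\tilde A^T \tilde A + n\nu I)^{-1} \tilde a_i = n\, l_i^\nu(\tilde A(w)) \leq n\, l_\infty^\nu(\tilde A(w)),
\]
and \cref{def:EffDimRLSCoh} yields $n\, l_\infty^\nu(\tilde A(w)) = \chi^\nu(\tilde A(w))\, d^\nu_{\textup{eff}}(\tilde A(w))$.

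To finish, I would convert $d^\nu_{\textup{eff}}(\tilde A(w))$ to the claimed $d^{\nu/B}_{\textup{eff}}(A)$ using the bound $\phi_i'' \leq B$. This bound implies $\tilde A^T \tilde A \preceq B A^T A$, whence $\sigma_j^2(\tilde A) \leq B\sigma_j^2(A)$. Since $x \mapsto x/(x+c)$ is increasing,
\[
\frac{\sigma_j^2(\tilde A)}{\sigma_j^2(\tilde A) + n\nu} \leq \frac{B\sigma_j^2(A)}{B\sigma_j^2(A) + n\nu} = \frac{\sigma_j^2(A)}{\sigma_j^2(A) + n\nu/B},
\]
and summing over $j$ yields $d^\nu_{\textup{eff}}(\tilde A(w)) \leq d^{\nu/B}_{\textup{eff}}(A)$, independent of $w$. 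Taking the supremum over $w$ in $\chi^\nu(\tilde A(w))$ gives $\chi^\nu_\star$, and assembling the pieces produces $\tau_\star^\nu \leq 1 + \chi^\nu_\star d^{\nu/B}_{\textup{eff}}(A)$. For least squares, $B=1$ gives the stated special case; for logistic regression, $B=1/4$ gives $d^{4\nu}_{\textup{eff}}(A) \leq d^\nu_{\textup{eff}}(A)$ by monotonicity of the effective dimension in $\nu$.

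The main obstacle is the ridge-leverage-score identification in the second paragraph: one has to spot that $\tilde A = \Phi''(Aw)^{1/2} A$ is the correct reweighting, since it simultaneously makes $\tilde A^T \tilde A$ match $n\nabla^2 f(w)$ (so that the inverse in $T_i(w)$ becomes the ridge-regularized Gram inverse of $\tilde A$) and makes $\tilde a_i \tilde a_i^T$ match the rank-one Hessian $\phi_i''(a_i^T w) a_i a_i^T$. Once this identification is made, the leverage-coherence identity and the monotonicity argument for the effective dimension are essentially bookkeeping.
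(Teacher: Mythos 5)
Your proposal is correct and follows essentially the same route as the paper's proof: split off the $\nu I$ term via Weyl's inequality (bounding it by $1$), identify the rank-one piece as $n\,l_i^\nu(\Phi''(Aw)^{1/2}A)$, and convert $d^\nu_{\textup{eff}}(\Phi''(Aw)^{1/2}A)\leq d^{\nu/B}_{\textup{eff}}(A)$ by monotonicity of $x\mapsto x/(x+c)$. The only cosmetic differences are that the paper evaluates the top eigenvalue of the rank-one conjugate via the trace and its cyclic property rather than citing the single nonzero eigenvalue directly, and it handles logistic regression by simply taking $B=1$ rather than $B=1/4$ followed by monotonicity of the effective dimension in $\nu$.
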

\fi
\cref{prop:hess_sim_glm} shows that for GLMs, the Hessian dissimilarity is controlled by the global ridge leverage coherence of the Hessian, $\chi_\star^{\nu}$. 
When $\chi_\star^{\nu}$ is close to 1, the batchsize required to see the full effects of preconditioning is not much larger than the effective dimension of the data matrix.
As $\deff(A)$ is much smaller than $n$ under mild assumptions (recall \cref{lemma:EffDimBnd}), this implies 
that a much smaller gradient batchsize suffices to enjoy the effects of preconditioning.
This improved theory agrees with our empirical results. 
Conversely, when the data matrix has high coherence, \cref{prop:hess_sim_glm} suggests that large gradient batchsizes may be necessary to realize the benefits of preconditioning.
% Conversely, it also tells us in the worst case of high-coherence datasets, that large batchsizes for the gradient maybe needed to realize the full benefits of preconditioning. 

\ifpreprint
\subsubsection{Preconditioner stability}
We also establish the following result that shows the preconditioners evolve in a stable fashion, which is needed for showing convergence of SketchySAGA and SketchyKatyusha.
\begin{proposition}[Stable evolution of preconditioners]
\label[proposition]{prop:stable_precond}
    Instate the hypotheses of \cref{assm:SmoothSC,assm:GoodPrecond,assm:FiniteUpates}.
    Consider the sequences of random variables $\{X_{j}\}_{j\in [J]}, \{Y_{j}\}_{j\in [J]}$ defined by 
    \[
    X_{j} \coloneqq \max\left\{\frac{\|w_{j}-w_\star\|^2_{P_{j+1}}}{\|w_{j}-w_\star\|^2_{P_{j}}}, 1\right\}, \quad Y_{j} \coloneqq \max\left\{\frac{\|w_{j}-w_\star\|^2_{P_{j-1}}}{\|w_{j}-w_\star\|^2_{P_{j}}}, 1\right\}
    \]
    Then with probability $1$, there exist constants $\xi_j, \beta_j$ such that
    \[
    X_{j}\leq \xi_j, \quad Y_{j} \leq \beta_j.
    \]
    Hence with probability $1$,
    \[
    \prod^{J}_{j=1}X_{j} \leq \prod^{J}_{j=1}\xi_{j} \coloneqq \mathcal E_P, \quad \prod_{j=1}^{J}Y_j \leq \prod_{j=1}^{J}\beta_j \coloneqq B_P.
    \]
\end{proposition}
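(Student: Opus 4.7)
The plan is to leverage \cref{assm:SmoothSC} and \cref{assm:GoodPrecond} to derive uniform, deterministic two-sided eigenvalue bounds on every preconditioner $P_j$ constructed by the algorithm, and then use these bounds to control each ratio $X_j$ and $Y_j$ directly. \cref{assm:FiniteUpates} then guarantees that only finitely many distinct preconditioners are ever constructed, so any finite product of such ratios is automatically finite.

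First I would unpack \cref{assm:GoodPrecond}, which states $(1-\zeta) P_j \preceq \nabla^2 f(w_j) + \nu I \preceq (1+\zeta) P_j$. Combining the left inequality with the trivial bound $\nabla^2 f(w_j) + \nu I \succeq \nu I$ (convexity of each $f_i$) yields $P_j \succeq \tfrac{\nu}{1+\zeta} I$, and combining the right inequality with $\nabla^2 f(w_j) + \nu I \preceq (\LMax + \nu) I$ from \cref{assm:SmoothSC} yields $P_j \preceq \tfrac{\LMax + \nu}{1-\zeta} I$. These two-sided bounds hold simultaneously for $P_{j-1}$, $P_j$, and $P_{j+1}$ on the almost-sure event that \cref{assm:GoodPrecond} is in force at every update time (to which we condition throughout, as is standard).

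Chaining these bounds gives $P_{j\pm 1} \preceq \tfrac{\LMax + \nu}{1-\zeta} I \preceq C \, P_j$, where $C := \tfrac{(1+\zeta)(\LMax+\nu)}{(1-\zeta)\,\nu}$. Since $X_j$ and $Y_j$ are Rayleigh quotients of $P_{j+1}$ and $P_{j-1}$ relative to $P_j$ along the fixed direction $w_j - w_\star$, each is upper bounded by the largest eigenvalue of $P_j^{-1/2} P_{j\pm 1} P_j^{-1/2}$, hence by $C$. Taking $\xi_j = \beta_j = \max\{C,1\}$ gives the individual bounds, and \cref{assm:FiniteUpates} then yields $\prod_{j=1}^{J} \xi_j \leq \max\{C,1\}^{J}$ and $\prod_{j=1}^{J} \beta_j \leq \max\{C,1\}^{J}$, both finite and deterministic.

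The only genuine subtlety — and what I view as the main obstacle — is securing the uniform lower bound $P_j \succeq \tfrac{\nu}{1+\zeta} I$. This step is what rules out a ratio blowup and it depends crucially on the regularization $\nu > 0$ in \eqref{eq:EmpRiskProb}; without it, the raw subsampled Hessian can be rank-deficient (for instance whenever $b_H < p$ in a GLM), in which case the small eigenvalues of $P_j$ could vanish and the corresponding ratios would be unbounded. Once this lower bound is in hand, the remainder of the argument is deterministic bookkeeping with a worst-case constant $C$.
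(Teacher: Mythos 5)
Your proof is correct and follows essentially the same route as the paper's: bound each Rayleigh quotient by a deterministic worst-case constant using a uniform upper eigenvalue bound on $P_{j\pm 1}$ and a uniform lower eigenvalue bound on $P_j$, then multiply finitely many such bounds. The only cosmetic difference is that the paper lower-bounds $P_j \succeq \rho I$ directly from the preconditioner construction (yielding the constant $(1+\zeta)L/\rho$), whereas you derive $P_j \succeq \tfrac{\nu}{1+\zeta} I$ from \cref{assm:GoodPrecond} alone, which is slightly more self-contained but yields the same conclusion.
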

The proof is given in \cref{subsec:stable_precond}.
\else
\fi

\subsubsection{The smoothness of the preconditioned stochastic gradient}
Convergence analysis of stochastic gradient methods
requires control of the smoothness constant of the minibatch stochastic gradient.
To analyze preconditioned methods, 
we must control the smoothness in the preconditioned norm ($\|\cdot\|_{P^{-1}}$) instead of the Euclidean norm ($\|\cdot\|_2$). 
The following proposition provides such control in expectation, in terms of the quantity $\mathcal L_P$, which we call the preconditioned expected smoothness constant.
This proposition extends Proposition 3.8 of \cite{gower2019sgd}, which handles the case $P = I$.
\begin{proposition}[Preconditioned expected smoothness]
\label[proposition]{prop:precond_grad_var} 
    Let $F$ be $\gamma_u$ upper-quadratically regular, $P$ be a $\zeta$-spectral approximation, and recall $\gammaMax = \max_{i\in [n]}\gamma_{u_i}$
    Instate \cref{assm:SmoothSC} and \cref{assm:GoodPrecond}. Then for any $w',w \in \R^p$,
\[
\E\|\widehat \nabla F(w)-\widehat \nabla F(w')\|_{P^{-1}}^2\leq 2\mathcal L_P\left(F(w)-F(w')-\langle\nabla F(w'),w-w'\rangle \right),
\]
% \[
% \E\|\widehat \nabla F(w)\|_{P^{-1}}^2\leq 2\mathcal L_P\left(F(w)-F(w_\star)\right)+\frac{2\sigma^2}{\rho}.
% \]
where
\[
\mathcal L_P = \left(\frac{n(b_g-1)}{b_g(n-1)}\gamma_u+\tau_\star^\nu\frac{n-b_g}{b_g(n-1)}\gammaMax\right)(1+\zeta).
\]
% \[
% \sigma^2 = \frac{n-b_g}{b_g(n-1)}\frac{1}{n}\sum_{i=1}^{n}\|\nabla f_i(w_\star)\|^2.
% \]
\end{proposition}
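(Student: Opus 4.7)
The plan is to combine three ingredients: (i) the exact variance identity for uniform sampling without replacement, (ii) the classical consequence of upper quadratic regularity that bounds a squared gradient difference in a Hessian norm by the associated Bregman divergence, and (iii) two operator-norm conversions, one from $P^{-1}$ to $\nabla^2 F(w_j)^{-1}$ via the $\zeta$-spectral approximation property (\cref{assm:GoodPrecond}) and one from $\nabla^2 F(w_j)^{-1}$ to $\nabla^2 F_i(w_j)^{-1}$ via the Hessian dissimilarity (\cref{def:hess_sim}). Here $w_j$ denotes the iterate at which the preconditioner was last constructed; its role is to couple the spectral approximation with quadratic regularity, which allows $w_0=w_j$ as the anchor in its definition.

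First, I would set $V_i\coloneqq \nabla F_i(w)-\nabla F_i(w')$ and $\bar V\coloneqq \nabla F(w)-\nabla F(w')=\tfrac{1}{n}\sum_iV_i$, expand the expectation using $\Pr[i\in\mathcal B_k]=b_g/n$ and $\Pr[i,j\in\mathcal B_k,\;i\ne j]=b_g(b_g-1)/(n(n-1))$, and apply $\sum_{i\ne j}\langle V_i,V_j\rangle_{P^{-1}}=n^2\|\bar V\|_{P^{-1}}^2-\sum_i\|V_i\|_{P^{-1}}^2$. This produces the sampling-without-replacement identity
\[
\E\bigl\|\widehat\nabla F(w)-\widehat\nabla F(w')\bigr\|_{P^{-1}}^2=\frac{n(b_g-1)}{b_g(n-1)}\|\bar V\|_{P^{-1}}^2+\frac{n-b_g}{b_g\,n(n-1)}\sum_{i=1}^n\|V_i\|_{P^{-1}}^2,
\]
whose two prefactors are precisely the ones appearing in $\mathcal L_P$, so the remainder of the proof amounts to bounding each of the two terms by a Bregman divergence of $F$.

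The key technical step is a workhorse lemma: if $G$ is $\gamma$-upper quadratically regular, then for any $u,u',w_0$,
\[
\|\nabla G(u)-\nabla G(u')\|_{\nabla^2 G(w_0)^{-1}}^2\le 2\gamma\bigl(G(u)-G(u')-\langle\nabla G(u'),u-u'\rangle\bigr).
\]
I would prove this by applying the upper quadratic regularity bound to the shifted convex function $y\mapsto G(y)-\langle\nabla G(u'),y\rangle$ and minimising the resulting quadratic model over $y$, an argument parallel to the one that underlies Proposition 3.8 of Gower et al. Instantiating this lemma at $(G,w_0)=(F,w_j)$ and then using $P^{-1}\preceq(1+\zeta)\nabla^2 F(w_j)^{-1}$ from \cref{assm:GoodPrecond} yields $\|\bar V\|_{P^{-1}}^2\le 2(1+\zeta)\gamma_u D_F(w,w')$, where $D_F(w,w')\coloneqq F(w)-F(w')-\langle\nabla F(w'),w-w'\rangle$.

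For the per-term pieces I would first convert the norm, noting that \cref{def:hess_sim} gives $\nabla^2 F_i(w_j)\preceq\tau_\star^\nu\,\nabla^2 F(w_j)$ and hence, upon inverting the Loewner order, $\nabla^2 F(w_j)^{-1}\preceq\tau_\star^\nu\,\nabla^2 F_i(w_j)^{-1}$. Chaining the spectral approximation, this dissimilarity inequality, and the workhorse lemma applied to each $F_i$ (whose upper quadratic regularity constant is at most $\gammaMax$) yields $\|V_i\|_{P^{-1}}^2\le 2(1+\zeta)\tau_\star^\nu\gammaMax D_{F_i}(w,w')$. Summing over $i$ and using $\sum_i D_{F_i}(w,w')=n D_F(w,w')$, then plugging back into the sampling identity and collecting terms, reproduces exactly $2\mathcal L_P D_F(w,w')$. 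The main obstacles are the workhorse lemma and, more mundanely, being careful about which direction the Loewner order flips when one inverts an operator inequality, since the proof composes several such inequalities in sequence.
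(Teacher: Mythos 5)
Your proposal is correct and yields exactly the stated constant; it is in substance the same argument as the paper's, which obtains the sampling decomposition and the Bregman-divergence bound by citing Theorem 3.6/Proposition 3.8 of \cite{gower2019sgd} and then transferring to the preconditioned norm via the change of variables $w = P^{-1/2}z$, bounding $\lambda_1(P^{-1/2}\nabla^2 F(w_j)P^{-1/2})\le 1+\zeta$ and $\lambda_1(P^{-1/2}\nabla^2 F_i(w_j)P^{-1/2})\le \tau_\star^{\nu}(1+\zeta)$. You simply unpack that citation — re-deriving the without-replacement variance identity and the cocoercivity consequence of upper quadratic regularity directly in the $P^{-1}$ norm — which is a presentational rather than a mathematical difference.
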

The proof is given in \cref{subsec:precond_grad_var}.
The preconditioned expected smoothness constant $\mathcal L_P$ in \cref{prop:precond_grad_var}
is the 
preconditioned analogue of the smoothness constant in the stochastic gradient setting.
\ifpreprint
Indeed, if $n = b_g$, then $\mathcal L_P = (1+\zeta)\gamma_u$, which is the smoothness constant of $F$ with respect to the preconditioned norm $\|\cdot\|_{P}$ (\cref{lem:approx_hess_good_model}).
\else
Indeed, if $n = b_g$, then $\mathcal L_P = (1+\zeta)\gamma_u$, which is the smoothness constant of $F$ with respect to the preconditioned norm $\|\cdot\|_{P}$.
\fi
We have seen that when $P$ provides a good quadratic model, $\gamma_u \leq 1+\varepsilon$, and so $\mathcal L_P \leq (1+\varepsilon)(1+\zeta)$.
However, PROMISE operates in the setting $b_g\ll n$, in which case \cref{prop:precond_grad_var} yields a new phenomena not present for full gradients $b_g = n$; namely, that even if $\gammaMax = 1$, so that $P$ provides a perfect quadratic model, it is not guaranteed that $\mathcal L_P = \bigO(1)$.
To ensure $\mathcal L_P = \bigO(1)$, the gradient batchsize must satisfy $b_g = \bigO(\tau_\star^{\nu})$.
Hence to realize the benefits of preconditioning, the gradient batchsize must be sufficiently large:
the required batchsize is determined by the Hessian dissimilarity.
The dependence on $\tau^{\nu}_\star$ reflects the fact that the gradient batchsize must be large enough 
to ensure that Hessian of the minibatch objective is a good approximation to the Hessian of the full objective.
If the gradient batchsize is too small, the corresponding minibatch Hessian may have curvature that is quite different from the full Hessian.
In this case, we should not expect a preconditioner built from a good approximation of the full Hessian to help, 
as it contains information unrelated to that of the minibatch stochastic gradient.
Moreover, we have the natural conclusion that the required gradient batchsize is smaller when the $\nabla^2F_i$ are more similar, and larger when they are more dissimilar.

\ifpreprint
To obtain a more concrete understanding of \cref{prop:precond_grad_var}, consider the special case when $F$ is a GLM. 
\cref{prop:hess_sim_glm} shows the Hessian dissimilarity is bounded as $\tau_\star^{\nu}\leq 1+\chi_\star^{\nu}d^{\nu/B}_{\textrm{eff}}(A)$, 
which is consistent with \cref{proposition:SSNPrecond}.
This bound depends on the global ridge leverage coherence $\chi_\star^\nu$ instead of the ridge leverage coherence of the Hessian at $w$, as smoothness is a global quantity.
When the global ridge leverage coherence is small, $\tau^\nu_\star$ is no larger than the effective dimension, so the required gradient batchsize is small.
Conversely, a large global ridge leverage coherence implies the required gradient batchsize may be large.
\else
\fi

Overall, \cref{prop:precond_grad_var} shows that preconditioning is not a panacea.
For problems with highly non-uniform data, convergence of PROMISE methods may be slow if the gradient batchsize 
is smaller than $\bigO(\tau^{\nu}_\star)$.
We emphasize this limitation is independent of any particular preconditioning technique, and would remain true even if PROMISE used the perfect preconditioner---the Hessian itself. 
The problem stems from the use of uniform sampling to construct the stochastic gradient.  
Evidently, given our extensive empirical results in \cref{section:experiments}, problem instances requiring large gradient batches seem to be uncommon.
This is unsurprising, as the data in many ML problems is (approximately) i.i.d.; hence we have strong reasons to believe that data is relatively uniform due to statistical similarity.
  
To our knowledge, the analysis above is the first to demonstrate the necessity of a minimum gradient batchsize to see the benefits of preconditioning.
Previously, \cite{derezinski2022stochastic} observed that least-squares with highly coherent data matrices requires large gradient batchsizes, and demonstrated the phenomena empirically. 
Moreover, \cite{derezinski2022stochastic} shows that leverage score sampling to select the gradients
can reduce the required gradient batchsize 
Our analysis generalizes this observation to arbitrary loss functions, and provides a simple explanation based on the expected preconditioned smoothness constant.  
% Previously, \cite{derezinski2022stochastic} realized at an intuitive level \mnote{them's fightin' words, so better be more precise} that least-squares with highly coherent data matrices requires large gradient batchsizes, and shows how leverage score sampling can reduce the batchsize required. 
% Neverthless, this falls short of the full mathematical analysis we have provided in \cref{prop:precond_grad_var}, which provides a simple explanation for this phenomena in a general setting.  

\subsubsection{Weighted quadratic regularity ratio}
We have seen that $\mathcal L_P$ is the analogue of the smoothness constant for PROMISE methods. As a consequence, the convergence rates of PROMISE methods will not depend on $\q$, but on the $\emph{weighted}$ quadratic regularity ratio, $\qbar$, which we define below.  
\begin{definition}[Weighted quadratic regularity ratio]
    Let $F:\R^p\mapsto \R$ be quadratically regular and define $\qmax \coloneqq \frac{\gammaMax}{\gamma_\ell}$. 
    Define the \emph{weighted quadratic regularity ratio}
    \begin{align*}
    \qbar &\coloneqq \frac{\mathcal L_P}{\gamma_\ell} = \frac{\left(\frac{n(b_g-1)}{b_g(n-1)}\gamma_u+\tau_\star^{\nu}\frac{(n-b_g)}{b_g(n-1)}\gammaMax\right)(1+\zeta)}{\gamma_\ell} \\
    &= \left(\frac{n(b_g-1)}{b_g(n-1)}\q+\tau_{\star}^{\nu}\frac{(n-b_g)}{b_g(n-1)}\qmax\right)(1+\zeta).    
    \end{align*}
\end{definition}

\subsection{SketchySVRG}
\label{subsection:sksvrg_theory}
\cref{thm:conv_SketchySVRG} shows the global linear convergence of SketchySVRG.

\begin{theorem}[SketchySVRG convergence]
\label{thm:conv_SketchySVRG}
Instate the hypotheses of \cref{assm:SmoothSC}-\cref{assm:GoodPrecond}. 
Run SketchySVRG with fixed learning rate $\eta = \frac{1}{8\mathcal L_P}$ and $m = \frac{19}{(1-\zeta)}\qbar$ inner iterations.
Then 
    \begin{align*}
           \mathbb E[F(\hat w^{(s)})]-F(w_\star) \leq \epsilon
    \end{align*}
after $s = 10\log(1/\epsilon)$ outer iterations. 
Hence, the total number of stochastic gradient queries to reach an $\epsilon$-suboptimal point is bounded by 
\[
10\left( n + \frac{19b_g}{(1-\zeta)}\qbar\right)\log \left( \frac{1}{\epsilon} \right).
\] 
\end{theorem}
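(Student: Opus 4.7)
The plan is to adapt the classical SVRG convergence proof of \cite{johnson2013accelerating} to the preconditioned setting, working throughout in the Mahalanobis norm $\|\cdot\|_P^2 = \langle \cdot, P\cdot\rangle$ induced by the current preconditioner and replacing the usual expected-smoothness inequality by \cref{prop:precond_grad_var}. Because SketchySVRG's default schedule updates the preconditioner only at the boundary of an outer iteration, $P$ may be treated as fixed throughout any one inner loop; together with the Option~II snapshot rule, this reduces the argument to a one-epoch Lyapunov decrease for $\|w_k-w_\star\|_P^2$ followed by an outer-loop geometric iteration in $F(\hat w^{(s)})-F(w_\star)$. The first step is to establish the SVRG-style variance bound
\[
\E\|g_k\|_{P^{-1}}^2 \leq 4\mathcal L_P\bigl[(F(w_k)-F(w_\star)) + (F(\hat w)-F(w_\star))\bigr],
\]
by writing $g_k = [\widehat\nabla F(w_k)-\widehat\nabla F(w_\star)] + [\widehat\nabla F(w_\star)-\widehat\nabla F(\hat w)+\nabla F(\hat w)]$, noting that the second bracket is centered so that its squared $P^{-1}$-norm is dominated by that of $\widehat\nabla F(\hat w)-\widehat\nabla F(w_\star)$, applying $\|a+b\|^2 \leq 2\|a\|^2+2\|b\|^2$ in the $P^{-1}$-norm, and invoking \cref{prop:precond_grad_var} twice (using $\nabla F(w_\star)=0$).

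The second step is to expand
\[
\|w_{k+1}-w_\star\|_P^2 = \|w_k-w_\star\|_P^2 - 2\eta\langle g_k,w_k-w_\star\rangle + \eta^2\|g_k\|_{P^{-1}}^2,
\]
take conditional expectation, lower-bound the cross term by $F(w_k)-F(w_\star)$ via convexity, and substitute the variance bound. Summing $k=0,\ldots,m-1$ telescopes the Lyapunov terms; discarding $\E\|w_m-w_\star\|_P^2 \geq 0$, invoking Option~II to identify $\tfrac{1}{m}\sum_k F(w_k)$ with $\E F(\hat w^{(s+1)})$, and converting the residual $\|\hat w-w_\star\|_P^2$ into objective suboptimality via the lower-quadratic-regularity inequality $\|\hat w-w_\star\|_P^2 \leq \tfrac{2}{(1-\zeta)\gamma_\ell}(F(\hat w)-F(w_\star))$ (obtained from \cref{lem:approx_hess_good_model} with $\nabla F(w_\star)=0$) produces the one-outer-iteration contraction
\[
\E[F(\hat w^{(s+1)})-F(w_\star)] \leq \Bigl[\tfrac{1}{m\eta(1-\zeta)\gamma_\ell(1-2\eta\mathcal L_P)} + \tfrac{2\eta\mathcal L_P}{1-2\eta\mathcal L_P}\Bigr]\bigl(F(\hat w^{(s)})-F(w_\star)\bigr).
\]

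Finally, plugging $\eta = 1/(8\mathcal L_P)$ collapses $1-2\eta\mathcal L_P$ to $3/4$ and the bracket to $\tfrac{32\qbar}{3m(1-\zeta)} + \tfrac{1}{3}$; the choice $m = 19\qbar/(1-\zeta)$ then brings the contraction factor strictly below $e^{-1/10}$, so iterating for $s = 10\log(1/\epsilon)$ outer rounds yields the claimed accuracy, and the total stochastic-gradient count follows by multiplying $s$ by the per-epoch cost of one full gradient plus $mb_g$ stochastic gradients. The main obstacle is securing the variance bound with the correct constant: a naive split $\|g_k\|^2 \leq 2\|g_k-\nabla F(w_k)\|^2 + 2\|\nabla F(w_k)\|^2$ does not yield the clean SVRG-style right-hand side above, so the centering trick on the second bracket is essential to land the factor $4$ that makes the constants $8$ and $19$ in $\eta$ and $m$ fit together. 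A secondary subtlety worth double-checking is that \cref{prop:precond_grad_var} yields the same $\mathcal L_P$ at every preconditioner instance; this follows because its bound depends only on the iterate-independent quantities $\gamma_u,\gammaMax,\tau_\star^\nu,\zeta,b_g,n$ under \cref{assm:SmoothSC,assm:GoodPrecond}, so no preconditioner-stability argument (of the kind needed for SketchySAGA or SketchyKatyusha) is required here.
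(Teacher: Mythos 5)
Your proposal is correct and follows essentially the same route as the paper: the same centered decomposition yielding the variance bound $\E\|g_k\|_{P^{-1}}^2 \leq 4\mathcal L_P[(F(w_k)-F(w_\star))+(F(\hat w)-F(w_\star))]$ via \cref{prop:precond_grad_var}, the same one-step expansion and telescoping over the inner loop with Option II, the same conversion of $\|\hat w - w_\star\|_P^2$ through lower quadratic regularity, and the identical contraction factor $\frac{1}{(1-\zeta)\gamma_\ell\eta(1-2\eta\mathcal L_P)m}+\frac{2\eta\mathcal L_P}{1-2\eta\mathcal L_P}$, which evaluates to $17/19 < 9/10$ under the stated choices of $\eta$ and $m$. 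Your closing observations about the centering trick being essential for the factor of $4$ and about why no preconditioner-stability argument is needed here are both consistent with the paper's treatment.
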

The proof is given in \cref{subsec:sksvrg_convergence}. 
\cref{thm:conv_SketchySVRG} shows that SketchySVRG converges linearly at rate controlled by the weighted quadratic regularity ratio $\qbar$.
This should be contrasted with vanilla SVRG, whose rate is controlled by the condition number $\kappa$.  
As $\qbar$ depends upon the quadratic regularity constants, it may be much smaller than $\kappa$ for structured functions.
Indeed, for quadratic objectives with batchsize $b_g =  \tau_\star^\nu$, $\qbar \leq 2(1+\zeta)$, which is a significant smaller than $\kappa$.   
\ifpreprint
However, despite the explicit absence of $\kappa$ in the convergence rate of \cref{thm:conv_SketchySVRG}, the overall guarantee provided by the theorem is pessimistic.  
The pessimism stems from the convergence rate's dependence upon the global value of $\qbar$, which \cref{proposition:QuadRegConds} shows might be as large as $\kappa^2$ in the worst-case, which is a consequence of \cref{thm:conv_SketchySVRG} failing to reflect that optimization trajectory is \emph{localized}. 
Indeed, in practice SketchySVRG can take a large stepsize based on the good local model provided by the current preconditioner, and not the global one.  
Hence, the rate of convergence SketchySVRG achieves in practice is much faster than what \cref{thm:conv_SketchySVRG} predicts, as it uses a much larger stepsize than the one required by \cref{thm:conv_SketchySVRG}, which is based on pessimistic global constants. 
Moreover, the quality of the quadratic model provided by preconditioning improves as the iterates approach the optimum, as $\qbar$ approaches a constant close to $1$ provided $b_g = \bigO(\tau_\star^\nu)$.  
The analysis underlying \cref{thm:conv_SketchySVRG} treats the entire optimization trajectory as if though it is always far from the optimum, and so is unable to exploit that $\qbar$ ameliorates as the iterates near the optimum.
\else
\fi

\ifpreprint
Unfortunately, while it is easy to identify the shortcomings of \cref{thm:conv_SketchySVRG}, it seems difficult to eliminate them and obtain a theorem showing fast global convergence.
Given the discussion in \cref{section:related_work}, this is unsurprising, as under our current hypotheses even Newton's method does not yield faster global convergence than first-order methods in the worst case; it can only guarantee fast local convergence.
Thus, further assumptions beyond smoothness and strong convexity are likely required to show an improved global convergence convergence rate.
Nevertheless, we show in \cref{thm:sksvrg_loc_con} that SketchySVRG achieves fast local convergence just like Newton's method, albeit 
the rate is only linear instead of quadratic, as it uses stochastic gradients.
So, SketchySVRG possesses the Newton-like property of fast local convergence, providing an advantage over stochastic first-order methods, which do not enjoy this property. 
\else
\fi

The exception to the preceding discussion on global convergence is the case where $F$ is quadratic, in which case the quadratic regularity constants equal $1$, so we always obtain global linear convergence at a rate independent of the condition number (assuming an appropriate gradient batchsize):

\begin{corollary}[SketchySVRG: Fast ridge regression]
    Instate the hypotheses of \cref{thm:conv_SketchySVRG}, and suppose $F$ is quadratic. 
    Run SketchySVRG with gradient batchsize $b_g = 1+\chi^{\nu}(A)\deff(A)$ and $m = 48\frac{1+\zeta}{1-\zeta}$ inner iterations. Then
\begin{align*}
           \mathbb E[F(\hat w^{(s)})]-F(w_\star) \leq \epsilon
    \end{align*}
where $\hat w^{(s)}$ is the output after running $s = 10\log\left(\frac{1}{\epsilon}\right)$ outer iterations.
Hence the total number of stochastic gradient evaluations to reach an $\epsilon$-suboptimal point is bounded by
\[
10\left( n + 48\frac{1+\zeta}{1-\zeta}[1+\chi^{\nu}(A)\deff(A)]\right)\log \left(\frac{1}{\epsilon}\right).
\]
\end{corollary}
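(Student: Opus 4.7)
The plan is to reduce the corollary to a direct application of Theorem \ref{thm:conv_SketchySVRG} by showing that, for quadratic $F$, the weighted quadratic regularity ratio $\qbar$ collapses to a universal constant under the prescribed batchsize. First, because $F$ is a quadratic (ridge regression) each $f_i$ has a constant Hessian, so by inspection of \cref{def:QuadReg} (or item 1 of \cref{proposition:QuadRegConds}, which is tight here) $\gamma_\ell = \gamma_u = 1$ and $\gammaMax = \gammaMin = 1$; hence $\q = \qmax = 1$.

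Next, I would bound the Hessian dissimilarity using \cref{prop:hess_sim_glm}. For ridge regression $\phi_i''\equiv 1$, so $\Phi''(Aw) = I$ for every $w$, and therefore $\chi^\nu\!\left(\Phi''(Aw)^{1/2} A\right) = \chi^\nu(A)$ independently of $w$. This gives $\chi_\star^\nu = \chi^\nu(A)$, and \cref{prop:hess_sim_glm} yields $\tau_\star^\nu \leq 1 + \chi^\nu(A)\deff(A) = b_g$ by the prescribed choice. Plugging $\q = \qmax = 1$ and $\tau_\star^\nu \leq b_g$ into the definition of $\qbar$ gives
\[
\qbar \;\leq\; \left(\frac{n(b_g-1)}{b_g(n-1)} + \frac{n-b_g}{n-1}\right)(1+\zeta) \;=\; \frac{1}{n-1}\!\left(2n - \tfrac{n}{b_g} - b_g\right)(1+\zeta).
\]
An AM-GM bound on $\tfrac{n}{b_g} + b_g \geq 2\sqrt{n}$ then yields $\qbar \leq \frac{2\sqrt{n}}{\sqrt{n}+1}(1+\zeta) < 2(1+\zeta)$, a universal constant independent of $n,p$, and in particular independent of the condition number of $A$.

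Finally, since $48\tfrac{1+\zeta}{1-\zeta} \geq \tfrac{19}{1-\zeta}\cdot 2(1+\zeta) \geq \tfrac{19}{1-\zeta}\qbar$, the prescribed inner-loop length $m$ meets the hypothesis of \cref{thm:conv_SketchySVRG}. Invoking that theorem with the stepsize $\eta = 1/(8\mathcal{L}_P)$ yields $\mathbb{E}[F(\hat w^{(s)})] - F(w_\star) \leq \epsilon$ after $s = 10\log(1/\epsilon)$ outer iterations, and a total stochastic gradient query count of at most $10(n + b_g m)\log(1/\epsilon) = 10\bigl(n + 48\tfrac{1+\zeta}{1-\zeta}[1+\chi^\nu(A)\deff(A)]\bigr)\log(1/\epsilon)$, which is exactly the claim. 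The only nontrivial step is the short AM-GM calculation collapsing the $\qbar$ expression; everything else is a direct specialization of previously established results, so no real obstacle arises.
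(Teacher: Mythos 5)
Your proposal is correct and follows the same route the paper intends: the remark after \cref{thm:conv_SketchySVRG} already notes that for quadratics with $b_g = \tau_\star^\nu$ one gets $\qbar \le 2(1+\zeta)$, and your steps (quadratic regularity constants equal to $1$, $\tau_\star^\nu \le 1+\chi^\nu(A)\deff(A)$ via \cref{prop:hess_sim_glm} with $B=1$, then specializing the theorem) are exactly that specialization. The only cosmetic remark is that the AM-GM step is unnecessary — each of the two terms in the bracket defining $\qbar$ is individually at most $1$ once $\tau_\star^\nu \le b_g \le n$, which gives $\qbar \le 2(1+\zeta)$ directly.
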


\subsection{SketchySVRG: Fast local convergence}
We establish local linear convergence of SketchySVRG independent of the condition number in the neighborhood 
\[\Nstar = \left\{w\in \R^p: \|w-w_\star\|_{\nabla^2F(w_\star)}\leq\frac{\varepsilon_0 \nu^{3/2}}{2M}\right\},\] 
where $M$ is the uniform Lipschitz constant for each $\nabla^2 F_i$. 
This result is analagous to the fast local convergence of Newton's method in the full gradient setting. 
\begin{theorem}
\label{thm:sksvrg_loc_con}
    Let $\varepsilon_0 \in (0, 1/6]$.
    Suppose that each $F_i$ has an $M$-Lipschitz Hessian, and that $w_0\in \Nstar$.
    Instate \cref{assm:SmoothSC} and \cref{assm:GoodPrecond} with $\zeta = \varepsilon_0$. 
    Run \cref{alg:sksvrg} using Option I with $\mathcal U = \{0\}$, 
    $m = 6$ inner iterations, $s = 2\log(1/\epsilon)$ outer iterations, $\eta = 1$, and $b_g = \Otil\left(\tau(\Nstar)\log(\frac{1}{\delta})\right)$. Then with probability at least $1-\delta$,
    \[
    F(\hat w^{(s)})-F(w_\star)\leq \epsilon.
    \]
    Hence the total number of stochastic gradient queries required to reach an $\epsilon$-suboptimal point is bounded by
    \[
    3\left[n+6\Otil\left(\tau(\Nstar)\log\left(\frac{1}{\delta}\right)\right)\right]\log\left(\frac{1}{\epsilon}\right).
    \]
\end{theorem}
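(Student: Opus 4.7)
The plan is to leverage the Lipschitz-Hessian hypothesis to obtain very tight quadratic regularity constants throughout $\Nstar$, and then show that a single step of preconditioned SVRG with stepsize $\eta = 1$ behaves like a noisy Newton step whose contraction factor is independent of the condition number.

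First I would translate the neighborhood into classical regularity constants. Because $\nu I \preceq \nabla^2 F(w_\star)$, the Hessian-norm ball $\Nstar$ is contained in a Euclidean ball of radius $\varepsilon_0 \nu/(2M)$, so its diameter satisfies $D \leq \varepsilon_0 \nu/M$. Item 2 of \cref{proposition:QuadRegConds} with $\mu = \nu$ then yields $\gamma_\ell(\Nstar), \gamma_u(\Nstar), \gammaMax(\Nstar) \in [(1+\varepsilon_0)^{-1},\, 1+\varepsilon_0]$, whence $\q(\Nstar), \qmax(\Nstar) \leq (1+\varepsilon_0)^2 \leq (7/6)^2$. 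Together with $\zeta = \varepsilon_0$ and the prescribed gradient batchsize $b_g = \Otil(\tau_\star^\nu(\Nstar)\log(1/\delta))$, \cref{prop:precond_grad_var} forces the preconditioned expected smoothness $\mathcal L_P$, and therefore $\qbar(\Nstar)$, to be bounded by a small absolute constant independent of the condition number.

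The core of the argument is a per-outer-loop contraction of the form $\E[F(\hat w^{(s+1)}) - F(w_\star)] \leq \tfrac{1}{2}(F(\hat w^{(s)}) - F(w_\star))$, conditioned on the event that the entire outer loop remains inside $\Nstar$. Inside the inner loop I would follow the standard SVRG telescoping, but replace every Euclidean norm with the $P$-norm. The variance of the SVRG estimator satisfies
\[
\E\|g_k - \nabla F(w_k)\|_{P^{-1}}^2 \leq 4\mathcal L_P\bigl((F(w_k) - F(w_\star)) + (F(\hat w) - F(w_\star))\bigr)
\]
by applying \cref{prop:precond_grad_var} twice and invoking convexity, while the lower-quadratic-regularity inequality applied at the preconditioner's construction point $w_0$ supplies the descent in the $P$-norm. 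A Lyapunov argument with potential $\|w_k - w_\star\|_P^2$ then shows that, because $\mathcal L_P$ is an absolute constant and $\eta = 1$ matches the Newton stepsize permitted by the quality of $P$, telescoping across $m = 6$ iterations and using Option I (snapshot set to $w_m$) delivers the required constant-factor contraction.

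The main obstacle is preserving the ``iterates stay in $\Nstar$'' event with high probability across all $s = 2\log(1/\epsilon)$ outer iterations, because the local regularity bounds collapse outside $\Nstar$ and a single escape invalidates the analysis. I would address this via a matrix-Bernstein concentration argument on $\widehat{\nabla}F(w_k) - \nabla F(w_k)$, which is where the $\log(1/\delta)$ factor in $b_g$ enters, combined with a supermartingale argument on $\|w_k - w_\star\|_{\nabla^2 F(w_\star)}^2$: because each expected inner step shrinks this quantity by a constant factor and the noise is tightly controlled by the inflated batchsize, the iterate sequence remains in $\Nstar$ with probability at least $1 - \delta$. A final union bound over the $s$ outer loops, combined with compounding the per-outer-loop contractions, yields the claimed high-probability bound.
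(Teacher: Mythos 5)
There is a genuine gap in the core of your argument. You propose to drive the per-outer-loop contraction with the standard SVRG telescoping in the $P$-norm, using the expected-smoothness variance bound from \cref{prop:precond_grad_var} and the potential $\|w_k-w_\star\|_{P}^2$. That engine cannot produce the theorem as stated. The one-step SVRG bound (cf.\ \cref{lem:sksvrg_one_step}) carries a factor $2\eta\left(2\eta\mathcal L_P-1\right)$ on the suboptimality $F(w_k)-F(w_\star)$, and even with your (correct) observation that the local quadratic regularity constants force $\mathcal L_P$ to be an absolute constant, that constant is at least $(1+\zeta)\gamma_u\geq 1$; with $\eta=1$ the factor $2\eta\mathcal L_P-1$ is positive and the telescoping yields no descent. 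The stepsize $\eta=1$ in the theorem is a Newton stepsize, and the paper's proof is correspondingly a Newton-type analysis: it tracks $\|w_k^{(s)}-w_\star\|_{\nabla^2F(w_\star)}$ \emph{per iterate, in high probability}, decomposing the update error into the exact-Newton error (controlled by Lipschitzness of the Hessian), the deviation of the variance-reduced gradient (controlled by a vector Bernstein inequality in the $\nabla^2F(w')^{-1}$-norm, which is exactly where $\tau_\star^\nu(\Nstar)$ and the $\log(1/\delta)$ in $b_g$ enter), and the preconditioner approximation error (controlled by the $\zeta$-spectral property). This gives $\|\Delta_{k+1}^{(s)}\|_{\nabla^2F(w_\star)}\leq \frac{7}{12}\|\Delta_k^{(s)}\|_{\nabla^2F(w_\star)}+\frac14\|\Delta_0^{(s)}\|_{\nabla^2F(w_\star)}$, which simultaneously keeps every iterate in $\Nstar$ and compounds over $m=6$ steps to a $\frac23$ contraction of the distance; function values are converted only at the end via quadratic regularity.

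A second, related problem is the probability model. The theorem asserts $F(\hat w^{(s)})-F(w_\star)\leq\epsilon$ with probability $1-\delta$, but your telescoping produces an in-expectation statement, and you propose to condition it on the event that the trajectory stays in $\Nstar$. Conditioning on a survival event destroys the unbiasedness of $g_k$ and the tower-property steps that the telescoping relies on, and converting the resulting expectation bound to a high-probability one via Markov would cost $1/\delta$ rather than $\log(1/\delta)$. Your instinct to invoke Bernstein concentration on the gradient estimator is the right one, but it must be used as the \emph{primary} contraction mechanism (as in \cref{lem:sksvrg_local_bernstein_bound,lem:sksvrg_local_dist_norm,lem:sksvrg_local_next_iterate_bound}), with a union bound over the $6s$ inner iterations, not merely as a patch to certify containment in $\Nstar$ on top of an expectation argument. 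Your first paragraph (locating $\Nstar$ inside a small Euclidean ball and deducing $\gamma_\ell,\gamma_u\approx 1$ from item 2 of \cref{proposition:QuadRegConds}) does match the paper's preliminary step and is fine.
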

The proof is given in \cref{subsec:sksvrg_fast_local_convergence}.
\cref{thm:sksvrg_loc_con} shows that once the iterates are close enough to the optimum, SketchySVRG converges linearly at a rate independent of the condition number, provided the gradient batchsize satisfies $b_g = \Otil(\tau_\star^{\nu}(\Nstar))$. 
Recall $\tau_\star^{\nu}(\Nstar)$ is always smaller than $n$, and is significantly smaller when there are no outliers amongst the individual Hessians $\nabla^2 F_i$.

Previously, \cite{derezinski2022stochastic} established a result similar to \cref{thm:sksvrg_loc_con} when $P$ is the \ssn{} preconditioner. 
However, \cite{derezinski2022stochastic} requires $b_g = \Otil(\kappa)$, which can easily exceed $n$ for ill-conditioned problems.
\cref{thm:sksvrg_loc_con} significantly improves this result by reducing the required gradient batchsize to $\Otil(\tau_\star^\nu(\Nstar))$.
For GLMs (with some mild hypotheses), we show in \cref{cor:sksvrg_fast_glm} that the required gradient batchsize is as small as $\Otil(\sqrt{n})$, which is often orders of magnitude smaller than $n$ or $\kappa$. 
Hence \cref{thm:sksvrg_loc_con} holds with small gradient batchsizes, which agrees with practice, as PROMISE methods provide excellent empirical performance without large gradient batchsizes. 
The key idea for achieving the improvements in \cref{thm:sksvrg_loc_con} is quadratic regularity, which enables tighter control over the gradient in the inverse Hessian norm with high probability.

To better understand the implications of \cref{thm:sksvrg_loc_con}, we present the following corollary, which addresses the setting where $F$ is a GLM. 
\begin{corollary}
\label[corollary]{cor:sksvrg_fast_glm}
    Instate the hypotheses of \cref{thm:sksvrg_loc_con} and let $F$ be a bounded GLM. 
    Moreover suppose its data matrix $A$ has polynomially decaying singular values, the regularization satisfies $\nu =\bigO(1/n)$, and the ridge leverage incoherence satisfies $\chi^{\nu}_\star(\Nstar) = \bigO(1)$\footnote{Equivalently, $\Phi''(Aw_\star)^{1/2}A$ is ridge leverage incoherent.}.
    Run $\cref{alg:sksvrg}$ with $b_g = \Otil\left(\sqrt{n}\log(\frac{1}{\delta})\right)$. 
    Then with probability at least $1-\delta$, at most 
    \[
    3\left[n+6\Otil\left(\sqrt{n}\log\left(\frac{1}{\delta}\right)\right)\right]\log\left(\frac{1}{\epsilon}\right)
    \]
    stochastic gradient queries are required to find an $\epsilon$-suboptimal point. % with $F(\hat w^{(s)})-F(w_\star)\leq \epsilon$.
\end{corollary}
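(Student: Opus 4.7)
The proof plan is to specialize Theorem \ref{thm:sksvrg_loc_con} to the GLM setting by controlling the Hessian dissimilarity $\tau_\star^\nu(\Nstar)$ under the stated assumptions. Theorem \ref{thm:sksvrg_loc_con} guarantees the claimed query complexity whenever the gradient batchsize satisfies $b_g = \Otil(\tau_\star^\nu(\Nstar) \log(1/\delta))$, so the entire task reduces to showing $\tau_\star^\nu(\Nstar) = \Otil(\sqrt{n})$.

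First, I would invoke Proposition \ref{prop:hess_sim_glm}, which asserts that for a bounded GLM with $\sup_x \phi''_i(x) \leq B$, one has $\tau_\star^{\nu} \leq 1 + \chi^{\nu}_{\star} d_{\textup{eff}}^{\nu/B}(A)$. By the ridge leverage incoherence hypothesis, $\chi^{\nu}_\star(\Nstar) = \bigO(1)$, so the dissimilarity collapses to $\tau_\star^\nu(\Nstar) = \bigO\bigl(d_{\textup{eff}}^{\nu/B}(A)\bigr)$.

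Next, I would bound the effective dimension using Lemma \ref{lemma:EffDimBnd}. The assumption that $A$ has polynomially decaying singular values means $\sigma_j(A) = \bigO(j^{-\beta})$ for some $\beta \geq 1$, so Lemma \ref{lemma:EffDimBnd} gives
\begin{equation*}
d_{\textup{eff}}^{\nu/B}(A) \leq \frac{\pi/(2\beta)}{\sin(\pi/(2\beta))}\left(\frac{BC}{n\nu}\right)^{1/(2\beta)}.
\end{equation*}
Combined with the regularization assumption $\nu = \bigO(1/n)$, the argument $n\nu$ is bounded below by a positive constant, so $d_{\textup{eff}}^{\nu/B}(A) = \bigO(1)$ in $\beta$; however, applying the lemma directly to the (weighted) Hessian matrix $A^T \Phi''(Aw)A$ and using boundedness of $\phi''$ to absorb constants, the same argument yields $d_{\textup{eff}}^{\nu}(A^T\Phi''(Aw)A) = \bigO(\sqrt{n})$ uniformly over $w \in \Nstar$. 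Taking the supremum in the definition of $\tau_\star^\nu$ then gives $\tau_\star^\nu(\Nstar) = \bigO(\sqrt{n})$.

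Finally, I would substitute this bound into Theorem \ref{thm:sksvrg_loc_con}: with $b_g = \Otil(\sqrt{n}\log(1/\delta))$, the fast local convergence theorem produces an $\epsilon$-suboptimal iterate in $2\log(1/\epsilon)$ outer iterations with probability at least $1-\delta$, and the total stochastic gradient query count is $3[n + 6\Otil(\sqrt{n}\log(1/\delta))]\log(1/\epsilon)$ as claimed. The main obstacle is the bookkeeping around uniformity over $\Nstar$: Proposition \ref{prop:hess_sim_glm} controls $\tau_\star^\nu$ via the \emph{global} coherence $\chi_\star^\nu$, so I must verify that the hypothesis $\chi^{\nu}_\star(\Nstar) = \bigO(1)$ (restricted to the neighborhood) suffices, which it does because $\tau_\star^\nu(\Nstar)$ itself only ranges $w$ over $\Nstar$ in its definition, matching the localized coherence. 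The remaining work is routine constant-tracking.
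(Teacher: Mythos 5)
Your proposal is correct and is exactly the paper's route: bound $\tau_\star^\nu(\Nstar)$ via \cref{prop:hess_sim_glm} together with the incoherence hypothesis $\chi_\star^\nu(\Nstar)=\bigO(1)$ (noting, as you do, that the proposition's pointwise argument localizes to $\Nstar$), control the effective dimension by \cref{lemma:EffDimBnd} under polynomial spectral decay and $\nu=\bigO(1/n)$ to get $\tau_\star^\nu(\Nstar)=\Otil(\sqrt{n})$, and substitute into \cref{thm:sksvrg_loc_con}. One clause is garbled --- $\nu=\bigO(1/n)$ bounds $n\nu$ from \emph{above}, not below, so your aside that $d_{\textup{eff}}^{\nu/B}(A)=\bigO(1)$ does not follow as written --- but since you ultimately invoke the lemma's stated $\bigO(\sqrt{n})$ conclusion, the argument stands.
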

\cref{cor:sksvrg_fast_glm} shows that under mild hypotheses on $A$ and the Hessian, SketchySVRG achieves fast local convergence with a small gradient batchsize of $\Otil(\sqrt{n})$, for common values of the regularization parameter $\nu$. 
Hence large gradient batchsizes are unnecessary to see the benefits of preconditioning for GLMs.
\ifpreprint
\cref{cor:sksvrg_fast_glm}'s prescription that the gradient batchsize should satisfy $b_g = \Otil(\sqrt{n})$ translates well to practice, as \cref{table:grad_batchsize} shows the median value of $b_g$ in our experiments is around $\sqrt{n}$.
\else
\fi

\ifpreprint
\subsection{SketchySAGA}
We now turn to the convergence of SketchySAGA. Similar to the analysis in \cite{defazio2014saga}, we set $b_g = 1$ and use a Lyapunov function argument.
The Lyapunov function is
\begin{align}
\label{eq:lyap_fun_sksaga}
    T_k \coloneqq B_k \left( \frac{1}{n} \sum_{i = 1}^n F_i(\psi_k^i) - F(w_\star) - \frac{1}{n} \sum_{i = 1}^n \langle \nabla F_i(w_\star), \psi_k^i - w_\star \rangle + c \| w_k - w_\star \|_{P_{k-1}}^2 \right), 
\end{align}
where
\[
B_k \coloneqq 
\begin{cases}
\prod_{i = 0}^{k-1} \beta_i & k \geq 1\\
1 & k = 0
\end{cases}.
\] 
The Lyapunov function in \eqref{eq:lyap_fun_sksaga} is identical to that of \cite{defazio2014saga}, except $w_k-w_\star$ is measured in the $P_{k-1}$-norm, and there is a factor of $B_k$ that arises from the changing metric. 

To establish linear convergence, we must show that for appropriate learning rate, $\eta$, and constant, $c$, the Lyapunov function contracts in expectation. 
This is shown in \cref{lem:sksaga_contraction}: if we set $\eta = \frac{1}{2(n(1-\zeta)\gammaMin  + \Lmc_P)}$, $c = \frac{1}{2 \eta (1 - \eta (1-\zeta)\gammaMin) n}$,  and $\kappa_P = \frac{\mathcal L_P}{(1-\zeta)\gammaMin}$, then conditioned on the first $k$ iterations,
\begin{align*}
    \E_k [T_{k+1}] \leq \left( 1 - \frac{1}{\kappa_P} \right) \beta_k T_k.
\end{align*}
Using the preceding result, we can easily establish the following theorem.
\begin{theorem}
\label{thm:conv_sksaga}
Instate the hypotheses of \cref{assm:SmoothSC}-\cref{assm:FiniteUpates}. 
Set $\kappa_P = \frac{\mathcal L_P}{(1-\zeta)\gammaMin}$ and $\tilde{\mathfrak{q}}_{\textup{max}} = \gammaMax/\gammaMin$.
Run SketchySAGA with fixed learning rate $\eta = \frac{1}{2\left(\mathcal L_P+n(1-\zeta)\gammaMin\right)}$.
Then  
\begin{align*}
    \E \|w_k-w_\star\|_{P_k}^2  \leq \left( 1 - \frac{1}{2(n+\kappa_P)}\right)^{k}\frac{n B_P}{\mathcal L_P+(1-\zeta)n\gamma_{\ell_{\textup{min}}}}T_0.
\end{align*}
Hence, 
\[
\E \|w_k-w_\star\|_{P_k}^2\leq \epsilon
\] 
after $k = 2\left( n+\tau_\star^{\nu}\frac{1+\zeta}{1-\zeta} \tilde{\mathfrak{q}}_{\textup{max}} \right) \log\left(\frac{nB_PT_0}{(\mathcal L_P+n(1-\zeta)\gamma_{\ell_{\textup{min}}})\epsilon}\right)$ iterations.
\end{theorem}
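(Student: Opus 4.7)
The plan is to iterate the one-step contraction supplied by the preceding lemma and then convert the Lyapunov bound into a bound on $\E\|w_k-w_\star\|^2_{P_k}$.

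First, I would take the conditional contraction $\E_k[T_{k+1}]\leq(1-1/(2(n+\kappa_P)))\,\beta_k T_k$ (the one-step guarantee stated just above the theorem, with the rate matching $\frac{1}{2(n+\kappa_P)}$ under the prescribed $\eta$ and $c$). Since each $\beta_k$ is a deterministic upper bound by Proposition~\ref{prop:stable_precond}, taking total expectation and unrolling for $k$ steps gives
\[
\E[T_k]\leq \left(1-\frac{1}{2(n+\kappa_P)}\right)^{k}\!\!B_k\,T_0,
\]
where $B_k=\prod_{i=0}^{k-1}\beta_i$. Because $\beta_j=1$ whenever $P_j=P_{j-1}$ (i.e.\ outside of $\mathcal U$), the product only accumulates at preconditioner-update times; under \cref{assm:FiniteUpates} there are finitely many such times, so $B_k\leq B_P$ almost surely.

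Next, I would use convexity of each $F_i$ to drop the first three (nonnegative) terms in the definition of $T_k$, yielding $c\,B_k\|w_k-w_\star\|^2_{P_{k-1}}\leq T_k$, hence
\[
\E\|w_k-w_\star\|^2_{P_{k-1}} \leq \frac{1}{c}\left(1-\frac{1}{2(n+\kappa_P)}\right)^{k}T_0.
\]
A direct substitution of $\eta=1/(2(\mathcal L_P+n(1-\zeta)\gammaMin))$ into $c=1/(2\eta(1-\eta(1-\zeta)\gammaMin)n)$ shows $1/c\leq n/(\mathcal L_P+n(1-\zeta)\gammaMin)$. The remaining step is to pass from the $P_{k-1}$-norm to the $P_k$-norm: when $k\notin\mathcal U$ this is a trivial equality, and when $k\in\mathcal U$ the ratio $\|w_k-w_\star\|^2_{P_k}/\|w_k-w_\star\|^2_{P_{k-1}}$ is controlled via Proposition~\ref{prop:stable_precond}; absorbing the cumulative factor into $B_P$ produces the stated bound.

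Finally, the iteration-complexity statement follows from imposing the RHS to be at most $\epsilon$ and using $\log\!\bigl(1/(1-x)\bigr)\geq x$ for $x\in(0,1)$, which gives $k\geq 2(n+\kappa_P)\log\!\bigl(nB_PT_0/((\mathcal L_P+n(1-\zeta)\gammaMin)\epsilon)\bigr)$. To rewrite the prefactor as $n+\tau_\star^{\nu}\tfrac{1+\zeta}{1-\zeta}\tilde{\mathfrak{q}}_{\max}$, I plug $b_g=1$ into Proposition~\ref{prop:precond_grad_var}, which reduces $\mathcal L_P$ to $\tau_\star^{\nu}\gammaMax(1+\zeta)$, and then $\kappa_P=\mathcal L_P/((1-\zeta)\gammaMin)=\tau_\star^{\nu}(1+\zeta)\tilde{\mathfrak{q}}_{\max}/(1-\zeta)$. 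The main obstacle I anticipate is the change-of-metric step: the $\beta_j$'s in Proposition~\ref{prop:stable_precond} are stated at the specific iterates $w_j$, so one must either verify that the relevant ratio at $w_k$ is likewise controlled (when $k$ happens to be an update time) or argue via the same worst-case spectral comparison between $P_{k-1}$ and $P_k$ that underlies $\beta_j$, ensuring that the absorbed factor is indeed at most $B_P$.
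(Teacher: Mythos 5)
Your proposal is correct and follows essentially the same route as the paper's proof: unroll the contraction lemma $\E_k[T_{k+1}]\leq(1-\tfrac{1}{2(n+\kappa_P)})\beta_k T_k$, absorb the product of the $\beta_j$'s into $B_P$, drop the nonnegative Bregman terms to isolate $c\|w_k-w_\star\|^2$, bound $1/c\leq n/(\mathcal L_P+n(1-\zeta)\gammaMin)$, and substitute $\mathcal L_P=\tau_\star^{\nu}\gammaMax(1+\zeta)$ (the $b_g=1$ case) to rewrite $\kappa_P$ in the iteration count. The one place you are more careful than the paper is the $P_{k-1}$-to-$P_k$ change of metric at the end (the paper simply asserts $T_k\geq c\|w_k-w_\star\|^2_{P_k}$ even though $T_k$ is defined with the $P_{k-1}$-norm); your handling of that step via the stability constants absorbed into $B_P$ is the right fix and does not change the stated conclusion.
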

The proof is given in \cref{subsec:sksaga_convergence}.
\cref{thm:conv_sksaga} proves the linear convergence of SketchySAGA, which allows it to attain high accuracy solutions of \eqref{eq:EmpRiskProb}. 
Since SketchySAGA does not require full gradient computations, it can achieve linear convergence in a streaming setting, where computing full gradients is prohibitively expensive. 
This is a unique advantage of SketchySAGA over SketchySVRG and SketchyKatyusha, which require periodic full gradient computations. 
We also note the dependence upon $\tilde{\mathfrak{q}}_{\textup{max}}$ is identical to that of SketchySVRG up to constant factors, with the only difference being $\tilde{\mathfrak{q}}_{\textup{max}} = \left(\gamma_\ell/\gammaMin\right)\q_{\textrm{max}}$, as we have assumed $b_g = 1$.

As an immediate corollary of \cref{thm:conv_sksaga}, SketchySAGA converges linearly to the optimum, at a rate independent of the condition number, for ridge regression problems.
\begin{corollary}[SketchySAGA: Fast ridge regression]
Instate the hypotheses of \cref{thm:conv_sksaga} and suppose $F$ is quadratic. Then
\begin{align*}
    \E \|w_k-w_\star\|_{P_k}^2  \leq \epsilon
\end{align*}
after $k = 2\left(n+\frac{1+\zeta}{1-\zeta}[1+\chi^{\nu}(A)\deff(A)] \right) \log\left(\frac{nT_0}{(\tau_\star^{\nu}+(1-\zeta)n)\epsilon}\right)$ iterations.
\end{corollary}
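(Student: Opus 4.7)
The plan is to specialize the general SketchySAGA convergence bound of \cref{thm:conv_sksaga} to the quadratic setting, where all of the structural constants collapse to their simplest values, and then invoke the Hessian dissimilarity bound of \cref{prop:hess_sim_glm} to express the result in terms of $\chi^{\nu}(A)$ and $\deff(A)$.

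First I would observe that when $F$ is quadratic, each $F_i$ has a constant Hessian. Consequently the bounds in \cref{proposition:QuadRegConds} (item 1 applied with $\mu = L$, or more directly the definition of quadratic regularity) give $\gamma_\ell = \gamma_u = \gamma_{\ell_i} = \gamma_{u_i} = 1$ for every $i$, so $\gammaMin = \gammaMax = 1$, $\q = \qmax = 1$, and the ratio $\tilde{\mathfrak q}_{\textup{max}} = \gammaMax/\gammaMin = 1$. Substituting these values and $b_g = 1$ into the definition of the preconditioned expected smoothness collapses the first term (coefficient $(b_g-1)/(n-1) = 0$) and yields $\mathcal L_P = \tau_\star^{\nu}(1+\zeta)$, whence $\kappa_P = \mathcal L_P/((1-\zeta)\gammaMin) = \tau_\star^{\nu}(1+\zeta)/(1-\zeta)$.

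Next I would handle the preconditioner-stability factor $B_P$ from \cref{prop:stable_precond}. Because the Hessian of a quadratic is constant in $w$, the subsampled Hessian $\widehat{\nabla}^2 F(w)$ built by any PROMISE preconditioner at any iterate depends only on the minibatch, not on the point of construction. Under the recommended default of updating the preconditioner only once for ridge regression (see \cref{table:alg_to_use_lsq_reg}), all $P_j$ coincide and the ratios defining $X_j, Y_j$ equal $1$, so $B_P = 1$. Plugging $\kappa_P = \tau_\star^{\nu}(1+\zeta)/(1-\zeta)$, $\tilde{\mathfrak q}_{\textup{max}} = 1$, $B_P = 1$, and $\mathcal L_P + n(1-\zeta)\gamma_{\ell_{\min}} = \tau_\star^{\nu}(1+\zeta) + (1-\zeta)n$ into \cref{thm:conv_sksaga} yields an iteration bound
\[
k = 2\left(n + \tau_\star^{\nu}\frac{1+\zeta}{1-\zeta}\right)\log\!\left(\frac{nT_0}{(\tau_\star^{\nu}+(1-\zeta)n)\,\epsilon}\right).
\]

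Finally, since ridge regression is a GLM with $\phi_i(x)=(x-b_i)^2/2$, so $\phi_i''\equiv 1$ is bounded, \cref{prop:hess_sim_glm} (second display) gives $\tau_\star^{\nu} \leq 1 + \chi^{\nu}(A)\deff(A)$. Substituting this bound for $\tau_\star^{\nu}$ in the iteration count yields the stated corollary. The main bookkeeping obstacle I anticipate is verifying the $B_P = 1$ simplification carefully (i.e., confirming that in the constant-Hessian regime the lazy-update stability constants are trivially bounded by $1$); everything else is a clean specialization of \cref{thm:conv_sksaga,prop:hess_sim_glm}.
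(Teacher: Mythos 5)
Your proposal is correct and follows exactly the route the paper intends: the paper presents this as an ``immediate corollary'' of \cref{thm:conv_sksaga} without a written proof, and the immediate specialization is precisely what you do --- quadraticity forces all quadratic regularity constants to equal $1$ (so $\tilde{\mathfrak q}_{\textup{max}}=1$ and, with $b_g=1$, $\mathcal L_P=\tau_\star^{\nu}(1+\zeta)$), the single preconditioner update recommended for ridge regression gives $B_P=1$, and \cref{prop:hess_sim_glm} with $\phi_i''\equiv 1$ converts $\tau_\star^{\nu}$ into $1+\chi^{\nu}(A)\deff(A)$ in the leading factor. The only cosmetic slip is invoking item 1 of \cref{proposition:QuadRegConds} ``with $\mu=L$'' --- a quadratic need not satisfy $\mu=L$ --- but your parenthetical appeal to the definition of quadratic regularity for a constant Hessian is the right justification and makes the argument sound.
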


\subsection{SketchyKatyusha}
Last, we come to the convergence of the SketchyKatyusha algorithm.
Similar to \cite{allenzhu2018katyusha,kovalev2020lkatyusha}\footnote{Although \cite{allenzhu2018katyusha} does not explicitly introduce a Lyapunov function, it is implicit in their analysis.} and SketchySAGA, the convergence proof is based on showing the contraction of a Lyapunov function. 
The Lyapunov function is
\begin{equation}
\label{eq:lyap_fun_skkat}
\Psi_k \coloneqq \mathcal {W}_k+\mathcal {Y}_k+\mathcal {Z}_k,
\end{equation}
where
\begin{align*}
    \mathcal W_k = \frac{1}{\theta_1}\left(F(w_k)-F(w_\star)\right), \quad \mathcal {Y}_k = \frac{\theta_2 (1+\theta_1)}{\pi\theta_1}(F(y_k)-F(w_\star)), \quad \mathcal Z_k = \frac{\mathcal L_P(1+\eta \sigma)}{2\eta}\|z_k-w_\star\|_{P_k}^2. 
\end{align*}
The definition of the Lyapunov function in \eqref{eq:lyap_fun_skkat} differs from that of \cite{kovalev2020lkatyusha}.
The difference arises as we work in a preconditioned metric, so $\|\cdot\|$ in \cite{kovalev2020lkatyusha} is replaced by $\|\cdot\|_{P_k}$. 
For the same reason, $L$ is replaced by $\mathcal L_P$.
Similar to SketchySAGA, the proof establishes that \eqref{eq:lyap_fun_skkat} contracts in expectation (at each iteration) for appropriately chosen values of $\pi, \sigma, \theta_1, \theta_2$, and $\eta$. 
\begin{theorem}[SketchyKatyusha convergence]
\label{thm:conv_skkat}
Instate the hypotheses of \cref{assm:SmoothSC}-\cref{assm:FiniteUpates}. 
Run SketchyKatyusha with $\pi = b_g/n$, $\sigma = (1-\zeta)/\qbar, \theta_1 = \min\left\{\sqrt{\frac{(1-\zeta)n}{b_g\qbar}}\theta_2,1/2\right\}$, $\theta_2 = 1/2$, and fixed learning rate $\eta = \frac{\theta_2}{(1+\theta_2)\theta_1}$.
Then  
\[
\E[\Psi_{k}] \leq \epsilon\Psi_0
\]
after $k = \max\left\{3n/b_g,n/b_g+2\sqrt{n\qbar/[(1-\zeta)b_g]}\right\}\log(\frac{\mathcal E_P}{\epsilon})$ iterations. 
\end{theorem}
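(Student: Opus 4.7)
The plan is to follow the Loopless Katyusha template of \cite{kovalev2020lkatyusha}, adapting every step from the Euclidean to the preconditioned metric $\|\cdot\|_{P_k}$, and then absorbing the preconditioner drift using \cref{prop:stable_precond}. Concretely, I would first prove a one-step contraction of the form
\begin{align*}
\E_k[\Psi_{k+1}] \leq \left(1 - \min\{\theta_1, \pi\}\right) \widehat{\Psi}_k,
\end{align*}
where $\widehat{\Psi}_k$ is $\Psi_k$ with the $\mathcal{Z}_k$-term measured in $\|\cdot\|_{P_{k+1}}$ rather than $\|\cdot\|_{P_k}$. The algebra driving this bound is essentially the same three-term identity used in Loopless Katyusha: write $x_k$ as the negative-momentum convex combination of $z_k$, $y$, $w_k$; expand $\|z_{k+1}-w_\star\|^2_{P_{k+1}}$ using the proximal update; and combine with upper and lower bounds on $F$.

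The key substitutions relative to \cite{kovalev2020lkatyusha} are: (i) replace the smoothness upper bound $F(w_{k+1}) \leq F(x_k) + \langle \nabla F(x_k), w_{k+1}-x_k\rangle + \tfrac{L}{2}\|w_{k+1}-x_k\|^2$ by its preconditioned analogue via \cref{prop:precond_grad_var}, so that the stochastic-gradient variance term $\E_k\|g_k - \nabla F(x_k)\|^2_{P_k^{-1}}$ is controlled by $2\mathcal L_P(F(y)-F(x_k)-\langle \nabla F(x_k), y-x_k\rangle)$; (ii) replace strong convexity by the lower quadratic regularity inequality $F(w_\star) \geq F(x_k) + \langle \nabla F(x_k), w_\star - x_k\rangle + \tfrac{(1-\zeta)\gamma_\ell}{2}\|w_\star - x_k\|^2_{P_k}$ coming from \cref{lem:approx_hess_good_model}, which explains the choice $\sigma = (1-\zeta)/\qbar$; and (iii) calibrate $\eta = \theta_2/[(1+\theta_2)\theta_1]$ and the momentum parameters exactly as in \cite{kovalev2020lkatyusha} so that the $\theta_1$ and $\pi$ coefficients line up to give a clean contraction.

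Next I would take conditional expectations over the random snapshot update. With probability $\pi$ we set $y \gets w_k$, which produces the $\mathcal{Y}_k$-contraction at rate $\pi$; otherwise $y$ stays fixed and the rate comes from $\theta_1$. Choosing $\theta_1 = \min\{\sqrt{(1-\zeta)n\theta_2/(b_g\qbar)},1/2\}$ and $\pi = b_g/n$ ensures both rates are at least $\min\{1/2, \sqrt{(1-\zeta) b_g/(n\qbar)}\theta_2\}$, producing the claimed iteration complexity. A subtle point is that $\mathcal{Z}_{k+1}$ is naturally written in $\|\cdot\|_{P_{k+1}}$, but the proximal update in \cref{alg:skkat} is effectively a step against $P_k$ (the current preconditioner), so in the one-step inequality the $\mathcal{Z}$-term appears in $\|\cdot\|_{P_k}$; one must pay a factor bounded by $X_j$ from \cref{prop:stable_precond} whenever $k \in \mathcal U$ to pass from $\|\cdot\|_{P_k}$ to $\|\cdot\|_{P_{k+1}}$.

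Iterating the one-step contraction and telescoping, the accumulated drift factor is $\prod_{j=1}^J X_j \leq \mathcal E_P$ by \cref{prop:stable_precond}, which is exactly the constant appearing inside the logarithm in the theorem. Solving $(1-\min\{\theta_1,\pi\})^k \mathcal E_P \leq \epsilon$ and substituting the two regimes of $\theta_1$ (capped at $1/2$ vs.\ the $\sqrt{\cdot}$ expression) yields the stated iteration count $k = \max\{3n/b_g, n/b_g + 2\sqrt{n\qbar/[(1-\zeta)b_g]}\}\log(\mathcal E_P/\epsilon)$. The main obstacle I anticipate is the careful bookkeeping of the preconditioner change: because $\mathcal L_P$, $\sigma$, $\theta_1$, and $\eta$ are all frozen between updates but $P_k$ changes at the update times in $\mathcal U$, one must verify that the one-step inequality derived for a fixed preconditioner extends to the lazy-update regime with only a multiplicative $X_j$ overhead, and that this overhead telescopes cleanly into $\mathcal E_P$ rather than compounding with the contraction factor.
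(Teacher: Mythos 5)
Your proposal is correct and follows essentially the same route as the paper's proof: a Lyapunov contraction in the style of Loopless Katyusha, with the variance term controlled by the preconditioned expected smoothness constant (\cref{prop:precond_grad_var}), strong convexity replaced by lower quadratic regularity plus the $\zeta$-spectral approximation (which is exactly how $\sigma=(1-\zeta)/\qbar$ arises), the metric change at update times absorbed via the $\xi_k$ factors of \cref{prop:stable_precond} telescoping into $\mathcal E_P$, and a final two-case analysis on whether $\theta_1$ is capped at $1/2$. The paper organizes the one-step bound into two coupling lemmas and tracks three separate contraction rates $\bigl(1-\theta_1(1-\theta_2),\,1-\tfrac{\pi\theta_1}{1+\theta_1},\,\tfrac{1}{1+\eta\sigma}\bigr)$ rather than the single $1-\min\{\theta_1,\pi\}$ you sketch, but this is only a difference of bookkeeping, not of substance.
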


The proof is given in \cref{subsec:skkat_convergence}.
\cref{thm:conv_skkat} shows SketchyKatyusha converges linearly, just like the other variance-reduced PROMISE methods.   
The main difference is the convergence rate depends upon $\sqrt{\qbar}$ instead of $\qbar$, due to the use of acceleration in the algorithm.
When $\qbar$ is moderate, (i.e., $\qbar = \bigO(1)$ for ridge regression) it is unclear whether SketchyKatyusha offers any benefit over SketchySVRG and SketchySAGA. 
Indeed, comparing their respective convergence bounds is insufficient, as they all essentially require $\bigO\left(n\log(\frac{1}{\epsilon})\right)$ gradient computations in this setting.
However, our numerical simulations show that acceleration provides an advantage, particularly for ridge regression problems, which tend to be more ill-conditioned than logistic regression problems.
This observation motivates our recommendation that SketchyKatyusha be the default PROMISE method for ridge regression when full gradient computations are feasible.
Last, we note if $F$ is quadratic, then SketchyKatyusha also enjoys fast global linear convergence; we omit writing down the explicit corollary, as it yields nothing new relative to the ones for SketchySVRG and SketchySAGA.
\else
\fi

\subsection{Convergence proof of SketchySVRG}
\label{subsec:sksvrg_convergence}
In this section we prove \cref{thm:conv_SketchySVRG}, which establishes linear convergence of SketchySVRG. 
The proof is divided into a sequence of helper lemmas; taken together these lemmas allow us to easily establish the theorem.

\subsubsection{Notation}
For clarity in the proof, we explicitly keep track of the outer iteration that a quantity belongs to. 
Specifically, we write $w_k^{(s)}$ for the $k$th iterate in outer iteration $s$, and do the same for other quantities.
Under this convention, $v^{(s)}_k$ denotes the variance-reduced gradient at the $k$th iteration of outer iteration $s$, and $P^{(s)}_k$ is the current preconditioner at the $k$th iteration of outer iteration $s$.

\subsubsection{Preliminary lemmas}
Here, we establish helper lemmas needed to prove \cref{thm:conv_SketchySVRG}.
We start by bounding the second moment of the preconditioned variance-reduced stochastic gradients.
\begin{lemma}[Variance bound]
\label[lemma]{lemma:second_mom_bound}
Let $v^{(s)}_k = \widehat{\nabla}F(w^{(s)}_k) - \widehat{\nabla} F(\hat{w}^{(s)}) + \nabla F(\hat{w}^{(s)})$ be the variance-reduced stochastic gradient at inner iteration $k$ in outer iteration $s$. Then
\begin{align*}
\E \|v^{(s)}_k\|^2_{\Psin} &\leq 4\mathcal L_P[F(w^{(s)}_k) - F(w_\star) + F(\hat{w}^{(s)}) - F(w_\star)].
\end{align*}
\end{lemma}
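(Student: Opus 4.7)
The plan is to decompose $v_k^{(s)}$ so that each piece can be bounded via \cref{prop:precond_grad_var} (preconditioned expected smoothness) applied at the anchor point $w_\star$. Since $\nabla F(w_\star) = 0$, I will rewrite
\[
v_k^{(s)} = \bigl[\widehat \nabla F(w_k^{(s)}) - \widehat \nabla F(w_\star)\bigr] - \bigl[\widehat \nabla F(\hat w^{(s)}) - \widehat \nabla F(w_\star) - \nabla F(\hat w^{(s)})\bigr],
\]
which is a subtraction and re-addition of $\widehat \nabla F(w_\star)$ and $\nabla F(w_\star) = 0$. Applying the elementary inequality $\|a-b\|_{\Psin}^2 \leq 2\|a\|_{\Psin}^2 + 2\|b\|_{\Psin}^2$ splits the squared norm into two preconditioned-norm-squared terms.

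Next I would take expectations with respect to the minibatch at inner step $k$ (conditioned on $w_k^{(s)}$ and $\hat w^{(s)}$, which are fixed at this stage). The first piece, $\widehat \nabla F(w_k^{(s)}) - \widehat \nabla F(w_\star)$, is handled directly by \cref{prop:precond_grad_var} with $w = w_k^{(s)}$ and $w' = w_\star$; using $\nabla F(w_\star) = 0$, the inner-product term vanishes and we obtain
\[
\E \bigl\|\widehat \nabla F(w_k^{(s)}) - \widehat \nabla F(w_\star)\bigr\|_{\Psin}^2 \leq 2\mathcal L_P\bigl(F(w_k^{(s)}) - F(w_\star)\bigr).
\]
For the second piece, observe that its expectation is zero (since $\widehat \nabla F$ is an unbiased estimator of $\nabla F$ and $\nabla F(w_\star) = 0$), so it is the centered version of $\widehat \nabla F(\hat w^{(s)}) - \widehat \nabla F(w_\star)$; using the standard variance bound $\E\|X - \E X\|^2 \leq \E\|X\|^2$ in the $\Psin$-norm, its second moment is bounded by $\E\|\widehat \nabla F(\hat w^{(s)}) - \widehat \nabla F(w_\star)\|_{\Psin}^2$, which \cref{prop:precond_grad_var} (applied now at $w = \hat w^{(s)}$, $w' = w_\star$) bounds by $2\mathcal L_P(F(\hat w^{(s)}) - F(w_\star))$.

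Combining the two bounds gives exactly the claimed $4\mathcal L_P[F(w_k^{(s)}) - F(w_\star) + F(\hat w^{(s)}) - F(w_\star)]$. The main subtlety is recognizing that the anchor point in the decomposition must be $w_\star$ (not $\hat w^{(s)}$, as in the classical SVRG analysis), because \cref{prop:precond_grad_var} is applied in a preconditioned norm where the inner-product term only cleanly vanishes when the reference point has zero gradient. The rest is routine algebra.
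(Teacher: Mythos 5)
Your proof is correct and follows essentially the same route as the paper's: the same splitting of $v_k^{(s)}$ around the anchor $w_\star$, the inequality $\|a-b\|_{\Psin}^2\leq 2\|a\|_{\Psin}^2+2\|b\|_{\Psin}^2$, the variance bound $\E\|X-\E X\|_{\Psin}^2\leq\E\|X\|_{\Psin}^2$ for the centered term, and two applications of \cref{prop:precond_grad_var} with $w'=w_\star$. No gaps.
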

% \leq {} &
\begin{proof}
We have
\begin{align*}
    \E \|v^{(s)}_k\|_{\Psin}^2 \overset{(1)}{\leq} {} & 2 \E \|\widehat{\nabla}F(w^{(s)}_k) - \widehat{\nabla}F(w_\star)\|^2_{\Psin} + 2 \E \|[\widehat{\nabla}F(\hat w^{(s)})- \widehat{\nabla}F(w_\star)] - \nabla F(\hat w^{(s)})\|^2_{\Psin} \\
    = {} & 2 \E \|\widehat{\nabla}F(w^{(s)}_k) - \widehat{\nabla}F(w_\star)\|^2_{\Psin} \\ 
    & + 2 \E \|[\widehat{\nabla}F(\hat w^{(s)})- \widehat{\nabla}F(w_\star)]-\E[\widehat \nabla F(\hat w^{(s)}) -\widehat \nabla F(w_\star)]\|^2_{\Psin}\\
    \overset{(2)}{\leq} {} & 2 \E \|\widehat{\nabla}F(w^{(s)}_k) - \widehat{\nabla}F(w_\star)\|^2_{\Psin}+
    2\E \|\widehat{\nabla}F(\hat w^{(s)})- \widehat{\nabla}F(w_\star)\|^2_{\Psin}
    \\
    \overset{(3)}{\leq} {} &  4\mathcal L_P[F(w^{(s)}_k) - F(w_\star) + F(\hat{w}^{(s)}) - F(w_\star)].
\end{align*}
\ifpreprint
Here (1) invokes \cref{lem:A-norm-par}, (2) uses \cref{lem:AVarBnd}, and (3) applies \cref{prop:precond_grad_var} with $w' = w_\star$ twice.
\else
Here, (1) uses $\|a+b\|_A^2 \leq 2\left(\|a\|_A^2+\|b\|_A^2\right)$ and (2) uses $\E\|X-\E X\|_A^2 \leq \E\|X\|_A^2$, which are valid for any random variable $X$ and symmetric positive definite matrix $A$.
Finally, (3) applies \cref{prop:precond_grad_var} with $w' = w_\star$ twice.
\fi
\end{proof}

Next, we have the following one-step relation.
\begin{lemma}[One-step bound]
\label[lemma]{lem:sksvrg_one_step}
Suppose we are in outer iteration $s$ at inner iteration $k$ and $w^{(s)}_{k+1} = w^{(s)}_k-\eta\Psin v^{(s)}_k$. Then
\begin{equation*}
    \E_k\|w^{(s)}_{k+1}-w_{\star}\|^2_{\Ps} \leq \|w^{(s)}_{k}-w_{\star}\|^2_{\Ps}+2\eta \left(2\eta \mathcal L_P-1\right)[F(w_k^{(s)})-F(w_\star)]+4\eta^2\mathcal L_{P}[F(\hat w^{(s-1)})-F(w_\star)].  
\end{equation*}
\end{lemma}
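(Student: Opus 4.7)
The plan is to expand the squared $P_k^{(s)}$-norm of the update, take the conditional expectation using the unbiasedness of the variance-reduced gradient $v_k^{(s)}$, then bound the cross term by convexity and the squared-norm term by the variance bound (Lemma~\ref{lemma:second_mom_bound}). This mirrors the standard one-step analysis of SVRG, modulo the fact that we measure everything in the preconditioned norm $\|\cdot\|_{P_k^{(s)}}$; because the update $w_{k+1}^{(s)} = w_k^{(s)} - \eta (P_k^{(s)})^{-1} v_k^{(s)}$ uses the same preconditioner as the norm, the $P_k^{(s)}$'s cancel neatly when expanding the square, so no additional machinery about how $P_k^{(s)}$ compares to $P_{k-1}^{(s)}$ or $P_{k+1}^{(s)}$ is required here.

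First I would write
\[
\|w_{k+1}^{(s)} - w_\star\|_{P_k^{(s)}}^2 = \|w_k^{(s)} - w_\star\|_{P_k^{(s)}}^2 - 2\eta\,\langle w_k^{(s)}-w_\star,\, v_k^{(s)}\rangle + \eta^2 \|v_k^{(s)}\|_{(P_k^{(s)})^{-1}}^2,
\]
where I used $\langle P_k^{(s)}(w_k^{(s)}-w_\star), (P_k^{(s)})^{-1} v_k^{(s)}\rangle = \langle w_k^{(s)}-w_\star, v_k^{(s)}\rangle$ and $\|(P_k^{(s)})^{-1} v_k^{(s)}\|_{P_k^{(s)}}^2 = \|v_k^{(s)}\|_{(P_k^{(s)})^{-1}}^2$. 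This is the main identity that makes the preconditioned analysis clean.

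Next I would take $\E_k[\cdot]$, the expectation conditioned on the first $k$ inner iterations of outer round $s$. Since the mini-batch $\mathcal B_k$ defining $v_k^{(s)}$ is independent of the history and $\hat w^{(s)}$, $w_k^{(s)}$, and $P_k^{(s)}$ are all measurable with respect to the conditioning, unbiasedness of the stochastic gradient estimator gives $\E_k v_k^{(s)} = \nabla F(w_k^{(s)})$. Convexity of $F$ then yields
\[
-2\eta\,\langle w_k^{(s)}-w_\star,\, \nabla F(w_k^{(s)})\rangle \le -2\eta\bigl(F(w_k^{(s)}) - F(w_\star)\bigr).
\]
For the square-norm term, I apply Lemma~\ref{lemma:second_mom_bound} to obtain
\[
\eta^2\,\E_k \|v_k^{(s)}\|_{(P_k^{(s)})^{-1}}^2 \le 4\eta^2\mathcal L_P\bigl[F(w_k^{(s)})-F(w_\star)\bigr] + 4\eta^2\mathcal L_P\bigl[F(\hat w^{(s)})-F(w_\star)\bigr].
\]

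Finally I combine the two bounds and group the coefficients of $F(w_k^{(s)})-F(w_\star)$ to get
\[
\E_k\|w_{k+1}^{(s)}-w_\star\|_{P_k^{(s)}}^2 \le \|w_k^{(s)}-w_\star\|_{P_k^{(s)}}^2 + 2\eta(2\eta\mathcal L_P - 1)[F(w_k^{(s)})-F(w_\star)] + 4\eta^2\mathcal L_P[F(\hat w^{(s)})-F(w_\star)],
\]
which matches the claimed inequality (interpreting the indexing convention $\hat w^{(s-1)}$ in the statement as the snapshot currently in use during outer round $s$). There is no real obstacle; the only subtlety is recognizing that the choice of measuring distance in the $P_k^{(s)}$-norm (the same preconditioner as in the update) is what lets us avoid any preconditioner stability terms at this stage of the analysis.
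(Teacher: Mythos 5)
Your proof is correct and is exactly the argument the paper intends: the paper's own proof is the one-line sketch ``use the definition of the update, expand the square, and invoke the variance bound,'' and you have filled in precisely those steps (expansion in the $P_k^{(s)}$-norm, unbiasedness plus convexity for the cross term, and \cref{lemma:second_mom_bound} for the second moment). Your reading of $\hat w^{(s-1)}$ as the snapshot in use during outer round $s$ is also the right way to reconcile the statement's indexing with the variance bound.
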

\begin{proof}
    Simply use the definition of the update, expand the square, and invoke \cref{lemma:second_mom_bound}.
\end{proof}

We now come to the key lemma for establishing convergence, which shows a contraction of suboptimalities between consecutive outer iterations. 
\begin{lemma}[outer iteration contraction]
\label[lemma]{lem:sksvrg_out_contract}
Suppose we are in outer iteration $s+1$. Then
\begin{equation}
    \E_{0:s}[F(\hat w^{(s+1)})]-F(w_\star)\leq \left[\frac{1}{(1-\zeta)\gamma_\ell\eta(1-2\eta \mathcal L_P)m} +\frac{2\eta\mathcal L_P}{1-2\eta \mathcal L_P}\right]\left(F(\hat w^{(s)})-F(w_\star)\right),
\end{equation}
where $\E_{0:s}$ denotes the expectation conditioned on outer iterations $0$ through $s$.
\end{lemma}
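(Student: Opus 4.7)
The plan is to iterate the one-step bound of \cref{lem:sksvrg_one_step} through the inner loop of outer iteration $s+1$ (during which the snapshot in use is $\hat{w}^{(s)}$), and then convert the resulting telescoped inequality into a contraction via lower quadratic regularity and Option~II of \cref{alg:sksvrg}.

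Assuming the preconditioner $P := P_k^{(s+1)}$ is fixed across the inner loop of outer iteration $s+1$, I take the total expectation $\E_{0:s}$ of both sides of \cref{lem:sksvrg_one_step} (via the tower property) and sum over $k = 0,\dots,m-1$. Since $P$ does not vary with $k$, the $\|w_k^{(s+1)}-w_\star\|_P^2$ terms telescope, leaving
\[
\E_{0:s}\|w_m^{(s+1)}-w_\star\|_P^2 \leq \|w_0^{(s+1)}-w_\star\|_P^2 - 2\eta(1-2\eta\mathcal L_P)\sum_{k=0}^{m-1}\E_{0:s}[F(w_k^{(s+1)})-F(w_\star)] + 4m\eta^2\mathcal L_P [F(\hat{w}^{(s)})-F(w_\star)].
\]
Dropping the non-negative left-hand side (valid because $1-2\eta\mathcal L_P > 0$ under the theorem's choice $\eta = 1/(8\mathcal L_P)$) and dividing by $2\eta m(1-2\eta\mathcal L_P)$ produces an upper bound on $\frac{1}{m}\sum_{k=0}^{m-1}\E_{0:s}[F(w_k^{(s+1)})-F(w_\star)]$ of the form $\frac{1}{2\eta m(1-2\eta\mathcal L_P)}\|w_0^{(s+1)}-w_\star\|_P^2 + \frac{2\eta\mathcal L_P}{1-2\eta\mathcal L_P}[F(\hat{w}^{(s)})-F(w_\star)]$.

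To convert the norm term into a suboptimality bound, I use $w_0^{(s+1)} = \hat{w}^{(s)}$ together with the lower bound of \cref{lem:approx_hess_good_model} applied at $w' = w_\star$, $w'' = \hat{w}^{(s)}$; since $\nabla F(w_\star) = 0$, this yields $\|\hat{w}^{(s)}-w_\star\|_P^2 \leq \frac{2}{(1-\zeta)\gamma_\ell}[F(\hat{w}^{(s)})-F(w_\star)]$. Option~II of \cref{alg:sksvrg} selects $\hat{w}^{(s+1)}$ uniformly from $\{w_0^{(s+1)},\dots,w_{m-1}^{(s+1)}\}$, so $\E_{0:s}[F(\hat{w}^{(s+1)})-F(w_\star)] = \frac{1}{m}\sum_{k=0}^{m-1}\E_{0:s}[F(w_k^{(s+1)})-F(w_\star)]$. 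Substituting both identities and collecting coefficients of $F(\hat{w}^{(s)}) - F(w_\star)$ yields precisely the claimed contraction factor $\frac{1}{(1-\zeta)\gamma_\ell\eta(1-2\eta\mathcal L_P)m} + \frac{2\eta\mathcal L_P}{1-2\eta\mathcal L_P}$.

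The only delicate point is the preconditioner-change issue: if $P$ were updated mid-inner-loop, the telescoping would require a stability bound comparing $\|\cdot\|_{P_k^{(s+1)}}^2$ with $\|\cdot\|_{P_{k+1}^{(s+1)}}^2$ in the style of \cref{prop:stable_precond}. Under the natural convention (and the one implicit in the theorem) that the preconditioner refreshes only at outer-iteration boundaries, no such comparison is needed and the argument above is complete.
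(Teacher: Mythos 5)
Your proposal is correct and follows essentially the same route as the paper's proof: telescope the one-step bound over the inner loop, identify the averaged suboptimality with $\E_{0:s}[F(\hat w^{(s+1)})]$ via Option~II, drop the nonnegative terminal norm, and convert $\|\hat w^{(s)}-w_\star\|_P^2$ into a suboptimality bound via lower quadratic regularity and the $\zeta$-spectral approximation. Your explicit remark about the preconditioner remaining fixed across the inner loop is a point the paper's proof leaves implicit, but it does not change the argument.
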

\begin{proof}
Summing the bound in \cref{lem:sksvrg_one_step} over $k = 0,\dots m-1$, we reach
\begin{align*}
    \sum_{k=0}^{m-1}\E_k\|w^{(s)}_{k+1}-w_{\star}\|^2_{\Ps} \leq {} & \sum_{k=0}^{m-1}\|w^{(s)}_{k}-w_{\star}\|^2_{\Ps}+2\eta m\left(2\eta \mathcal L_P-1\right)\frac{1}{m}\sum_{k=0}^{m-1}[F(w_k^{(s)})-F(w_\star)]\\
    &+ 4m\eta^2\mathcal L_{P}[F(\hat w^{(s)})-F(w_\star)]. 
\end{align*}
Now, taking the expectation over all the inner iterations conditioned on outer iterations $0$ through $s$, we find 
\begin{align*}
    \E_{0:s}\|w_m^{(s)}-w_\star\|_{\Ps}^2 \leq {} & \|\hat w^{(s)}-w_{\star}\|^2_{P^{(s)}_0}+2\eta m\left(2\eta \mathcal L_P-1\right)\left(\E_{0:s}\left[F(\hat w^{(s+1)})\right]-F(w_\star)\right)\\
    &+ 4m\eta^2\mathcal L_{P}[F(\hat w^{(s)})-F(w_\star)].
\end{align*} 
Rearranging and invoking quadratic regularity of $f$, we reach
\begin{align*}
    &\E_{0:s}\|w_m^{(s)}-w_\star\|_{\Ps}^2+2\eta m\left(1-2\eta \mathcal L_P\right)\left(\E_{0:s}\left[F(\hat w^{(s+1)})\right]-F(w_\star)\right) \\
    &\leq  2\left(\frac{1}{(1-\zeta)\gamma_\ell}+2m\eta^2 \mathcal L_P\right)[F(\hat w^{(s)})-F(w_\star)].
\end{align*}
Hence we conclude
\begin{align*}
 \E_{0:s}[F(\hat w^{(s+1)})]-F(w_\star)\leq \left[\frac{1}{(1-\zeta)\gamma_\ell\eta(1-2\eta \mathcal L_P)m} +\frac{2\eta\mathcal L_P}{1-2\eta \mathcal L_P}\right]\left(F(\hat w^{(s)})-F(w_\star)\right).
\end{align*}
\end{proof}
\subsubsection{SketchySVRG convergence: Proof of \cref{thm:conv_SketchySVRG}}
\begin{proof}
    From \cref{lem:sksvrg_out_contract} we have,
\begin{align*}
         \E_{0:s-1}[F(\hat w^{(s)})]-F(w_\star)\leq \left[\frac{1}{(1-\zeta)\gamma_\ell\eta(1-2\eta \mathcal L_P)m} +\frac{2\eta\mathcal L_P}{1-2\eta \mathcal L_P}\right]\left(F(\hat w^{(s-1)})-F(w_\star)\right).
\end{align*}
Setting $\eta = \frac{1}{8\mathcal L_P}$ and $m = \frac{19}{1-\zeta}\qbar$, we obtain
\[
\E_{0:s-1}[F(\hat w^{(s)})]-F(w_\star)\leq \frac{9}{10}\left(F(\hat w^{(s-1)})-F(w_\star)\right).
\]
Taking the total expectation over all outer iterations, and recursing, we reach
\[
\E[F(\hat w^{(s)})]-F(w_\star)\leq \left(\frac{9}{10}\right)^s\left(F(w_0)-F(w_\star)\right).
\]
Hence after $s = 10\log\left(\frac{F(w_0)-F(w_\star)}{\varepsilon}\right)$ outer iterations we have
\[
\E[F(\hat w^{(s)})]-F(w_\star)\leq \varepsilon.
\]
\end{proof}

\section{Numerical experiments}
\label{section:experiments}
In this section, we provide four sets of experiments to demonstrate 
the effectiveness of the PROMISE methods for $l^2$-regularized least squares and logistic regression problems. 
\ifpreprint
We also provide studies that examine the sensitivity of our methods to hyperparameters and investigate the quadratic regularity ratio.
\else
We also investigate the quadratic regularity ratio.
\fi
We present the following results:
\begin{itemize}
    \item Performance experiments (\cref{subsection:performance}): We compare PROMISE methods to SVRG, b-nice SAGA (henceforth referred to as SAGA), Loopless Katyusha (L-Katyusha), and stochastic L-BFGS (SLBFGS), whose learning rates are tuned. 
    We find that our methods outperform the competition on a testbed of $51$ medium-sized least squares and logistic regression problems. 
    \item Suboptimality experiments (\cref{subsection:subopt}): We show that PROMISE methods achieve global linear convergence on several least squares and logistic regression problems, which matches the global linear convergence guarantees in \cref{section:theory}. Furthermore, our methods converge faster than the competition.
    \item Showcase experiments (\cref{subsection:showcase}): We evaluate PROMISE methods against the competition on the url, yelp, and acsincome datasets, which originate in real-world applications and lead to large-scale problems.
    We again find that our methods outperform the competition. 
    \ifpreprint
    \item Streaming experiments (\cref{subsection:large_scale}): We test PROMISE methods on performing logistic regression with large-scale transformations of the HIGGS and SUSY datasets. 
    These transformed datasets are so large that they do not fit in the memory of most computers, putting these experiments in a streaming setting where the computation of full gradients is prohibitive. 
    Our methods continue to outperform the competition.
    \else
    \item Streaming experiments (\cref{subsection:large_scale}): We test PROMISE methods on performing logistic regression with a large-scale transformation of the HIGGS dataset. 
    This transformed dataset is so large that it does not fit in the memory of most computers, putting these experiments in a streaming setting where the computation of full gradients is prohibitive. 
    Our methods continue to outperform the competition.
    \fi
    \ifpreprint
    \item Sensitivity study (\cref{subsection:sensitivity}): We investigate how the choice of rank $r$ and update frequency $u$ impacts the performance of PROMISE methods.
    We find that the impact of these hyperparameters depends on the spectral properties of the data.
    \else
    \fi
    \item Regularity study (\cref{subsection:regularity}): We demonstrate that the quadratic regularity ratio, $\gamma_u/\gamma_\ell$, is well-behaved over the optimization trajectory, which provides empirical support for our claims in \cref{section:fast_global_convergence}. 
\end{itemize}

The experiments in \cref{subsection:performance,subsection:subopt,subsection:showcase,subsection:large_scale,subsection:regularity} run PROMISE methods with the default hyperparameters given in \cref{section:algs}. 
Throughout the experiments, we set the $l^2$-regularization parameter $\nu = 10^{-2}/n_{\mathrm{tr}}$, where $n_{\mathrm{tr}}$ is the number of samples in the training set,
which typically results in an ill-conditioned problem. %, which can be challenging to solve for existing stochastic gradient methods.
\ifpreprint
The results are qualitatively similar for larger values of $\nu$; when $\nu = 10^{-1}/n_{\mathrm{tr}}$, PROMISE methods still outperform the competition on the performance experiments (\cref{subsubsection:performance_mu_1e-1_appdx}).
\else
\fi
\ifpreprint
    All preconditioners use the default values of $r$ and $\rho$ in \cref{table:precond_hyperparams}, unless stated otherwise.
    Additional details appear in \cref{appndx:experiment_details} and code for our experiments can be found at \href{https://github.com/udellgroup/PROMISE}{https://github.com/udellgroup/PROMISE}.
\else
    All preconditioners use the default values of $r$ and $\rho$ in \cref{subsection:precond_comp}.
    Additional details appear in Appendix D of \href{https://arxiv.org/abs/2309.02014v2}{https://arxiv.org/abs/2309.02014v2} and code for our experiments can be found at \href{https://github.com/udellgroup/PROMISE}{https://github.com/udellgroup/PROMISE}.
\fi

% The first and second sets of experiments (\cref{subsection:performance,subsection:showcase}) compare our methods Sketchy\{SVRG, SAGA, Katyusha\}, with no hyperparameter tuning, to SVRG, b-nice SAGA (henceforth referred to as SAGA), loopless Katyusha (L-Katyusha), and stochastic L-BFGS (SLBFGS), whose learning rates are tuned. \pnote{add citations?} 
% The third set of experiments (\cref{subsection:sensitivity}) is a sensitivity study that examines how changing the update frequency $u$ and rank $r$ impacts the performance of our methods.

% \begin{itemize}
%     \item Discuss which preconditioners we use with the Sketchy methods
%     \item Provide a link to code
% \end{itemize}

\subsection{Performance experiments}
\label{subsection:performance}
% \pnote{Compare time to run subsampled Newton vs. Nystrom vs. SASSN on synthetic datasets of varying dimension. We don't have to care about optimization quality, we should just compare time per epoch. The idea is that even if subsampled Newton gives better curavture info than Nystrom/SASSN, the time scaling might be so poor that we can't use subsampled Newton in practice. We might want to remove the automated learning rate calculation to make sure that it isn't distorting the times.}

Our first set of experiments compares the performance of SketchySVRG, SketchySAGA, and SketchyKatyusha, with their \textit{default} hyperparameters, to SVRG, SAGA, L-Katyusha, and SLBFGS, with \textit{tuned} hyperparameters, on solving ridge and $l^2$-regularized logistic regression problems. These experiments therefore \emph{understate} the performance improvement that can be expected by using PROMISE methods. Moreover, we modify SLBFGS to compute the preconditioner once per epoch rather than at every iteration for a fair comparison. 

SAGA/SketchySAGA require one full pass through the data per epoch, while SVRG/L-Katyusha/SLBFGS/SketchySVRG/SketchyKatyusha use two full passes through the data per epoch since they compute full gradients\footnote{L-Katyusha and SketchyKatyusha compute full gradients with random probability, and our hyperparameter settings result in one full gradient computation per epoch, in expectation.}.
By using the number of full data passes we (roughly) equate the computation required for computing gradients, making for a fair comparison. 
We compute the minimum $F(w^{\star})$ for all ridge and logistic regression problems via scikit-learn \citep{pedregosa2011scikit}.
\ifpreprint
We run neither SketchySGD nor SGD because these algorithms do not converge linearly and hence underperform the others (\cref{subsubsection:sketchy_opt_comparison_appdx}).
\else
We run neither SketchySGD nor SGD because these algorithms do not converge linearly.
\fi

Our primary metrics for comparing the performance of these methods are the wall-clock time and number of full data passes to reach suboptimality within $10^{-4}$ of the minimum, $F(w^\star)$. 
Each optimizer is run either until this suboptimality condition is met (i.e., the problem is solved), 
or for $200$ full data passes ($100$ epochs for SVRG, L-Katyusha, SLBFGS, SketchySVRG, and SketchyKatyusha, $200$ epochs for SAGA and SketchySAGA).

\subsubsection{Ridge regression}
\label{subsubsection:ridge}
We solve ridge regression problems of the form
\begin{align*}
    \textrm{minimize}_{w\in \mathbb{R}^p} ~ \frac{1}{n_{\mathrm{tr}}} \sum_{i=1}^{n_{\mathrm{tr}}} \frac{1}{2} (a_i^T w - b_i)^2 + \frac{\nu}{2} \|w\|_2^2,
\end{align*}
where $a_i \in \R^p$ is a datapoint, $b_i \in \R$ is a label, and $\nu > 0$ is the regularization parameter.

Our experiments in this setting are performed on a testbed of $17$ datasets from OpenML \citep{vanschoren2013openml} and LIBSVM \citep{chang2011libsvm}.
\ifpreprint
We apply random features \citep{rahimi2007random,mei2022randfeatures} to most, but not all, datasets; further details regarding preprocessing are provided in \cref{subsubsection:performance_exp_data_appdx}.
\else
We apply random features \citep{rahimi2007random,mei2022randfeatures} to most, but not all, datasets; further details regarding preprocessing may be found in the arxiv report.
\fi

\ifpreprint
The results of these experiments appear in \cref{fig:prop_solved_least_squares,fig:ranking_least_squares}. 
\cref{fig:prop_solved_least_squares} shows the proportion of problems solved by both our methods and the competitor methods as a function of wall-clock time and full data passes. 
\else
Results appear in \cref{fig:prop_solved_least_squares}, 
which shows the proportion of problems solved by both our methods and the competitor methods as a function of wall-clock time and full data passes. 
\fi
When combined with any of the \ssn{}, \nyssn{}, \sassnc{}, and \sassnr{} preconditioners, SketchySVRG, SketchySAGA, and SketchyKatyusha uniformly outperform competitor methods.
\ifpreprint
Furthermore, the performance of SketchySVRG, SketchySAGA, and SketchyKatyusha degrades considerably when combined with the \diagssn{} preconditioner (\cref{fig:prop_solved_least_squares_appdx} in \cref{subsubsection:precond_comparison_appdx}), demonstrating the value of a low-rank approximation to the subsampled Hessian over a diagonal approximation.
\else
\fi
\ifpreprint

\cref{fig:ranking_least_squares} shows the average ranking of all the optimization methods over the course of optimization.
To compute these rankings, we rank each optimizer by the number of problems solved 
at every $10$ timesteps (either $10$ seconds or $10$ full data passes), 
and then compute the mean of these ranks over the entire optimization trajectory. 
We observe that the \ssn{} and \nyssn{} preconditioners tend to outperform the others.
\else
\fi
% \cref{fig:subopt_least_squares} displays objective suboptimality for selected datasets. For simplicity, we only show the preconditioned methods with the \nyssn{} preconditioner. 
% We observe that the \nyssn{} preconditioner leads to significantly faster convergence towards the suboptimality threshold of $10^{-4}$, both in terms of wall-clock time and full data passes. 
SketchyKatyusha and SketchySVRG perform slightly better than SketchySAGA,
supporting our recommendation %in \cref{subsection:algo_recs} 
to use SketchyKatyusha for ridge regression.

% \begin{itemize}
    % \item Write down the problem we are solving mathematically (DONE)
    % \item Say which datasets we used and how we pre-preprocessed (this should be brief, other details can be in the appendix) (DONE)
    % \item We should say which of these datasets was originally made for binary classification (DONE, IN APPENDIX)
    % \item Include our fixed hyperparameters (DONE, EARLIER IN SECTION)
    % \item Mention the learning rate grid (DESCRIBED IN APPENDIX)
    % \item Present performance plot (step plot + ranking plot) (DONE)
    % \item Present suboptimality plot (DONE)
    % \item Present plots of best learning rates for competing optimizers to show the spread (NOT GOING TO INCLUDE DUE TO SPACE)
    % \item Discuss which datasets all optimizers failed on? (NOT GOING TO INCLUDE DUE TO SPACE)
    % \item Plot the automated learning rates? (NOT GOING TO INCLUDE DUE TO SPACE)
% \end{itemize}

\begin{figure}[p]
    \centering
    \includegraphics[scale=0.4]{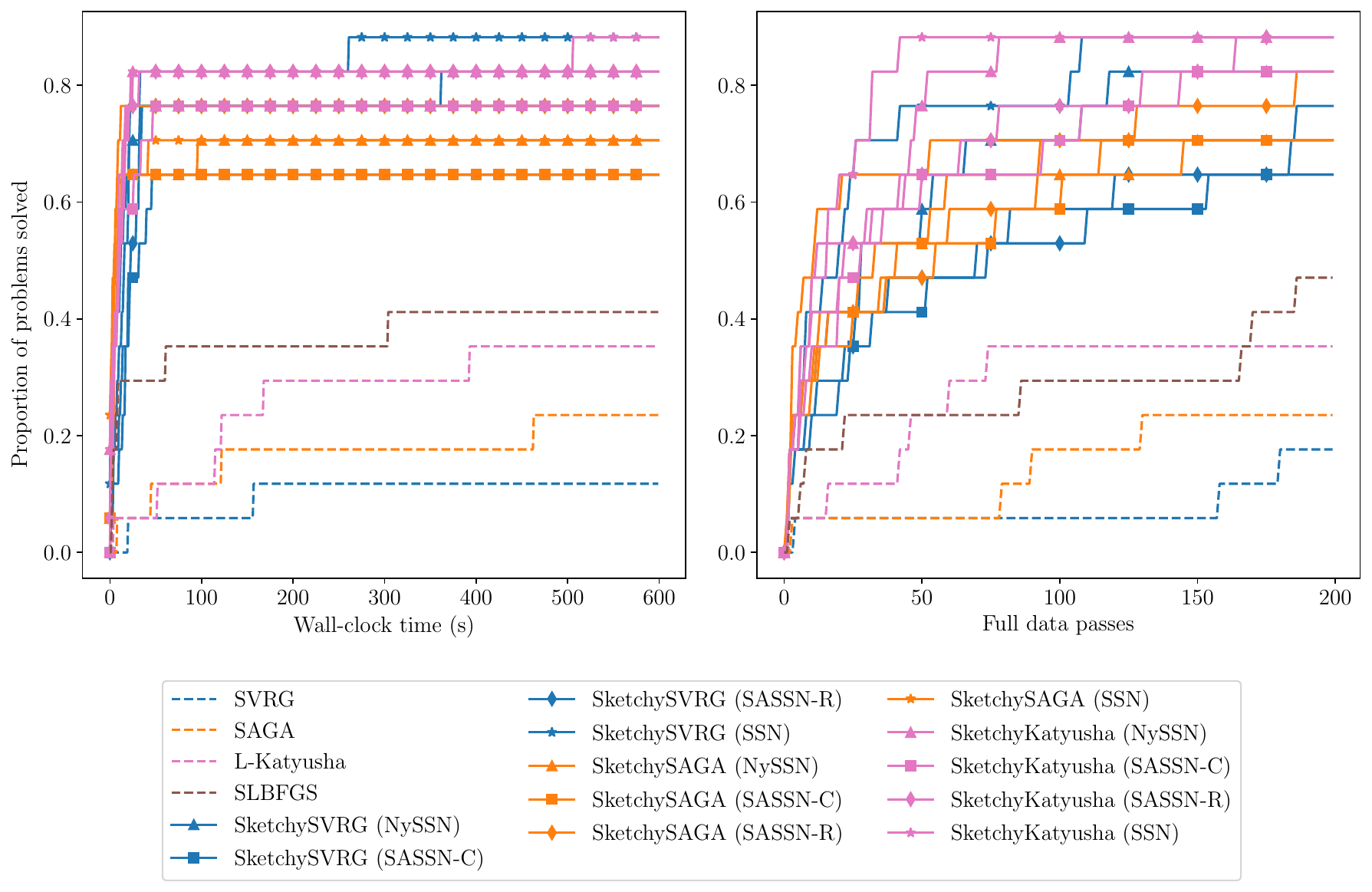}
    \caption{PROMISE methods solve ridge regression problems faster than competitors.}
    \label{fig:prop_solved_least_squares}
\end{figure}

\ifpreprint
\begin{figure}[p]
    \centering
    \includegraphics[scale=0.4]{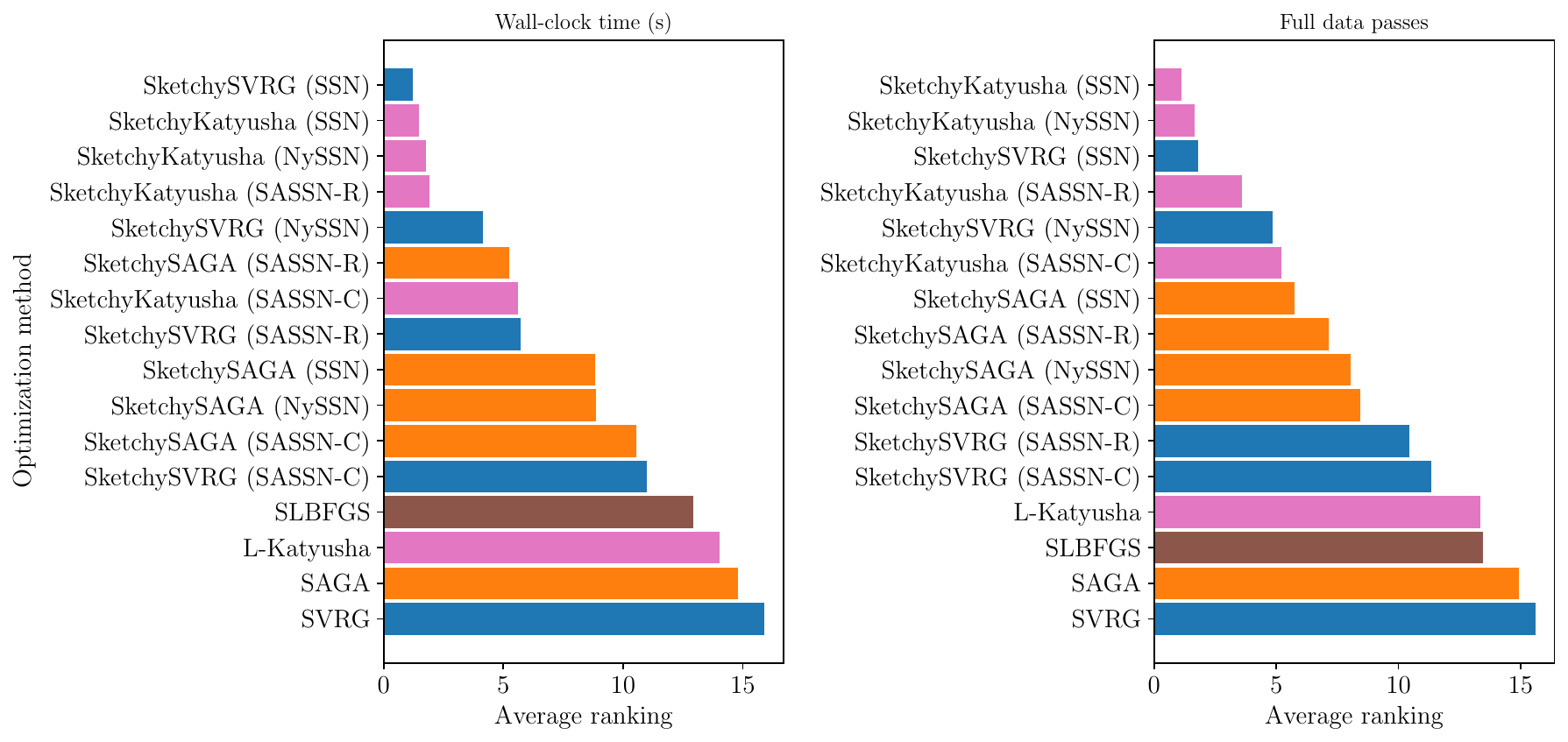}
    \caption{Average ranking of each method by number of ridge regression problems solved with respect to wall-clock time (left) and full gradient computations (right).}
    \label{fig:ranking_least_squares}
\end{figure}
\else
\fi

% \begin{figure}[h]
%     \centering
%     \includegraphics[scale=0.5]{}
%     \caption{Suboptimality comparisons between our proposed methods and tuned competitor methods for selected datasets on ridge regression.}
%     \label{fig:subopt_least_squares}
% \end{figure}

\subsubsection{$l^2$-regularized logistic regression}
\label{subsubsection:logistic}
\begin{figure}[p]
    \centering
    \includegraphics[scale=0.4]{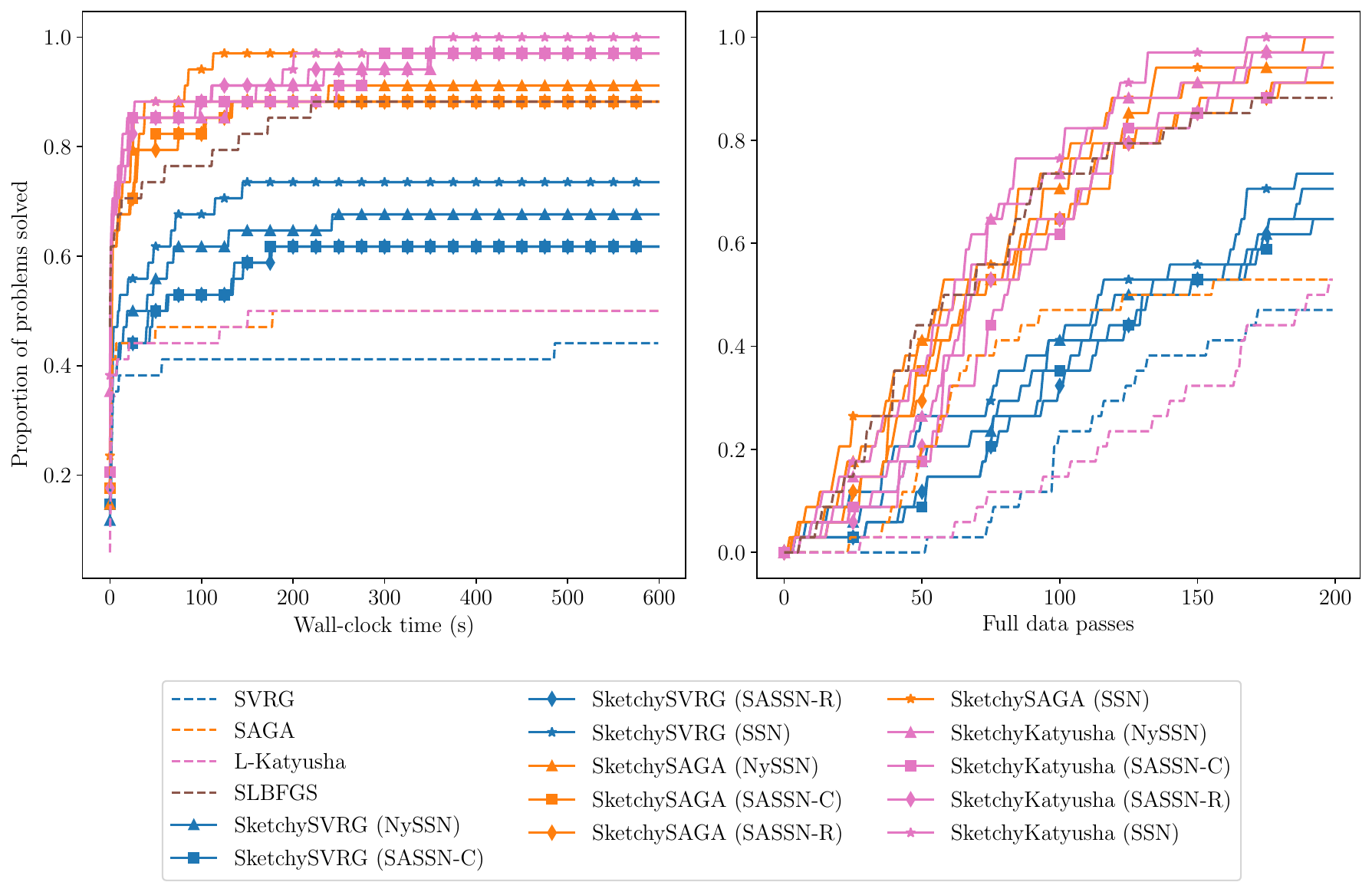}
    \caption{PROMISE methods (and SLBFGS) solve $l^2$-regularized logistic regression problems faster than competitors.}
    \label{fig:prop_solved_logistic}
\end{figure}

\ifpreprint
\begin{figure}[p]
    \centering
    \includegraphics[scale=0.4]{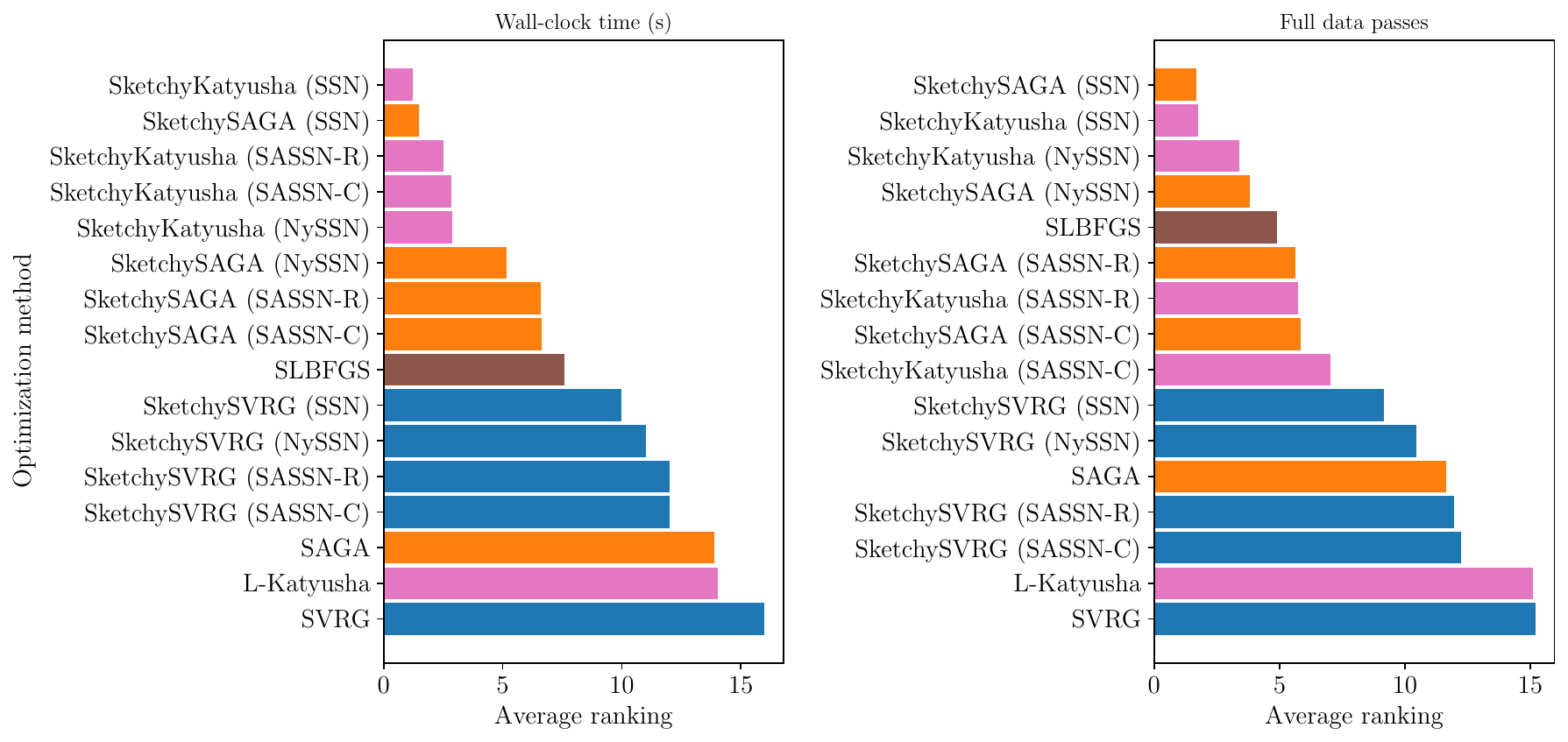}
    \caption{Average ranking of each method by number of $l^2$-regularized logistic regression problems solved with respect to wall-clock time (left) and full gradient computations (right).}
    \label{fig:ranking_logistic}
\end{figure}
\else
\fi

We solve $l^2$-regularized logistic regression problems of the form
\begin{align*}
    \textrm{minimize}_{w\in \mathbb{R}^p} ~ \frac{1}{n_{\mathrm{tr}}} \sum_{i=1}^{n_{\mathrm{tr}}} \log(1 + \exp(-b_i a_i^T w)) + \frac{\nu}{2} \|w\|_2^2,
\end{align*}
where $a_i \in \R^p$ is a datapoint, $b_i \in \{-1, 1\}$ a label, and $\nu > 0$ the regularization parameter.

These experiments use a testbed of $34$ datasets from LIBSVM. 
\ifpreprint
We apply random features to a few of the datasets; further details regarding preprocessing appear in \cref{subsubsection:performance_exp_data_appdx}.
\else
We apply random features to a few of the datasets; further details regarding preprocessing appear in the \href{https://arxiv.org/abs/2309.02014v2}{arxiv report}.
\fi

\ifpreprint
The results of these experiments appear in \cref{fig:prop_solved_logistic,fig:ranking_logistic}. \cref{fig:prop_solved_logistic} shows the proportion of problems solved by both our methods and the competitor methods as a function of wall-clock time and full data passes. 
\else
The results of these experiments appear in \cref{fig:prop_solved_logistic}, which shows the proportion of problems solved by both our methods and the competitor methods as a function of wall-clock time and full data passes. 
\fi
When combined with any one of the \ssn{}, \nyssn{}, \sassnc{}, and \sassnr{} preconditioners, SketchySVRG, SketchySAGA, and SketchyKatyusha uniformly outperform SVRG, SAGA and L-Katyusha. 
In addition, SketchySAGA and SketchyKatyusha outperform SLBFGS, which also employs preconditioning.
\ifpreprint
Just like in ridge regression, the \diagssn{} preconditioner leads to degraded performance (\cref{fig:prop_solved_logistic_appdx} in \cref{subsubsection:precond_comparison_appdx}), which further demonstrates the value of low-rank approximations to the subsampled Hessian.
\else
\fi
\ifpreprint
\cref{fig:ranking_logistic} shows the average ranking of all the optimization methods over the course of optimization. 
We calculate these rankings using the same approach as in ridge regression. 
The \ssn{} preconditioner outperforms all other preconditioners, while \nyssn{}, \sassnc{}, and \sassnr{} display similar performance to one another.
\else
\fi

\ifpreprint
We compare \ssn{} and \nyssn{} on both sparse and dense datasets in \cref{subsubsection:logistic_sparse_dense_appdx}; we find that \ssn{} tends to outperform \nyssn{} on sparse problems while \nyssn{} slightly outperforms \ssn{} on dense problems, which is in line with the preconditioner recommendations in \cref{table:precond_comp}.
\else
\fi
% We provide suboptimality plots for selected datasets in \cref{subsubsection:performance_exp_subopt_appdx}.

% \cref{fig:subopt_logistic} displays objective suboptimality for selected datasets. 
% For simplicity, we only show the preconditioned methods with the \nyssn{} preconditioner. 
% The \nyssn{} preconditioner leads to faster convergence on the ijcnn1 and SUSY datasets, while performing comparably to the competitor methods on the real-sim dataset. 
% The \nyssn{} preconditioner does not provide improved convergence on real-sim due to a lack of spectral decay; we find that the ratio of the largest singular value to the 100th largest singular value of the data matrix is around $3$. \pnote{Add same ratios for ijcnn1 and SUSY for comparison}

Overall, SketchySAGA and SketchyKatyusha perform much better than SketchySVRG 
here, supporting our recommendation in \cref{subsection:algo_recs} to use 
SketchyKatyusha (assuming we can compute full gradients) or SketchySAGA for logistic regression.

% \begin{itemize}
    % \item Write down the problem we are solving mathematically (DONE)
    % \item Say which datasets we used and how we pre-preprocessed (this should be brief, other details can be in the appendix) (DONE)
    % \item Include our fixed hyperparameters (DONE, IN APPENDIX)
    % \item Mention the learning rate grid (DESCRIBED IN APPENDIX)
    % \item Present performance plot (step plot + ranking plot) (DONE)
    % \item Present plots of best learning rates for competing optimizers to show the spread (NOT GOING TO INCLUDE DUE TO SPACE)
    % \item Plot the automated learning rates? (NOT GOING TO INCLUDE DUE TO SPACE)
% \end{itemize}

% \begin{figure}[h]
%     \centering
%     \includegraphics[scale=0.5]{}
%     \caption{Suboptimality comparisons between our proposed methods and tuned competitor methods for selected datasets on $l^2$-regularized logistic regression.}
%     \label{fig:subopt_logistic}
% \end{figure}

\subsection{Suboptimality experiments}
\label{subsection:subopt}
We examine the objective suboptimality (with respect to the lowest attained training loss for all methods) for SketchySVRG, SketchySAGA, and SketchyKatyusha, with their \textit{default} hyperparameters, and the competitor methods, with \textit{tuned} hyperparameters.
For simplicity, we only show PROMISE methods with the \nyssn{} and \ssn{} preconditioner.
Each optimizer is run for $200$ full data passes ($100$ epochs for SVRG, L-Katyusha, SLBFGS, SketchySVRG, and SketchyKatyusha, $200$ epochs for SAGA and SketchySAGA).
\ifpreprint
We provide additional details and a comparison with SketchySGD in \cref{subsection:subopt_exp_appdx}.
\else
\fi

\cref{fig:subopt_least_squares,fig:subopt_logistic} display objective suboptimality (with respect to the lowest attained training loss) for selected datasets on ridge and $l^2$-regularized logistic regression. 
The objective suboptimality for PROMISE methods decreases linearly for ridge and logistic regression, which matches the theoretical convergence guarantees in \cref{section:theory}. 
On ridge regression, PROMISE methods uniformly outperform the competition,
even reaching machine precision on the yolanda dataset!
On logistic regression, PROMISE methods generally outpeform SVRG, SAGA, and L-Katyusha.
Interestingly, SLBFGS outperforms PROMISE methods on ijcnn1.
However, SLBFGS can be unstable; for example, SLBFGS initially outperforms PROMISE methods on SUSY, but the training loss suddenly spikes and then diverges.

\begin{figure}[h]
    \centering
    \includegraphics[scale=0.5]{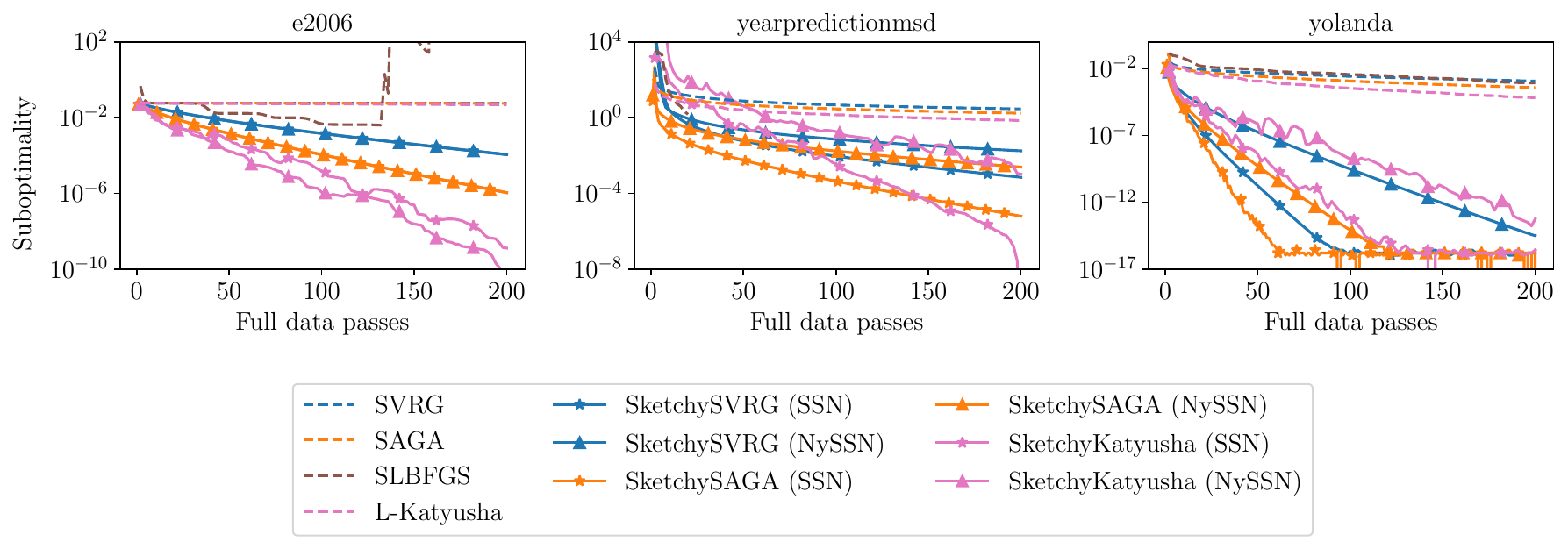}
    \caption{Suboptimality comparisons between our proposed methods and tuned competitor methods for selected datasets on ridge regression.}
    \label{fig:subopt_least_squares}
\end{figure}

\begin{figure}[h]
    \centering
    \includegraphics[scale=0.5]{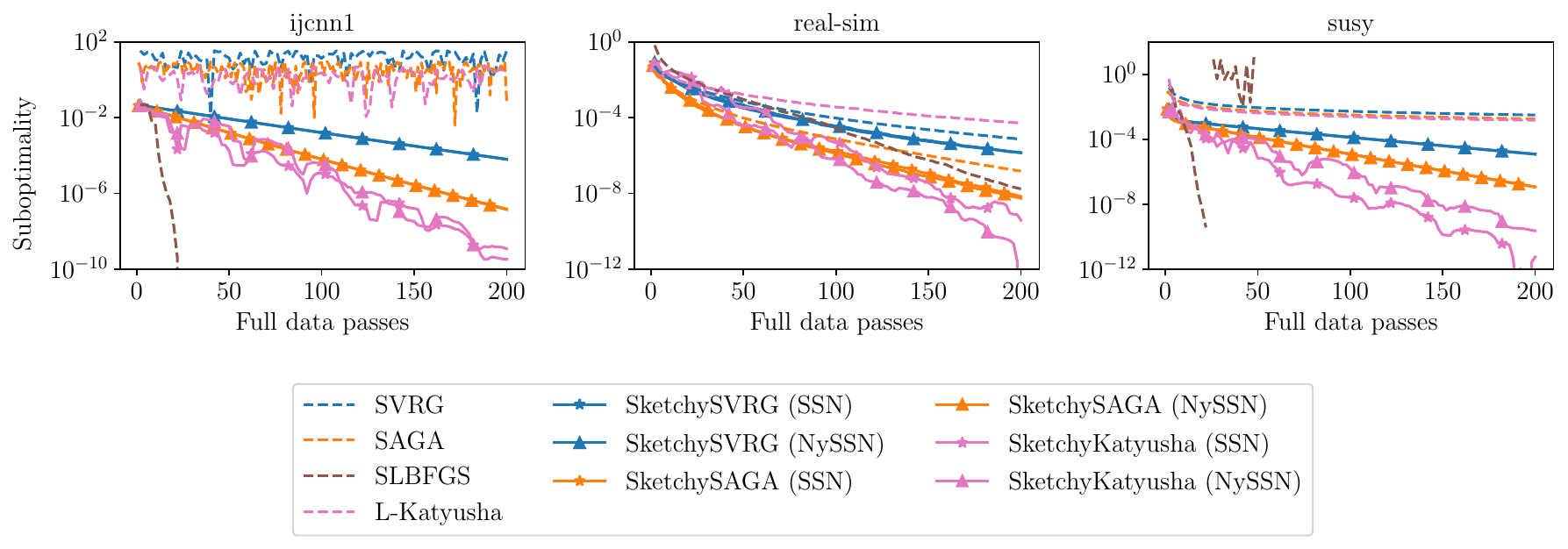}
    \caption{Suboptimality comparisons between our proposed methods and tuned competitor methods for selected datasets on $l^2$-regularized logistic regression.}
    \label{fig:subopt_logistic}
\end{figure}

\subsection{Showcase experiments}
\label{subsection:showcase}
Our second set of experiments compares the performance of SketchySVRG, SketchySAGA, and SketchyKatyusha, with their \textit{default} hyperparameters, to SVRG, SAGA, L-Katyusha, and SLBFGS with both default and tuned hyperparameters on the url, yelp, and acsincome datasets.
All three datasets originate in real-world applications: the url dataset is used to train a $l^2$-regularized logistic regression classifier that detects malicious websites using features derived from URLs, the yelp dataset is used to train a $l^2$-regularized logistic regression classifier that predicts sentiment from user reviews, and the acsincome dataset is used to train a ridge regression classifier that predicts income given demographic information such as age, employment, and education.
After preprocessing, all three of these datasets have $n_{\mathrm{tr}} > 10^6$ training examples, while url and yelp have $p > 10^6$ features, putting all three of these datasets in the big-data regime.
\ifpreprint
Additional information regarding these datasets appears in \cref{subsection:showcase_exp_appdx}.
\else
\fi
We provide two sets of comparisons: the first set compares our methods to SVRG, SAGA, and L-Katyusha with their \textit{default} hyperparameters, while the second set compares our methods to SVRG, SAGA, L-Katyusha, and SLBFGS with their \textit{tuned} hyperparameters.
We run each optimizer with a fixed time budget: 1 hour for url and yelp, and 2 hours for acsincome.

\ifpreprint
The first set of comparisons appears in \cref{fig:showcase_auto_train,fig:showcase_auto_test}.
\cref{fig:showcase_auto_train} shows the training loss for our methods and the competitor methods (with default hyperparameters) as a function of wall-clock time. 
When combined with either of the \ssn{} or \nyssn{} preconditioners, SketchySVRG, SketchySAGA, and SketchyKatyusha uniformly outperform the competitor methods on their default hyperparameters.
\cref{fig:showcase_auto_test} compares the same optimizers, but on test classification error (url, yelp) and test loss (acsincome). 
We observe our methods generalize better to test data than the competitor methods while running much faster.
\else
The first set of comparisons appears in \cref{fig:showcase_auto_test}, which compares our methods and the competitor methods (with default hyperparameters) on test classification error (url, yelp) and test loss (acsincome) as a function of wall-clock time.
When combined with either of the \ssn{} or \nyssn{} preconditioners, SketchySVRG, SketchySAGA, and SketchyKatyusha uniformly outperform the competitor methods on their default hyperparameters.
Our methods generalize better to test data than the competitor methods while running much faster.
\fi

\ifpreprint
\begin{figure}[h]
    \centering
    \includegraphics[scale=0.5]{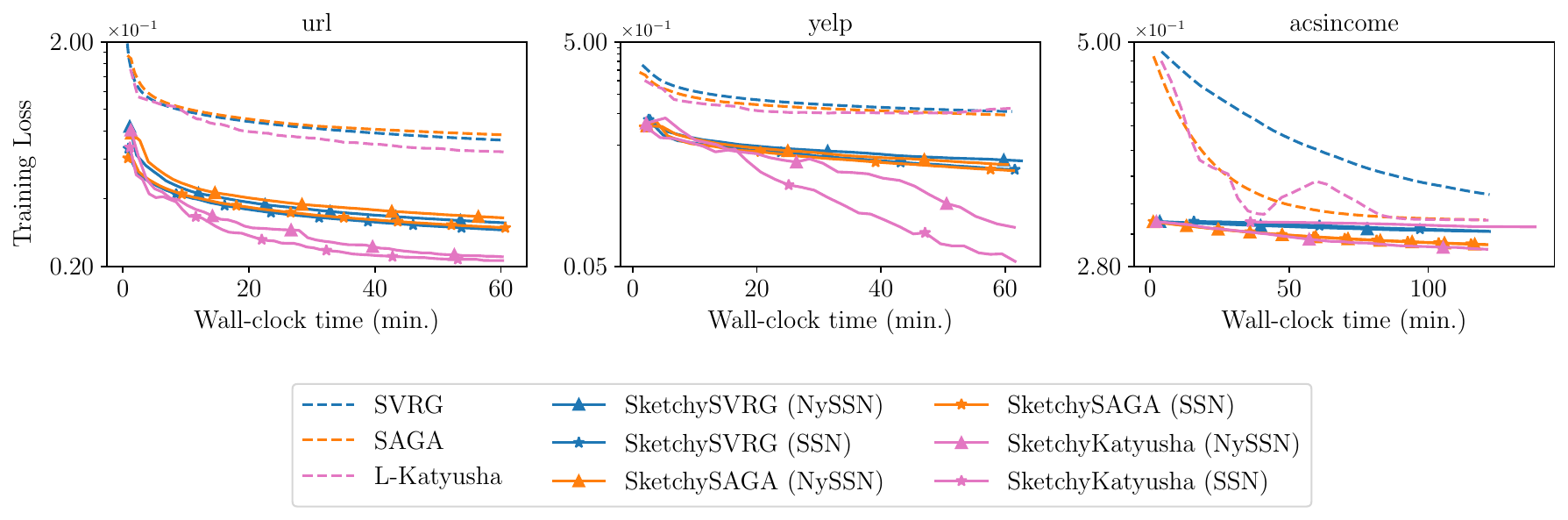}
    \caption{Comparisons to competitor methods on training loss with default learning rates (SVRG, SAGA) and smoothness parameters (L-Katyusha).}
    \label{fig:showcase_auto_train}
\end{figure}
\else
\fi

\begin{figure}[h]
    \centering
    \includegraphics[scale=0.5]{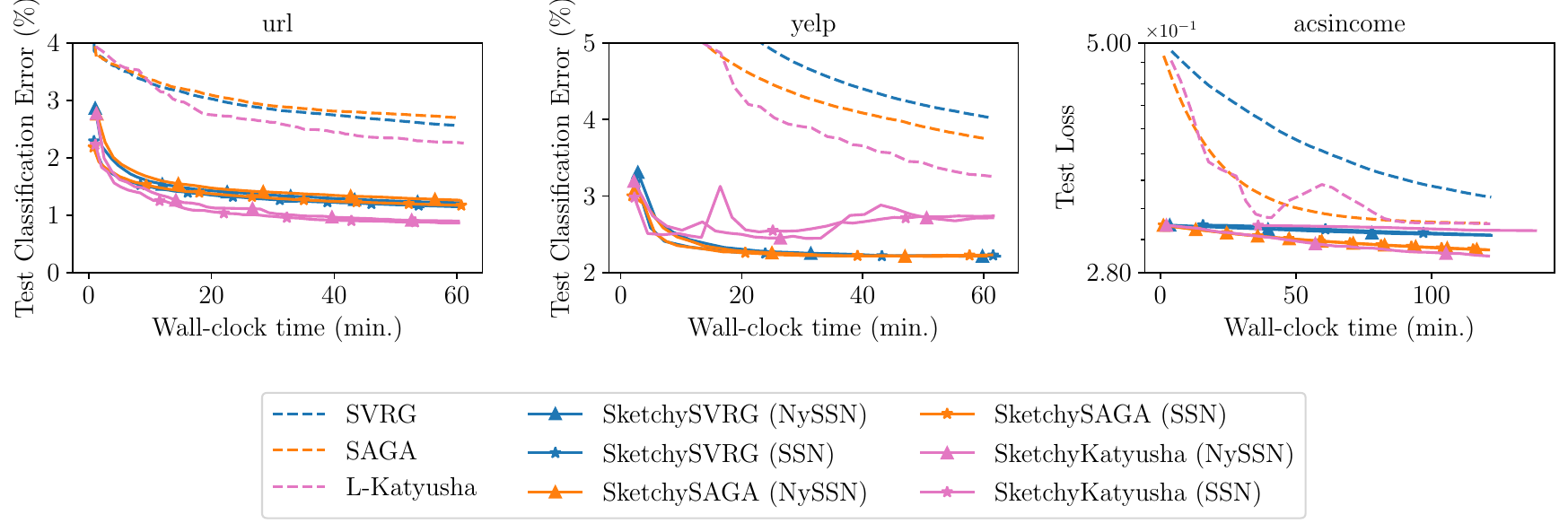}
    \caption{Comparisons to competitor methods on test metrics with default learning rates (SVRG, SAGA) and smoothness parameters (L-Katyusha).}
    \label{fig:showcase_auto_test}
\end{figure}

\ifpreprint
The second set of comparisons appears in \cref{fig:showcase_tuned_train,fig:showcase_tuned_test}.
\cref{fig:showcase_tuned_train} shows the training loss for our methods and the competitor methods (with tuned hyperparameters) a function of wall-clock time; \cref{fig:showcase_tuned_test} shows test metrics. 
PROMISE methods outperform the competition on url and acsincome and perform comparably on yelp. PROMISE methods are also substantially more stable along the optimization trajectory, whereas the competition often suffer sudden large increases in the loss.
Moreover, recall that the performance of the competitor methods is only possible after hyperparameter tuning, which is quite expensive for datasets of this size, whereas PROMISE methods still obtain good performance with default hyperparameters.
\else
The second set of comparisons appears in \cref{fig:showcase_tuned_test}, which compares our methods and the competitor methods (with tuned hyperparameters) on test metrics as a function of wall-clock time.
PROMISE methods outperform the competition on url and acsincome and perform comparably on yelp. 
Moreover, recall that the performance of the competitor methods is only possible after hyperparameter tuning, which is quite expensive for datasets of this size, whereas PROMISE methods still obtain good performance with default hyperparameters.
\fi

% \begin{itemize}
    % \item Discuss the timescale (seconds/minutes) (DONE)
    % \item Connect to performance experiments section -- the showcase experiments are on bigger problems + we actually use wall-clock time (DONE)
    % \item Mention that we care about getting the best test loss/accuracy (NO LONGER TRUE?)
    % \item Say which datasets we used and how we pre-processed -- we can go in more detail here, especially for yelp (DONE, IN APPENDIX)
    % \item Mention how much time we gave to run each optimizer on each dataset (DONE)
    % \item Include our fixed hyperparameters (DONE)
    % \item Mention the learning rate grid (DONE, IN APPENDIX)
    % \item Describe training + test plots (DONE)
    % \item Say that we are doing better (DONE)
    % \item Even when competitors are tuned, they don't beat us; anyways, hyperparameter tuning in this setting is quite expensive (DONE)
    % \item Plot the automated learning rates? (NOT DOING BECAUSE OF SPACE)
% \end{itemize}

\ifpreprint
\begin{figure}[h]
    \centering
    \includegraphics[scale=0.5]{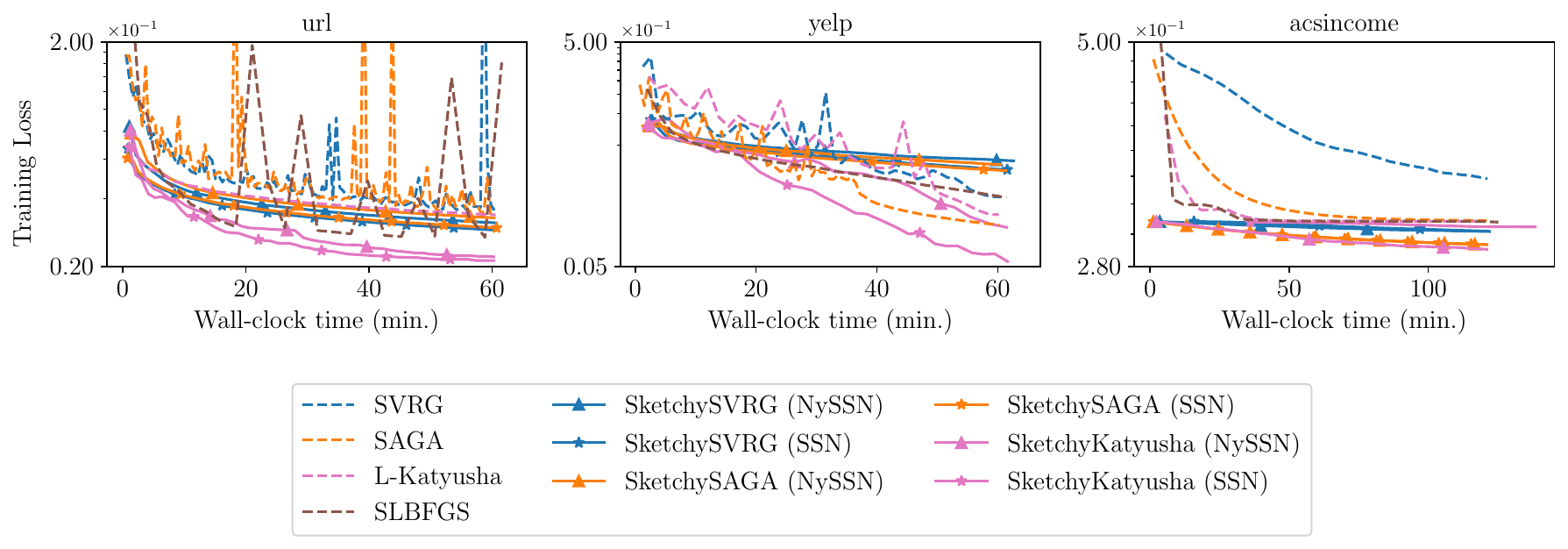}
    \caption{Comparisons to competitor methods on training loss with tuned learning rates (SGD, SVRG, SAGA, SLBFGS) and smoothness parameters (L-Katyusha).}
    \label{fig:showcase_tuned_train}
\end{figure}
\else
\fi

\begin{figure}[h]
    \centering
    \includegraphics[scale=0.5]{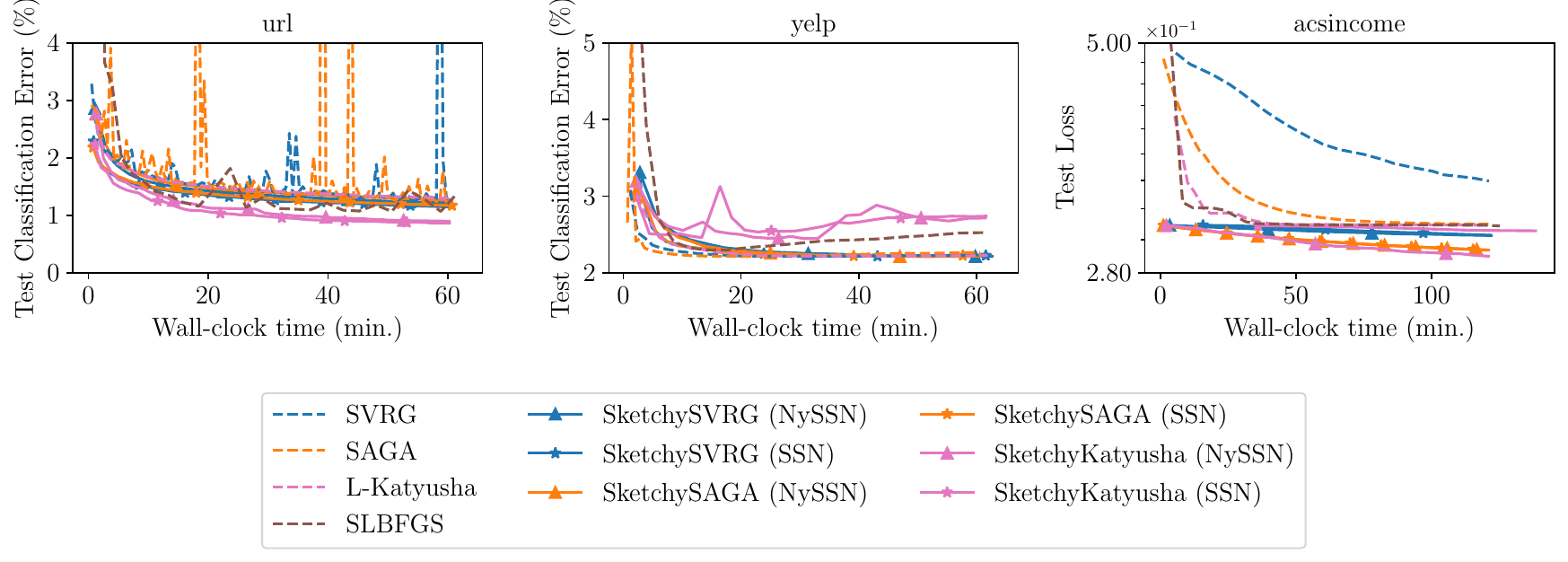}
    \caption{Comparisons to competitor methods on test metrics with tuned learning rates (SGD, SVRG, SAGA, SLBFGS) and smoothness parameters (L-Katyusha).}
    \label{fig:showcase_tuned_test}
\end{figure}

\subsection{Streaming experiments}
\label{subsection:large_scale}
\ifpreprint
We apply random features to the HIGGS and SUSY datasets to obtain transformed datasets with sizes $840$ and $720$ GB respectively (see \cref{subsubsection:streaming_exp_data_appdx} for more details). 
These transformed datasets are much larger than the hard drive and RAM of most computers.
% The transformed HIGGS dataset has size $(n_{\mathrm{tr}}, p) = (1.05 \cdot 10^7, 10^4)$; the transformed SUSY dataset has size $(n_{\mathrm{tr}}, p) = (4.5 \cdot 10^6, 2 \cdot 10^4)$.
% The transformed HIGGS and SUSY datasets are $840$ and $720$ GB in size, making them much larger than the hard drive and RAM capacity of most computers.
We solve a $l^2$-regularized logistic regression problem on each transformed dataset.
To perform optimization, we load the original dataset in memory and at each iteration, form a minibatch of the transformed dataset by applying random features to a minibatch of the data.
In this setting, computing a full gradient of the objective is computationally prohibitive, so we exclude SVRG, L-Katyusha, SLBFGS, SketchySVRG, and SketchyKatyusha.
We compare our methods to SGD and SAGA with their \textit{tuned} hyperpameters (a comparison to SAGA with its \textit{default} hyperparameter is presented in \cref{subsubsection:streaming_exp_def_comp_appdx}). 
All optimization methods are run for $10$ epochs.
\else
We apply random features to the HIGGS dataset to obtain a transformed datasets with size $840$ GB (see the \href{https://arxiv.org/abs/2309.02014v2}{arxiv report} for more details). 
This transformed dataset is much larger than the hard drive and RAM of most computers.
We solve a $l^2$-regularized logistic regression problem on this transformed dataset.
To perform optimization, we load the original dataset in memory and at each iteration, form a minibatch of the transformed dataset by applying random features to a minibatch of the data.
In this setting, computing a full gradient of the objective is computationally prohibitive, so we exclude SVRG, L-Katyusha, SLBFGS, SketchySVRG, and SketchyKatyusha.
We compare our methods to SGD and SAGA with their \textit{tuned} hyperpameters.
All optimization methods are run for $10$ epochs.
\fi

% We provide two sets of comparisons: the first set compares out methods to SAGA with its \textit{default} hyperparameter, while the second set compares our methods to SGD and SAGA with their \textit{tuned} hyperparameters.
% All optimization methods are run for $10$ epochs.

% The first set of comparisons is presented in \cref{fig:higgs_auto,fig:susy_auto}; the second set is presented in \cref{fig:higgs_tuned,fig:susy_tuned}.
\ifpreprint
The comparison to tuned versions of SGD and SAGA is presented in \cref{fig:higgs_tuned,fig:susy_tuned}.
On these problems, PROMISE methods (SketchySGD and SketchySAGA) perform well while the competitors (SGD and SAGA) struggle to make any progress.
The \nyssn{} preconditioner outperforms the \ssn{} preconditioner on these large, dense problems: it achieves similar test loss at each iteration but is faster on wall-clock time.
\else
The comparison to tuned versions of SGD and SAGA is presented in \cref{fig:higgs_tuned}.
On this problem, PROMISE methods (SketchySGD and SketchySAGA) perform well while the competitors (SGD and SAGA) struggle to make any progress.
The \nyssn{} preconditioner outperforms the \ssn{} preconditioner on this large, dense problem: it achieves similar test loss at each iteration but is faster on wall-clock time.
\fi
We only plot test loss, as computing the training loss suffers from the same computational issues as computing a full gradient.
The plots with respect to wall-clock time only show the time taken in optimization; they do not include the time taken in repeatedly applying the random features transformation.

% \begin{itemize}
    % \item Connect to performance experiments section -- these are bigger problems + we cannot load them into memory (DONE)
    % \item Mention that we can only assess test loss/accuracy due to problem sizes (DONE)
    % \item Mention how much time we gave to each optimizer (in epochs) (DONE)
    % \item Mention the learning rate grid (DONE, IN APPENDIX)
    % \item Show plots for both higgs and susy (DONE)
% \end{itemize}

% \begin{figure}[h]
%     \centering
%     \includegraphics[scale=0.5]{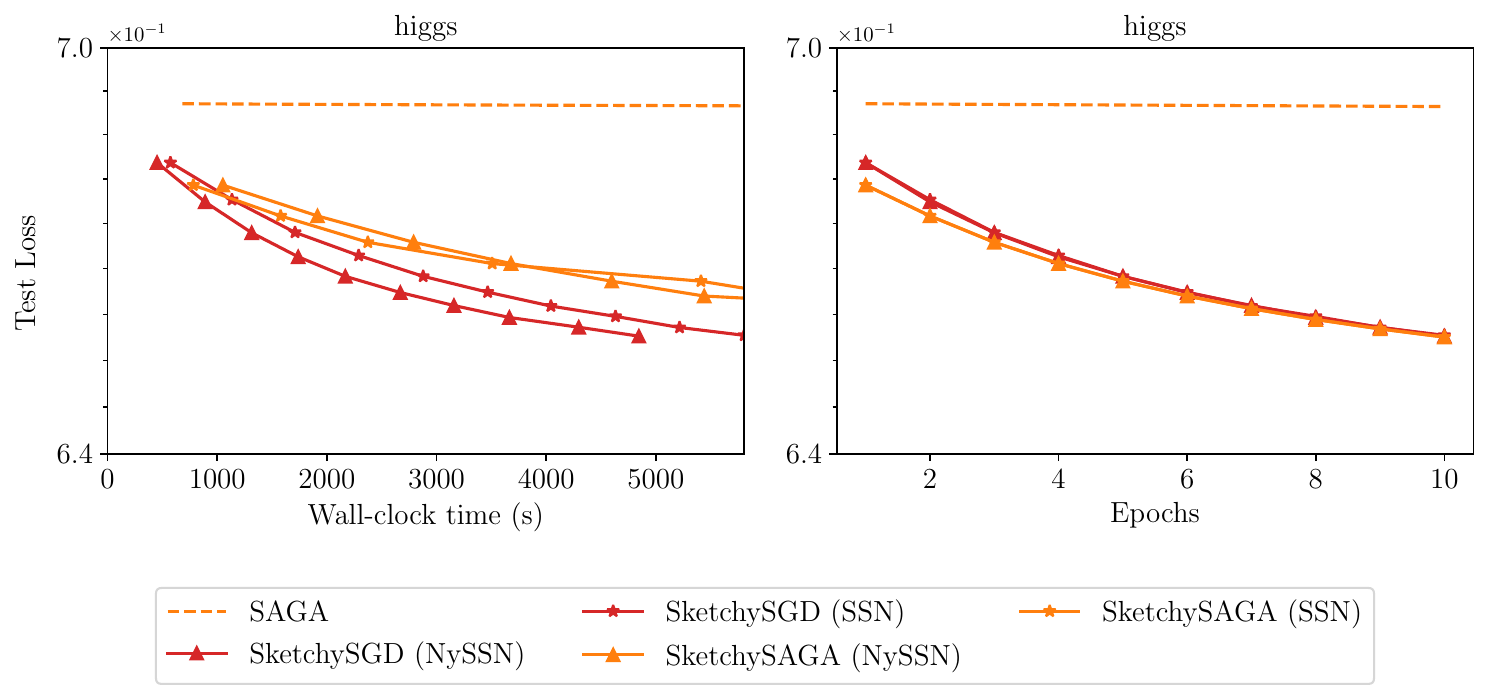}
%     \caption{Comparison between SAGA with default learning rate, SketchySGD, and SketchySAGA on HIGGS.}
%     \label{fig:higgs_auto}
% \end{figure}

% \begin{figure}[h]
%     \centering
%     \includegraphics[scale=0.5]{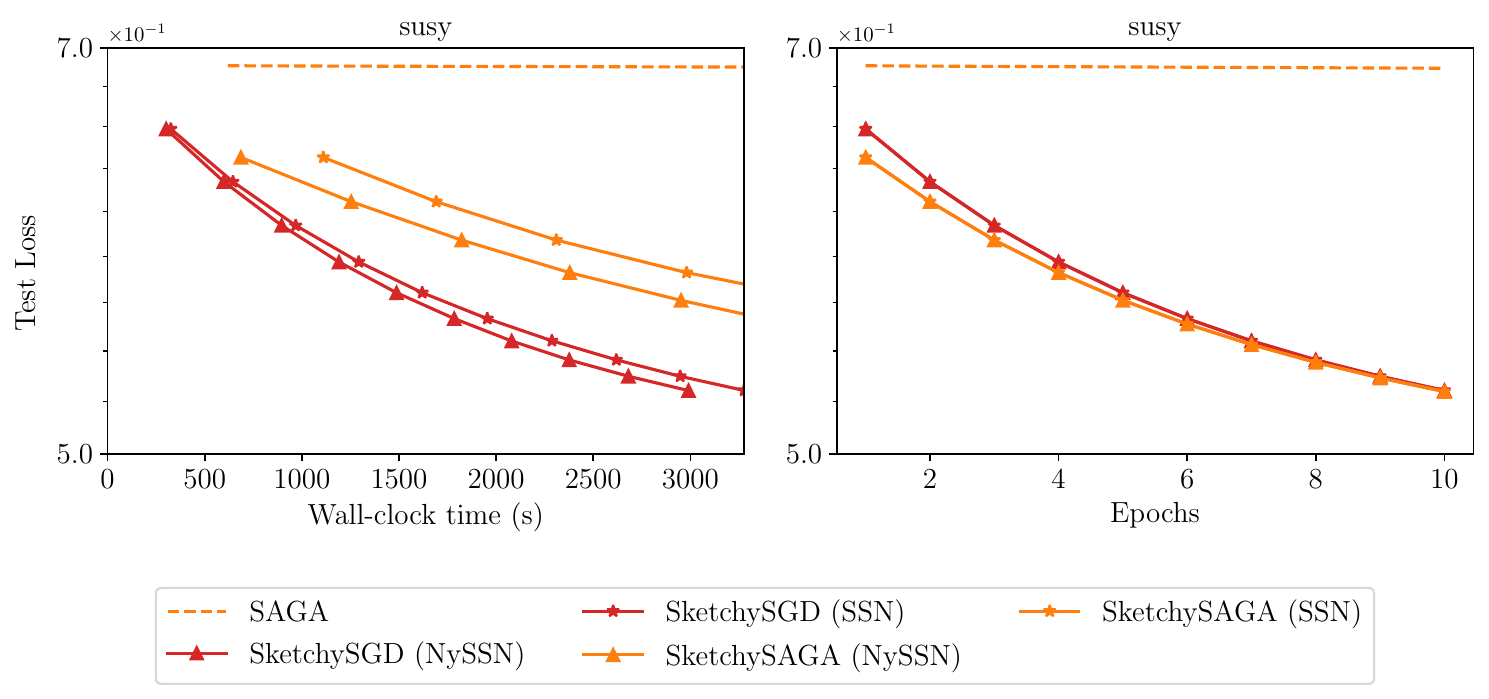}
%     \caption{Comparison between SAGA with default learning rate, SketchySGD, and SketchySAGA on SUSY.}
%     \label{fig:susy_auto}
% \end{figure}

\begin{figure}[h]
    \centering
    \includegraphics[scale=0.4]{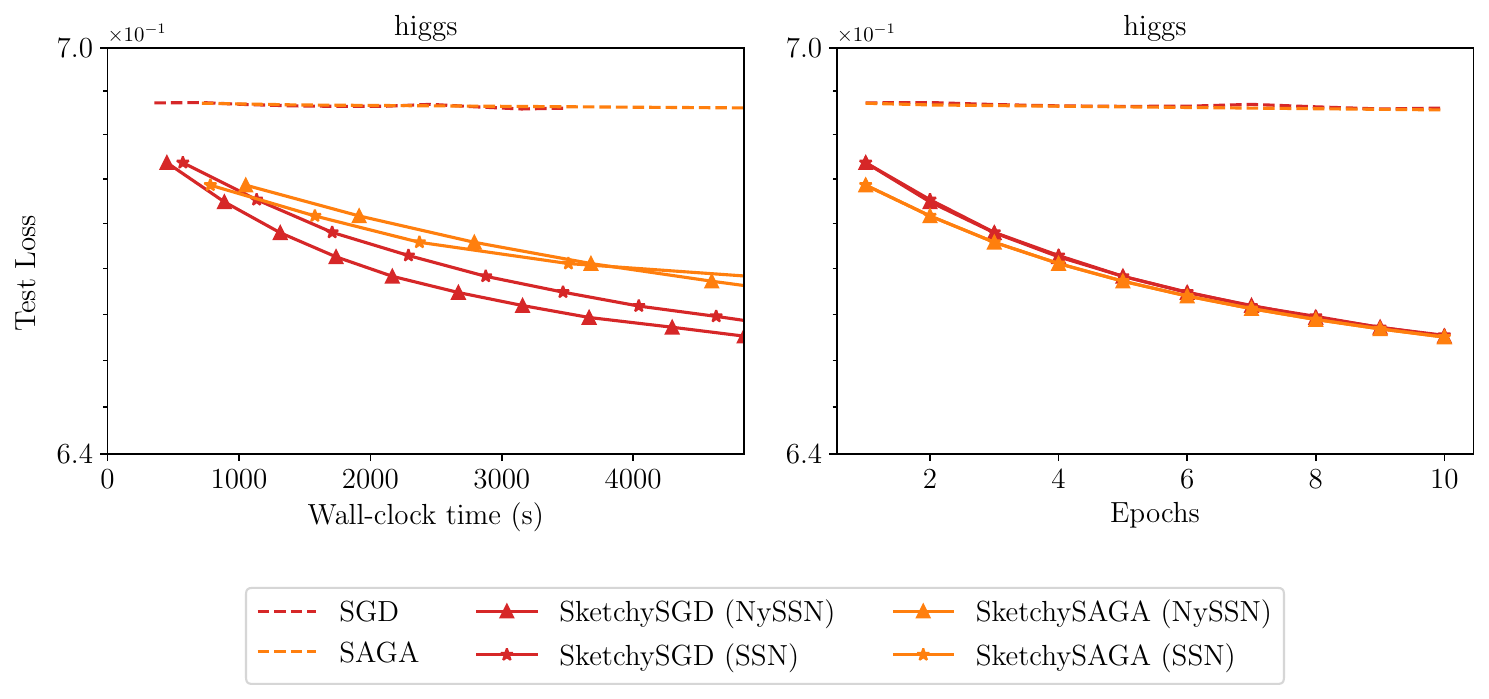}
    \caption{PROMISE methods outperform (tuned) SGD and SAGA on HIGGS.}
    \label{fig:higgs_tuned}
\end{figure}

\ifpreprint
\begin{figure}[h]
    \centering
    \includegraphics[scale=0.4]{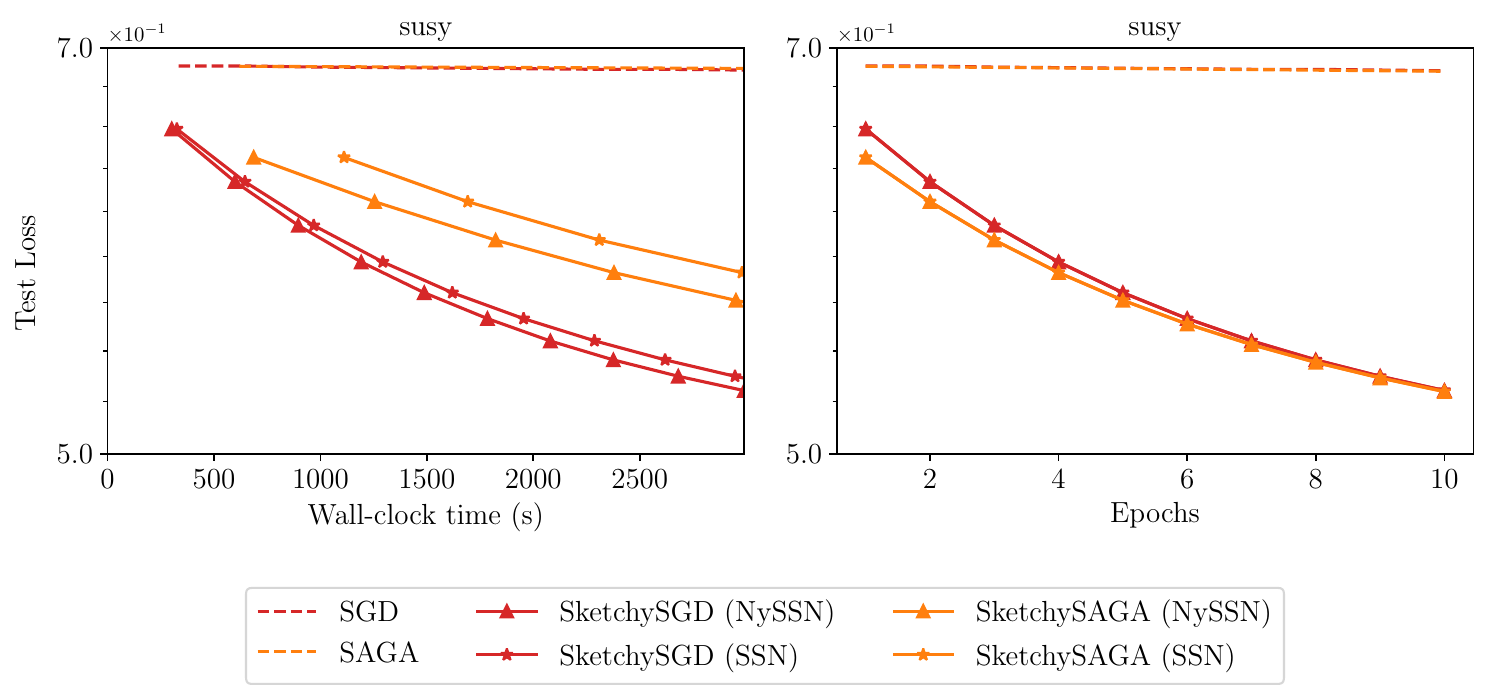}
    \caption{Comparison between SGD and SAGA with tuned learning rates, SketchySGD, and SketchySAGA on SUSY.}
    \label{fig:susy_tuned}
\end{figure}
\else
\fi

\ifpreprint
\subsection{Sensitivity study}
\label{subsection:sensitivity}
We investigate the sensitivity of SketchySAGA, with the \nyssn{} preconditioner, to the rank hyperparameter $r$ (\cref{subsubsection:sensitivity_r}) and update frequency hyperparameter $u$ (\cref{subsubsection:sensitivity_u}). 
In the first set of sensitivity experiments, we select ranks $r \in \{1, 2,5, 10, 20, 50\}$ while holding the update frequency fixed at $u = \left\lceil \frac{n_{\textrm{tr}}}{b_g} \right\rceil$ (1 epoch)\footnote{If we set $u = \infty$ in ridge regression, which fixes the preconditioner throughout the run of SketchySAGA, the potential gain from a larger rank $r$ may not be realized due to a poor initial Hessian approximation.}.
In the second set of sensitivity experiments, we select update frequencies $u \in \left\{0.5\left\lceil \frac{n_{\textrm{tr}}}{b_g} \right\rceil, 
\left\lceil \frac{n_{\textrm{tr}}}{b_g} \right\rceil, 
2\left\lceil \frac{n_{\textrm{tr}}}{b_g} \right\rceil, 
5\left\lceil \frac{n_{\textrm{tr}}}{b_g} \right\rceil,
\infty \right\}$, while holding the rank fixed at $r = 10$. 
% In the third set of sensitivity experiments, we combine SketchySGD, SketchySVRG, SketchySAGA, and SketchyKatyusha with the \nyssn{} preconditioner and run all four methods with their default hyperparameters.
We use the E2006-tfidf, YearPredictionMSD, yolanda, ijcnn1, real-sim, and SUSY datasets, with the same preprocessing as in \cref{subsection:performance_exp_appdx}. 
Each curve is the suboptimality (measured with respect to the best attained value) of the median training loss corresponding to a given $(r, u)$ pair across several random seeds ($10$ for E2006-tfidf, ijcnn1, and real-sim, $5$ for YearPredictionMSD and yolanda, and $3$ for SUSY).
We perform $80$ full data passes, which is equivalent to $80$ epochs of SketchySGD/SketchySAGA or $40$ epochs of SketchySVRG/SketchyKatyusha.

% \begin{itemize}
    % \item Talk about the grid that we are searching over (DONE)
    % \item Show plots of training loss (DONE)
    % \item Draw attention to least squares datasets where all optimizers failed (guillermo, mtp)? (NOT DOING)
% \end{itemize}

\subsubsection{Effects of changing the rank}
\label{subsubsection:sensitivity_r}
Looking at \cref{fig:sensitivity_r}, we see two distinct patterns: either increasing the rank has no noticeable impact on performance (E2006-tfidf, real-sim), or increasing the rank leads to faster convergence (YearPredictionMSD). 
We empirically observe that these patterns are related to the spectrum of each dataset (\cref{fig:spectrums}). 
For example, the spectrum of E2006-tfidf is highly concentrated in the first singular value and decays rapidly, and increasing the rank does not improve convergence.
On the other hand, the spectrum of YearPredictionMSD is not as concentrated in the first singular value, but still decays rapidly, so convergence steadily improves as we increase the rank from $r = 1$ to $r = 50$.
The spectrum of real-sim decays quite slowly in comparison to E2006-tfidf or YearPredictionMSD, 
so increasing the rank up to $50$ does not capture enough of the spectrum to improve convergence.
Rank sensitivity plots for yolanda, ijcnn1, and SUSY appear in \cref{subsubsection:sensitivity_exp_plots_appdx}.

\begin{figure}[h]
    \centering
    \includegraphics[scale=0.5]{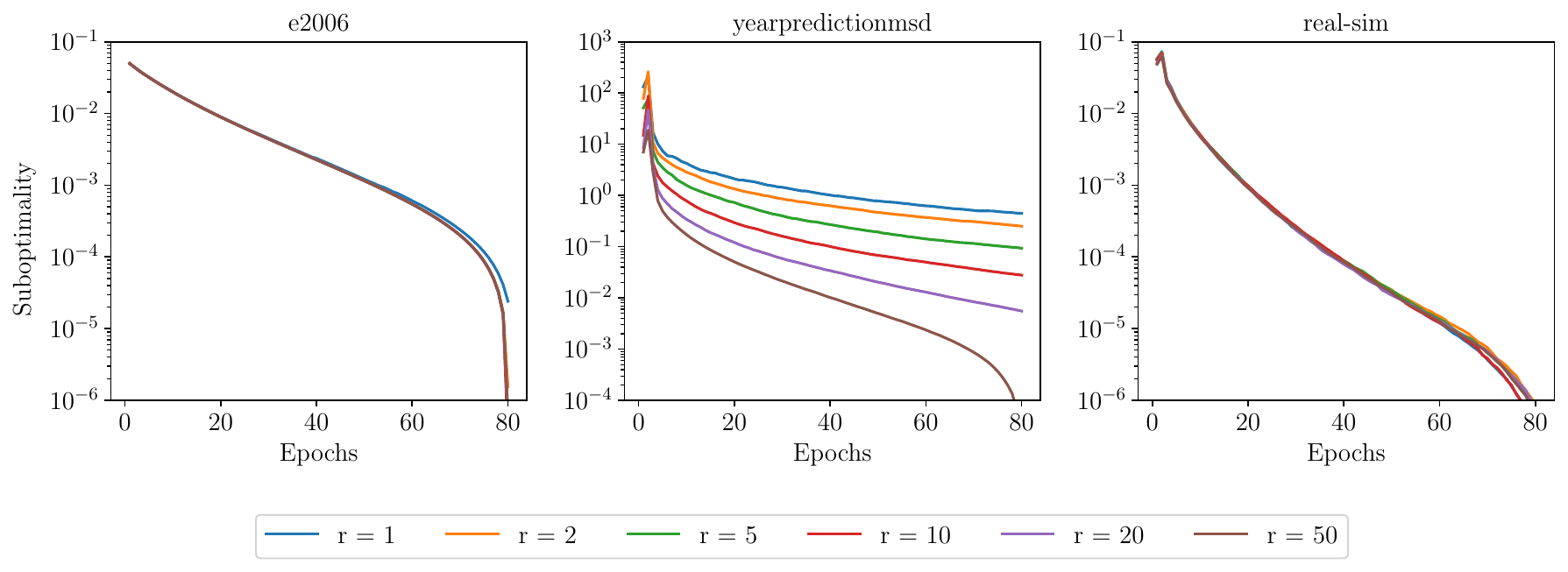}
    \caption{Sensitivity of SketchySAGA to rank $r$.}
    \label{fig:sensitivity_r}
\end{figure}

\begin{figure}[h]
    \centering
    \includegraphics[scale=0.5]{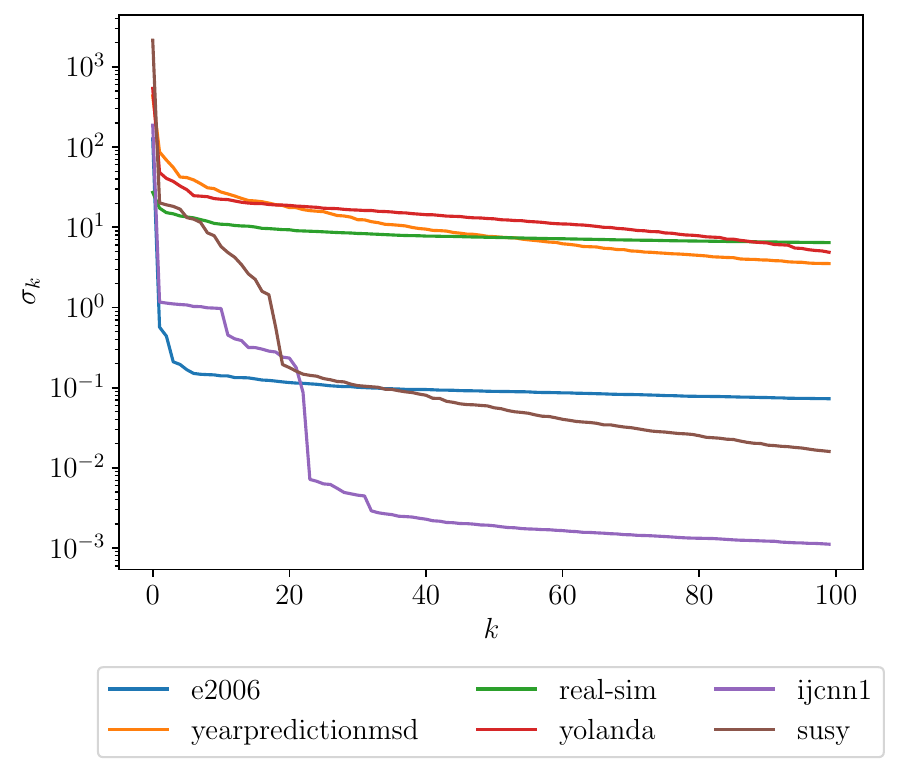}
    \caption{Top $100$ singular values of datasets after preprocessing.}
    \label{fig:spectrums}
\end{figure}

\subsubsection{Effects of changing the update frequency}
\label{subsubsection:sensitivity_u}
In this section, we display results only for logistic regression (\cref{fig:sensitivity_u}), since there is no benefit to updating the preconditioner for a quadratic problem such as ridge regression (\cref{subsubsection:sensitivity_exp_plots_appdx}): the Hessian in ridge regression is constant for all $w \in \R^p$. The impact of the update frequency depends on the spectrum of each dataset.
The spectra of ijcnn1 and susy are highly concentrated in the top $r = 10$ singular values and decay rapidly (\cref{fig:spectrums}).
Interestingly, it seems that for such problems the initial preconditioner approximates the curvature of the loss well throughout the optimization trajectory.
On the other hand, the spectrum of real-sim decays quite slowly, so the initial preconditioner does not capture most of the curvature information in the Hessian.
For real-sim, we find it is beneficial to update the preconditioner. These updates can be very infrequent: an update frequency of $5$ epochs yields nearly identical performance to updating every $0.5$ epochs.

\begin{figure}[h]
    \centering
    \includegraphics[scale=0.5]{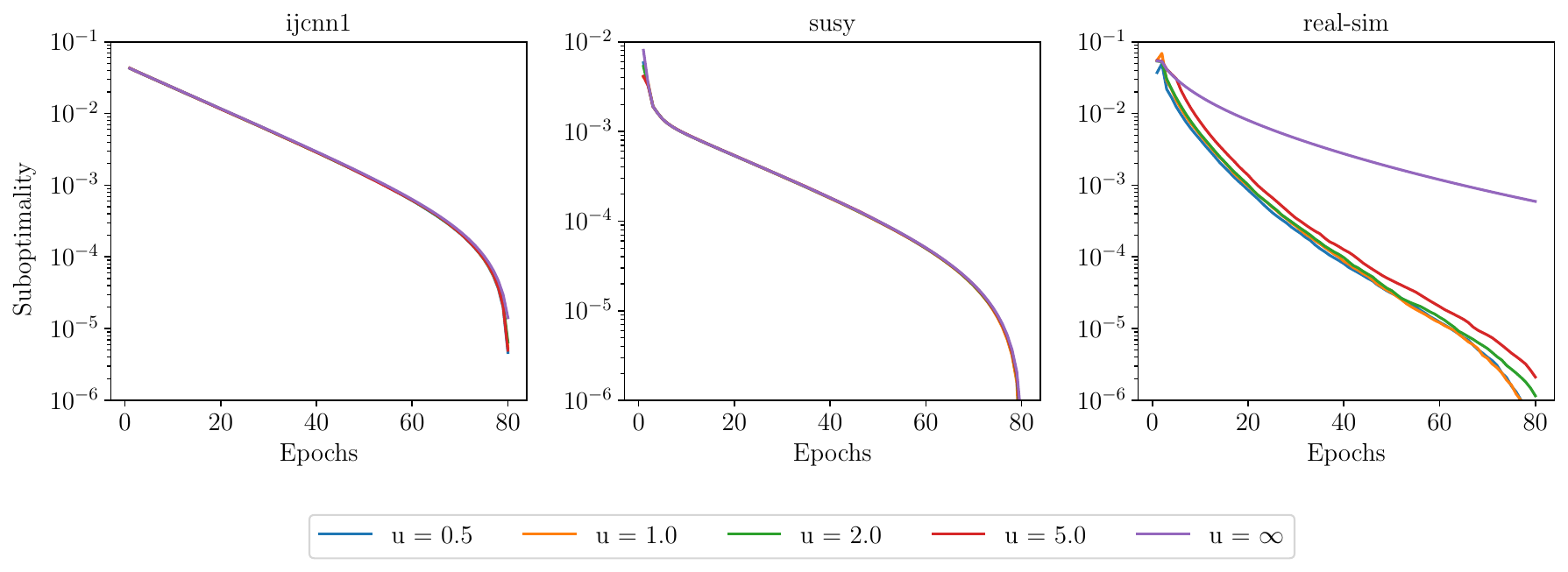}
    \caption{Sensitivity of SketchySAGA to update frequency $u$.}
    \label{fig:sensitivity_u}
\end{figure}
\else
\fi

\subsection{Regularity study: Why do PROMISE methods converge fast globally?}
\label{subsection:regularity}
\label{section:fast_global_convergence}
The theory in \cref{section:theory} shows that the PROMISE algorithms converge linearly, 
with the rate of convergence being controlled by the weighted quadratic regularity ratio $\qbar$.
It is encouraging that the convergence rate is no longer controlled by the condition number, 
yet the quantity $\qbar$ is not local, and so could be quite large.
Indeed, \cref{proposition:QuadRegConds} implies in the worst case it may be as large as $\kappa^2$, which is at odds with the empirical results in \cref{subsection:performance,subsection:subopt,subsection:showcase,subsection:large_scale}.
Here we provide an empirical argument that partly explains why PROMISE methods exhibit faster convergence than stochastic first-order methods.
The key observation in this regard is that the optimization trajectory does not arbitrarily traverse $\R^p$---it stays in localized regions.
Thus, the speed of convergence is determined by the \emph{local} values of $\qbar$, and not its global value over all of $\R^p$.
As the value of $\qbar$ over a localized region may be better behaved than over the whole space, PROMISE methods can take larger step-sizes, which leads to faster convergence.
Furthermore, as the iterates approach the optimum, the values of $\qbar$ over these localized regions approach $1$.
Hence we expect the local weighted quadratic regularity ratio to be small. 

% In \cref{section:fast_global_convergence} we demonstrate that if the quadratic regularity ratio $\gamma_u / \gamma_\ell = \bigO(1)$, then preconditioning leads to fast convergence towards to the optimum. 
%We show empirically that this condition holds in a \textit{local} sense when we use SketchySAGA with the \nyssn{} preconditioner to solve several $l^2$-regularized logistic regression problems. (The quadratic regularity ratio is 1 for ridge regression so we do not investigate numerically.)
In this section, we provide empirical evidence for the hypothesis of the preceding paragraph by studying a local version of the quadratic regularity ratio $\q$ along the optimization trajectory.
The reason for studying $\q$ instead of its weighted version $\qbar = \mathcal{L}_P / \gamma_\ell$ is because $\mathcal{L}_P$ requires knowledge of the Hessian similarity $\tau_\star^\nu$ and the individual values of $\gamma_{u_i}$, which are too expensive to compute.
Furthermore, $\q \approx \qbar$ for reasonably large gradient batchsizes, which makes $\q$ a reasonable proxy for $\qbar$.
To this end, we start by defining appropriate local versions of the quadratic regularity constants.
Close inspection of our analysis reveals that we only need quadratic regularity with $w_0$ equal to the iterate where we compute the preconditioner, $w_1$ set to be the current iterate, and $w_2$ set equal to the optimum.
Hence appropriate definitions for the local quadratic regularity constants $\gamma_{u,j}, \gamma_{\ell,j}$ and local quadratic regularity ratio $\q_j$ are given by
% The expressions for $\gamma_u$ and $\gamma_\ell$ in \cref{def:QuadReg} require taking the supremum over all possible $w_0, w_1, w_2 \in \mathcal{C}$. 
% However, our theoretical results can be made to  
% Thus we define the local quadratic regularity constants
\begin{equation}
\label{eq:local_upper_qr}
    \gamma_{u,j} \coloneqq \max_{w \in S_j}\int^1_0 2(1 - t) \frac{\|w_\star-w\|_{\nabla^2F(w+t(w_\star-w))}^2}{\|w_\star-w\|^2_{\nabla^2 F(w_j)}}dt,
\end{equation} 
\begin{equation}
\label{eq:local_lower_qr}
  \gamma_{\ell,j} \coloneqq \min_{w \in S_j} \int^1_0 2(1 - t) \frac{\|w_\star-w\|_{\nabla^2F(w+t(w_\star-w))}^2}{\|w_\star-w\|^2_{\nabla^2 F(w_j)}}dt,
\end{equation}
\begin{equation}
    \q_j \coloneqq \frac{\gamma_{u,j}}{\gamma_{\ell,j}},
\end{equation}
where $w_j$ is the iterate where we compute the preconditioner, $S_j$ is the set of iterates associated with the preconditioner $P_j$ (i.e., iterates on the trajectory between $w_j$, inclusive and $w_{j+1}$, exclusive), and $w_\star$ is the optimum.

% Our new goal is to investigate the value of the l-QRC over the optimization trajectory. We are also interested in whether its value differs in different segments of the trajectory. 
% To support this analysis, 
% we compute the infimum and supremum in \cref{eq:local_upper_qr,eq:local_lower_qr} over the iterates associated with the preconditioner $P_j$, i.e., we take the set $S_j$ of iterates between $w_j$ (inclusive) and $w_{j+1}$ (exclusive) and replace $\sup_{w \in \mathcal{C}}$ and $\inf_{w \in \mathcal{C}}$ with $\max_{w \in S_j}$ and $\min_{w \in S_j}$, respectively.

\cref{fig:qr_ratio} shows $\q_j$ for eight datasets over $50$ epochs of training using SketchySAGA with the \nyssn{} preconditioner. 
For all of these datasets, $\q_j \approx 1$ after $20$ epochs of training.
Furthermore, this phenomenon occurs before SketchySAGA converges close to the optimum; \cref{table:epochs_to_solve} demonstrates that $\q_j \approx 1$ well before the problem has been solved (within $10^{-4}$ of $F(w^\star)$ as in \cref{subsection:performance}). 
%Furthermore, this phenomenon is not occurring because SketchySAGA has already converged close to the optimum; \cref{table:epochs_to_solve} demonstrates that $\q_j \approx 1$ well before the problem has been solved (within $10^{-4}$ of $F(w^\star)$ as in \cref{subsection:performance}). 
For example, ijcnn1 takes $94$ epochs to be solved by SketchySAGA with the \nyssn{} preconditioner, but $\q_j \approx 1$ in less than $5$ epochs.
\ifpreprint
See \cref{subsection:regularity_exp_appdx} for additional details.
\else
\fi

\begin{figure}[h]
    \centering
    \includegraphics[scale=0.4]{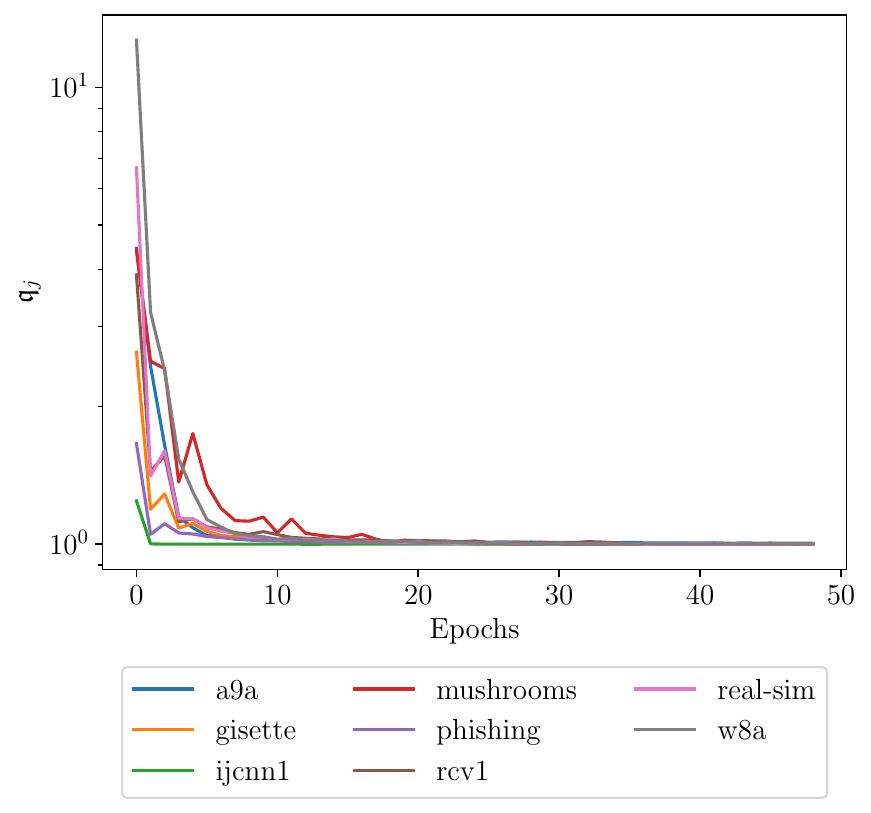}
    \caption{Plots of $\q_j$ over the optimization trajectory for selected datasets.}
    \label{fig:qr_ratio}
\end{figure}

\ifpreprint
\begin{table}[H]
\centering
    \begin{tabular}{C{2.5cm}C{2.5cm}}
        Dataset & \# of epochs \\ \hline
        a9a & 15 \\ \hline
        gisette & 86 \\ \hline
        ijcnn1 & 94 \\ \hline
        mushrooms & 20 \\ \hline
        phishing & 71 \\ \hline
        rcv1 & 49 \\ \hline
        real-sim & 39 \\ \hline
        w8a & 28 \\ \hline
    \end{tabular}
    \caption{
    Number of epochs to solve logistic regression problems on selected datsets.
    }
    \label{table:epochs_to_solve}
\end{table}
\else
\begin{table}[H]
    \centering
    \begin{tabular}{c|c|c|c|c|c|c|c|c}
        Dataset & a9a & gisette & ijcnn1 & mushrooms & phishing & rcv1 & real-sim & w8a \\ \hline
        \# of epochs & 15 & 86 & 94 & 20 & 71 & 49 & 39 & 28
    \end{tabular}
    \caption{Number of epochs to solve logistic regression problems on selected datsets.}
    \label{table:epochs_to_solve}
\end{table}
\fi

% \begin{itemize}
    % \item Talk about the purpose of computing these constants (i.e. verifying Adiabatic Hessians occur in practice) (DONE)
    % \item Talk about how we don't compute $\gamma_u$, $\gamma_\ell$ exactly; we only compute w.r.t. the optimum (DONE)
    % \item Mention why we don't bother running experiments for least squares (Hessian is constant) (DONE)
    % \item Show plot for all eight tested datasets (DONE)
    % \item Since the local conditioning becomes $1$, convergence really depends on the preconditioned condition number (DONE)
    % \item Show plot of preconditioned vs. non-preconditioned hessian spectrum
    % \item Show table of how many epochs it took to ``solve'' each problem in the performance experiments
    % \item Connect the \# of epochs to solve with the spectrum plots
% \end{itemize}
\section{Conclusion}
We introduce PROMISE, a framework for combining scalable preconditioning techniques with popular stochastic optimization methods.
In particular, we present a variety of preconditioning techniques (\ssn{}, \nyssn{}, \sassnc{}, \sassnr{}, \diagssn{}) and develop the preconditioned stochastic second-order methods SketchySVRG, SketchySAGA, and SketchyKatyusha.
Furthermore, we provide default hyperparameters for these preconditioners and algorithms, which enable them to work out-of-the-box, even on highly ill-conditioned data. 

To analyze the PROMISE methods, we introduce quadratic regularity and the quadratic regularity ratio, which generalize the notions of smoothness, strong convexity, and condition number to the Hessian norm.
We also introduce Hessian dissimilarity, which allows us to give practical requirements on the gradient batchsize, a first in the literature.
We show that PROMISE methods have global linear convergence, and that this convergence is condition-number free for ridge regression.
Moreover, we show that SketchySVRG converges linearly at a rate independent of the condition number, once the iterates are close enough to the optimum. 
Hence, SketchySVRG enjoys the fast local convergence one would expect of a Newton-type method. 
\ifpreprint
Last, we have discussed a theoretical hybrid algorithm that combines Katyusha with SketchySVRG, yielding a stochastic second-order method that possesses the best stochastic gradient query complexity, and which resolves an open question in the literature.  
Developing a practical stochastic second-order method that matches the performance of the hybrid algorithm provides an interesting direction for future work. 
\else
\fi

We empirically demonstrate the superiority of PROMISE methods over popular competitor methods for ridge and logistic regression.
PROMISE methods, with their default hyperparameters, consistently outperform the competition, even when they have been tuned to achieve their best performance.

\section{Acknowledgements}
We would like to thank Catherine Chen, John Duchi, Daniel LeJeune, Michael Mahoney, Mert Pilanci, Aaron Sidford, and Ali Teshnizi for helpful discussions regarding this work.
We would also like to thank GPT-4 for inspiring the title of this work.

% Acknowledgements should go at the end, before appendices and references
% \acks{We would like to acknowledge support for this project
% from the National Science Foundation (NSF grant IIS-9988642)
% and the Multidisciplinary Research Program of the Department
% of Defense (MURI N00014-00-1-0637). }

% Manual newpage inserted to improve layout of sample file - not
% needed in general before appendices/bibliography.

\newpage

\appendix

% \startcontents[sections] % Starts separate table of contents
% \section*{Appendix Table of Contents}
% \printcontents[sections]{}{1}{} % Prints out the separate table of contents

\ifpreprint

\section{Additional preconditioner details}
\label{section:methodology_appdx}
We present additional information related to the \sassn{} and \diagssn{} preconditioners that was omitted in the main text.

\subsection{\sassn{} preconditioner}
\label{subsection:methodology_sassn_appdx}

An implementation of the \sassnc{}/\sassnr{} preconditioners is provided in the $\psassnc$/$\psassnr$ classes (\cref{tab:sassn_attr,alg:sassnc_gen,alg:sassnr_gen,alg:sassn_upd,alg:sassn_dir}). 
The attributes of the $\psassnc$/$\psassnr$ classes are given in \cref{tab:sassn_attr}, and pseudocode for the \texttt{update} and \texttt{direction} methods is provided in \cref{alg:sassn_upd,alg:sassn_dir}, respectively. 
$\psassnc$ and $\psassnr$ have their own \texttt{generate\_embedding} methods (\cref{alg:sassnc_gen,alg:sassnr_gen}) for generating a sparse embedding.

\begin{table}[H]
    \centering
    \begin{tabular}{c c}
        Attribute & Description \\ \hline
        $r$ & Rank for constructing preconditioner \\ \hline
        $\rho$ & Regularization for preconditioner \\ \hline
        $Y$ & Sketch of subsampled Hessian \\ \hline
        $L$ & Lower-triangular Cholesky factor for storing preconditioner \\ \hline
        $\lambda_\mathcal{P}$ & Estimate of preconditioned smoothness constant
    \end{tabular}
    \caption{Attributes of the $\psassnc$/$\psassnr$ classes.}
    \label{tab:sassn_attr}
\end{table}

% \begin{algorithm}[H]
%     \centering
%     \caption{\sassn{} constructor}
%     \label{alg:sassn}
%     \begin{algorithmic}
%         \Require sketch size $s_0$, regularization $\rho_0$
%         \State {\textbf{class} \sassn{}}
%         \Indent
%         \State \textbf{local variable} $s$ (int) \Comment{Sketch size}
%         \State \textbf{local variable} $\rho$ (float) \Comment{Regularization}
%         \State \textbf{local variable} $Y$ (matrix) \Comment{Sketched square root of subsampled Hessian}
%         \State \textbf{local variable} $L$ (matrix) \Comment{Cholesky factor of $YY^T + \rho I$}

%         \vspace{0.5pc}

%         \Function{\sassn{}}{$s_0, \rho_0$} \Comment{Constructor}
%         \State $s \gets s_0$
%         \State $\rho \gets \rho_0$
%         \State $Y \gets \textrm{None}$
%         \State $L \gets \textrm{None}$
%         \EndFunction
        
%         \EndIndent
%     \end{algorithmic}
% \end{algorithm}

The \texttt{generate\_embedding} method for $\psassnc$/$\psassnr$ takes a dimension $b$ as input and generates a column-sparse/row-sparse embedding that is used to construct the \sassnc{}/\sassnr{} preconditioner.

\begin{algorithm}[H]
    \centering
    \caption{Generate column-sparse embedding for $\psassnc$ preconditioner}
    \label{alg:sassnc_gen}
    \begin{algorithmic}
        \Require{$\psassnc$ object with attributes $r, \rho, Y, L, \lambda_\mathcal{P}$}
        \Function{$\psassnc$.\texttt{generate\_embedding}}{$b$}
        \State $r \gets \psassnc.r$ \Comment{Get attributes}
        \State $\zeta \gets \min(r, 8)$ \Comment{Sparsity parameter}
        \State $\textrm{cols} \gets [0, 1, \ldots, b - 1]^T \otimes 1_{\zeta}$ \Comment{Column indices of nonzero entries}
        \State $\textrm{rows} \gets \texttt{rand\_choice} (\{0, 1, \ldots, r - 1\}, \zeta b)$ \Comment{Row indices of nonzero entries}
        \State $\Phi \gets \texttt{sign}(\texttt{unif\_sample}(\zeta b) - 0.5)$ \Comment{Random sign matrix}
        \State $\Omega \gets \texttt{csr\_matrix} \left( \sqrt{\frac{1}{\zeta}}\Phi, \textrm{idx} = (\textrm{rows}, \textrm{cols}), \textrm{shape} = (r, b) \right)$
        \State \Return $\Omega$
        \EndFunction
    \end{algorithmic}
\end{algorithm}

\begin{algorithm}[H]
    \centering
    \caption{Generate row-sparse embedding for $\psassnr$ preconditioner}
    \label{alg:sassnr_gen}
    \begin{algorithmic}
        \Require{$\psassnr$ object with attributes $r, \rho, Y, L, \lambda_\mathcal{P}$}
        \Function{$\psassnr$.\texttt{generate\_embedding}}{$b$}
        \State $r \gets \psassnr.r$ \Comment{Get attributes}
        \State $\zeta \gets \min(b, 8)$ \Comment{Sparsity parameter}
        \State $\textrm{cols} \gets \texttt{rand\_choice} (\{0, 1, \ldots, b - 1\}, \zeta r)$ \Comment{Column indices of nonzero entries}
        \State $\textrm{rows} \gets [0, 1, \ldots, r - 1]^T \otimes 1_{\zeta}$ \Comment{Row indices of nonzero entries}
        \State $\Phi \gets \texttt{sign}(\texttt{unif\_sample}(\zeta r) - 0.5)$ \Comment{Random sign matrix}
        \State $\Omega \gets \texttt{csr\_matrix} \left( \sqrt{\frac{b}{\zeta r}}\Phi, \textrm{idx} = (\textrm{rows}, \textrm{cols}), \textrm{shape} = (r, b) \right)$ 
        \State \Return $\Omega$
        \EndFunction
    \end{algorithmic}
\end{algorithm}

The implementation of this method for $\psassnc$ and $\psassnr$ is based on \cite{martinsson2020randomized} and \cite{derezinski2021newtonless}, respectively. We limit the sparsity parameter $\zeta \leq 8$ based on recommendations from \cite{tropp2019streaming}.

The \texttt{update} method takes a GLM $\Model$, Hessian batches $\mathcal{B}_1, \mathcal{B}_2$, and vector $w \in \R^p$ as input. 

\begin{algorithm}[H]
    \centering
    \caption{Update $\psassnc$/$\psassnr$ preconditioner and preconditioned smoothness constant}
    \label{alg:sassn_upd}
    \begin{algorithmic}
        \Require{$\psassnc$/$\psassnr$ object with attributes $r, \rho, Y, L, \lambda_\mathcal{P}$}
        \Function{$\psassnc$/$\psassnr$.\texttt{update}}{$\Model, \mathcal{B}_1, \mathcal{B}_2, w$}
        \State $\rho \gets \psassnc/\psassnr.\rho$ \Comment{Get attributes}
        \State
        \State \# Phase 1: Update preconditioner
        \State $A_{\textrm{sub}} \gets \Model.\texttt{get\_data}(\mathcal{B}_1)$
        \State $d_{\textrm{sub}} \gets \Model.\texttt{get\_hessian\_diagonal}(\mathcal{B}_1, w)$
        \State $X \gets \diag(\sqrt{d_{\textrm{sub}}}) A_{\textrm{sub}}$ \Comment{Square root of subsampled Hessian}
        \State $\Omega \gets \psassnc/\psassnr.\texttt{generate\_embedding}(|\mathcal{B}_1|)$ \Comment{Sparse embedding}
        \State $Y \gets \Omega X$ \Comment{Sketched square root of subsampled Hessian}
        \State $L \gets \texttt{cholesky}(Y Y^T + \rho I)$

        \State
        \State \# Phase 2: Update estimated preconditioned smoothness constant
        \State $A_{\textrm{sub}} \gets \Model.\texttt{get\_data}(\mathcal{B}_2)$
        \State $d_{\textrm{sub}} \gets \Model.\texttt{get\_hessian\_diagonal}(\mathcal{B}_2, w)$
        \State $Z \gets A_{\textrm{sub}}^T \diag(d_{\textrm{sub}}) A_{\textrm{sub}} + \Model.\texttt{get\_reg}() I$ \Comment{Subsampled Hessian}
        \State $\lambda_\mathcal{P} \gets \texttt{eig}(Z (Y^T Y + \rho I)^{-1}, k = 1)$ \Comment{Compute largest eigenvalue}
        \State
        \State $\psassnc/\psassnr.Y \gets Y, \psassnc/\psassnr.L \gets L, \psassnc/\psassnr.\lambda_\mathcal{P} \gets \lambda_\mathcal{P}$ \Comment{Set attributes}
        \EndFunction
    \end{algorithmic}
\end{algorithm}

In the first phase, this method constructs the \sassnc{}/\sassnr{} preconditioner $P$ at $w$ by computing the square root of the subsampled Hessian, $X$, applying a sparse embedding to $X$, and computing a Cholesky factorization. 
The factors $Y$ and $L$ can be used later to apply the preconditioner to a vector.
In the second phase, this method estimates the preconditioned smoothness constant by taking a similar approach to the $\pssn$ class.

The \texttt{direction} method takes a vector $g \in \R^p$ (typically a stochastic gradient) as input. 

\begin{algorithm}[H]
    \centering
    \caption{Compute preconditioned $\psassnc$/$\psassnr$ direction}
    \label{alg:sassn_dir}
    \begin{algorithmic}
        \Require{$\psassnc$/$\psassnr$ object with attributes $r, \rho, Y, L, \lambda_\mathcal{P}$}
        \Function{$\psassnc$/$\psassnr$.\texttt{direction}}{$g$}
        \State $Y \gets \psassnc/\psassnr.Y, L \gets \psassnc/\psassnr.L, \rho \gets \psassnc/\psassnr.\rho$ \Comment{Get attributes}
        \State $v \gets Yg$
        \State $v \gets L^{-1} v$ \Comment{Triangular solve}
        \State $v \gets L^{-T} v$ \Comment{Triangular solve}
        \State $v \gets Y^T v$
        \State \Return $(g - v) / \rho$
        \EndFunction
    \end{algorithmic}
\end{algorithm}

The method then computes $P^{-1}g$ using the Woodbury formula with the preconditioner factors $Y$ and $L$; this computation has complexity $\mathcal{O}(rp)$ in general.

% \begin{algorithm}[H]
%     \centering
%     \caption{Compute largest eigenvalue after $\psassn$ preconditioning}
%     \label{alg:sassn_eig}
%     \begin{algorithmic}
%         \Require{$\psassn$ object with attributes $s, \rho, Y, L$}
%         \Function{$\psassn$.\texttt{largest\_eig}}{$\Model, \mathcal{B}, w$}
%         \State $A_{\textrm{sub}} \gets \Model.\texttt{get\_data}(\mathcal{B})$
%         \State $d_{\textrm{sub}} \gets \Model.\texttt{get\_hessian\_diagonal}(\mathcal{B}, w)$
%         \State $Z \gets A_{\textrm{sub}}^T \diag(d_{\textrm{sub}}) A_{\textrm{sub}}$ \Comment{Subsampled Hessian}
%         \State $(\lambda, \sim) \gets \texttt{eig}(Z ((\self.Y)^T \self.Y + (\self.\rho) I)^{-1}, k = 1)$ \Comment{Compute largest eigenval. w/o eigenvecs.}
%         \State \Return $\lambda$
%         \EndFunction
%     \end{algorithmic}
% \end{algorithm}

\subsection{\diagssn{} preconditioner}
\label{subsection:methodology_diagssn_appdx}

An implementation of the \diagssn{} preconditioner is provided in the $\pdiagssn$ class (\cref{tab:diagssn_attr,alg:diagssn_upd,alg:diagssn_dir}). 
The attributes of the $\pdiagssn$ class are given in \cref{tab:diagssn_attr}, and pseudocode for the \texttt{update} and \texttt{direction} methods is provided in \cref{alg:diagssn_upd,alg:diagssn_dir}, respectively.

\begin{table}[H]
    \centering
    \begin{tabular}{c c}
        Attribute & Description \\ \hline
        $\rho$ & Regularization for preconditioner \\ \hline
        $d$ & Vector for storing preconditioner \\ \hline
        $\lambda_\mathcal{P}$ & Estimate of preconditioned smoothness constant
    \end{tabular}
    \caption{Attributes of the $\pdiagssn$ class.}
    \label{tab:diagssn_attr}
\end{table}

% \begin{algorithm}[H]
%     \centering
%     \caption{\diagssn{} constructor}
%     \label{alg:diagssn}
%     \begin{algorithmic}
%         \Require regularization $\rho_0$
%         \State {\textbf{class} \diagssn{}}
%         \Indent 
%         \State \textbf{local variable} $\rho$ (float) \Comment{Regularization}
%         \State \textbf{local variable} $d$ (vector) \Comment{Preconditioner}
        
%         \vspace{0.5pc}
        
%         \Function{\diagssn{}}{$\rho_0$} \Comment{Constructor}
%         \State $\rho \gets \rho_0$
%         \State $d \gets \textrm{None}$
%         \EndFunction
        
%         \EndIndent
%     \end{algorithmic}
% \end{algorithm}

The \texttt{update} method takes a GLM $\Model$, Hessian batches $\mathcal{B}_1, \mathcal{B}_2$, and vector $w \in \R^p$ as input. 

\begin{algorithm}[H]
    \centering
    \caption{Update $\pdiagssn$ preconditioner and preconditioned smoothness constant}
    \label{alg:diagssn_upd}
    \begin{algorithmic}
        \Require{$\pdiagssn$ object with attributes $\rho, d, \lambda_{\mathcal{P}}$}
        \Function{$\pdiagssn$.\texttt{update}}{$\Model, \mathcal{B}_1, \mathcal{B}_2, w$}
        \State $\rho \gets \pdiagssn.\rho$ \Comment{Get attributes}
        \State
        \State \# Phase 1: Update preconditioner
        \State $A_{\textrm{sub}} \gets \Model.\texttt{get\_data}(\mathcal{B}_1)$
        \State $d_{\textrm{sub}} \gets \Model.\texttt{get\_hessian\_diagonal}(\mathcal{B}_1, w)$
        \State $X \gets \diag(\sqrt{d_{\textrm{sub}}}) A_{\textrm{sub}}$ \Comment{Square root of subsampled Hessian}
        \State $d \gets \texttt{column\_norm}(X)^2$

        \State
        \State \# Phase 2: Update estimated preconditioned smoothness constant
        \State $A_{\textrm{sub}} \gets \Model.\texttt{get\_data}(\mathcal{B}_2)$
        \State $d_{\textrm{sub}} \gets \Model.\texttt{get\_hessian\_diagonal}(\mathcal{B}_2, w)$
        \State $Z \gets A_{\textrm{sub}}^T \diag(d_{\textrm{sub}}) A_{\textrm{sub}} + \Model.\texttt{get\_reg}() I$ \Comment{Subsampled Hessian}
        \State $\lambda_\mathcal{P} \gets \texttt{eig}(Z \diag(d + \rho)^{-1}, k = 1)$ \Comment{Compute largest eigenvalue}
        \State
        \State $\pdiagssn.d \gets d, \pdiagssn.\lambda_\mathcal{P} \gets \lambda_\mathcal{P}$ \Comment{Set attributes}
        \EndFunction
    \end{algorithmic}
\end{algorithm}

In the first phase, this method constructs the \diagssn{} preconditioner $P$ at $w$ by computing the square root of the subsampled Hessian, $X$, and then using $X$ to compute the diagonal of the subsampled Hessian, $d$, which is stored as a vector.
In the second phase, this method estimates the preconditioned smoothness constant by taking a similar approach to the $\pssn$ class.

The \texttt{direction} method takes a vector $g \in \R^p$ (typically a stochastic gradient) as input. 

\begin{algorithm}[H]
    \centering
    \caption{Compute preconditioned $\pdiagssn$ direction}
    \label{alg:diagssn_dir}
    \begin{algorithmic}
        \Require{$\pdiagssn$ object with attributes $\rho, d, \lambda_{\mathcal{P}}$}
        \Function{$\pdiagssn$.\texttt{direction}}{$g$}
        \State $d \gets \pdiagssn.d, \rho \gets \pdiagssn.\rho$ \Comment{Get attributes}
        \State \Return $g / (d + \rho) $ \Comment{Elementwise division}
        \EndFunction
    \end{algorithmic}
\end{algorithm}

The method then computes $P^{-1}g$ by elementwise division with $d$; this computation has complexity $\mathcal{O}(p)$.

% \begin{algorithm}[H]
%     \centering
%     \caption{Compute largest eigenvalue after $\pdiagssn$ preconditioning}
%     \label{alg::diagssn_eig}
%     \begin{algorithmic}
%         \Require{$\pdiagssn$ object with attributes $\rho, d$}
%         \Function{$\pdiagssn$.\texttt{largest\_eig}}{$\Model, \mathcal{B}, w$}
%         \State $A_{\textrm{sub}} \gets \Model.\texttt{get\_data}(\mathcal{B})$
%         \State $d_{\textrm{sub}} \gets \Model.\texttt{get\_hessian\_diagonal}(\mathcal{B}, w)$
%         \State $X \gets \diag(\sqrt{d_{\textrm{sub}}}) A_{\textrm{sub}}$ \Comment{Square root of subsampled Hessian}
%         \State $X \gets X \diag(\sqrt{\self.d})^{-1}$ \Comment{Square root of preconditioned subsampled Hessian}
%         \State $(\sim, \sigma, \sim) \gets \texttt{svd}(X, k = 1)$ \Comment{Compute largest sing. value w/o sing. vectors}
%         \State \Return $\sigma^2$
%         \EndFunction
%     \end{algorithmic}
% \end{algorithm}
\ifpreprint
\section{General lemmas}
Here we collect some general lemmas that will be used in our analysis.
The first three lemmas follow from elementary computation. 
\begin{lemma}[A-norm variance decomposition]
\label[lemma]{lem:AVarBnd}
Let $A$ be a symmetric positive definite matrix, and $X\in \R^p$ a random vector. Then
\[
\E\|X-\E X\|_A^2 = \E\|X\|_A^2 -\|\E X\|^2_A,
\]
and so 
\[
\E\|X-\E X\|_A^2\leq \E\|X\|_A^2.
\]
\end{lemma}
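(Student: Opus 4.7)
The plan is to directly expand the squared $A$-norm and compute the expectation term by term, exactly mirroring the scalar-variance identity $\Var(X) = \E X^2 - (\E X)^2$.

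First, I would write $\|X-\E X\|_A^2 = (X-\E X)^T A (X-\E X)$ and expand using bilinearity of $(u,v)\mapsto u^T A v$:
\[
\|X-\E X\|_A^2 = X^T A X - 2 X^T A (\E X) + (\E X)^T A (\E X),
\]
where I used symmetry of $A$ to combine the two cross terms. Then I would take expectations, pulling the constant $\E X$ out of $\E[X^T A(\E X)] = (\E X)^T A (\E X)$, which yields
\[
\E\|X-\E X\|_A^2 = \E\|X\|_A^2 - 2(\E X)^T A (\E X) + (\E X)^T A (\E X) = \E\|X\|_A^2 - \|\E X\|_A^2.
\]
This establishes the first identity.

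For the inequality, I would invoke the hypothesis that $A$ is symmetric positive definite, which implies $\|\E X\|_A^2 = (\E X)^T A (\E X) \geq 0$. Subtracting a nonnegative quantity from $\E\|X\|_A^2$ gives the stated bound. The main ``obstacle'' is purely bookkeeping---making sure the symmetry of $A$ is used to combine cross terms and that expectations commute with the linear operations---but there is no genuine difficulty here; the result is a straightforward generalization of the identity $\Var(X) = \E X^2 - (\E X)^2$ to the $A$-weighted inner product.
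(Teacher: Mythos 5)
Your proof is correct and is exactly the elementary computation the paper has in mind (the paper states this lemma "follows from elementary computation" and omits the details). Expanding the quadratic form, using symmetry of $A$ to combine the cross terms, and invoking positive semidefiniteness of $\|\E X\|_A^2$ for the inequality is the intended argument.
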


\begin{lemma}[$A$-norm parallelogram law inequality]
\label[lemma]{lem:A-norm-par}
    Let $A$ be a symmetric positive definite matrix, then for any vectors $a,b$ we have
    \[
    \|a+b\|_A^2 \leq 2\left(\|a\|_A^2+\|b\|_A^2\right).
    \]
\end{lemma}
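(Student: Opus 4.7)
The plan is to use the fact that $A$ being symmetric positive definite makes $\langle \cdot, \cdot\rangle_A := \langle \cdot, A\cdot \rangle$ a valid inner product, with induced norm $\|\cdot\|_A$. Then the inequality is nothing more than the standard parallelogram-type bound $\|a+b\|^2 \leq 2(\|a\|^2 + \|b\|^2)$ applied in this inner product space.

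Concretely, I would first expand
\[
\|a+b\|_A^2 = (a+b)^T A (a+b) = \|a\|_A^2 + 2 a^T A b + \|b\|_A^2.
\]
Next, I would bound the cross term by the elementary inequality $2xy \leq x^2 + y^2$ applied to $x = \|a\|_A$ and $y = \|b\|_A$, after first invoking Cauchy--Schwarz in the $A$-inner product to get $a^T A b \leq \|a\|_A \|b\|_A$. Combining these two steps yields $2 a^T A b \leq \|a\|_A^2 + \|b\|_A^2$, and substituting back into the expansion gives the claim.

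There is no main obstacle here; the only subtlety is confirming that Cauchy--Schwarz applies, which follows because $A \succ 0$ ensures $\langle u, v \rangle_A = u^T A v$ defines a genuine inner product on $\mathbb{R}^p$. Alternatively, one could skip Cauchy--Schwarz entirely by writing $\|a+b\|_A^2 + \|a-b\|_A^2 = 2\|a\|_A^2 + 2\|b\|_A^2$ (the exact parallelogram identity in $\|\cdot\|_A$) and then dropping the nonnegative term $\|a-b\|_A^2$ from the left-hand side. This second route is perhaps cleaner and avoids the explicit cross-term bound.
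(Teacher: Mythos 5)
Your proof is correct, and it matches the paper's treatment: the paper simply states that this lemma ``follows from elementary computation,'' and your expansion of $\|a+b\|_A^2$ followed by Cauchy--Schwarz (or, equivalently, the parallelogram identity with the nonnegative term $\|a-b\|_A^2$ dropped) is exactly the intended elementary argument.
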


\begin{lemma}[$A$-norm polarization identity]
\label[lemma]{lem:A-norm-pol}
Let $A$ be a symmetric positive definite matrix. Then for any vectors $a,b,c,d$, the following equality holds
\[
\langle a-b,c-d\rangle_A = \frac{1}{2}\left(\|a-d\|_A^2-\|a-c\|^2_A\right)+\frac{1}{2}\left(\|c-b\|_A^2-\|d-b\|^2_A\right).
\]
\end{lemma}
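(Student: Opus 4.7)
The plan is to prove the identity by direct algebraic expansion, exploiting the bilinearity and symmetry of the $A$-inner product, which follow from $A$ being symmetric positive definite. Since $\langle u, v\rangle_A \coloneqq u^T A v$ defines a genuine inner product on $\R^p$, we may treat $\|\cdot\|_A^2$ exactly as one treats ordinary squared norms in the standard polarization identity.

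First, I would expand each of the four squared $A$-norms on the right-hand side using the identity $\|x-y\|_A^2 = \|x\|_A^2 - 2\langle x, y\rangle_A + \|y\|_A^2$. This yields expressions purely in terms of the pairwise $A$-inner products $\langle a, c\rangle_A$, $\langle a, d\rangle_A$, $\langle b, c\rangle_A$, $\langle b, d\rangle_A$ and the diagonal quantities $\|a\|_A^2, \|b\|_A^2, \|c\|_A^2, \|d\|_A^2$.

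Next, I would compute the two differences $\|a-d\|_A^2 - \|a-c\|_A^2$ and $\|c-b\|_A^2 - \|d-b\|_A^2$ separately. In the first, the term $\|a\|_A^2$ cancels, and collecting the cross terms gives $2\langle a, c-d\rangle_A$ plus a residual $\|d\|_A^2 - \|c\|_A^2$. In the second, the term $\|b\|_A^2$ cancels, and collecting cross terms gives $-2\langle b, c-d\rangle_A$ plus the residual $\|c\|_A^2 - \|d\|_A^2$, which is exactly the negative of the residual from the first difference.

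Finally, I would sum the two differences: the residuals cancel, leaving $2\langle a, c-d\rangle_A - 2\langle b, c-d\rangle_A = 2\langle a-b, c-d\rangle_A$ by linearity in the first argument. Multiplying by $\tfrac{1}{2}$ recovers the left-hand side. This identity is purely algebraic, so there is no real obstacle; the only care needed is tracking signs and ensuring the cross terms are collected correctly, which is why grouping into the two symmetric differences (as written in the statement) is cleaner than expanding everything at once.
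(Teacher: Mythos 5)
Your proof is correct, and it matches the paper's treatment: the paper simply states that this lemma "follows from elementary computation" and omits the details, which are exactly the expansion and cancellation you carry out. The sign bookkeeping in your two symmetric differences checks out, so nothing further is needed.
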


\begin{lemma}[Generalized Young Inequality]
\label[lemma]{lemma:GenYoungIneq}
    Let $A$ be a symmetric positive definite matrix. Then for any $a,b \in \R^p$ and $c_1,c_2>0$
    \[
    c_1 \langle a,b \rangle - \frac{c_2}{2}\|b\|_A^2 \leq \frac{c^2_1}{2c_2}\|a\|_{A^{-1}}^2.  
    \]
\end{lemma}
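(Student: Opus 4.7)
The plan is to obtain the bound via a single expansion of a nonnegative squared norm, which is the cleanest route and avoids invoking any auxiliary inequality like Cauchy–Schwarz or standard Young separately. Concretely, I would exploit the fact that since $A$ is symmetric positive definite, $A^{1/2}$ and $A^{-1/2}$ both exist, and the inner product can be rewritten as $\langle a,b\rangle = \langle A^{-1/2}a, A^{1/2}b\rangle$.

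The key step is to consider the quantity
\[
0 \leq \left\| \frac{c_1}{\sqrt{c_2}} A^{-1/2} a - \sqrt{c_2}\, A^{1/2} b \right\|_2^2,
\]
which is nonnegative by definition of the Euclidean norm. Expanding the square yields three terms: $\frac{c_1^2}{c_2}\|A^{-1/2}a\|_2^2 = \frac{c_1^2}{c_2}\|a\|_{A^{-1}}^2$, the cross term $-2c_1\langle A^{-1/2}a, A^{1/2}b\rangle = -2c_1\langle a,b\rangle$, and $c_2\|A^{1/2}b\|_2^2 = c_2\|b\|_A^2$. Rearranging gives $2c_1\langle a,b\rangle - c_2\|b\|_A^2 \leq \frac{c_1^2}{c_2}\|a\|_{A^{-1}}^2$, and dividing by $2$ produces the claim.

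There is essentially no obstacle: the only subtle point is verifying that $\langle A^{-1/2}a, A^{1/2}b\rangle_2 = \langle a,b\rangle$, which follows from the symmetry of $A^{1/2}$ (and hence of $A^{-1/2}$). One could alternatively present the argument as an optimization: the left-hand side, viewed as a function of $\|b\|_A \geq 0$ after applying Cauchy–Schwarz in the $A$-inner product, is a concave quadratic maximized at $\|b\|_A = c_1\|a\|_{A^{-1}}/c_2$, attaining exactly $\frac{c_1^2}{2c_2}\|a\|_{A^{-1}}^2$. I would prefer the direct squared-norm expansion since it avoids two steps and makes the equality case transparent.
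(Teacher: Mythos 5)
Your proof is correct and is essentially the same argument as the paper's: the paper sets $a' = \frac{c_1}{\sqrt{c_2}}A^{-1}a$, $b' = \sqrt{c_2}b$ and invokes $\|a'+b'\|_A^2 \leq 2(\|a'\|_A^2 + \|b'\|_A^2)$ in the $A$-norm, which is just a repackaging of the nonnegative squared norm you expand directly (working with $A^{\pm 1/2}$ in the Euclidean norm rather than with $A^{-1}$ in the $A$-norm). The identical scaling by $c_1/\sqrt{c_2}$ and $\sqrt{c_2}$ is the heart of both proofs, so no substantive difference.
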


\begin{proof}
    Set $a' = \frac{c_1}{\sqrt{c_2}}A^{-1}a, b' = \sqrt{c_2}b$. Then
    \[ c_1 \langle a,b \rangle - \frac{c_2}{2}\|b\|_A^2 = \langle a',b'\rangle_A+\frac{1}{2}\|b'\|_A^2.\]
    Now, using $\|a'+b'\|_A^2 \leq 2(\|a'\|_{A}^2+\|b'\|_A^2$), expanding and rearranging, we reach
    \[
    \langle a',b'\rangle_A\leq \frac{1}{2}\left(\|a'\|_A^2+\|b'\|_A^2\right).
    \]
    Hence
    \[
    \langle a',b'\rangle_A-\frac{1}{2}\|b'\|_A^2\leq \frac{\|a'\|^2_A}{2}.
    \]
    Now, substituting in the values for $a'$, $b'$, and using $\|A^{-1}a\|_{A}^2 = \|a\|_{A^{-1}}^2$, completes the proof. 
\end{proof}
\

\begin{lemma}[Smoothness with respect to the $A$-norm]
\label[lemma]{lem:A-norm-smooth}
    Let $A\in \mathbb{S}_p^{++}(\R)$ , and $F(x)$ be a real-valued convex function satisfying:
    \[
    F(x)\leq F(y)+\langle \nabla F(y),x-y\rangle+\frac{L}{2}\|x-y\|_A^2 \quad \forall x,y\in \R^p. 
    \]
    Then
    \[
    F(x) \geq F(y)+\langle \nabla F(y),x-y\rangle+\frac{1}{2L}\|\nabla F(x)-\nabla F(y)\|_{A^{-1}}^2 \quad \forall x,y\in \R^p.
    \]
\end{lemma}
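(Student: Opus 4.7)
The plan is to mimic the classical Euclidean proof that smoothness implies the co-coercivity/``descent lemma'' style lower bound, but carefully carrying the $A$-inner product throughout. The key device is the auxiliary convex function $\phi_y(x) \coloneqq F(x) - \langle \nabla F(y), x\rangle$, which is convex (as a sum of a convex and linear function), inherits the $A$-norm smoothness constant $L$ from $F$, and satisfies $\nabla \phi_y(x) = \nabla F(x) - \nabla F(y)$. The crucial observation is that $\nabla \phi_y(y) = 0$, so convexity guarantees $\phi_y$ is globally minimized at $y$, i.e., $\phi_y(y) \leq \phi_y(z)$ for every $z \in \R^p$.

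Next, I would apply the upper smoothness inequality to $\phi_y$ at a carefully chosen probe point. Specifically, given $x$, set $z = x - \tfrac{1}{L} A^{-1} \nabla \phi_y(x)$, which is the minimizer of the $A$-norm quadratic upper model at $x$. Plugging this into the smoothness inequality for $\phi_y$, the linear term contributes $-\tfrac{1}{L}\|\nabla \phi_y(x)\|_{A^{-1}}^2$ (using $\langle v, A^{-1} v\rangle = \|v\|_{A^{-1}}^2$), and the quadratic term contributes $+\tfrac{1}{2L}\|\nabla \phi_y(x)\|_{A^{-1}}^2$ (using the identity $\|A^{-1} v\|_A^2 = \|v\|_{A^{-1}}^2$). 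This yields
\[
\phi_y(z) \leq \phi_y(x) - \tfrac{1}{2L} \|\nabla \phi_y(x)\|_{A^{-1}}^2.
\]

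Combining this with $\phi_y(y) \leq \phi_y(z)$ gives
\[
\phi_y(y) \leq \phi_y(x) - \tfrac{1}{2L}\|\nabla F(x) - \nabla F(y)\|_{A^{-1}}^2.
\]
Unfolding the definition of $\phi_y$ and rearranging produces exactly the claimed lower bound. The argument is essentially bookkeeping; the only mildly non-routine step is the $A$-norm identity $\|A^{-1} v\|_A^2 = \|v\|_{A^{-1}}^2$, which follows from $v^T A^{-1} A A^{-1} v = v^T A^{-1} v$ and the fact that $A$ is symmetric positive definite, so there is no real obstacle. No ancillary lemma from the excerpt beyond convexity and the hypothesis is needed.
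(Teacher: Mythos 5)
Your proof is correct, and it is essentially the standard argument underlying the fact the paper relies on: the paper simply cites the textbook equivalence between $L$-smoothness in a norm and the co-coercivity-type lower bound in the dual norm, then notes that the dual of $\|\cdot\|_A$ is $\|\cdot\|_{A^{-1}}$. You have written out that classical proof in full for the $A$-norm case (via the shifted function $\phi_y$ and the probe point $x-\tfrac{1}{L}A^{-1}\nabla\phi_y(x)$), so the content is the same, just self-contained rather than deferred to a reference.
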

\begin{proof}
    It is a standard fact of convex optimization (see Theorem 5.8 in \cite{beck2017firstorder} or Theorem 2.1.5 in \cite{nesterov2018lectures}) that $F$ satisfies the inequality
    \[
    F(x)\leq F(y)+\langle \nabla F(y),x-y\rangle +\frac{L}{2}\|x-y\|^2 \quad \forall x,y \in \R^p,
    \]
    with respect to the norm $\|\cdot\|$, if and only if it satisfies the inequality
    \[
    F(x)\geq F(y)+\langle \nabla F(y),x-y\rangle+\frac{1}{2L}\|\nabla F(x)-\nabla F(y)\|_*^2 \quad \forall x,y \in \R^p,
    \]
    where $\|\cdot\|_*$ is the dual norm of $\|\cdot\|$.
    The claim follows by observing that the dual norm of $\|\cdot\|_A$ is $\|\cdot \|_{A^{-1}}$.
\end{proof}
\else
\fi

\section{Proofs of main results}
\label{sec:theory_pfs}

\ifpreprint
\subsection{Proof of \cref{lemma:EffDimBnd}}
\label{subsec:EffDimPf}
\begin{proof}
    The proof is by direct calculation. 
    Indeed, for the first part of the claim, observe that
    \begin{align*}
        &\deff(A^{T}\Phi''(Aw)A) = \sum_{j=1}^{n}\frac{\sigma_j^2(\Phi''(Aw)A)}{\sigma_j^2(\Phi''(Aw)A)+n\nu} \overset{(1)}{\leq} \sum^{n}_{j=1} \frac{C_1Bj^{-2\beta}}{C_1Bj^{-2\beta}+n\nu}\\
        & \overset{(2)}{=} \sum^{n}_{j=1} \frac{C}{C+(n\nu)j^{2\beta}} \leq \int_{0}^{\infty}\frac{C}{C+(n\nu)x^{2\beta}}dx \overset{(3)}{=} C(n\nu)^{-1/(2\beta)}\int_{0}^{\infty}\frac{1}{C+u^{2\beta}}du\\
        & \overset{(4)}{=} C(n\nu)^{-1/(2\beta)}\times C^{\frac{1}{2\beta}-1}\frac{\pi/(2\beta)}{\sin(\pi/(2\beta))} = \frac{\pi/(2\beta)}{\sin(\pi/(2\beta))}\left(\frac{C}{n\nu}\right)^{1/{2\beta}}. 
    \end{align*}
    Here $(1)$ uses $A^{T}\Phi''(Aw)A\preceq B$, our hypotheses on that $\sigma_j(A) = \bigO(j^{-2\beta})$, and that $\frac{x}{x+\nu}$ is increasing in $x$ for $x\geq 0$, $(2)$ sets $C = C_1B$ and multiplies the numerator and denominator by $j^{2\beta}$, $(3)$ uses the substitution $u = (n\nu)^{1/{2\beta}}$, and $(4)$ uses the fact that $\int_{0}^{\infty}\frac{1}{C+u^{2\beta}}du = C^{\frac{1}{2\beta}-1}\frac{\pi/(2\beta)}{\sin(\pi/(2\beta))}$ \citep{sutherland2017fixing}.

    The second claim now follows from the first by plugging in $\nu = \bigO(1/n)$.
\end{proof}
\else
\fi

\ifpreprint
\subsection{Proof of \cref{lem:deff_vs_kappa}}
\label{subsec:deff_vs_kappa_pf}
\begin{proof}
    By definition, $\chi^{\nu}(A^{T}\Phi''(Aw)A)\deff(A^{T}\Phi''(Aw)A)) = n\max_{i\in [n]}l_i^{\nu}(\Phi''(Aw)^{1/2}A)$.
    Now, for any $i$
    \begin{align*}
        &l_i^{\nu}(\Phi''(Aw)^{1/2}A) = \phi''(a_i^{T}w)\left[a_i^{T}(A^{T}\Phi''(Aw)A+n\nu I)^{-1}a_i^{T}\right] \leq \frac{\phi''(a_i^{T}w)\|a_i\|^2}{n\nu}\\
        & = \frac{\|\nabla^2 f_i(w)\|}{n\nu}\leq \frac{L_i}{n\nu}\leq \frac{\LMax}{n\nu} = \frac{\kappaMax}{n}.
    \end{align*}
    The preceding display immediately implies 
    \[
    \chi^{\nu}(A^{T}\Phi''(Aw)A)\deff(A^{T}\Phi''(Aw)A))\leq \kappaMax,
    \]
    which concludes the argument. 
\end{proof}

\subsection{Proofs of sampling and low-rank approximation bounds}
\label{subsec:samp_lr_pfs}
\subsubsection{Preliminaries}
The following result is an immediate consequence of Theorem 1 in \cite{cohen2016optimal} with $A = B = D^{1/2}_{\sigma}$.
\begin{proposition}
\label[proposition]{proposition:appx_mat_mu_sr}
     Let $G\in \R^{p\times p}$ with eigendecomposition $G = V\Lambda V^{T}$. Let $\sigma >0$ and define $D_\sigma = \Lambda(\Lambda+\sigma I)^{-1}$. Assume one of the following items holds:
     \begin{enumerate}
         \item $\frac{1}{\sqrt{r}}\Omega$, where $\Omega$ is a Gaussian random matrix with $r = \Upomega\left(\frac{d^{\sigma}_{\textup{eff}}(\widehat \nabla^2 f(w))+\log(\frac{1}{\delta})}{{\zeta_0^2}}\right)$ rows.
         \item $\Omega$ is an SRHT matrix with $ r =\Upomega\left(\frac{d^{\sigma}_{\textup{eff}}(\widehat \nabla^2 f(w))+\log(\frac{1}{\zeta_0 \delta})\log\left(\frac{d^{\sigma}_{\textup{eff}}(\widehat \nabla^2 f(w))}{\delta}\right)}{{\zeta_0^2}}\right)$ rows.
         \item $\Omega$ is an sparse sign embedding with sparsity $s = \Upomega\left(\frac{\log\left(\frac{d^{\sigma}_{\textup{eff}}(\widehat \nabla^2 f(w))}{\delta}\right)}{{\zeta_0}}\right)$ and $r =\Upomega\left(\frac{d^{\sigma}_{\textup{eff}}(\widehat \nabla^2 f(w))\log\left(\frac{d^{\sigma}_{\textup{eff}}(\widehat \nabla^2 f(w))}{\delta}\right)}{{\zeta_0^2}}\right)$ rows.
     \end{enumerate}
     Then
    \begin{equation}
        \|D_\sigma^{1/2}V^{T}\Omega^{T}\Omega VD^{1/2}_\sigma-D_\sigma\|\leq \zeta_0,
    \end{equation}
    with probability at least $1-\delta$.
\end{proposition}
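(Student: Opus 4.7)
The plan is to reduce the claim directly to the approximate matrix multiplication (AMM) guarantee of Cohen, Nelson, and Woodruff (2016), Theorem~1, exactly as the surrounding text suggests. Setting $A^{T} = B = V D_\sigma^{1/2}$, the target quantity becomes
\[
D_\sigma^{1/2} V^{T}\Omega^{T}\Omega V D_\sigma^{1/2} - D_\sigma \;=\; A\Omega^{T}\Omega B - AB,
\]
so the spectral-norm error we wish to control is precisely the AMM error for this pair of matrices. The Cohen--Nelson--Woodruff bound then yields $\|A\Omega^{T}\Omega B - AB\|\leq \zeta_0$ with probability at least $1-\delta$ provided $\Omega$ is an oblivious subspace embedding (OSE) with parameters calibrated to the statistical dimension of $A$.

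The key observation that drives every case is that the statistical dimension of $A = V D_\sigma^{1/2}$ coincides with the effective dimension appearing in the proposition. Indeed, since $V$ is orthonormal,
\[
\|V D_\sigma^{1/2}\|_F^{2} \;=\; \operatorname{tr}(D_\sigma) \;=\; \sum_{j}\frac{\lambda_j}{\lambda_j+\sigma} \;=\; d^{\sigma}_{\textup{eff}}(G),
\qquad \|V D_\sigma^{1/2}\|^{2} \leq 1.
\]
Consequently the AMM sample complexity, which is governed by $\|A\|_F^{2}/\|A\|^{2}$, reduces in each case to a dependence on $d^{\sigma}_{\textup{eff}}$.

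For each of the three sketching families I would then plug in the corresponding OSE/AMM bound from the RandNLA literature. Gaussian embeddings give the tightest guarantee and yield the $r = \Upomega((d^{\sigma}_{\textup{eff}} + \log(1/\delta))/\zeta_0^{2})$ bound via a standard matrix Bernstein/Chernoff argument; SRHT embeddings pick up the extra $\log(d^{\sigma}_{\textup{eff}}/\delta)$ factor from the Hadamard derandomization; and sparse sign (OSNAP) embeddings incur both a sparsity requirement $s = \Upomega(\log(d^{\sigma}_{\textup{eff}}/\delta)/\zeta_0)$ and an inflated row count $r = \Upomega(d^{\sigma}_{\textup{eff}}\log(d^{\sigma}_{\textup{eff}}/\delta)/\zeta_0^{2})$, matching the three cases in the statement.

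The main obstacle is purely bookkeeping: reconciling the precise polylogarithmic factors and failure probability constants stated in the three external OSE results with the unified form required here, and verifying that the spectral-norm variant of AMM (rather than the weaker Frobenius-norm version) applies in each case. No new mathematical idea is needed beyond carefully invoking the cited AMM theorem with $A = B^{T} = V D_\sigma^{1/2}$ and the identification of the statistical dimension with $d^{\sigma}_{\textup{eff}}$.
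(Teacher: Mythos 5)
Your proposal is correct and takes essentially the same route as the paper: the paper presents this proposition as an immediate consequence of Theorem~1 of Cohen et al.\ (2016) with $A = B = D_\sigma^{1/2}$, which is precisely the approximate-matrix-multiplication reduction you describe, together with the identification of the statistical dimension of $VD_\sigma^{1/2}$ with $d^{\sigma}_{\textup{eff}}$. Your version is if anything slightly more careful, since you carry the orthogonal factor $V$ explicitly (setting $A^{T}=B=VD_\sigma^{1/2}$), which matters for the non-rotation-invariant embeddings (SRHT, sparse sign) but changes nothing in the singular-value data the cited theorem depends on.
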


We also recall the following lemma due to \cite{alaoui2015fast}.

\begin{lemma}[Lemma 1, \cite{alaoui2015fast}]
\label[lemma]{lem:reg_nys_err_bnd}
Let $H$ be a symmetric psd matrix, $\sigma>0$ and $\hat H = H\langle \Omega \rangle$ be a \nys{} approximation to $H$ with test matrix $\Omega$. 
Further, let the regularized \nys{} approximation $H \langle \Omega \rangle_\sigma = (H \Omega)(\Omega^T H \Omega + \sigma I)^{-1}(H \Omega)^T$.
Define $E = H-H\langle \Omega \rangle$ and $E_{\sigma} = H-H\langle \Omega \rangle_{\sigma}$. 
Suppose that $\|D^{1/2}_{\sigma}\Omega\Omega^{T}D^{1/2}_{\sigma}-D_{\sigma}\|\leq \widetilde \eta < 1$.
Then 
\begin{equation}
\|H-\hat H\| \leq \frac{\sigma}{1-\widetilde \eta}. 
\end{equation}
\end{lemma}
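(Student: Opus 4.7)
The plan is to prove the bound on $\|H-\hat H\|$ by relating the unregularized Nystr\"om error to the regularized Nystr\"om error, for which the hypothesis on $D_\sigma^{1/2}\Omega\Omega^T D_\sigma^{1/2}$ can be applied directly.

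First, I would reduce to the case $H=\Lambda$ diagonal by diagonalizing $H=V\Lambda V^T$ and absorbing the rotation into $\Omega$ (consistent with how $D_\sigma$ is defined in \cref{proposition:appx_mat_mu_sr}). Next, I would establish the Loewner comparison $H\langle \Omega\rangle_\sigma \preceq H\langle \Omega\rangle \preceq H$. The right inequality is the standard fact that the Nystr\"om approximation is dominated by $H$; the left follows because $(\Omega^T H\Omega+\sigma I)^{-1}\preceq (\Omega^T H\Omega)^\dagger$ on the range of $\Omega^T H\Omega$, while both $H\langle\Omega\rangle_\sigma$ and $H\langle\Omega\rangle$ vanish on its orthogonal complement. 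Since all three are psd, this gives $0\preceq H-\hat H\preceq H-H\langle\Omega\rangle_\sigma$, and hence $\|H-\hat H\|\leq \|H-H\langle\Omega\rangle_\sigma\|$; it therefore suffices to bound the regularized error.

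The key algebraic step is to use the Sherman--Morrison--Woodbury identity to rewrite
\[
\Lambda-\Lambda\Omega(\Omega^T\Lambda\Omega+\sigma I)^{-1}\Omega^T\Lambda = \sigma\sqrt{\Lambda}(T+\sigma I)^{-1}\sqrt{\Lambda}, \qquad T \coloneqq \sqrt{\Lambda}\Omega\Omega^T\sqrt{\Lambda}.
\]
Because $D_\sigma^{1/2}=\sqrt{\Lambda}(\Lambda+\sigma I)^{-1/2}$ is diagonal and commutes with $\Lambda$, a direct computation shows $T=(\Lambda+\sigma I)^{1/2}(D_\sigma^{1/2}\Omega\Omega^T D_\sigma^{1/2})(\Lambda+\sigma I)^{1/2}$. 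The hypothesis $\|D_\sigma^{1/2}\Omega\Omega^T D_\sigma^{1/2}-D_\sigma\|\leq \tilde\eta$ gives $D_\sigma^{1/2}\Omega\Omega^T D_\sigma^{1/2}\succeq D_\sigma-\tilde\eta I$; conjugating by $(\Lambda+\sigma I)^{1/2}$ and adding $\sigma I$ yields the clean bound $T+\sigma I \succeq (1-\tilde\eta)(\Lambda+\sigma I)$, hence $(T+\sigma I)^{-1}\preceq \frac{1}{1-\tilde\eta}(\Lambda+\sigma I)^{-1}$.

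Combining these pieces,
\[
H-H\langle\Omega\rangle_\sigma = \sigma\sqrt{\Lambda}(T+\sigma I)^{-1}\sqrt{\Lambda}\preceq \frac{\sigma}{1-\tilde\eta}\,\sqrt{\Lambda}(\Lambda+\sigma I)^{-1}\sqrt{\Lambda} = \frac{\sigma}{1-\tilde\eta}D_\sigma,
\]
and since $\|D_\sigma\|=\max_j \lambda_j/(\lambda_j+\sigma)\leq 1$, the spectral norm of the regularized error is at most $\sigma/(1-\tilde\eta)$. Chaining with the Loewner comparison from the second paragraph finishes the proof. The main obstacle I anticipate is ensuring the $V$/rotation conventions are compatible between the lemma's assumption and \cref{proposition:appx_mat_mu_sr}; the SMW manipulation and the conjugation argument themselves are routine, but the bookkeeping between $\Omega$, $V^T\Omega$, and $D_\sigma$ needs to be stated cleanly so that the hypothesis can be inserted at exactly the right step.
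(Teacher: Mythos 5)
The paper does not actually prove this lemma---it is imported verbatim as Lemma 1 of \cite{alaoui2015fast}---so there is no in-paper argument to compare against. Your reconstruction is correct and is essentially the standard proof from that source: reduce to diagonal $H$, compare the unregularized and regularized \nys{} errors in the Loewner order, convert the regularized error via Sherman--Morrison--Woodbury into $\sigma\sqrt{\Lambda}(T+\sigma I)^{-1}\sqrt{\Lambda}$ with $T=\sqrt{\Lambda}\Omega\Omega^T\sqrt{\Lambda}$, and then feed in the hypothesis through the identity $T=(\Lambda+\sigma I)^{1/2}\bigl(D_\sigma^{1/2}\Omega\Omega^TD_\sigma^{1/2}\bigr)(\Lambda+\sigma I)^{1/2}$ to obtain $T+\sigma I\succeq(1-\tilde\eta)(\Lambda+\sigma I)$. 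These steps all check out, including the final observation that $\|D_\sigma\|\le 1$.

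One spot deserves a cleaner justification: the comparison $H\langle\Omega\rangle_\sigma\preceq H\langle\Omega\rangle$. The inequality $(M+\sigma I)^{-1}\preceq M^\dagger$ with $M=\Omega^TH\Omega$ is false off the range of $M$, and saying that the two approximations ``vanish on the orthogonal complement'' of $\mathrm{range}(M)$ is not literally meaningful, since they act on $\R^p$ while $M$ acts on $\R^r$. What you actually need is that every vector the middle factor sees, namely $u=\Omega^THv$, satisfies $u\in\mathrm{range}(\Omega^TH)\subseteq\mathrm{range}(\Omega^TH^{1/2})=\mathrm{range}(\Omega^TH\Omega)$; on that subspace the scalar comparison $1/(\mu+\sigma)\le 1/\mu$ over the positive eigenvalues of $M$ gives the quadratic-form inequality, and the chain $0\preceq H-\hat H\preceq H-H\langle\Omega\rangle_\sigma$, hence $\|H-\hat H\|\le\|E_\sigma\|$, becomes airtight. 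The notational mismatch you flag between the lemma's hypothesis (written with $\Omega\Omega^T$ and the eigenbasis of $H$ already absorbed) and the $V^T\Omega$ convention of the surrounding spectral-approximation proposition is a real looseness in the paper; your reduction to diagonal $H$ resolves it in the intended way.
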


\subsubsection{Proof of \cref{proposition:SSNPrecond}}
\begin{proof}
    Suppose the subsampled Hessian $\widehat{\nabla}^2 f(w)$ is constructed with batch $\mathcal B \subseteq [n]$ such that $|\mathcal B| = b_H$. Since $f$ is a GLM, 
    \[\widehat{\nabla}^2 f(w) = \frac{1}{b_H} A_{\mathcal B}^T \Phi''(A_{\mathcal{B}} w) A_{\mathcal B},\]
    where $A$ is the associated data matrix.
    Now, as the Hessian batchsizes satisfies 
    \begin{align*}
    &b_H = \bigOmega\left(\frac{\chi^\rho(\nabla^2 f(w)) d^\rho_{\textup{eff}}(\nabla^2 f(w))\log\left(\frac{d^\rho_{\textup{eff}}(\nabla^2 f(w))}{\delta}\right)}{\zeta_0^2}\right) \\ 
    & = \bigOmega\left(\frac{\chi^\rho(\Phi''(Aw)^{1/2}A) d^\rho_{\textup{eff}}(\Phi''(Aw)^{1/2}A)\log\left(\frac{d^\rho_{\textup{eff}}(\Phi''(Aw)^{1/2}A)}{\delta}\right)}{\zeta_0^2}\right),
    \end{align*} 
    we may apply \cref{lemma:ridgesamplingappx} to conclude
    \[(1 - \zeta_0) \left( \frac{1}{b_H} A_{\mathcal B}^T \Phi''(A_{\mathcal{B}} w) A_{\mathcal B} + \rho I \right) \preceq \frac{1}{n} A^T \Phi''(Aw) A + \rho I \preceq (1 + \zeta_0) \left( \frac{1}{b_H} A_{\mathcal B}^T \Phi''(A_{\mathcal{B}} w) A_{\mathcal B} + \rho I \right) \]
    holds with probability at least $1 - \delta$.
    The previous display is equivalent to
    \[
    (1 - \zeta_0) (\widehat{\nabla}^2 f(w) + \rho I) \preceq \nabla^2 f(w) + \rho I \preceq (1 + \zeta_0) (\widehat{\nabla}^2 f(w) + \rho I).
    \]
    Since $\rho \geq \nu$, a routine calculation shows that
    \[
    \frac{\nu}{\rho} (1 - \zeta_0) (\widehat{\nabla}^2 f(w) + \rho I) \preceq \nabla^2 f(w) + \nu I \preceq (1 + \zeta_0) (\widehat{\nabla}^2 f(w) + \rho I).
    \]
    Thus, for $\zeta = \max \{1 - \frac{\nu}{\rho} (1 - \zeta_0), \zeta_0\}$,
    \[
    (1 - \zeta)(\widehat{\nabla}^2 f(w) + \rho I) \preceq \nabla^2 f(w) + \nu I \preceq (1 + \zeta) (\widehat{\nabla}^2 f(w) + \rho I).
    \]
    To conclude, recall $\zeta_0 < 1$, so the $\max$ defining $\zeta$ is always attained at $1 - \frac{\nu}{\rho} (1 - \zeta_0)$.
\end{proof}

\subsubsection{Proof of \cref{proposition:NysSSNPrecond}}
\begin{proof}
Let $E =  \widehat \nabla^2 f(w)-\hat{H}$, and recall that it satisfies $E\succeq 0$ \citep{tropp2017fixed}. 
To control the norm of $E$, we may use our hypotheses on $r$ to invoke \cref{proposition:appx_mat_mu_sr} (applied to $\Omega^{T}$)\footnote{If $\Omega$ is Gaussian, we apply the proposition to $\frac{1}{\sqrt{c}}\Omega^{T}$. We can do this, as the Nystr{\"o}m approximation is invariant to multiplication of $\Omega$ by scalars.} and \cref{lem:reg_nys_err_bnd} to guarantee that 
\[\|E\|\leq \zeta_0 \rho \]
with probability at least $1-\delta/2$. 
Now, by definition of $E$, the regularized subsampled Hessian satisfies
\begin{equation}
\label{eq:HessianDecomp}
    \widehat \nabla^2 f(w)+\rho I = \hat{H}+\rho I+E.
\end{equation}
Let $P = \hat{H}+\rho I$. Then combining \eqref{eq:HessianDecomp} with Weyl's inequalities yields
\begin{align*}
&\lambda_{1}\left (P^{-1/2} \left(\widehat \nabla^2 f(w)+\rho I \right) P^{-1/2}\right) \leq  \lambda_{1}\left(P^{-1/2}\left(\hat{H}+\rho I\right)P^{-1/2}\right)+\lambda_{1}\left(P^{-1/2}EP^{-1/2}\right) \\ =
& 1+\|P^{-1/2}E P^{-1/2}\| \leq 1+\|P^{-1}\|\|E\|\leq 1+\frac{\|E\|}{\rho}\leq 1+\zeta_0.
\end{align*}
To bound the smallest eigenvalue, observe that
\[
  \widehat \nabla^2 f(w)+\rho I \succeq \hat{H}+\rho I
\quad \implies \quad
\widehat \nabla^2 f(w)+\rho I \succeq P.\]
Thus, conjugating the preceding relation by $P^{-1/2}$ we obtain
\[P^{-1/2} \left(\widehat \nabla^2 f(w)+\rho I\right) P^{-1/2}\succeq I_{p},\]
where $I_{p}$ is the $p\times p$ identity matrix.
The preceding inequality immediately yields 
\[
\lambda_{p}(P^{-1/2} \left(\widehat \nabla^2 f(w)+\rho I\right) P^{-1/2})\geq 1.
\]
Hence,
\[1\leq\lambda_{p}(P^{-1/2} \left(\widehat \nabla^2 f(w)+\rho I\right) P^{-1/2})\leq \lambda_{1}(P^{-1/2} \left(\widehat \nabla^2 f(w)+\rho I\right) P^{-1/2})\leq 1+\zeta_0.\]
As an immediate consequence, we obtain the Loewner ordering relation
\[I_p\preceq P^{-1/2}\left(\widehat \nabla^2 f(w)+\rho I\right) P^{-1/2}\preceq (1+\zeta_0)I_p\]
which we conjugate by $P^{1/2}$ to show
\[\hat{H}+\rho I \preceq \widehat \nabla^2 f(w)+\rho I \preceq (1+\zeta_0)\left(\hat{H}+\rho I\right),\]
with probability at least $1-\delta/2$.

We now relate the quality of $P$ to the full Hessian $\nabla^2 f(w)+\nu I$. 
To accomplish this, we observe that by our hypothesis on $b_H$, \cref{proposition:SSNPrecond} implies the relation
\[
(1-\zeta_0)\left(\widehat \nabla^2 f(w)+\nu I\right) \preceq \nabla^2 f(w)+\nu I \preceq (1+\zeta_0)\left(\widehat \nabla^2 f(w)+\nu I\right),
\]
holds with probability at least $1-\delta/2$.
As $\rho \geq \nu$, the preceding display yields
\[
(1-\zeta_0)\frac{\nu}{\rho}\left(\widehat \nabla^2 f(w)+\rho I\right) \preceq \nabla^2 f(w)+\nu I \preceq (1+\zeta_0)\left(\widehat \nabla^2 f(w)+\rho I \right).
\]
Hence by union bound, with probability at least $1-\delta$, we have
\[
(1-\zeta_0)\frac{\nu}{\rho}\left(\hat{H}+\rho I\right) \preceq  \nabla^2 f(w)+\nu I \preceq (1+\zeta_0)^2\left(\hat{H}+\rho I\right).
\]
Scaling down $\zeta_0$ by a constant factor to $\zeta'$, so that $(1+\zeta')^2$ is smaller than $(1+\zeta_0)$, the previous display becomes
\[
(1-\zeta_0)\frac{\nu}{\rho}\left(\hat{H}+\rho I\right) \preceq  \nabla^2 f(w)+\nu I \preceq (1+\zeta_0)\left(\hat{H}+\rho I\right). 
\]
Setting $\zeta = 1-(1-\zeta_0)\frac{\nu}{\rho}$, we see that $\hat H+\rho I$ is a $\zeta$-spectral approximation, which concludes the argument. 
\end{proof}

\subsubsection{Proof of $\zeta$-spectral approximation for SASSN}
Sketch-and-Solve Subsampled Newton also yields a $\zeta$-spectral approximation under similar hypotheses as Nystr{\"o}m Subsampled Newton, as shown in the following proposition. 
\begin{proposition}
\label[proposition]{proposition:SASSNPrecond}
     Let $w\in \mathbb{R}^{p}$, $\zeta_0 \in (0, 1)$, suppose $F$ is a GLM, and that  we construct the subsampled Hessian with sample size $b_H =  \bigOmega\left(\frac{\chi^\nu(\nabla^2 f(w)) d^\nu_{\textup{eff}}(\nabla^2 f(w))\log\left(\frac{d^\nu_{\textup{eff}}(\nabla^2 f(w))}{\delta}\right)}{\zeta_0^2}\right)$. 
     Further assume one of the following holds:
     \begin{enumerate}
         \item $\frac{1}{\sqrt{r}}\Omega$, where $\Omega$ is a Gaussian random matrix with $r = \bigOmega\left(\frac{d^{\rho}_{\textup{eff}}(\widehat \nabla^2 f(w))+\log(\frac{1}{\delta})}{{\zeta_0^2}}\right)$ rows.
         \item $\Omega$ is an SRHT matrix with $ r =\bigOmega\left(\frac{d^{\rho}_{\textup{eff}}(\widehat \nabla^2 f(w))+\log(\frac{1}{\zeta \delta})\log\left(\frac{d^{\rho}_{\textup{eff}}(\widehat \nabla^2 f(w))}{\delta}\right)}{{\zeta_0^2}}\right)$ rows.
         \item $\Omega$ is an sparse sign embedding with sparsity $s = \bigOmega\left(\frac{\log\left(\frac{d^{\rho}_{\textup{eff}}(\widehat \nabla^2 f(w))}{\delta}\right)}{{\zeta_0}}\right)$ and $r =\bigOmega\left(\frac{d^{\rho}_{\textup{eff}}(\widehat \nabla^2 f(w))\log\left(\frac{d^{\rho}_{\textup{eff}}(\widehat \nabla^2 f(w))}{\delta}\right)}{{\zeta_0^2}}\right)$ rows.
         \item $\Omega$ is a LESS-uniform embedding with sparsity $s = \bigO \left(\chi^{\rho}(\widehat \nabla^2 f(w)) d^{\rho}_{\textup{eff}}(\widehat \nabla^2 f(w)) \right)$ and $r = \bigOmega\left(\frac{d^{\rho}_{\textup{eff}}(\widehat \nabla^2 f(w))\log\left(\frac{d^{\rho}_{\textup{eff}}(\widehat \nabla^2 f(w))}{\delta}\right)}{{\zeta_0^2}}\right)$ rows.
     \end{enumerate}
     Then with probability at least $1 - \delta$,
     \begin{equation}
        (1-\zeta) (\hat H+\rho I){\preceq} \nabla^2f(w)+\nu I\preceq (1+\zeta)(\hat H+\rho I),
    \end{equation}
    where $\zeta = (1-\zeta_0)\nu/\rho$.
\end{proposition}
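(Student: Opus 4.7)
The argument parallels the proof of \cref{proposition:NysSSNPrecond} but replaces the Nystr{\"o}m error bound with an approximate matrix multiplication (AMM) bound, and replaces the one-sided relation $E \succeq 0$ with a two-sided relation. The plan is to first show that $\hat H + \rho I$ is a $\zeta_0$-spectral approximation of the \emph{subsampled} regularized Hessian $\widehat\nabla^2 f(w) + \rho I$, and then chain this with \cref{proposition:SSNPrecond} to compare against $\nabla^2 f(w) + \nu I$.

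First, write $\widehat\nabla^2 f(w) = R^T R$ with $R$ as in \eqref{eq:sassn_preconditioner}, and let $R = U\Sigma V^T$ be its (thin) SVD. Then $\widehat\nabla^2 f(w) = V\Sigma^2 V^T$ and $\hat H = V\Sigma U^T\Omega^T\Omega U\Sigma V^T$, so writing $D_\rho = \Sigma^2(\Sigma^2+\rho I)^{-1}$, the claim that
\[
(1-\zeta_0)(\hat H+\rho I) \preceq \widehat\nabla^2 f(w)+\rho I \preceq (1+\zeta_0)(\hat H+\rho I)
\]
is equivalent to $\|D_\rho^{1/2}(U^T\Omega^T\Omega U - I)D_\rho^{1/2}\| \leq \zeta_0$. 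This is precisely the form controlled by \cref{proposition:appx_mat_mu_sr} (whose bound is invariant under the orthogonal change of basis $V\mapsto U$) for the Gaussian, SRHT, and sparse sign ensembles. Under our sketch-size hypotheses, it therefore holds with probability at least $1-\delta/2$. The LESS-uniform case is not covered by \cref{proposition:appx_mat_mu_sr}; I would invoke instead the LESS-uniform AMM guarantee of \cite{derezinski2021newtonless}, whose sparsity hypothesis is exactly the one stated. This is the step I expect to require the most care, since the LESS argument uses ridge-leverage incoherence of $\widehat\nabla^2 f(w)$ rather than a uniform embedding property.

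Second, by the choice of $b_H$, \cref{proposition:SSNPrecond} gives, with probability at least $1-\delta/2$, the spectral approximation
\[
(1-\zeta_0)(\widehat\nabla^2 f(w)+\nu I) \preceq \nabla^2 f(w)+\nu I \preceq (1+\zeta_0)(\widehat\nabla^2 f(w)+\nu I),
\]
which, using $\rho \geq \nu$, implies
\[
(1-\zeta_0)\tfrac{\nu}{\rho}(\widehat\nabla^2 f(w)+\rho I) \preceq \nabla^2 f(w)+\nu I \preceq (1+\zeta_0)(\widehat\nabla^2 f(w)+\rho I).
\]
Combining the two displays via a union bound and composing the Loewner inequalities yields, with probability at least $1-\delta$,
\[
(1-\zeta_0)^2 \tfrac{\nu}{\rho}(\hat H+\rho I) \preceq \nabla^2 f(w)+\nu I \preceq (1+\zeta_0)^2(\hat H+\rho I).
\]
Finally, rescaling $\zeta_0$ by a universal constant (as in the proof of \cref{proposition:NysSSNPrecond}) absorbs the squared factors into $1\pm\zeta_0$, giving the stated bound with $\zeta = 1-(1-\zeta_0)\nu/\rho$. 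The bookkeeping at this last step is routine; the genuine technical obstacle is obtaining the two-sided AMM bound for the LESS-uniform case, while the other sketches follow directly from \cref{proposition:appx_mat_mu_sr}.
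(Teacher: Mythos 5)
Your proposal is correct and follows essentially the same route as the paper's proof: the same SVD-based reduction of the spectral-approximation claim to the bound $\|D_\rho^{1/2}U^T\Omega^T\Omega U D_\rho^{1/2}-D_\rho\|\leq \zeta_0$, the same invocation of \cref{proposition:appx_mat_mu_sr} (with the LESS-uniform case handled separately via the guarantee in \cite{derezinski2021newtonless}, exactly as the paper does in a footnote), and the same chaining with \cref{proposition:SSNPrecond} followed by a union bound and a constant rescaling of $\zeta_0$. No gaps.
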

\begin{proof}
    As $F$ is a GLM, the \emph{unregularized} portion of subsampled Hessian has the form
    \[
    A^{T}_{\mathcal B}\Phi(A_{\mathcal B}w)A_{\mathcal B} = R^{T}R,
    \]
    where $R = \Phi(A_{\mathcal B}w)^{1/2}A_{\mathcal B}$ and
    where $\mathcal B$ corresponds to the subsampled indices. 
    Hence
    \[
    \widehat \nabla^2 f(w)+\rho I = R^{T}R+\rho I.
    \]
    Now, consider the random variable 
    \[
    X \coloneqq \left(R^{T}R+\rho I\right)^{-1/2}\left(R^T\Omega^{T}\Omega R+\rho I\right)\left(R^{T}R+\rho I\right)^{-1/2}-I_p.
    \]
    Using the SVD, $R = U\Sigma V^{T}$, this may be rewritten as 
    \begin{align*}
        X &= V\left(\Sigma^2+\rho I\right)^{-1/2}V^{T}\left(V\Sigma U^{T}\Omega^{T}\Omega U\Sigma V^{T}+\rho I\right)V\left(\Sigma^2+\rho I\right)^{-1/2}V^{T}-I_p\\
        &= V\left(\Sigma^2+\rho I\right)^{-1/2}V^{T}\left(V\Sigma U^{T}\Omega^{T}\Omega U\Sigma V^{T}+V\left[-\Sigma^2+\Sigma^2+\rho I\right]V^{T}\right)V\left(\Sigma^2+\rho I\right)^{-1/2}V^{T}-I_p\\
        &= V\Sigma\left(\Sigma^2+\rho I\right)^{-1/2}\left(U^{T}\Omega^{T}\Omega U\right)\Sigma(\Sigma^2+\rho I)^{-1/2}V^T-V\Sigma^2\left(\Sigma^2+\rho I\right)^{-1}V^{T}\\
        &= VD_\rho^{1/2}U^{T}\Omega^{T}\Omega UD^{1/2}_{\rho}V^{T}-VD_\rho V^{T}.
    \end{align*}
    Hence we find
    \[
    \|X\| = \|D_\rho^{1/2}U^T\Omega^{T}\Omega UD^{1/2}_\rho-D_\rho\|.
    \]
    Now, using our hypotheses on $\Omega$  we may invoke \cref{proposition:appx_mat_mu_sr}\footnote{See Lemma 7 in \cite{derezinski2021newtonless} to understand why \cref{proposition:appx_mat_mu_sr} holds for item 4.} with $G = RR^{T} = U\Sigma^2U^{T}$, to reach
    \[
    \|X\| =\|D_\rho^{1/2}U^T\Omega^{T}\Omega UD^{1/2}_\rho-D_\rho\|\leq \zeta_0, 
    \]
    with probability at least $1-\delta/2$.
    It now follows immediately from the preceding display that
    \[
    (1-\zeta_0)\left(\widehat \nabla^2 f(w)+\rho I\right) \preceq R^T\Omega^{T}\Omega R +\rho I \preceq (1+\zeta_0)\left(\widehat \nabla^2 f(w)+\rho I\right),
    \]
    with probability at least $1-\delta/2$.
    Rearranging and scaling down $\zeta_0$, we reach
    \[
     (1-\zeta_0)\left(R^T\Omega^{T}\Omega R +\rho I\right) \preceq \left(\widehat \nabla^2 f(w)+\rho I\right) \preceq (1+\zeta_0)\left(R^T\Omega^{T}\Omega R +\rho I\right).
    \]
    Now applying the same argument as in the proof of \cref{proposition:NysSSNPrecond}, we conclude
    \[
     (1-\zeta_0)\frac{\nu}{\rho}\left(R^T\Omega^{T}\Omega R +\rho I\right) \preceq \left(\nabla^2 f(w)+\nu I\right) \preceq (1+\zeta_0)\left(R^T\Omega^{T}\Omega R +\rho I\right),
    \]
    with probability at least $1-\delta$. 
    Thus, setting $\zeta = 1 - \frac{\nu}{\rho} (1 - \zeta_0)$, we reach
    \[
    (1 - \zeta)(R^T \Omega^T \Omega R + \rho I) \preceq \nabla^2 f(w) + \nu I \preceq (1 + \zeta) (R^T \Omega^T \Omega R + \rho I),
    \]
    which concludes the argument. 
    %Since $\zeta_0 < 1$, the $\max$ in the definition of $\zeta$ is always attained at $1 - \frac{\nu}{\rho} (1 - \zeta_0)$.
\end{proof}
\else
\fi

\ifpreprint
\subsection{Proofs for quadratic regularity}
\label{subsec:quad_reg_pfs}
% \subsubsection{Proof of \cref{prop:local_bnds}}
% \begin{proof}
%     For any $w_1, w_2 \in \mathcal C$, Taylor's theorem implies that
%     \begin{equation*}
%         F(w_2)= F(w_1)+\langle \nabla F(w_1),w_2-w_1\rangle +\int_0^1 (1-t)\|w_2-w_1\|_{\nabla^2 F(w_t)}^2 dt,
%     \end{equation*}
%     where $w_t = w_1+t(w_2-w_1)\in \mathcal C$. 
%     As
%     \begin{equation*}
%         \int_0^1(1-t)\|w_2-w_1\|_{\nabla^2F(w_t)}^2 dt = \frac{1}{2}\left(\frac{\int_0^12(1-t)\|w_2-w_1\|_{\nabla^2 F(w_t)}^2 dt}{\|w_2-w_1\|^2_{\nabla^2 F(w_0)}}\right)\|w_2-w_1\|^2_{\nabla^2 F(w_0)},
%     \end{equation*}
%     the first two inequalities follow immediately from the definitions of $\gamma_u$ and $\gamma_\ell$.

%     For the last inequality, observe the first inequality implies via \cref{lem:A-norm-smooth} that
%     \begin{align*}
%         & F(w_2)\geq F(w_1)+\langle \nabla F(w_1),w_2-w_1\rangle+\frac{1}{2\gamma_u(\mathcal C)}\|\nabla F(w_2) -\nabla F(w_1)\|^2_{\nabla^2 F(w_0)^{-1}}\\
%         &F(w_2)\leq F(w_1)+\langle \nabla F(w_1),w_2-w_1\rangle+\frac{\gamma_u(\mathcal C)}{2}\|w_2-w_1\|^2_{\nabla^2 F(w_0)},  
%     \end{align*}
%     Hence combining these two inequalities, we reach 
%     \[\frac{1}{2\gamma_u(\mathcal C)}\|\nabla F(w_2) - \nabla F(w_1)\|^2_{\nabla^2 F(w_0)^{-1}}\leq \frac{\gamma_u(\mathcal C)}{2}\|w_2-w_1\|^2_{\nabla^2 F(w_0)},\]
%     which yields the claim.
% \end{proof}
\subsubsection{Proof of \cref{proposition:QuadRegConds}}
We first define the quadratic stability constant,
\begin{equation*}
    \Gamma(\mathcal C) \coloneqq \sup_{w_0 \in \mathcal C}\left(\sup_{w_1,w_2, w_3 \in \mathcal C,w_2\neq w_3}\frac{\|w_3-w_2\|^2_{\nabla^2 F(w_1)}}{\|w_3-w_2\|^2_{\nabla^2 F(w_0)}}\right) = \sup_{w_0 \in \mathcal C}\left(\sup_{w_1\in \mathcal C, d\in (\mathcal C-\mathcal C)\setminus\{0\}}\frac{\|d\|^2_{\nabla^2 F(w_1)}}{\|d\|^2_{\nabla^2 F(w_0)}}\right) .
\end{equation*}
\begin{lemma}
\label[lemma]{lem:quad_stab_bnd} 
    The upper and lower quadratic regularity constants satisfy the following bound:
    \[
    \frac{1}{\Gamma(\mathcal C)} \leq \gamma_\ell(\mathcal C)\leq \gamma_u(\mathcal C)\leq \Gamma(\mathcal C).
    \]
\end{lemma}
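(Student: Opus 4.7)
The plan is to prove the three inequalities separately, using a second-order Taylor expansion with integral remainder as the main tool for bounding $\gamma_u$ and $\gamma_\ell$ in terms of $\Gamma(\mathcal{C})$. The middle inequality $\gamma_\ell(\mathcal{C}) \leq \gamma_u(\mathcal{C})$ follows essentially for free from the definitions.

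For $\gamma_u(\mathcal{C}) \leq \Gamma(\mathcal{C})$, I would fix arbitrary $w_0, w_1, w_2 \in \mathcal{C}$ and apply the integral form of Taylor's theorem to obtain
\begin{equation*}
F(w_2) = F(w_1) + \langle \nabla F(w_1), w_2 - w_1 \rangle + \int_0^1 (1-t)\, \|w_2 - w_1\|_{\nabla^2 F(w_t)}^2 \, dt,
\end{equation*}
where $w_t = w_1 + t(w_2 - w_1)$. Since $\mathcal{C}$ is convex, $w_t \in \mathcal{C}$ for all $t \in [0,1]$, and since $w_2 - w_1 \in (\mathcal{C}-\mathcal{C})$, the definition of $\Gamma(\mathcal{C})$ directly yields $\|w_2 - w_1\|_{\nabla^2 F(w_t)}^2 \leq \Gamma(\mathcal{C}) \|w_2 - w_1\|_{\nabla^2 F(w_0)}^2$. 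Substituting and computing $\int_0^1 (1-t)\,dt = 1/2$ produces the upper quadratic regularity inequality with constant $\Gamma(\mathcal{C})$, which shows $\gamma_u(\mathcal{C}) \leq \Gamma(\mathcal{C})$.

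For $\gamma_\ell(\mathcal{C}) \geq 1/\Gamma(\mathcal{C})$, I would use the same Taylor expansion, but now exchange the roles of $w_0$ and $w_t$ in the definition of $\Gamma(\mathcal{C})$: since this supremum is taken over both $w_0$ and $w_1$ ranging freely over $\mathcal{C}$, we also have $\|d\|_{\nabla^2 F(w_0)}^2 \leq \Gamma(\mathcal{C}) \|d\|_{\nabla^2 F(w_t)}^2$, i.e., $\|d\|_{\nabla^2 F(w_t)}^2 \geq \Gamma(\mathcal{C})^{-1} \|d\|_{\nabla^2 F(w_0)}^2$. Applied inside the integral with $d = w_2 - w_1$, this gives the lower quadratic regularity inequality with constant $1/\Gamma(\mathcal{C})$.

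Finally, the inequality $\gamma_\ell(\mathcal{C}) \leq \gamma_u(\mathcal{C})$ follows by taking $w_2 \neq w_1$ (so $\|w_2 - w_1\|_{\nabla^2 F(w_0)}^2 > 0$, using that $F$ inherits strong convexity in all cases covered) and combining the upper and lower quadratic regularity inequalities for the same triple $(w_0, w_1, w_2)$: both must hold, so the coefficient on the squared norm in the lower bound cannot exceed the coefficient in the upper bound. I do not anticipate any serious obstacle; the main subtlety is merely confirming that the extra ``outer'' supremum over $w_0$ in the definition of $\Gamma(\mathcal{C})$ does not pose an issue, which it does not because the bound $\|w_2 - w_1\|_{\nabla^2 F(w_t)}^2 \leq \Gamma(\mathcal{C})\|w_2 - w_1\|_{\nabla^2 F(w_0)}^2$ holds uniformly in $w_0, w_t \in \mathcal{C}$.
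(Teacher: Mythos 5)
Your proposal is correct and follows essentially the same route as the paper: the paper's proof starts from the integral (Taylor-remainder) characterization $\gamma_u(\mathcal C) = \sup_{w_0,w_1,w_2}\int_0^1 2(1-t)\,\|w_2-w_1\|^2_{\nabla^2 F(w_t)}/\|w_2-w_1\|^2_{\nabla^2 F(w_0)}\,dt$ and bounds the ratio uniformly by $\Gamma(\mathcal C)$ (and by $1/\Gamma(\mathcal C)$ from below, using the symmetry of the supremum in $w_0$ and $w_1$), exactly as you do after deriving that representation explicitly from Taylor's theorem. The only cosmetic difference is that the paper treats the integral formula as the definition of the optimal constant rather than re-deriving it, and it leaves the middle inequality $\gamma_\ell \leq \gamma_u$ implicit.
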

\begin{proof}
    Observe that,
    \[
    \frac{\|w_2-w_1\|^2_{\nabla^2 F(w_t)}}{\|w_2-w_1\|^2_{\nabla^2F(w_0)}} = \frac{\|w_t-w_1\|^2_{\nabla^2 F(w_t)}}{\|w_t-w_1\|^2_{\nabla^2F(w_0)}},
    \]
    where $w_t = w_1+t(w_2-w_1)\in \mathcal C$.
    Hence we have
    \begin{align*}
         \gamma_u(\mathcal C) &= \sup_{w_0\in \mathcal{C}}\left(\sup_{w_1,w_2\in \mathcal{C}, w_1\neq w_2} \int^1_0 2(1 - t) \frac{\|w_2-w_1\|_{\nabla^2F(w_1+t(w_2-w_1))}^2}{\|w_2-w_1\|^2_{\nabla^2 F(w_0)}}dt\right) \\
         & = \sup_{w_0\in \mathcal{C}}\left(\sup_{w_1,w_2\in \mathcal{C}, w_1\neq w_2} \int^1_0 2(1 - t) \frac{\|w_t-w_1\|^2_{\nabla^2 F(w_t)}}{\|w_t-w_1\|^2_{\nabla^2F(w_0)}}dt\right) \\
         % &\leq \int_{0}^{1}2(1-t)\sup_{w_0\in \mathcal{C}}\left(\sup_{w_1,w_2\in \mathcal{C}, w_1\neq w_2}\frac{\|w_2-w_1\|^2_{\nabla^2 F(w_1)}}{\|w_2-w_1\|^2_{\nabla^2F(w_0)}}\right)dt \\
         & \leq \sup_{w_0\in \mathcal{C}}\left(\int^1_0 2(1-t) \sup_{w_1,w_2\in \mathcal{C}, w_1\neq w_2}\frac{\|w_t-w_1\|^2_{\nabla^2 F(w_t)}}{\|w_t-w_1\|^2_{\nabla^2F(w_0)}}dt\right) \\
         &\leq \int_{0}^{1}2(1-t)\sup_{w_0\in \mathcal{C}}\left(\sup_{w_1\in \mathcal C, d\in (\mathcal C-\mathcal C)\setminus\{0\}}\frac{\|d\|^2_{\nabla^2 F(w_1)}}{\|d\|^2_{\nabla^2 F(w_0)}}\right)dt\\
         &= \Gamma(\mathcal C).
    \end{align*}
    By observing that 
    \[
    \inf_{w_0 \in \mathcal C}\left(\inf_{w_1,w_2, w_3 \in \mathcal C,w_2\neq w_3}\frac{\|w_3-w_2\|^2_{\nabla^2 F(w_1)}}{\|w_3-w_2\|^2_{\nabla^2 F(w_0)}}\right)
    \geq \frac{1}{\Gamma(\mathcal C)},\]
    a nearly identical argument yields the lower bound on $\gamma_\ell(\mathcal C)$.
\end{proof}

% The following quantities will be useful in the proof:
% \begin{equation*}
%     \gamma^\star_\ell \coloneqq \inf_{w_0,w_1\in \R^p}\left(\inf_{w_2,w_3\in \R^p}\frac{\|w_3-w_2\|^2_{\nabla^2f(w_1)}}{\|w_3-w_2\|^2_{\nabla^2f(w_0)}}\right), \quad \gamma_u^\star \coloneqq  \sup_{w_0,w_1\in \R^p}\left(\sup_{w_2,w_3\in \R^p}\frac{\|w_3-w_2\|^2_{\nabla^2f(w_1)}}{\|w_3-w_2\|^2_{\nabla^2f(w_0)}}\right)
% \end{equation*}
% We refer to them as the global upper and lower quadratic regularity constants, they expand upon the notion of Hessian stability from \cite{karimireddy2018global}.
% It is easily verfied that the following inequality holds:
% \begin{equation}
% \label{eq:GlobQuadRegBnds}
% \frac{1}{\gamma_u^\star}\leq \gamma_\ell^\star \leq \gamma_\ell \leq \gamma_u \leq \gamma^\star_u. 
% \end{equation}
Hence to prove \cref{proposition:QuadRegConds}, 
we just have to upper bound $\Gamma(\mathcal C)$, and this is the strategy we shall follow.
% it suffices to bound $\Gamma(\mathcal C)$ appropriately, and this is the strategy we shall follow. 
\begin{proof}
    \begin{enumerate}
        \item If $f$ is $L$-smooth and $\mu$-strongly convex, then for any $w_0,w_1\in \mathcal C$ and $d\in \left(\mathcal C-\mathcal C\right)\setminus\{0\}$, we have
        \[
        \frac{\mu}{L}\leq \frac{\|d\|^2_{\nabla^2F(w_1)}}{\|d\|^2_{\nabla^2F(w_0)}}\leq \frac{L}{\mu}.
        \]
        Hence $\Gamma(\mathcal C)\leq L/\mu$, and so 
        \[\frac{\mu}{L}\leq \gamma_\ell^\star\leq \gamma^\star_u \leq \frac{L}{\mu},\]
        by \cref{lem:quad_stab_bnd}.
        \item Observe that for any $w_0,w_1\in \mathcal C$ and $d\in \left(\mathcal C-\mathcal C\right)\setminus\{0\}$, we have
        \[
         \frac{\|d\|^2_{H(w_1)}}{\|d\|^2_{H(w_0)}}= 1+\frac{d^{T}\left[H(w_1)-H(w_0)\right]d}{\|d\|^2_{H(w_0)}} \leq 1+\frac{MD}{\mu}.
        \]
        Where the last inequality used that $f$ has an $M$-Lipschitz Hessian and that $\|d|\leq D.$
        Thus $\Gamma(\mathcal C) \leq 1+\frac{MD}{\mu}$, and the claim follows immediately from \cref{lem:quad_stab_bnd}.

        \item Recall if $F$ is generalized self-concordant, then by definition (see Definition 1 in \cite{marteau2019globally}) the function $\phi(t) = \|d\|^2_{\nabla^2F(w+td')}$ satisfies
        \[
        \phi'(t) \leq k\|d'\|\|d\|^2_{\nabla^2 F(w+td')} = k\|d'\|\phi(t) \quad \forall w \in \mathcal C, d,d'\in \left(\mathcal C-\mathcal C\right)\setminus\{0\}.
        \]
        Hence, 
        \begin{align*}
            \frac{\phi(1)}{\phi(0)} = \exp\left(\int_{0}^{1} \frac{\phi'(t)}{\phi(t)}dt\right) \leq \exp\left(\int_{0}^{1}k\|d'\|dt\right) \leq \exp(kD).  
        \end{align*}
        Setting  $w = w_0$ and $d' = w_1-w_0$, we reach 
        \[
        \frac{\|d\|^2_{\nabla^2 F(w_1)}}{\|d\|^2_{\nabla^2 F(w_0)}}\leq \exp(kD).
        \]
        It immediately follows that $\Gamma(\mathcal C)\leq \exp(kD)$.
       
       \item 
        As $F$ is a GLM, the Hessian has the form $\nabla^2 F(w) = \frac{1}{n}A^{T}\Upphi''\left(Aw\right)A+\nu I$.
        Hence with $\hat \Sigma = \frac{1}{n}A^{T}A$, we have by definition of $u$ and $\ell$ that
        \[\ell\hat\Sigma+\nu I\preceq \nabla^2 F(w)\preceq  u\hat \Sigma+\nu I.\]
        Thus for any $w_0,w_1\in \mathcal{C}$ and $d\in \left(\mathcal C-\mathcal C\right)\setminus\{0\} $, we have
        \begin{align*}
             &\frac{\|d\|^2_{\nabla^2F(w_1)}}{\|d\|^2_{\nabla^2F(w_0)}} = \frac{\frac{\|d\|^2_{\frac{1}{n}A^{T}\Upphi''\left(Aw_1\right)A}}{\|d\|^2}+\nu}{\frac{\|d\|^2_{\frac{1}{n}A^{T}\Upphi''\left(Aw_0\right)A}}{\|d\|^2}+\nu} \leq  \frac{\frac{u\|d\|^2_{\hat \Sigma}}{\|d\|^2}+\nu}{\frac{\ell \|d\|^2_{\hat \Sigma}}{\|d\|^2}+\nu} \leq \frac{u\lambda_1(\hat \Sigma)+\nu}{\ell\lambda_1(\hat \Sigma)+\nu},
        \end{align*}
        where the last inequality follows from the variational characterization of $\lambda_1(\hat \Sigma)$, and that the function
        \[h(x) = \frac{u x+\nu}{\ell x+\nu}\]
        is monotonically increasing for $x\geq 0.$
        Thus we conclude
        \[
        \Gamma(\mathcal C)\leq \frac{u\lambda_1(\hat \Sigma)+\nu}{\ell\lambda_1(\hat \Sigma)+\nu},
        \]
        which yields the claim via \cref{lem:quad_stab_bnd}.
    \end{enumerate}
\end{proof}
\else
\fi

\ifpreprint
\subsection{Proofs of Hessian dissimilarity bounds}
\label{subsec:hess_sim_pfs}
\subsubsection{Proof of \cref{lem:hess_sim}}
\begin{proof}
    Let $H_i(w) = \nabla^2 F(w)^{-1/2}\left(\nabla^2 f_i(w)+\nu I\right)\nabla^2 F(w)^{-1/2}$. 
    We start with the lower bound.
    Observe for any $w\in \mathcal C$ that
    \begin{align*}
        1 & = \lambda_1\left(\nabla^2F(w)^{-1/2}\nabla^2F(w)\nabla^2F(w)^{-1/2}\right) = \lambda_1\left(\nabla^2F(w)^{-1/2}\left[\frac{1}{n}\sum_{i=1}^{n}\nabla^2 F_i(w)\right]\nabla^2F(w)^{-1/2}\right) \\
        & = \lambda_{1}\left(\frac{1}{n}\sum_{i=1}^{n}H_i(w)\right)\leq \frac{1}{n}\sum_{i=1}^{n}\lambda_{1}(H_i(w)) \leq \max_{1\leq i\leq n}\lambda_1(H_i(w))\leq \tau_\star^{\nu}(\mathcal C),
    \end{align*}
    which proves the lower bound.
    To prove the upper bound, we note that for any $w\in \mathcal{C}$ and $i\in [n]$
    \begin{align*}
        \nabla^2 F_i(w)\preceq n \nabla^2 F(w), \quad \nabla^2 F_i(w)\preceq (L_{\textrm{max}}+\nu)I_p.
    \end{align*}
    So conjugation gives
    \begin{align*}
        H_i(w)\preceq n I_p, \quad H_i(w)\preceq \left(L_{\textrm{max}}+\nu\right)\nabla^2F(w)^{-1}\preceq 1+\frac{L_{\textrm{max}}}{\nu} I_p
    \end{align*}
    which immediately implies $\tau_\star^\nu(\mathcal C)\leq \min\{n,1+\frac{L_{\textrm{max}}}{\nu}\}$. 
\end{proof}

\subsubsection{Proof of \cref{prop:hess_sim_glm}}
\begin{proof}
    Let $H_i(w) = \nabla^2 F(w)^{-1/2}\left(\nabla^2 f_i(w)+\nu I\right)\nabla^2 F(w)^{-1/2}$ .
    As $F(w)$ is a GLM, we have that
    \begin{align*}
    H_i(w) &= \nabla^2 F(w)^{-1/2}\left(\nabla^2 f_i(w)+\nu I\right)\nabla^2 F(w)^{-1/2} \\
    &= \left(R^T(w)R(w)+\nu I\right)^{-1/2}(\phi''_i(a_i^T w)a_ia_i^{T}+\nu I)\left(R^T(w)R(w)+\nu I\right)^{-1/2},    
    \end{align*}
    where $R(w) = \frac{1}{\sqrt{n}}\Phi''(Aw)^{1/2}A.$
    Hence, a direction calculation yields
    \begin{align*}
        \lambda_{1}(H_i(w)) \overset{(1)}{\leq} {} & \lambda_1\left((R^T(w)R(w)+\nu I)^{-1/2}\phi''_i(a_i^T w)a_ia_i^{T}(R^T(w)R(w)+\nu I)^{-1/2}\right)\\
        &+\nu \lambda_1((R^T(w)R(w)+\nu I)^{-1}) \\
        \overset{(2)}{\leq} {} & 1+\phi''_i(a_i^T w)\lambda_1\left(a_ia_i^{T}(R^T(w)R(w)+\nu I)^{-1}\right) \\
        \overset{(3)}{\leq} {} & 1+\phi''_i(a_i^T w)\textrm{trace}\left(a_ia_i^{T}(R^T(w)R(w)+\nu I)^{-1}\right) \\
        \overset{(4)}{=} {} & 1+\phi''_i(a_i^T w)\textrm{trace}\left(a_i^{T}(R^T(w)R(w)+\nu I)^{-1}a_i\right) \\
        = {} & 1+\phi''_i(a_i^T w)a_i^{T}(R^T(w)R(w)+\nu I)^{-1}a_i \\
        = {} & 1+(\sqrt{\phi''_i(a_i^T w)}a_i)^{T}(R^T(w)R(w)+\nu I)^{-1}(\sqrt{\phi''_i(a_i^T w)}a_i) \\
        = {} & 1+n l_i^\nu(\Phi''(Aw)^{1/2}A),
    \end{align*}
    where $(1)$ uses Weyl's inequalities, $(2)$ uses matrix similarity, $(3)$ uses that the trace is the sum of matrix's eigenvalues and all the eigenvalues are non-negative, and $(4)$ uses the cyclic property of the trace.
    Hence we conclude
    \begin{align*}
        & \tau_\star^\nu =\sup_{w\in \R^p}\left(\max_{1\leq i\leq n}\lambda_1(H_i(w))\right) \leq \sup_{w\in \R^p} \left(1 + n \max_{1\leq i\leq n}l_i^\nu(\Phi''(Aw)^{1/2}A)\right) = \sup_{w\in \R^p}\left(1+n l_\infty^\nu(\Phi''(Aw)^{1/2}A)\right) \\ 
        &= 1 +\sup_{w\in \R^p}(\chi^{\nu}(\Phi''(Aw)^{1/2}A)d^{\nu}_{\textrm{eff}}(\Phi''(Aw)^{1/2}A)) \leq 1+\chi_\star^{\nu}\sup_{w\in \R^p} d^{\nu}_{\textrm{eff}}(\Phi''(Aw)^{1/2}A). 
    \end{align*}
    Now, to conclude the desired inequality, observe that
    \begin{align*}
        &d^{\nu}_{\textrm{eff}}(\Phi''(Aw)^{1/2}A) = \sum_{j=1}^{p}\frac{\lambda_j(A^{T}\Phi''(Aw)A)}{\lambda_j(A^{T}\Phi''(Aw)A)+n\nu} \leq \sum_{j=1}^{p}\frac{B\lambda_j(A^{T}A)}{B\lambda_j(A^{T}A)+n\nu} \\
  & =\sum_{j=1}^{p}\frac{\lambda_j(A^{T}A)}{\lambda_j(A^{T}A)+\frac{n\nu}{B}} = d^{\nu/B}_{\textrm{eff}}(A),
    \end{align*}
    where the first inequality follows as $A^{T}\Phi''(Aw)A \preceq B A^TA$ and $\frac{x}{x+\nu}$ is increasing in $x$ for $x\geq 0$.
    The claim regarding logistic regression and least squares immediately follows by observing that $B = 1$ in both cases. 
\end{proof}
\else
\fi

\ifpreprint
\subsection{Proof of \cref{prop:stable_precond}}
\label{subsec:stable_precond}
\begin{proof}
    We only prove the result for the $X_j$'s, as the argument for the $Y_j$'s is identical.
    To this end, observe that
    \begin{align*}
    \frac{\|w_j-w_\star\|^2_{P_{j+1}}}{\|w_j-w_\star\|^2_{P_{j}}} \leq \frac{1+\zeta}{\rho}\frac{\|w_j-w_\star\|^2_{\nabla^2 F(w_j)}}{\|w_j-w_\star\|^2} \leq (1+\zeta)\frac{L}{\rho}.
    \end{align*}
    Hence for any $j$, with probability 1
    \[
    X_j \leq \max\left\{1,(1+\zeta)\frac{L}{\rho}\right\}.
    \]
    The desired claim now follows immediately for some $\xi_j$ satisfying 
    \[
    \xi_j \leq \max\left\{1,(1+\zeta)\frac{L}{\rho}\right\}.
    \]
\end{proof}
\else
\fi

\subsection{Proof of \cref{prop:precond_grad_var}}
\label{subsec:precond_grad_var}
We begin by recalling the following fundamental result from \cite{gower2019sgd}. 
\begin{theorem}[Theorem 3.6 and Proposition 3.8, \cite{gower2019sgd}]
\label{thm:grad_var}
Suppose $F = \frac{1}{n}\sum_{i = 1}^{n}F_i(w)$, where $F_i:\R^p\mapsto \R$.
Let the following conditions hold:
\begin{enumerate}
    \item $F_i$ is convex, for every $i\in [n]$.
    \item For each $i\in [n]$, there exists a matrix $M_i \in \mathbb{S}_p^{++}(\R)$, such that for all $x,h \in \R^p$
    \[
    F_i(w+h)\leq F_i(w)+\langle \nabla F_i(w),h \rangle+\frac{1}{2}\|h\|_{M_i}^2.
    \]
    \item There exists a matrix $M \in \mathbb{S}_p^{++}(\R)$, such that for all $x,h \in \R^p$
    \[
    F(w+h)\leq F(w)+\langle \nabla F(w),h \rangle+\frac{1}{2}\|h\|_{M}^2.
    \]
\end{enumerate}
Then for any $w,w' \in \R^p$, it holds that
\[
\E\|\widehat \nabla F(w)-\widehat \nabla F(w')\|^2\leq 2\mathcal L\left(F(w)-F(w')-\langle\nabla F(w'),w-w'\rangle \right),
\]
where 
\[
\mathcal L = \frac{n(b_g-1)}{b_g(n-1)}\lambda_1\left(M\right)+\frac{n-b_g}{b_g(n-1)}\max_{i\in [n]}\lambda_1(M_i).
\]
\end{theorem}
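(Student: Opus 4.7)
The plan is to combine an exact finite-population variance identity with a matrix-norm co-coercivity inequality. Write $Y_i \coloneqq \nabla F_i(w)-\nabla F_i(w')$ and $\bar Y \coloneqq \nabla F(w)-\nabla F(w') = \tfrac{1}{n}\sum_{i=1}^n Y_i$, so that $\widehat{\nabla}F(w)-\widehat{\nabla}F(w') = \tfrac{1}{b_g}\sum_{i\in\mathcal B}Y_i$ for the uniformly sampled without-replacement batch $\mathcal B$ of size $b_g$. Using the first-order inclusion probabilities $\Pr(i\in\mathcal B)=b_g/n$ and the pair-inclusion probabilities $\Pr(\{i,j\}\subset\mathcal B)=b_g(b_g-1)/(n(n-1))$, a direct expansion of the second moment produces the convex-combination identity
\begin{equation*}
\E\bigl\|\widehat{\nabla}F(w)-\widehat{\nabla}F(w')\bigr\|^2 \;=\; \frac{n(b_g-1)}{b_g(n-1)}\,\|\bar Y\|^2 \;+\; \frac{n-b_g}{b_g(n-1)}\cdot\frac{1}{n}\sum_{i=1}^n \|Y_i\|^2 .
\end{equation*}
The two coefficients sum to $1$, and collapse to $(1,0)$ at $b_g=n$ and to $(0,1)$ at $b_g=1$, which is a useful consistency check.

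Next I would convert each squared-gradient-difference into a Bregman divergence via the standard matrix-norm co-coercivity lemma: if $g\colon\R^p\to\R$ is convex and satisfies $g(w+h)\leq g(w)+\langle\nabla g(w),h\rangle+\tfrac12\|h\|_M^2$ for all $w,h$, then $\|\nabla g(w)-\nabla g(w')\|_{M^{-1}}^2 \leq 2\bigl(g(w)-g(w')-\langle\nabla g(w'),w-w'\rangle\bigr)$. This is obtained by applying the smoothness bound at the point $w - M^{-1}(\nabla g(w)-\nabla g(w'))$ and invoking convexity of $g$ to eliminate the resulting inner product (equivalently, it is the $M$-norm analogue of the Fenchel-duality fact that $M$-smoothness in one norm is equivalent to strong convexity of the conjugate in the dual norm). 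Combining the co-coercivity bound with $\|x\|^2 \leq \lambda_1(A)\|x\|_{A^{-1}}^2$ applied to $A=M_i$ and $A=M$ gives $\|Y_i\|^2 \leq 2\lambda_1(M_i)\,D_i(w,w')$ and $\|\bar Y\|^2 \leq 2\lambda_1(M)\,D_F(w,w')$, where $D_i$ and $D_F$ denote the Bregman divergences of $F_i$ and $F$.

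Finally I would assemble the pieces. Linearity of the finite sum gives $\tfrac{1}{n}\sum_i D_i(w,w') = D_F(w,w')$, so $\tfrac{1}{n}\sum_i \|Y_i\|^2 \leq 2\max_{i\in[n]}\lambda_1(M_i)\cdot D_F(w,w')$. Substituting both bounds into the variance identity yields the claim with $\mathcal L = \tfrac{n(b_g-1)}{b_g(n-1)}\lambda_1(M) + \tfrac{n-b_g}{b_g(n-1)}\max_i\lambda_1(M_i)$, as required. The main obstacle is getting the correct coefficient $n(b_g-1)/(b_g(n-1))$ on the $\|\bar Y\|^2$ term: a naive application of the decomposition $\E\|X\|^2 = \|\E X\|^2 + \mathrm{Var}(X)$ would yield the suboptimal coefficient $1$, and only careful bookkeeping of the negative correlation induced by without-replacement sampling (through the pair-inclusion probabilities) interpolates correctly between the full-batch limit and the pure-stochastic limit; everything else is standard convex analysis.
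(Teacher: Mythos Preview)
Your argument is correct: the without-replacement variance identity, the matrix-norm co-coercivity bound, and the assembly via $\tfrac{1}{n}\sum_i D_i = D_F$ are all valid, and the coefficient computation checks out. Note, however, that the paper does not actually prove this theorem---it is quoted verbatim as a cited result from \cite{gower2019sgd} (Theorem~3.6 and Proposition~3.8 there) and is only used as an input to the proof of \cref{prop:precond_grad_var}. Your proof sketch is essentially the standard argument from that reference, so there is nothing to contrast.
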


With these preliminaries out of the way, we commence the proof of \cref{prop:precond_grad_var}.
\begin{proof}
    Observe that each $F_i$ satisfies:
    \[
    F_i(w+h)\leq F_i(w)+\langle \nabla F_i(w),h\rangle+\frac{1}{2}\|h\|_{M_i}^2,
    \]
    with $M_i = \gamma_{u_i}\nabla^2F_i(w_0)$, where $w_0$ is the point where the preconditioner $P$ is constructed. 
    Hence performing the change of variable $w = P^{-1/2}z$ and defining $F_{P_i}(z) = F_i(P^{-1/2}z), F_P(z) = F(P^{-1/2}z)$, we reach
    \[
    F_{P_i}(z+\tilde h)\leq F_{P_i}(z)+ \langle \nabla F_{P_i}(z),\tilde h \rangle +\frac{\gamma_{u_i}}{2}\|\tilde h\|_{\nabla^2 F_{P_i}(z_0)}^2, 
    \]
    \[
    F_{P}(z+\tilde h)\leq F_{P}(z)+ \langle \nabla F_{P}(z),\tilde h \rangle +\frac{\gamma_{u}}{2}\|\tilde h\|_{\nabla^2 F_{P}(z_0)}^2. 
    \]
    Hence the conditions of \cref{thm:grad_var} are satisfied with $M_i = \gamma_{u_i} \nabla^2 F_{P_i}(z_0), M = \gamma_u \nabla^2 F_{P}(z_0)$, and so we reach
    \[
    \E\|\widehat\nabla F_{P_i}(z)-  \widehat\nabla F_{P_i}(z')\|^2\leq 2\mathcal L\left(F_{P_i}(z)- F_{P_i}(z')-\langle \nabla F_{P_i}(z'),z-z'\rangle\right),
    \]
    with $\mathcal L$ as in \cref{thm:grad_var}.
    Thus, we obtain
    \[
     \E\|\widehat\nabla F(w)-  \widehat\nabla F(w')\|_{P^{-1}}^2\leq 2\mathcal L\left(F(w)- F(w')-\langle \nabla F(w'),w-w'\rangle\right).
    \]
    Now,
    \begin{align*}
    & \mathcal L  = \frac{n(b_g-1)}{b_g(n-1)}\lambda_1\left(M\right)+\frac{n-b_g}{b_g(n-1)}\max_{i\in [n]}\lambda_1(M_i) \\
    & = \frac{n(b_g-1)}{b_g(n-1)}\gamma_{u}\lambda_1\left(\frac{1}{n}\sum_{i=1}^{n}\nabla^2 F_{P_i}(z_0)\right)+\frac{n-b_g}{b_g(n-1)}\max_{i\in [n]}\lambda_1\left(\gamma_{u_i}\nabla^2 F_{P_i}(z_0)\right) \\ 
    &\overset{(1)}{\leq} \frac{n(b_g-1)}{b_g(n-1)}\gamma_u (1+\zeta)+\frac{n-b_g}{b_g(n-1)}\gammaMax \lambda_1\left(\nabla^2 F_{P_i}(z_0)\right)\\
    & \overset{(2)} \leq  \left(\frac{n(b_g-1)}{b_g(n-1)}\gamma_u+\tau_{\star}^{\nu}\frac{n-b_g}{b_g(n-1)}\gammaMax\right) (1+\zeta) = \mathcal L_P,
    \end{align*}
    where $(1)$, $(2)$ both use that $P$ is a $\zeta$-spectral approximation, and $(2)$ uses  $\nabla^2 F_i(w)\preceq \tau^\nu_{\star} \nabla^2F(w)$, which follows by definition of $\tau_\star^{\nu}$. 
    Hence for all $w,w' \in \R^p$
    \[
    \E\|\widehat\nabla F(w)-  \widehat\nabla F(w')\|_{P^{-1}}^2\leq 2\mathcal L_P\left(F(w)- F(w')-\langle \nabla F(w'),w-w'\rangle\right),
    \]
    as desired.
\end{proof}

\subsection{SketchySVRG: Fast local convergence}
\label{subsec:sksvrg_fast_local_convergence}
In this section, we prove \cref{thm:sksvrg_loc_con}, which shows local condition number-free convergence of SketchySVRG in the neighborhood 
\[
\Nstar = \left\{w\in \R^p: \|w-w_\star\|_{\nabla^2F(w_\star)}\leq\frac{\varepsilon_0 \nu^{3/2}}{2M}\right\}.
\]
The result proven here, substantially improves upon the local convergence result of \cite{derezinski2022stochastic}, which requires a gradient batch size of $\Otil(\kappa)$ to obtain fast local convergence.
In contrast, \cref{thm:sksvrg_loc_con} only requires the gradient batchsize to satisfy $b_g = \Otil(\tau_\star^{\nu}(\Nstar)$, which is often orders of magnitude smaller than $\kappa$ in the ill-conditioned setting (see \cref{cor:sksvrg_fast_glm}).

The overarching idea of the proof is similar to other local analyses of stochastic Newton methods \citep{li2020subsampled,derezinski2021newtonless,derezinski2022stochastic}.
Namely, we seek to show the iterates belong to progressively smaller neighborhoods of the optimum, with a contraction rate independent of the condition number. 
\ifpreprint
The key to achieving these objectives is that the Hessian always provides a good quadratic model in the sense of \cref{lem:lazy_hess_good_model} when the iterates are close to the optimum.
\else
\fi

We start with standard notation, which will be used throughout the proof. 
\subsubsection{Notation}
We define the following quantities:
    \[
    \Delta_k^{(s)} \coloneqq w_k^{(s)}-w_\star, \quad p_k^{(s)} \coloneqq \nabla^2 F(w_k^{(s)})^{-1}v^{(s)}_k, \quad \tilde{p}_k^{(s)} \coloneqq P^{-1}v^{(s)}_k. 
    \]
$\Delta^{(s)}_k$ is the distance of the current iterate to the optimum, $p_k^{(s)}$ is the exact Newton direction, and $\tilde{p}^{(s)}_k$ is the approximate Newton direction actually computed by the algorithm.  

\subsubsection{Preliminary lemmas}
We begin with the following technical lemma, which shows the following items hold in $\Nstar$: (1) the quadratic regularity constants are close to unity, (2) the Hessians are uniformly close in the Loewner ordering, (3) taking an exact Newton step moves the iterate closer to the optimum in the Hessian norm, (4) $\nabla F_i(w)$, $\nabla F(w)$ are $(1+\varepsilon_0)$ Lipschitz in $\Nstar$, and (5) $P^{-1}$ is uniformly good approximation to the inverse Hessian. 
\begin{lemma}
\label[lemma]{lem:sksvrg_local_basic_lemmas}
    Let $w,w'\in \Nstar$, and suppose $P$ is a $\varepsilon_0$-spectral approximation constructed at some $w_0\in \Nstar$, 
    then the following items hold.
    \begin{enumerate}
        \item
        \[
        \frac{1}{1+\varepsilon_0}\leq \gammaMin(\Nstar)\leq \gammaMax(\Nstar)\leq (1+\varepsilon_0).
        \]
        \item \[
            (1-\varepsilon_0)\nabla^2 F(w)\preceq \nabla^2 F(w') \preceq (1+\varepsilon_0) \nabla^2F(w). 
            \]
        \item \[
            \|w-w_\star-\nabla^2F(w)^{-1}\nabla F(w)\|_{\nabla^2 F(w)}\leq \varepsilon_0\|w-w_\star\|_{\nabla^2 F(w)}.
            \]
        \item \begin{align*}
        &\|\nabla F_i(w)-\nabla F_i(w_\star)\|_{\nabla^2 F_i(w')^{-1}}\leq (1+\varepsilon_0)\|w-w_\star\|_{\nabla^2 F_i(w')}, \quad \text{for all $i\in [n]$},\\
        & \|\nabla F(w)-\nabla F(w_\star)\|_{\nabla^2 F(w')^{-1}}\leq (1+\varepsilon_0)\|w-w_\star\|_{\nabla^2 F(w')}.
        \end{align*}
        \item 
        \[
        \left\|\nabla^2F(w)^{1/2}(\nabla^2F(w)^{-1}-P^{-1})\nabla^2F(w)^{1/2}\right\|\leq 3\varepsilon_0. 
        \]
    \end{enumerate}
\end{lemma}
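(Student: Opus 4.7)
The plan is to reduce all five items to the single key fact that $\Nstar$ has small Euclidean diameter relative to the strong convexity parameter. Specifically, since $F$ is $\nu$-strongly convex we have $\nabla^2 F(w_\star) \succeq \nu I$, so the definition of $\Nstar$ forces $\|w - w_\star\| \leq \varepsilon_0 \nu / (2M)$ for any $w \in \Nstar$, yielding a Euclidean diameter $D \leq \varepsilon_0 \nu / M$. This gives the single scaling $MD/\nu \leq \varepsilon_0$ that will power every item.

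With this in hand, I would prove item 2 first as a building block: for $w,w' \in \Nstar$, the $M$-Lipschitz Hessian hypothesis gives $\|\nabla^2 F(w) - \nabla^2 F(w')\| \leq M\|w-w'\| \leq \varepsilon_0 \nu \preceq \varepsilon_0 \nabla^2 F(w)$, which rearranges to the claimed Loewner sandwich. Item 1 is then immediate from part (2) of \cref{proposition:QuadRegConds} with the diameter bound above, since $1 + MD/\nu \leq 1 + \varepsilon_0$ (and apply this to each $F_i$ individually, observing that each has the same strong convexity lower bound via the $\frac{\nu}{2}\|w\|^2$ term and the same Lipschitz Hessian constant $M$). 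For item 4, I would first extend item 2 to each $F_i$ by the same argument (Lipschitz Hessian + $\nabla^2 F_i \succeq \nu I$), then write $\nabla F_i(w) - \nabla F_i(w_\star) = \int_0^1 \nabla^2 F_i(w_\star + t(w-w_\star))(w-w_\star)\, dt$, and use $\nabla^2 F_i(w_\star + t(w-w_\star)) \preceq (1+\varepsilon_0) \nabla^2 F_i(w')$ together with the bound $\|Av\|_{A^{-1}}^2 \leq \lambda_1(A)\|v\|_A^2$ style estimate; the global-gradient statement follows by averaging.

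Item 5 combines \cref{assm:GoodPrecond} (applied with $\zeta = \varepsilon_0$) with item 2: we get $\tfrac{1-\varepsilon_0}{1+\varepsilon_0} \nabla^2 F(w) \preceq P \preceq \tfrac{1+\varepsilon_0}{1-\varepsilon_0} \nabla^2 F(w)$ by chaining the spectral approximation at $w_0$ with the Hessian comparison between $w$ and $w_0$. Conjugating by $\nabla^2 F(w)^{-1/2}$ and computing $\|I - \nabla^2 F(w)^{1/2} P^{-1} \nabla^2 F(w)^{1/2}\|$ gives a bound of $2\varepsilon_0 / (1-\varepsilon_0)$, which is $\leq 3\varepsilon_0$ whenever $\varepsilon_0 \leq 1/3$ (in particular for $\varepsilon_0 \leq 1/6$).

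The one item requiring a bit more care is item 3, the lazy-Newton contraction. I would use $\nabla F(w_\star) = 0$ to write
\[
w - w_\star - \nabla^2 F(w)^{-1}\nabla F(w) = \nabla^2 F(w)^{-1} \int_0^1 \bigl[\nabla^2 F(w) - \nabla^2 F(w_\star + t(w-w_\star))\bigr](w-w_\star)\, dt,
\]
then apply Lipschitz Hessian inside the integral to bound the Euclidean norm of the bracketed term by $\tfrac{M}{2}\|w-w_\star\|^2$. The main obstacle is converting this Euclidean bound into the desired $\|\cdot\|_{\nabla^2 F(w)}$ bound while picking up a factor of $\varepsilon_0$ rather than constants depending on $\kappa$: since $\nabla^2 F(w) \succeq \nu I$, we have $\|\nabla^2 F(w)^{-1/2} u\| \leq \nu^{-1/2} \|u\|$ and $\|w-w_\star\| \leq \nu^{-1/2}\|w-w_\star\|_{\nabla^2 F(w)}$, so the $\frac{M}{2}\|w-w_\star\|^2$ term becomes $\frac{M}{2\nu}\|w-w_\star\| \cdot \|w-w_\star\|_{\nabla^2 F(w)} \leq \frac{MD}{2\nu} \|w-w_\star\|_{\nabla^2 F(w)} \leq \tfrac{\varepsilon_0}{2}\|w-w_\star\|_{\nabla^2 F(w)}$, which suffices. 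All the constants work out because of the careful choice $\|w-w_\star\|_{\nabla^2 F(w_\star)} \leq \varepsilon_0 \nu^{3/2}/(2M)$, where the $\nu^{3/2}$ exponent is exactly what is needed to absorb both the $\nu^{-1/2}$ from strong convexity and the $\nu^{-1}$ from the Newton contraction proof.
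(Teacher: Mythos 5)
Your proposal is correct and follows essentially the same route as the paper: everything is driven by the observation that $\|w-w_\star\|\leq \nu^{-1/2}\|w-w_\star\|_{\nabla^2 F(w_\star)}\leq \varepsilon_0\nu/(2M)$, item 2 comes from the $M$-Lipschitz Hessian plus $I\preceq \nu^{-1}\nabla^2 F(w)$, item 1 from part 2 of \cref{proposition:QuadRegConds} with $MD/\nu\leq\varepsilon_0$, and item 5 from chaining \cref{assm:GoodPrecond} at $w_0$ with item 2 (the paper gets $(1\pm\varepsilon_0)^2$ and relaxes to $1\pm 3\varepsilon_0$; your $2\varepsilon_0/(1-\varepsilon_0)\leq 3\varepsilon_0$ is the same computation). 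The only place you diverge is item 3: the paper rewrites the residual as $\|\int_0^1[\nabla^2F(w)^{-1/2}\nabla^2F(w_\star+t(w-w_\star))\nabla^2F(w)^{-1/2}-I]\,dt\,\nabla^2F(w)^{1/2}(w-w_\star)\|$ and applies item 2 directly inside the integral, never leaving the Hessian norm, whereas you pass through Euclidean norms and spend two factors of $\nu^{-1/2}$; both give a bound $\leq\varepsilon_0$ and your accounting of the $\nu^{3/2}$ in the definition of $\Nstar$ is exactly right. Your integral argument for item 4 is also sound (the paper does not actually write out that item).
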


\ifpreprint
\begin{proof}
    \begin{enumerate}
        \item Observing that $\|w-w_\star\|\leq \nu^{-1/2}\|w-w_\star\|_{\nabla^2F(w_\star)}\leq \varepsilon\nu/M$, and each $\nabla^2 F_i$ has $M$-Lipschitz Hessian, the first statement follows from item 2. of \cref{proposition:QuadRegConds}.
        \item For the second statement, observe that $F$ having $M$-Lipschitz Hessian implies
        \[
        \nabla^2 F(w)-M\|w'-w\|I_p \preceq \nabla^2 F(w') \preceq \nabla^2 F(w)+M\|w'-w\|I_p. 
        \]
        So, using $I_p \preceq 1/\nu \nabla^2F(w)$, we find
        \[
        \|w'-w\| \leq \nu^{-1/2}\left(\|w'-w_\star\|_{\nabla^2 F(w_\star)}+\|w-w_\star\|_{\nabla^2 F(w_\star)}\right)\leq \varepsilon_0\nu/M
        \] 
        Hence, we conclude
        \[
        (1-\varepsilon_0)\nabla^2 F(w)\preceq \nabla^2 F(w')\preceq (1+\varepsilon_0) \nabla^2 F(w). 
        \]
        \item This item follows by direct calculation:
        \begin{align*}
            & \|w-w_\star-\nabla^2F(w)^{-1}\nabla F(w)\|_{\nabla^2 F(w)} = \|\nabla F(w)-\nabla^2F(w)(w-w_\star)\|_{\nabla^2 F(w)^{-1}} \\
            & = \left\|\int_{0}^{1}[\nabla^2F(w_\star+t(w-w_\star))-\nabla^2F(w)](w-w_\star)dt\right\|_{\nabla^2 F(w)^{-1}} \\
            & = \left\|\int_{0}^{1}[\nabla^2F(w)^{-1/2}\nabla^2F(w_\star+t(w-w_\star))\nabla^2F(w)^{-1/2}-I_p]dt\nabla^2F(w)^{1/2}(w-w_\star)\right\|\\
            &\leq \varepsilon_0\|w-w_\star\|_{\nabla^2F(w)},
        \end{align*}
        where the last inequality uses item 2. and Cauchy-Schwarz.
        \item The last item follows directly from the definition of a $\varepsilon_0$-spectral approximation and item 2. Indeed,
        \[
        (1-\varepsilon_0)P\preceq \nabla^2F(w_0) \preceq (1+\varepsilon_0) P
        \]
        implies
        \[
        (1+\varepsilon_0)^{-1}\nabla^2F(w_0)\preceq P\preceq (1-\varepsilon_0)^{-1}\nabla^2F(w_0).
        \]
        Hence by the properties of the Loewner ordering
        \[
        (1-\varepsilon_0) \nabla^2 F(w_0)^{-1} \preceq P^{-1} \preceq (1+\varepsilon_0) \nabla^2F(w_0)^{-1}.
        \]
        Now applying item 2., we reach
        \[
       (1-\varepsilon_0)^2 \nabla^2F(w)^{-1}\preceq  P^{-1} \preceq (1+\varepsilon_0)^2 \nabla^2F(w)^{-1}.
        \]
        As $\varepsilon \in (0,1/6]$, the preceding display becomes
        \[
        (1-3\varepsilon_0) \nabla^2F(w)^{-1}\preceq  P^{-1} \preceq (1+3\varepsilon_0) \nabla^2F(w)^{-1},
        \]
        which immediately implies the desired statement. 
        
    \end{enumerate}
\end{proof}
\else
\fi

We will use the following version of Bernstein's inequality for vectors which slightly refines a result of \cite{minsker2017some}.
%  \begin{lemma}[\cite{minsker2017some} Corollary 4.1] 
% \label[lemma]{lem:bernstein_ineq}
%     Let $v_1,\dots,v_m\in \R^p$ be independent mean zero random vectors with $\|v_i\|\leq R$ and $\sum_{i=1}^{m}
%     \E[\|v_i\|]^2\leq \varsigma^2$ almost surely. Then for all $t^2\geq \varsigma^2+tR/3$, we have
%     \[
%     \mathbb{P}\left\{\left\|\sum^{m}_{i=1} v_i\right\|\geq t\right\}\leq 28\exp\left(\frac{-t^2/2}{\varsigma^2+\frac{1}{3}tR}\right).
%     \]
% \end{lemma}

 \begin{lemma}[Bernstein's inequality for vectors]
 \label[lemma]{lem:bernstein_ineq}
     Let $\{X_{i}\}_{1\leq i\leq m}$, be a sequence of independent mean zero random vectors in $\R^p$ satisfying $\|X_i\|\leq R$  and $\E[\|X_i\|^2] \leq \varsigma^2$ for all $i\in [m]$,
     Then 
     \[
     \mathbb P\left(\left\|\frac{1}{m}\sum_{i=1}^{m}X_i\right\|\geq t\right) \leq 8\exp\left(-\min\left\{\frac{mt^2}{4\varsigma^2},\frac{-3mt}{4R}\right\}\right),
     \]
     for all $t\geq \sqrt{\frac{\varsigma^2}{m}}+\frac{R}{3m}$.
\end{lemma}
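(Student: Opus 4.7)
The plan is to reduce the vector Bernstein inequality to a matrix Bernstein inequality via the standard Hermitian dilation trick, and then invoke Minsker's dimension-free matrix Bernstein bound \citep{minsker2017some}. The ``slight refinement'' over Minsker's statement amounts to using the structure of the dilation to replace a generic variance proxy by the scalar quantity $\varsigma^2$, and to tracking the constants so that the deviation threshold has the clean form $t\geq \sqrt{\varsigma^2/m}+R/(3m)$.

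First, for each $i\in[m]$, form the self-adjoint dilation
\[
Y_i = \begin{pmatrix} 0 & X_i \\ X_i^T & 0 \end{pmatrix} \in \mathbb{R}^{(p+1)\times(p+1)}.
\]
A direct computation gives $\|Y_i\|=\|X_i\|\leq R$ and $\E Y_i = 0$. Moreover,
\[
Y_i^2 = \begin{pmatrix} X_i X_i^T & 0 \\ 0 & \|X_i\|^2 \end{pmatrix},
\]
so $\|\E Y_i^2\|=\max\{\|\E X_iX_i^T\|,\E\|X_i\|^2\}\leq \E\|X_i\|^2 \leq \varsigma^2$. Hence the block-diagonal sum of variances satisfies $\|\sum_{i=1}^m \E Y_i^2\|\leq m\varsigma^2$. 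The key point of the dilation is that $\|\sum_{i=1}^m Y_i\|=\|\sum_{i=1}^m X_i\|$, so controlling the spectral norm of $\sum_i Y_i$ is equivalent to controlling the Euclidean norm of $\sum_i X_i$.

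Next, I would apply Minsker's dimension-free matrix Bernstein inequality to the rescaled variables $\bar Y_i \coloneqq Y_i/m$. These satisfy $\|\bar Y_i\|\leq R/m$ and $\|\sum_i \E \bar Y_i^2\| \leq \varsigma^2/m$. Minsker's bound then yields, for any $t$ above a threshold determined by $R/m$ and $\varsigma^2/m$,
\[
\Prob\!\left(\Big\|\tfrac{1}{m}\sum_{i=1}^m Y_i\Big\|\geq t\right) \leq C\exp\!\left(-\min\left\{\frac{mt^2}{4\varsigma^2},\frac{3mt}{4R}\right\}\right),
\]
where $C$ is an absolute constant. Combined with the dilation identity, the left-hand side equals $\Prob(\|m^{-1}\sum_i X_i\|\geq t)$. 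The last step is to verify that the threshold in Minsker's statement translates to the condition $t\geq \sqrt{\varsigma^2/m}+R/(3m)$ (this is exactly the regime in which the ``dominant term'' in Bernstein-type inequalities governs the tail) and to check that the constant can be taken equal to $8$ by absorbing a small factor into the min.

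The main obstacle is bookkeeping around the constants and the validity threshold: Minsker's theorem is stated with a single generic prefactor and a slightly different form of the regime of validity, so some care is required to reconcile the two and to verify that, under the structured variance bound $\|\E Y_i^2\|\leq \varsigma^2$ (rather than the operator-norm variance proxy that appears in the abstract matrix statement), one recovers exactly the $\varsigma^2$ and $R$ that appear in the claimed inequality, with the constant $8$ and the lower bound $\sqrt{\varsigma^2/m}+R/(3m)$ on $t$.
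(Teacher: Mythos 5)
Your overall strategy—pass to the scaled sequence, reduce the vector statement to an operator Bernstein bound, and track how the variance proxy and the prefactor specialize for vectors—is the same as the paper's, but your choice of black box is exactly where the difficulty lies, and it is the one step you leave unresolved. The paper's proof is a one-line application of Theorem 7.3.1 of \cite{tropp2015introduction} (matrix Bernstein with \emph{intrinsic dimension}, rectangular case) to the scaled sequence $\{X_i/m\}$ viewed as $p\times 1$ matrices. That theorem is what makes the constants fall out cleanly: with $V_1=\sum_i\E[X_iX_i^T]$ and $V_2=\sum_i\E\|X_i\|^2$, one has $\operatorname{trace}(V_1)=V_2$ and hence $\|V_1\|\le V_2$, so the intrinsic dimension of $\operatorname{diag}(V_1,V_2)$ equals $2\sum_i\E\|X_i\|^2/\sum_i\E\|X_i\|^2=2$; the prefactor $4\cdot\operatorname{intdim}$ is then exactly $8$, the validity threshold is exactly $t\ge\sqrt{v}+L/3$ with $v\le\varsigma^2/m$ and $L=R/m$, and the exponent $\frac{t^2/2}{v+Lt/3}\ge\min\{\frac{mt^2}{4\varsigma^2},\frac{3mt}{4R}\}$ follows from $\frac{a}{b+c}\ge\frac12\min\{\frac{a}{b},\frac{a}{c}\}$.

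By contrast, routing through \cite{minsker2017some} as you propose does not obviously deliver these constants: Minsker's bound carries a different (larger) prefactor and a differently shaped validity condition, which is precisely why the paper describes the lemma as a \emph{refinement} of Minsker's result rather than a corollary of it. So the "bookkeeping obstacle" you flag at the end is not minor—it is the entire content of the lemma beyond the citation, and your plan does not close it. The fix is simply to swap the reference: your dilation computations ($\|Y_i\|=\|X_i\|$, $\|\E Y_i^2\|\le\varsigma^2$) are correct and are essentially what Tropp's rectangular intrinsic-dimension theorem packages for you. One further note: the $\frac{-3mt}{4R}$ inside the $\min$ in the paper's statement is a sign typo (as written the bound would be vacuous); your version with $\frac{3mt}{4R}$ is the intended one.
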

\begin{proof}
    The result follows immediately from applying Theorem 7.3.1. of \cite{tropp2015introduction} to the scaled sequence $\{X_i/m\}_{1\leq i\leq m}$.
\end{proof}

Our next lemma controls the deviation of the stochastic gradient: $\widehat \nabla F(w)-\widehat \nabla F(w_\star)-\nabla F(w)$, in the norm $\|\cdot\|_{\nabla^2F(w')^{-1}}$.
This lemma is the key to improving over the local convergence analysis of \cite{derezinski2022stochastic}, which requires a gradient batchsize of $\Otil(\kappa)$.
The improvement is possible thanks to quadratic regularity and Hessian dissimilarity. 
Quadratic regularity enables us to directly reason in the norms  $\left(\|\cdot\|_{\nabla^2F(w')},\|\cdot\|_{\nabla^2F(w')^{-1}}\right)$, while Hessian dissimilarity allows for the tightest control possible over the gradient batchsize. 
\begin{lemma}
\label[lemma]{lem:sksvrg_local_bernstein_bound}
    Let $\beta_g\in (0,1)$. Suppose $w,w' \in \Nstar$, and $b_g \geq \frac{32\tau_{\star}^{\nu}(\Nstar)\log(\frac{8}{\delta})}{\beta^2_g}$. Then with probability at least $1-\delta$,
    \[
    \|\widehat \nabla F(w)-\widehat \nabla F(w_\star)-\nabla F(w)\|_{\nabla^2 F(w')^{-1}}\leq \beta_g\|w-w_\star\|_{\nabla^2 F(w')}.
    \]
   
\end{lemma}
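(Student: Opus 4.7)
The plan is to apply the vector Bernstein inequality (\cref{lem:bernstein_ineq}) to the centered random vectors
\[
X_i = \nabla^2 F(w')^{-1/2}\bigl[\nabla F_i(w) - \nabla F_i(w_\star) - \nabla F(w)\bigr], \qquad i \in \mathcal B_g,
\]
where $\mathcal B_g$ indexes the stochastic batch of size $b_g$. Noting that $\nabla F(w_\star)=0$, one has $\E X_i = 0$ and $\frac{1}{b_g}\sum_{i \in \mathcal B_g} X_i = \nabla^2 F(w')^{-1/2}[\widehat\nabla F(w)-\widehat\nabla F(w_\star)-\nabla F(w)]$, whose Euclidean norm equals the quantity in the statement.\footnote{Without-replacement sampling only tightens concentration versus the i.i.d.\ case via Serfling-style arguments, so it is harmless to invoke Bernstein as stated.} The proof therefore reduces to producing the two Bernstein parameters $R$ and $\varsigma^2$.

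The key ingredients are item~4 of \cref{lem:sksvrg_local_basic_lemmas}, which gives $\|\nabla F_i(w)-\nabla F_i(w_\star)\|_{\nabla^2 F_i(w')^{-1}}\leq (1+\varepsilon_0)\|w-w_\star\|_{\nabla^2 F_i(w')}$, together with the Hessian dissimilarity bound $\nabla^2 F_i(w')\preceq \tau_\star^\nu(\Nstar)\nabla^2 F(w')$ (so also $\nabla^2 F(w')^{-1}\preceq \tau_\star^\nu\,\nabla^2 F_i(w')^{-1}$ by operator monotonicity of $A\mapsto -A^{-1}$). Composing the two spectral inequalities yields the uniform bound
\[
\|\nabla F_i(w)-\nabla F_i(w_\star)\|_{\nabla^2 F(w')^{-1}}\leq \tau_\star^\nu(\Nstar)(1+\varepsilon_0)\,\|w-w_\star\|_{\nabla^2 F(w')},
\]
so by the triangle inequality and Jensen's inequality applied to the centering term, $\|X_i\|\leq 2\tau_\star^\nu(\Nstar)(1+\varepsilon_0)\|w-w_\star\|_{\nabla^2 F(w')}\eqqcolon R$. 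For $\varsigma^2$, I use $\E\|X_i\|^2\leq \E\|\nabla^2 F(w')^{-1/2}(\nabla F_i(w)-\nabla F_i(w_\star))\|^2$, apply the per-index bound above squared, and then exploit the averaging identity $\frac{1}{n}\sum_{i=1}^n\nabla^2 F_i(w')=\nabla^2 F(w')$ to collapse one factor of $\tau_\star^\nu$, yielding $\varsigma^2\leq \tau_\star^\nu(\Nstar)(1+\varepsilon_0)^2\|w-w_\star\|^2_{\nabla^2 F(w')}$.

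Plugging $R$, $\varsigma^2$, and $t=\beta_g\|w-w_\star\|_{\nabla^2 F(w')}$ into \cref{lem:bernstein_ineq}, the subgaussian branch of the min becomes $b_g\beta_g^2/[4\tau_\star^\nu(\Nstar)(1+\varepsilon_0)^2]$ and the subexponential branch becomes $3b_g\beta_g/[8\tau_\star^\nu(\Nstar)(1+\varepsilon_0)]$. Since $\varepsilon_0\leq 1/6$ and $\beta_g<1$, the subgaussian branch dominates, and the assumed $b_g\geq 32\tau_\star^\nu(\Nstar)\log(8/\delta)/\beta_g^2$ makes it exceed $\log(8/\delta)$ with room to spare, while also verifying the admissibility condition $t\geq\sqrt{\varsigma^2/b_g}+R/(3b_g)$ (both of these lower terms are $O(\sqrt{\tau_\star^\nu/b_g}\|\cdot\|)$ and $O(\tau_\star^\nu/b_g\|\cdot\|)$, easily below $t$ given the stated batch size). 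The resulting tail probability is at most $\delta$, giving the claim.

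The only genuine subtlety I foresee is the passage from $\|\cdot\|_{\nabla^2 F_i(w')^{-1}}$ to $\|\cdot\|_{\nabla^2 F(w')^{-1}}$ via Hessian dissimilarity on the dual side: it costs a factor $\tau_\star^\nu$ on $R$ but, crucially, only $\tau_\star^\nu$ (not $\tau_\star^{\nu\,2}$) on $\varsigma^2$ after using the averaging identity. This asymmetry is precisely what allows the gradient batchsize requirement to scale with $\tau_\star^\nu(\Nstar)$ rather than with $\kappa$, and it is the main improvement over the analysis of \cite{derezinski2022stochastic}. Everything else is a straightforward bookkeeping exercise with constants absorbed into the factor $32$.
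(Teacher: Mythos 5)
Your proposal is correct and matches the paper's proof essentially step for step: the same centered vectors $\tilde v_i = \nabla^2F(w')^{-1/2}(\nabla F_i(w)-\nabla F_i(w_\star)-\nabla F(w))$, the same vector Bernstein inequality, the same almost-sure bound $R = 2(1+\varepsilon_0)\tau_\star^{\nu}(\Nstar)\|w-w_\star\|_{\nabla^2F(w')}$, and the same crucial single factor of $\tau_\star^{\nu}$ in the variance. The only (immaterial) difference is the intermediate derivation of $\varsigma^2$: the paper passes through lower quadratic regularity and an averaged Bregman divergence before applying upper quadratic regularity, whereas you square item 4 of \cref{lem:sksvrg_local_basic_lemmas} and use $\frac{1}{n}\sum_i\nabla^2F_i(w')=\nabla^2F(w')$ directly, arriving at the same bound up to a factor of $2$.
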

\begin{proof}
    We begin by observing that 
    \[
    \left\|\widehat \nabla F(w)-\widehat \nabla F(w_\star)-\nabla F(w)\right\|_{\nabla^2 F(w')^{-1}}^2 = \left\|\frac{1}{b_g}\sum_{i\in \mathcal B} \tilde v_i\right\|^{2},
    \]
    where 
    \[\tilde v_i = \nabla^2 F(w')^{-1/2}(\nabla F_i(w)-\nabla F_i(w_\star)-\nabla F(w)).\]
    Now,
    \begin{align*}
        \|\tilde v_i\|^2 &\leq 2\|\nabla F_i(w)-\nabla F_i(w_\star)\|_{\nabla^2 F(w')^{-1}}^2+2\|\nabla F(w)\|_{\nabla^2 F(w')^{-1}}^2 \\
        &\leq 2\tau_{\star}^{\nu}(\Nstar)\|\nabla F_i(w)-\nabla F_i(w_\star)\|_{\nabla^2 F_i(w')^{-1}}^2+2\|\nabla F(w)\|_{\nabla^2 F(w')^{-1}}^2 \\
        &= 2\tau_{\star}^{\nu}(\Nstar)\|\nabla F_i(w)-\nabla F_i(w_\star)\|^2_{\nabla^2 F_i(w')^{-1}}+2\|\nabla F(w)-\nabla F(w_\star)\|_{\nabla^2 F(w')^{-1}}^2.
     \end{align*}
     Here, the second inequality is due to the definition of Hessian dissimilarity.
     Now, invoking item 4 of \cref{lem:sksvrg_local_basic_lemmas} we reach
     \begin{align*}
         \|\tilde v_i\|^2 &\leq 2\tau_{\star}^{\nu}(\Nstar)(1+\varepsilon_0)^2\|w-w_\star\|^2_{\nabla^2 F_i(w')}+2(1+\varepsilon_0)^2\|w-w_\star\|^2_{\nabla^2 F(w')}\\
         & \leq 4(1+\varepsilon_0)^2\tau_{\star}^{\nu}(\Nstar)^2\|w-w_\star\|^2_{\nabla^2 F(w')},
     \end{align*}
    % where the last inequality uses item 1 of \cref{lem:sksvrg_local_basic_lemmas}.
     So, for each $i\in \mathcal B$, it holds with probability 1 that 
     \[
     \|\tilde v_i\|\leq 2 (1+\varepsilon_0)\tau_{\star}^{\nu}(\Nstar)\|w-w_\star\|_{\nabla^2 F(w')}.
     \]
     Thus, we may set $R = 2(1+\varepsilon_0)\tau_{\star}^{\nu}(\Nstar)\|w-w_\star\|_{\nabla^2 F(w')}$.
     
     Next, observe that
     \begin{align*}
            \E[\|\tilde v_i\|]^2 &\leq \E\|\nabla F_i(w)-\nabla F_i(w_\star)\|_{\nabla^2 F(w')^{-1}}^2\\
            &\leq \tau_{\star}^{\nu}(\Nstar)\E\|\nabla F_i(w)-\nabla F_i(w_\star)\|_{\nabla^2 F_i(w')^{-1}}^2\\
            &\overset{(1)}{\leq} \tau_{\star}^{\nu}(\Nstar) \E[2(1+\varepsilon_0)(F_i(w)-F_i(w_\star)-\langle \nabla F_i(w_\star),w-w_\star\rangle)]\\ 
            &\leq 2(1+\varepsilon_0)\tau_{\star}^{\nu}(\Nstar)\left(F(w)-F(w_\star)\right) \\
            &\overset{(2)}{\leq} 2(1+\varepsilon_0)^2\tau_{\star}^{\nu}(\Nstar)\|w-w_\star\|^2_{\nabla^2 F(w')}. 
     \end{align*}
     Here $(1)$ uses $F_i(w)\geq F_i(w_\star)+\langle \nabla F_i(w_\star),w-w_\star \rangle+\frac{1}{2(1+\varepsilon_0)}\|w-w_\star\|^2_{\nabla^2F_i(w_\star)}$, which follows from lower quadratic regularity of $F$ and item 1 of \cref{lem:sksvrg_local_basic_lemmas}.
     Last, $(2)$ applies  upper quadratic regularity and item 1 of \cref{lem:sksvrg_local_basic_lemmas}.
     Hence, the variance is bounded by
     \[
     \varsigma^2 = 2(1+\varepsilon_0)^2\tau_{\star}^{\nu}(\Nstar)\|w-w_\star\|^2_{\nabla^2 F(w')}.
     \]
     
     Now, we invoke \cref{lem:bernstein_ineq} to find 
     \[
        \mathbb{P}\left(\left\|\frac{1}{b_g}\sum_{i\in \mathcal B} \tilde v_i\right\|\geq \sqrt{\frac{4\varsigma^2\log\left(\frac{8}{\delta}\right)}{b_g}}+\frac{4R\log\left(\frac{8}{\delta}\right)}{3b_g}\right)\leq \delta.
    \]
     The last display immediately implies with probability at least $1-\delta$, that 
     \begin{align*}
        &\left\|\widehat \nabla F(w)-\widehat \nabla F(w_\star)-\nabla F(w)\right\|_{\nabla^2 F(w')^{-1}}^2\leq \frac{8\varsigma^2\log(8/\delta)}{b_g}+\frac{32R^2\log^2(8/\delta)}{9b_g^2}\\
        &= \left[\frac{16(1+\varepsilon_0)^2\tau_{\star}^{\nu}(\Nstar)\log(8/\delta)}{b_g}+2\left(\frac{8(1+\varepsilon_0)\tau_{\star}^{\nu}(\Nstar)\log(8/\delta)}{3b_g}\right)^2\right] \|w-w_\star\|^2_{\nabla^2 F(w')} \\
        &\leq \beta^2_g \|w-w_\star\|^2_{\nabla^2 F(w')}
        \end{align*}
     where the last inequality follows from $b_g \geq \frac{32\tau_{\star}^{\nu}(\Nstar)\log(\frac{8}{\delta})}{\beta^2_g}$ and $\varepsilon\in(0,1/6)$.
     The desired claim now follows by taking square roots.   
\end{proof}

The next lemma shows that for sufficiently $b_g$ (with high probability) the distance to the optimum in the $\nabla^2F(w_\star)$-norm decreases when an \emph{exact} Newton step based on the current iterate is taken.   
\begin{lemma}
\label[lemma]{lem:sksvrg_local_dist_norm}
    Let $w_k^{(s)}\in \Nstar$, and $\beta_g\in (0,1)$. Suppose the gradient batchsize satisfies $b_g = \bigO\left(\frac{\tau_\star^{\nu}(\Nstar)\log\left(\frac{k+1}{\delta}\right)}{\beta^2_g}\right)$. 
    Then with probability at least $1-\frac{\delta}{(k+1)^2}$,
    \[
    \|\Delta_k^{(s)}-p^{(s)}_k\|_{\nabla^2 F(w_\star)}\leq (1+\varepsilon_0)\left[(\varepsilon_0+\beta_g)\|\Delta_k^{(s)}\|_{\nabla^2 F(w_\star)}+\beta_g\|\Delta_0^{(s)}\|_{\nabla^2 F(w_\star)}\right].
    \]
\end{lemma}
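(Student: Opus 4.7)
The plan is to split $\Delta_k^{(s)}-p_k^{(s)}$ into a deterministic ``lazy/exact Newton error'' term plus a stochastic ``variance'' term, handle the first with item~3 of \cref{lem:sksvrg_local_basic_lemmas}, and the second with two applications of \cref{lem:sksvrg_local_bernstein_bound}. Recall that in SketchySVRG we have
\[
v_k^{(s)} \;=\; \widehat{\nabla}F(w_k^{(s)}) - \widehat{\nabla}F(w_0^{(s)}) + \nabla F(w_0^{(s)}),
\]
where we have used that under Option~I, $\hat w^{(s)} = w_0^{(s)}$. Adding and subtracting $\nabla F(w_k^{(s)})$ and multiplying by $\nabla^2 F(w_k^{(s)})^{-1}$ gives the decomposition
\[
\Delta_k^{(s)} - p_k^{(s)} \;=\; \underbrace{\bigl(\Delta_k^{(s)} - \nabla^2 F(w_k^{(s)})^{-1}\nabla F(w_k^{(s)})\bigr)}_{T_1} \;-\; \underbrace{\nabla^2 F(w_k^{(s)})^{-1} e_k^{(s)}}_{T_2},
\]
where $e_k^{(s)} := \bigl(\widehat{\nabla}F(w_k^{(s)}) - \widehat{\nabla}F(w_0^{(s)})\bigr) - \bigl(\nabla F(w_k^{(s)}) - \nabla F(w_0^{(s)})\bigr)$.

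I will bound $\|T_1\|_{\nabla^2F(w_\star)}$ and $\|T_2\|_{\nabla^2F(w_\star)}$ separately. For $T_1$, item~2 of \cref{lem:sksvrg_local_basic_lemmas} gives $\nabla^2F(w_\star)\preceq(1+\varepsilon_0)\nabla^2F(w_k^{(s)})$, so I can convert to the $\nabla^2F(w_k^{(s)})$-norm and then apply item~3 to the exact Newton residual, yielding
\[
\|T_1\|_{\nabla^2F(w_\star)} \;\le\; (1+\varepsilon_0)^{1/2}\varepsilon_0\,\|\Delta_k^{(s)}\|_{\nabla^2F(w_k^{(s)})} \;\le\; (1+\varepsilon_0)\,\varepsilon_0\,\|\Delta_k^{(s)}\|_{\nabla^2F(w_\star)},
\]
using item~2 one more time to convert back. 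For $T_2$, the same Loewner inequality gives $\|T_2\|_{\nabla^2F(w_\star)}\le(1+\varepsilon_0)^{1/2}\|e_k^{(s)}\|_{\nabla^2F(w_k^{(s)})^{-1}}$, and I will then use $\nabla F(w_\star)=0$ to rewrite
\[
e_k^{(s)} \;=\; \bigl[\widehat{\nabla}F(w_k^{(s)}) - \widehat{\nabla}F(w_\star) - \nabla F(w_k^{(s)})\bigr] \;-\; \bigl[\widehat{\nabla}F(w_0^{(s)}) - \widehat{\nabla}F(w_\star) - \nabla F(w_0^{(s)})\bigr],
\]
which is a difference of two quantities each handled by \cref{lem:sksvrg_local_bernstein_bound} with $w'=w_k^{(s)}$ (the choice ensuring the output norm $\nabla^2F(w_k^{(s)})^{-1}$ is what I need).

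Next I apply \cref{lem:sksvrg_local_bernstein_bound} at $w=w_k^{(s)}$ and at $w=w_0^{(s)}$, each with failure probability $\delta/(2(k+1)^2)$; the stated batchsize $b_g = \bigO(\tau_\star^\nu(\Nstar)\log((k+1)/\delta)/\beta_g^2)$ absorbs the extra $\log$ factors, and a union bound yields, with probability at least $1-\delta/(k+1)^2$,
\[
\|e_k^{(s)}\|_{\nabla^2F(w_k^{(s)})^{-1}} \;\le\; \beta_g\bigl(\|\Delta_k^{(s)}\|_{\nabla^2F(w_k^{(s)})}+\|\Delta_0^{(s)}\|_{\nabla^2F(w_k^{(s)})}\bigr).
\]
Applying item~2 once more to replace the $\nabla^2F(w_k^{(s)})$-norm by the $\nabla^2F(w_\star)$-norm gives $\|T_2\|_{\nabla^2F(w_\star)}\le (1+\varepsilon_0)\beta_g\bigl(\|\Delta_k^{(s)}\|_{\nabla^2F(w_\star)}+\|\Delta_0^{(s)}\|_{\nabla^2F(w_\star)}\bigr)$. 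Combining $T_1$ and $T_2$ via the triangle inequality produces the claimed bound.

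The only real subtlety is probabilistic bookkeeping: because the same batch $\mathcal{B}_k$ is used for both terms in $e_k^{(s)}$, I cannot treat them as independent, so I must rely on the triangle inequality and a union bound rather than directly bounding $\|e_k^{(s)}\|$ via a single Bernstein application. I also need to be careful that $w_0^{(s)}\in\Nstar$, which will follow inductively from the convergence argument (this is why the hypothesis is phrased only as $w_k^{(s)}\in\Nstar$ together with the known containment of $w_0^{(s)}$ in the neighborhood). Everything else is routine norm conversion using items~2 and~3 of \cref{lem:sksvrg_local_basic_lemmas}.
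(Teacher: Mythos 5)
Your proposal is correct and follows essentially the same route as the paper: the same split into the exact lazy-Newton residual (handled by item~3 of \cref{lem:sksvrg_local_basic_lemmas}) and the stochastic error recentered at $w_\star$ (handled by two applications of \cref{lem:sksvrg_local_bernstein_bound} with $w'=w_k^{(s)}$ plus a union bound), followed by norm conversion via item~2. The only cosmetic difference is that you convert norms term-by-term while the paper converts once at the end; the constants come out identically.
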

\begin{proof}
    We begin by applying the triangle inequality to reach
    \begin{align*}
        &\|\Delta_k^{(s)}-p^{(s)}_k\|_{\nabla^2 F(w_k^{(s)})} = \|\Delta_k^{(s)}-\nabla^2F(w_k^{(s)})^{-1}v^{(s)}_k\|_{\nabla^2 F(w_k^{(s)})} \\
        & = \|\nabla^2 F(w_k^{(s)})\Delta_k^{(s)}-(\widehat \nabla F(w_k^{(s)})-\widehat \nabla F(\hat w^{(s)})+\nabla F(\hat w^{(s)}))\|_{\nabla^2 F(w_k^{(s)})^{-1}} \\
        &\leq \|\nabla^2 F(w_k^{(s)})\Delta_k^{(s)}-\nabla F(w_k^{(s)})\|_{\nabla^2 F(w_k^{(s)})^{-1}}\\
        &{}+\|\nabla F(w_k^{(s)})-\widehat \nabla F(w_k^{(s)})+\widehat \nabla F(\hat w^{(s)})-\nabla F(\hat w^{(s)})\|_{\nabla^2 F(w_k^{(s)})^{-1}}.
    \end{align*}
   To bound the first term, we apply item 3. of \cref{lem:sksvrg_local_basic_lemmas}, which yields
    \[
      \|\Delta_k^{(s)}-\nabla^2F(w_k^{(s)})^{-1}\nabla F(w_k^{(s)})\|_{\nabla^2 F(w_k^{(s)})} \leq \varepsilon_0\|\Delta_k^{(s)}\|_{\nabla^2 F(w_k^{(s)})}.
    \]
    For the second term, the triangle inequality yields
    \begin{align*}
       &\|\nabla F(w_k^{(s)})-\widehat \nabla F(w_k^{(s)})+\widehat \nabla F(\hat w^{(s)})-\nabla F(\hat w^{(s)})\|_{\nabla^2 F(w_k^{(s)})^{-1}}\\ 
       &\leq \|\widehat \nabla F(w_k^{(s)})-\widehat \nabla F(w_\star)-\nabla F(w_k^{(s)})\|_{\nabla^2 F(w_k^{(s)})^{-1}}\\
       &{}+\|\widehat \nabla F(\hat w^{(s)})-\widehat \nabla F(w_\star)-\nabla F(\hat w^{(s)})\|_{\nabla^2 F(w_k^{(s)})^{-1}}. 
    \end{align*}
    Now, we can apply \cref{lem:sksvrg_local_bernstein_bound}, to find that
    \begin{align*}
        & \|\widehat \nabla F(w_k^{(s)})-\widehat \nabla F(w_\star)-\nabla F(w_k^{(s)})\|_{\nabla^2 F(w_k^{(s)})^{-1}}\leq \beta_g \|\Delta_k^{(s)}\|_{\nabla^2 F(w_k^{(s)})}, \\
        & \|\widehat \nabla F(\hat w^{(s)})-\widehat \nabla F(w_\star)-\nabla F(\hat w^{(s)})\|_{\nabla^2 F(w_k^{(s)})^{-1}}\leq \beta_g \|\Delta_0^{(s)}\|_{\nabla^2 F(w_k^{(s)})}, 
    \end{align*}
    with probability at least $1-\frac{\delta}{(k+1)^2}$.
    So,
    \begin{align*}
     &\|\nabla F(w_k^{(s)})-\widehat \nabla F(w_k^{(s)})+\widehat \nabla F(\hat w^{(s)})-\nabla F(\hat w^{(s)})\|_{\nabla^2 F(w_k^{(s)})^{-1}}\\  
     &\leq \beta_g\left(\|\Delta_k^{(s)}\|_{\nabla^2 F(w_k^{(s)})}+ \|\Delta_0^{(s)}\|_{\nabla^2 F(w_k^{(s)})}\right), 
    \end{align*}
    Combining the upper bounds on terms $1$ and $2$, we find  
    \begin{align*}
        \|\Delta_k^{(s)}-p^{(s)}_k\|_{\nabla^2 F(w_k^{(s)})} \leq (\varepsilon_0+\beta_g)\|\Delta_k^{(s)}\|_{\nabla^2 F(w_k^{(s)})}+\beta_g\|\Delta_0^{(s)}\|_{\nabla^2 F(w^{(s)}_k)}. 
    \end{align*}
    Hence applying item 2. of \cref{lem:sksvrg_local_basic_lemmas} twice, we conclude
    \begin{align*}
        \|\Delta_k^{(s)}-p^{(s)}_k\|_{\nabla^2 F(w_\star)} \leq (1+\varepsilon_0)\left[(\varepsilon_0+\beta_g)\|\Delta_k^{(s)}\|_{\nabla^2 F(w_\star)}+\beta_g\|\Delta_0^{(s)}\|_{\nabla^2 F(w_\star)}
        \right],
    \end{align*}
     with probability at least $1-\frac{\delta}{(k+1)^2}$.
\end{proof}

Next we have the following result, which shows (with high probability) the distance to the optimum of the iterate actually computed by \cref{alg:sksvrg} is decreasing in the $\nabla^2 F(w_\star)$-norm. 
In particular, this implies $w_{k+1}^{(s)}$ remains in $\Nstar$.  
\begin{lemma}
\label[lemma]{lem:sksvrg_local_next_iterate_bound}
    Instate the hypotheses of \cref{lem:sksvrg_local_dist_norm}. Then the following items hold with probability at least $1-\frac{\delta}{(k+1)^2}$.
    \begin{enumerate}
        \item $\|\Delta_{k+1}^{(s)}\|_{\nabla^2F(w_\star)}\leq \frac{7}{12}\|\Delta^{(s)}_k\|_{\nabla^2F(w_\star)}+\frac{1}{4} \|\Delta^{(s)}_0\|_{\nabla^2F(w_\star)}$
        \item $w^{(s)}_{k+1} \in \Nstar$.
    \end{enumerate}
\end{lemma}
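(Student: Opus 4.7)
The plan is to decompose the step error into a ``Newton'' part and a ``preconditioner approximation'' part. Since $\eta=1$, the update gives $\Delta_{k+1}^{(s)} = \Delta_k^{(s)} - \tilde p_k^{(s)}$, so adding and subtracting the exact Newton step $p_k^{(s)}$ and applying the triangle inequality yields
\[
\|\Delta_{k+1}^{(s)}\|_{\nabla^2 F(w_\star)} \leq \|\Delta_k^{(s)} - p_k^{(s)}\|_{\nabla^2 F(w_\star)} + \|p_k^{(s)} - \tilde p_k^{(s)}\|_{\nabla^2 F(w_\star)}.
\]
The first term is exactly what \cref{lem:sksvrg_local_dist_norm} controls, provided $w_k^{(s)} \in \Nstar$ (which holds by the induction hypothesis for item 2). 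I would choose the gradient batchsize large enough that $\beta_g$ is a small multiple of $\varepsilon_0$, absorbing it into the leading constant.

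For the second term, write $p_k^{(s)} - \tilde p_k^{(s)} = (\nabla^2 F(w_k^{(s)})^{-1} - P^{-1}) v_k^{(s)}$ and apply item 5 of \cref{lem:sksvrg_local_basic_lemmas} to obtain
\[
\|p_k^{(s)} - \tilde p_k^{(s)}\|_{\nabla^2 F(w_k^{(s)})} \leq 3\varepsilon_0 \|v_k^{(s)}\|_{\nabla^2 F(w_k^{(s)})^{-1}} = 3\varepsilon_0 \|p_k^{(s)}\|_{\nabla^2 F(w_k^{(s)})}.
\]
Then bound $\|p_k^{(s)}\|_{\nabla^2 F(w_k^{(s)})} \leq \|\Delta_k^{(s)}\|_{\nabla^2 F(w_k^{(s)})} + \|\Delta_k^{(s)} - p_k^{(s)}\|_{\nabla^2 F(w_k^{(s)})}$ via triangle inequality and re-use the bound from the proof of \cref{lem:sksvrg_local_dist_norm} on the latter piece. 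Finally, use item 2 of \cref{lem:sksvrg_local_basic_lemmas} to convert every $\|\cdot\|_{\nabla^2 F(w_k^{(s)})}$ into $\|\cdot\|_{\nabla^2 F(w_\star)}$ at the cost of a $(1\pm\varepsilon_0)^{1/2}$ factor. Collecting everything gives
\[
\|\Delta_{k+1}^{(s)}\|_{\nabla^2 F(w_\star)} \leq A\|\Delta_k^{(s)}\|_{\nabla^2 F(w_\star)} + B\|\Delta_0^{(s)}\|_{\nabla^2 F(w_\star)},
\]
and it will remain to verify $A \leq 7/12$ and $B \leq 1/4$ under $\varepsilon_0 \leq 1/6$ and $\beta_g$ chosen as a sufficiently small constant multiple of $\varepsilon_0$ (forcing $b_g = \Otil(\tau_\star^\nu(\Nstar)/\varepsilon_0^2)$ via \cref{lem:sksvrg_local_dist_norm}).

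For item 2, I would argue by induction. The base case $w_0^{(s)} \in \Nstar$ is given. Assuming $w_j^{(s)} \in \Nstar$ for all $j \leq k$, each such iterate satisfies $\|\Delta_j^{(s)}\|_{\nabla^2 F(w_\star)} \leq \|\Delta_0^{(s)}\|_{\nabla^2 F(w_\star)} \leq \varepsilon_0 \nu^{3/2}/(2M)$, so item 1 yields
\[
\|\Delta_{k+1}^{(s)}\|_{\nabla^2 F(w_\star)} \leq (A+B)\|\Delta_0^{(s)}\|_{\nabla^2 F(w_\star)} \leq \tfrac{10}{12}\,\|\Delta_0^{(s)}\|_{\nabla^2 F(w_\star)} \leq \frac{\varepsilon_0 \nu^{3/2}}{2M},
\]
giving $w_{k+1}^{(s)} \in \Nstar$ and closing the induction. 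The high-probability qualifier $1-\delta/(k+1)^2$ is inherited directly from the invocation of \cref{lem:sksvrg_local_dist_norm} at iteration $k$.

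The main obstacle is bookkeeping: the coefficients on $\|\Delta_k^{(s)}\|$ and $\|\Delta_0^{(s)}\|$ pick up factors of the form $(1+\varepsilon_0)^{1/2}$ from each norm conversion and additional $\varepsilon_0,\beta_g$ terms at each triangle inequality, and one has to verify that with $\varepsilon_0 \leq 1/6$ these accumulate to no more than $7/12$ and $1/4$ respectively. The choice of constants $7/12$ and $1/4$ seems calibrated precisely so that $A+B < 1$, leaving slack for the induction step while giving a strict geometric contraction of $\|\Delta_k^{(s)}\|$ relative to $\|\Delta_0^{(s)}\|$ for use in the outer-iteration argument of \cref{thm:sksvrg_loc_con}.
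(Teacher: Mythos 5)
Your proposal is correct and follows essentially the same route as the paper: the same triangle-inequality decomposition into the exact-Newton error (controlled by \cref{lem:sksvrg_local_dist_norm}) and the preconditioner-approximation error (controlled by item 5 of \cref{lem:sksvrg_local_basic_lemmas} applied to $(\nabla^2 F(w_k^{(s)})^{-1}-P^{-1})v_k^{(s)}$), followed by the same re-use of the Newton-error bound on $\|p_k^{(s)}\|$ and the same norm conversions. The only cosmetic difference is that the paper fixes $\beta_g=1/8$ as an absolute constant (keeping $b_g=\Otil(\tau_\star^\nu(\Nstar))$ with no $\varepsilon_0$ dependence) and deduces item 2 directly from item 1 using $\tfrac{7}{12}+\tfrac14<1$ rather than phrasing it as an induction.
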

\begin{proof}
    To start off, we apply the triangle inequality to reach 
    \begin{align*}
        & \|\Delta_{k+1}^{(s)}\|_{\nabla^2F(w_\star)} \leq \|\Delta_{k}^{(s)}-p^{(s)}_{k}\|_{\nabla^2F(w_\star)}+\|p^{(s)}_{k}-\tilde p^{(s)}_{k} \|_{\nabla^2F(w_\star)}. \\
    \end{align*}
    The first term may be bounded by invoking \cref{lem:sksvrg_local_dist_norm}, so for now we focus on bounding the second term, which represents the error from computing an approximate Newton step. 
    To this end, observe that 
    \begin{align*}
        \|p^{(s)}_{k}-\tilde p^{(s)}_{k} \|_{\nabla^2F(w_\star)} & = \left\|\nabla^2 F(w^{(s)}_k)^{1/2}(p^{(s)}_{k}-\tilde p^{(s)}_{k})\right\|_{\nabla^2 F(w^{(s)}_k)^{-1/2}\nabla^2F(w_\star)\nabla^2 F(w^{(s)}_k)^{-1/2}} \\
        &\overset{(1)}{\leq} (1+\varepsilon_0)\left\|\nabla^2 F(w^{(s)}_k)^{1/2}(p^{(s)}_{k}-\tilde p^{(s)}_{k})\right\|\\
        &= (1+\varepsilon_0)\left\|\nabla^2 F(w^{(s)}_k)^{1/2}(\nabla^2 F(w^{(s)}_k)^{-1}-P^{-1})\nabla^2 F(w^{(s)}_k)^{1/2}(\nabla^2 F(w^{(s)}_k)^{1/2}p^{(s)}_{k})\right\|\\
        &\overset{(2)}{\leq} \frac{21}{6}\varepsilon_0\|p_k^{(s)}\|_{\nabla^2 F(w_k^{(s)})}\leq 4\varepsilon_0\|p_k^{(s)}\|_{\nabla^2 F(w_\star)} \\
        &\leq 4\varepsilon_0\left(\|\Delta_{k}^{(s)}\|_{\nabla^2F(w_\star)}+\|\Delta_{k}^{(s)}-p^{(s)}_k\|_{\nabla^2F(w_\star)}\right),
    \end{align*}
    where $(1)$ uses item 2 of \cref{lem:sksvrg_local_basic_lemmas}, and $(2)$ uses item 4 of \cref{lem:sksvrg_local_basic_lemmas}, along with $\varepsilon_0\leq 1/6$.
    Combining the preceding upper bound with our initial bound, we reach
    \begin{align*}
        \|\Delta_{k+1}^{(s)}\|_{\nabla^2F(w_\star)} &\leq (1+4\varepsilon_0)\|\Delta_{k}^{(s)}-p^{(s)}_{k}\|_{\nabla^2F(w_\star)}+4\varepsilon_0\|\Delta_{k}^{(s)}\|_{\nabla^2F(w_\star)}.
    \end{align*}
    Now, invoking \cref{lem:sksvrg_local_dist_norm} to bound $\|\Delta_{k}^{(s)}-p^{(s)}_{k}\|_{\nabla^2F(w_\star)}$, we find with probability at least $1-\delta/(k+1)^2$, that
    \begin{align*}
        \|\Delta_{k+1}^{(s)}\|_{\nabla^2F(w_\star)} &\leq \left[(1+\varepsilon_0)(1+4\varepsilon_0)(\varepsilon_0+\beta_g)+4\varepsilon_0\right]\|\Delta_k^{(s)}\|_{\nabla^2 F(w_\star)}+(1+\varepsilon_0)(1+4\varepsilon_0)\beta_g\|\Delta_0^{(s)}\|_{\nabla^2 F(w_\star)}.
    \end{align*}
    Using $\varepsilon_0 \leq \frac{1}{6}$, the preceding display becomes
    \begin{align*}
        \|\Delta_{k+1}^{(s)}\|_{\nabla^2F(w_\star)} &\leq \left(\frac{1}{3}+2\beta_g\right)\|\Delta_k^{(s)}\|_{\nabla^2 F(w_\star)}+2\beta_g\|\Delta_0^{(s)}\|_{\nabla^2 F(w_\star)}.
    \end{align*}
    Now, let us set $\beta_g = \frac{1}{8}$, then the preceding display simplifies to
    \[
     \|\Delta_{k+1}^{(s)}\|_{\nabla^2F(w_\star)} \leq \frac{7}{12}\|\Delta_k^{(s)}\|_{\nabla^2 F(w_\star)}+\frac{1}{4}\|\Delta_0^{(s)}\|_{\nabla^2 F(w_\star)},
    \]
    which proves the first claim. 
    To see the second claim, note that 
    \[
    \max\{\|\Delta_k^{(s)}\|_{\nabla^2 F(w_\star)},\|\Delta_0^{(s)}\|_{\nabla^2 F(w_\star)}\} \leq \varepsilon_0 \nu^{3/2}/(2M),
    \]
    as $w_k^{(s)},\hat w^{(s)}\in \Nstar$.
    Hence, the second claim follows immediately from the first. 
\end{proof}
    \begin{lemma}[One-stage analysis]
    \label[lemma]{lem:sksvrg_local_one_stage_bnd}
        Let $\hat w^{(s)}\in \Nstar$. Run \cref{alg:sksvrg} with $m = 6$ inner iterations and gradient batchsize satisfies $b_{g} =\bigO\left(\tau_\star^{\nu}(\Nstar)\log\left(\frac{m+1}{\delta}\right)\right)$. 
        Then with probability at least $1-\delta$:
        \begin{enumerate}
            \item $\hat w^{(s+1)} \in \mathcal N_{\frac{2}{3}\varepsilon_0}(w_\star)$.
            \item  $F(\hat w^{(s+1)})-F(w_\star)\leq \frac{2}{3}(F(\hat w^{(s)})-F(w_\star))$.
        \end{enumerate}
    \end{lemma}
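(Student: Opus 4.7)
The plan is to iterate Lemma \ref{lem:sksvrg_local_next_iterate_bound} over the $m$ inner iterations, then convert the resulting distance bound into an objective suboptimality bound via quadratic regularity around $w_\star$.

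First, I would set up a union bound. Lemma \ref{lem:sksvrg_local_next_iterate_bound} fails with probability at most $\delta/(k+1)^2$ at inner iteration $k$, so over $k = 0, \ldots, m-1$ the total failure probability is bounded by $\delta \sum_{k=0}^{m-1}(k+1)^{-2} \leq \delta \pi^2/6$. Rescaling $\delta$ by a constant (and absorbing it into the $\bigO(\cdot)$ in the batchsize hypothesis) gives the failure probability $\delta$ claimed in the statement. On the good event, we simultaneously have $w_k^{(s)} \in \Nstar$ for all $k \leq m$ and the recursion
\[
\|\Delta_{k+1}^{(s)}\|_{\nabla^2 F(w_\star)} \leq \tfrac{7}{12}\|\Delta_k^{(s)}\|_{\nabla^2 F(w_\star)} + \tfrac{1}{4}\|\Delta_0^{(s)}\|_{\nabla^2 F(w_\star)}.
\]

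Next, I would solve this affine recursion in closed form. Letting $a_k = \|\Delta_k^{(s)}\|_{\nabla^2 F(w_\star)} / \|\Delta_0^{(s)}\|_{\nabla^2 F(w_\star)}$, the fixed point of $a \mapsto (7/12)a + 1/4$ is $3/5$, so $a_k \leq 3/5 + (2/5)(7/12)^k$. Taking $m = 6$ gives $a_m \leq 3/5 + (2/5)(7/12)^6 < 0.62 < 2/3$. Since $\hat w^{(s+1)} = w_m^{(s)}$ under Option I and $\|\Delta_0^{(s)}\|_{\nabla^2 F(w_\star)} \leq \varepsilon_0 \nu^{3/2}/(2M)$ by hypothesis, this immediately yields $\|\hat w^{(s+1)} - w_\star\|_{\nabla^2 F(w_\star)} \leq (2/3)\cdot \varepsilon_0 \nu^{3/2}/(2M)$, establishing item 1.

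For item 2, I would convert distances to objective suboptimalities using quadratic regularity at $w_\star$. Since $\nabla F(w_\star) = 0$ and item 1 of Lemma \ref{lem:sksvrg_local_basic_lemmas} gives quadratic regularity constants in $[1/(1+\varepsilon_0), 1+\varepsilon_0]$ on $\Nstar$, upper quadratic regularity applied at $\hat w^{(s+1)}$ and lower quadratic regularity applied at $\hat w^{(s)}$ (both with $w_0 = w_\star$) yield
\[
F(\hat w^{(s+1)}) - F(w_\star) \leq \tfrac{1+\varepsilon_0}{2}\|\Delta_m^{(s)}\|_{\nabla^2 F(w_\star)}^2, \qquad F(\hat w^{(s)}) - F(w_\star) \geq \tfrac{1}{2(1+\varepsilon_0)}\|\Delta_0^{(s)}\|_{\nabla^2 F(w_\star)}^2.
\]
Combining these with $\|\Delta_m^{(s)}\|_{\nabla^2 F(w_\star)} \leq a_m \|\Delta_0^{(s)}\|_{\nabla^2 F(w_\star)}$ gives a contraction factor of $(1+\varepsilon_0)^2 a_m^2$. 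For $\varepsilon_0 \leq 1/6$ and $m = 6$, numerical evaluation yields $(1+\varepsilon_0)^2 a_m^2 \leq (7/6)^2 \cdot (0.62)^2 < 2/3$, which is item 2.

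The main obstacle is really just bookkeeping: verifying that the $\bigO(\cdot)$ batchsize in the statement absorbs the rescaling needed so that the union bound delivers probability $1 - \delta$, and making sure the numerical constant in $a_m^2 (1+\varepsilon_0)^2$ falls below $2/3$ with $m = 6$. Both are straightforward once the recursion is solved in closed form.
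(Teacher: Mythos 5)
Your proposal is correct and follows essentially the same route as the paper's proof: union bound over \cref{lem:sksvrg_local_next_iterate_bound}, iterate the affine recursion to get $\|\Delta_m^{(s)}\|_{\nabla^2 F(w_\star)} \leq \tfrac{2}{3}\|\Delta_0^{(s)}\|_{\nabla^2 F(w_\star)}$, then convert to function values via upper and lower quadratic regularity with the $(1+\varepsilon_0)^2$ factor. The only cosmetic difference is that you solve the recursion via its fixed point (getting the slightly sharper $a_6 < 0.62$) whereas the paper unrolls it directly to $\tfrac{1}{15}+\tfrac{3}{5} = \tfrac{2}{3}$; both deliver the required contraction.
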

    \begin{proof}
    As $\hat{w}^{(s)}\in \Nstar$, it follows by union bound that the conclusions of \cref{lem:sksvrg_local_next_iterate_bound} hold for all $w_{k}^{(s)}$, where  $k\in \{0,\dots, m-1\}$, with probability at least 
    \[
    1-\sum^{m-1}_{k=0}\frac{\delta}{(m+1)^2} = 1-\frac{m}{(m+1)^2}\delta\geq 1-\delta. 
    \]
    Consequently, 
    \[
    \|\Delta_{m}^{(s)}\|_{\nabla^2F(w_\star)}\leq \frac{7}{12}\|\Delta^{(s)}_{m-1}\|_{\nabla^2F(w_\star)}+\frac{1}{4}\|\Delta^{(s)}_{0}\|_{\nabla^2F(w_\star)}.
    \] 
    We now recurse on the previous display, and use $m = 6> \frac{\log(1/15)}{\log(7/12)}$, to reach 
    \begin{align*}
        &\|\Delta_{m}^{(s)}\|_{\nabla^2F(w_\star)} \leq \left(\frac{7}{12}\right)^{m}\|\Delta^{(s)}_0\|_{\nabla^2F(w_\star)}+\left(\sum_{k=0}^{m-1}\left(\frac{7}{12}\right)^{k}\right)\frac{1}{4}\|\Delta^{(s)}_0\|_{\nabla^2F(w_\star)}\\
        &\leq \frac{1}{15}\|\Delta^{(s)}_0\|_{\nabla^2F(w_\star)}+\frac{1}{4(1-\frac{7}{12})}\|\Delta^{(s)}_0\|_{\nabla^2F(w_\star)}\\ 
        &= \left(\frac{1}{15}+\frac{3}{5}\right)\|\Delta^{(s)}_0\|_{\nabla^2F(w_\star)} \leq 
       \frac{2}{3}\|\Delta^{(s)}_0\|_{\nabla^2F(w_\star)}.
    \end{align*}
    Hence $\hat w^{(s+1)} = w_m^{(s)}\in \mathcal N_{\frac{2}{3}\varepsilon_0}(w_\star)$. 
    Using this last inclusion, and applying upper quadratic regularity, followed by lower quadratic regularity, we find 
    \begin{align*}
        &F(\hat w^{(s+1)})-F(w_\star)\leq \frac{1+\varepsilon_0}{2}\|\Delta_{m}^{(s)}\|^2_{\nabla^2F(w_\star)} \leq \frac{1+\varepsilon_0}{2} \frac{4}{9} \|\Delta^{(s)}_0\|^2_{\nabla^2F(w_\star)} \\
        &\leq (1+\varepsilon_0)^2\frac{4}{9}\left(F(\hat w^{(s)})-F(w_\star)\right) \leq \frac{2}{3}\left(F(\hat w^{(s)})-F(w_\star)\right),
    \end{align*}
    as desired. 
\end{proof}
\subsubsection{Proof of \cref{thm:sksvrg_loc_con}}
We now come to the proof of \cref{thm:sksvrg_loc_con}, which is reduced to union bounding over the conclusion of \cref{lem:sksvrg_local_one_stage_bnd}. 
\begin{proof}
    By hypothesis, we may invoke \cref{lem:sksvrg_local_one_stage_bnd} to conclude the output of the first outer iteration satisfies
    \[
    F(\hat w^{(1)})-F(w_\star)\leq \frac{2}{3}(F(w_0)-F(w_\star)), \quad \text{and}~ \hat w^{(1)}\in \Nstar
    \]
    with probability at least $1-\delta$.
    Hence we can apply \cref{lem:sksvrg_local_one_stage_bnd} again to $\hat w^{(s)}$, the output of the second outer iteration. 
    Repeating this logic for all the remaining outer iterations, we find by union bound, that with probability at least $1-s\delta$,
    \[
    F(\hat w^{(s)})-F(w_\star) \leq \left(\frac{2}{3}\right)^{s}(F(w_0)-F(w_\star))\leq \epsilon.
    \]
    The theorem now follows by scaling $\delta$ down by $ 3\log\left((F(w_0)-F(w_\star))/\epsilon\right)$.
\end{proof}

\ifpreprint
\subsection{SketchySAGA}
\label{subsec:sksaga_convergence}
In this subsection, we prove convergence of SketchySAGA (\Cref{alg:sksaga}).
The convergence analysis is based on a Lyapunov function argument, with the Lyapunov function defined in the main text. 
We note, that unlike the proofs of the previous theorems, the intuition behind the convergence argument is less obvious, which is often the case with Lyapunov function based arguments.  
\subsubsection{Notation}
Throughout this section, we shall need the following quantities:
\[
\gammatlm = (1 - \zeta)\gammaMin, \quad \kappa_P = n + \frac{\mathcal L_P}{\gammatlm}.
\]

\subsubsection{Preliminary lemmas}
We start by observing that all the $F_i's$ are quadratically regular.
\begin{lemma}[$F_i$'s are quadratically regular]
\label[lemma]{lem:sksaga_Fi_qr}
    Instate the hypotheses of \cref{assm:SmoothSC}. Then $F_i$ is quadratically regular for each $i \in [n]$.
\end{lemma}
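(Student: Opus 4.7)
The plan is to reduce the claim to item 1 of \cref{proposition:QuadRegConds} by verifying that each $F_i$ is simultaneously smooth and strongly convex. Recall that the objective \eqref{eq:EmpRiskProb} has the form $F(w) = \frac{1}{n}\sum_{i=1}^{n} f_i(w) + \frac{\nu}{2}\|w\|^2$, so the natural splitting into constituent functions is $F_i(w) = f_i(w) + \frac{\nu}{2}\|w\|^2$, so that $F(w) = \frac{1}{n}\sum_{i=1}^{n} F_i(w)$.

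First, I would observe that \cref{assm:SmoothSC} gives that $f_i$ is $L_i$-smooth and convex, so $\nabla^2 f_i(w) \preceq L_i I$ and $\nabla^2 f_i(w) \succeq 0$ for all $w \in \R^p$. Adding the quadratic $\frac{\nu}{2}\|w\|^2$ contributes $\nu I$ to the Hessian, whence
\[
\nu I \preceq \nabla^2 F_i(w) = \nabla^2 f_i(w) + \nu I \preceq (L_i + \nu) I.
\]
Hence $F_i$ is $(L_i + \nu)$-smooth and $\nu$-strongly convex on all of $\R^p$.

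Next, I would apply item 1 of \cref{proposition:QuadRegConds} with $\mathcal C = \R^p$, $L = L_i + \nu$, and $\mu = \nu$. This immediately yields that $F_i$ is quadratically regular with
\[
\frac{\nu}{L_i + \nu} \;\leq\; \gamma_{\ell_i} \;\leq\; \gamma_{u_i} \;\leq\; \frac{L_i + \nu}{\nu},
\]
which proves the claim. The argument is entirely straightforward since the nontrivial work was already carried out in the proof of \cref{proposition:QuadRegConds}; the only subtlety worth flagging is the choice of splitting of $F$ into the $F_i$'s, but including the full regularizer in each $F_i$ (rather than, say, the scaled copy $\frac{\nu}{2n}\|w\|^2$) is precisely what supplies the strong-convexity needed to invoke the proposition.
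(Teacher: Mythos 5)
Your proof is correct and takes the same route as the paper: both identify $F_i(w) = f_i(w) + \frac{\nu}{2}\|w\|^2$ as smooth and strongly convex under \cref{assm:SmoothSC} and then invoke item 1 of \cref{proposition:QuadRegConds}. Your version merely spells out the explicit smoothness and strong-convexity constants, which the paper's one-line proof leaves implicit.
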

\begin{proof}
    By definition $F_i(w) = f_i(w)+\frac{\nu}{2}\|w\|^2$, hence it smooth and strongly convex. Thus, $F_i(w)$ is quadratically regular by \cref{proposition:QuadRegConds}.
\end{proof}

For the remaining lemmas, we will always assume that \cref{assm:SmoothSC} holds, so the $F_i$'s will immediately be quadratically regular by \cref{lem:sksaga_Fi_qr}.

\begin{lemma}\label[lemma]{lemma:sketchysaga_uqr_ub}
    Instate the hypotheses of \cref{assm:SmoothSC,assm:GoodPrecond}. Then for any iteration $k$ and for all $x, y \in \R^p$,
    \begin{align*}
        F_i(y) \leq F_i(x) + \langle \nabla F_i(x), y - x \rangle + \frac{\Lmc_P}{2} \| y - x \|_{P_k}^2.
    \end{align*}
\end{lemma}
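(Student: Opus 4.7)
The plan is to derive the claimed preconditioned smoothness of each $F_i$ by chaining together three ingredients: upper quadratic regularity of $F_i$ (which controls $F_i$ via its own Hessian at an anchor point), the Hessian dissimilarity bound (which relates $\nabla^2 F_i$ to the full Hessian $\nabla^2 F$), and the $\zeta$-spectral approximation property (which relates $\nabla^2 F$ at the construction point to the current preconditioner $P_k$).

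First I would invoke \cref{lem:sksaga_Fi_qr} to conclude that each $F_i$ is quadratically regular on $\R^p$ with upper constant $\gamma_{u_i}$. Let $w_j$ denote the iterate at which the current preconditioner was constructed, so that $P_k = P_j$ under the indexing convention described at the start of \cref{section:theory}. Applying upper quadratic regularity of $F_i$ with anchor point $w_0 = w_j$ yields
\[
F_i(y) \leq F_i(x) + \langle \nabla F_i(x),\, y - x\rangle + \tfrac{\gamma_{u_i}}{2}\,\|y-x\|^2_{\nabla^2 F_i(w_j)}.
\]

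Second, I would convert the $\nabla^2 F_i(w_j)$-norm on the right-hand side into a $P_k$-norm. By \cref{def:hess_sim}, $\nabla^2 F_i(w_j) \preceq \tau_\star^\nu\, \nabla^2 F(w_j)$, and by \cref{assm:GoodPrecond}, $\nabla^2 F(w_j) \preceq (1+\zeta) P_j = (1+\zeta) P_k$. Composing these two Loewner inequalities gives
\[
\|y-x\|^2_{\nabla^2 F_i(w_j)} \;\leq\; (1+\zeta)\,\tau_\star^\nu\, \|y-x\|^2_{P_k}.
\]

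Finally, I would bound $\gamma_{u_i} \leq \gammaMax$ and observe that with $b_g = 1$ the coefficient $n(b_g-1)/[b_g(n-1)]$ in the definition of $\mathcal L_P$ (\cref{prop:precond_grad_var}) vanishes, so $\mathcal L_P = (1+\zeta)\,\tau_\star^\nu\, \gammaMax$. Combining the two displays above with this identification yields exactly the claimed inequality. I do not anticipate any real obstacle: the argument is a direct norm-conversion, and the only bookkeeping subtlety is the indexing convention $P_k = P_j$, together with recalling that the convergence analysis of SketchySAGA is carried out for $b_g = 1$.
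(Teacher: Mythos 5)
Your proposal is correct and follows essentially the same route as the paper: the paper's one-line proof ("each $F_i$ is $\gammaMax$-upper quadratically regular, $\mathcal L_P = (1+\zeta)\tau_\star^{\nu}\gammaMax$, apply \cref{lem:approx_hess_good_model}") is exactly your chain of upper quadratic regularity at the anchor $w_j$, the Loewner bound $\nabla^2 F_i(w_j)\preceq \tau_\star^{\nu}\nabla^2 F(w_j)\preceq (1+\zeta)\tau_\star^{\nu}P_k$, and the identification $\mathcal L_P=(1+\zeta)\tau_\star^{\nu}\gammaMax$ valid for $b_g=1$. You simply make explicit the $\tau_\star^{\nu}$ conversion step that the paper leaves implicit in its citation.
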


\begin{proof}
    As each $F_i$ is $\gammaMax$-upper quadratically regular, and $\mathcal L_P = (1+\zeta)\tau_\star^{\nu}\gammaMax$, the claim immediately follows from \cref{lem:approx_hess_good_model}.
\end{proof}

\begin{lemma}\label[lemma]{lemma:sketchysaga_lqr_ub}
    Instate the hypotheses of \cref{assm:SmoothSC,assm:GoodPrecond}. Then for any iteration $k$ and for all $x, y \in \R^p$,
    \begin{align*}
        F(y) \leq F(x) + \langle \nabla F(x), y - x \rangle + \frac{1}{2 (1 - \zeta) \gamma_\ell} \| \nabla F(y) - \nabla F(x) \|_{P_k^{-1}}^2. 
    \end{align*}
\end{lemma}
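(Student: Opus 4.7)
The plan is to combine the lower quadratic regularity of $F$ with the $\zeta$-spectral approximation property of $P_k$ to obtain strong convexity in the $P_k$-norm, and then invoke the standard conjugate-duality argument (via the generalized Young inequality \cref{lemma:GenYoungIneq}) to convert that strong convexity into the desired gradient-difference bound.

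Concretely, I would start by applying lower quadratic regularity of $F$ with base point $w_0 = w_k$ (the iterate at which $P_k$ was built), which gives
\[
F(y) \geq F(x) + \langle \nabla F(x), y-x\rangle + \frac{\gamma_\ell}{2}\|y-x\|_{\nabla^2 F(w_k)}^2
\]
for all $x,y\in\R^p$. Then \cref{assm:GoodPrecond} yields $(1-\zeta)P_k \preceq \nabla^2 F(w_k)$, so $\|y-x\|_{\nabla^2 F(w_k)}^2 \geq (1-\zeta)\|y-x\|_{P_k}^2$. Setting $\mu := (1-\zeta)\gamma_\ell$, this produces a strong-convexity-type inequality for $F$ in the $P_k$-norm with parameter $\mu$.

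To pass from this to the claim, I consider the auxiliary function $h_x(y) := F(y) - F(x) - \langle \nabla F(x), y-x\rangle$, which is convex, non-negative, satisfies $\nabla h_x(y) = \nabla F(y) - \nabla F(x)$, and inherits the same strong convexity in the $P_k$-norm. Applying the strong convexity bound at $(y,x)$, i.e.\ $h_x(x) \geq h_x(y) + \langle \nabla h_x(y), x-y\rangle + \tfrac{\mu}{2}\|x-y\|_{P_k}^2$ and using $h_x(x) = 0$, I obtain
\[
h_x(y) \leq \langle \nabla F(y) - \nabla F(x), y-x\rangle - \frac{\mu}{2}\|y-x\|_{P_k}^2.
\]
Finally, I invoke \cref{lemma:GenYoungIneq} with $A = P_k$, $a = \nabla F(y)-\nabla F(x)$, $b = y-x$, and $c_1 = 1$, $c_2 = \mu$, which bounds the right-hand side by $\tfrac{1}{2\mu}\|\nabla F(y)-\nabla F(x)\|_{P_k^{-1}}^2$. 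Substituting the definition of $h_x(y)$ and $\mu = (1-\zeta)\gamma_\ell$ yields exactly the claim.

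There is no significant obstacle here: every step is a template application of tools the paper has already developed (lower quadratic regularity, $\zeta$-spectral approximation, and the generalized Young inequality), and the only slight subtlety is the freedom to choose $w_0 = w_k$ in the definition of quadratic regularity, which is explicitly permitted by \cref{def:QuadReg}. The argument is essentially the $P_k$-norm analogue of the well-known equivalence between strong convexity and the dual smoothness-type bound used in \cref{lem:A-norm-smooth}.
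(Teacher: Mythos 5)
Your proposal is correct and follows essentially the same route as the paper: the paper also introduces the Bregman-type auxiliary function $g_x(z) = F(z) - F(x) - \langle \nabla F(x), z - x\rangle$, uses lower quadratic regularity together with \cref{assm:GoodPrecond} to get a $(1-\zeta)\gamma_\ell$ strong-convexity bound in the $P_k$-norm, and extracts the gradient bound from it. The only cosmetic difference is that the paper minimizes the quadratic lower bound over the free variable explicitly, whereas you evaluate it at $v=x$ and invoke \cref{lemma:GenYoungIneq} — the same computation in disguise.
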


\begin{proof}
    Define $g_x(z) = F(z) - F(x) - \langle \nabla F(x), z - x \rangle$. We have $\nabla g_x(x) = \nabla F(x) - \nabla F(x) = 0$. Since $g$ is convex, $0 = g_x(x) = \min_v g_x(v)$. Furthermore, since $F$ is $\gamma_\ell$ lower-quadratically regular, $g_x$ is $\gamma_\ell$ lower-quadratically regular.

    Therefore,
    \begin{align*}
        0 = \min_v g_x(v) &\geq \min_v [g_x(y) + \langle \nabla g_x(y), v - y \rangle + \frac{\gamma_\ell}{2} \| v - y \|_{\nabla^2 F(w_k)}^2] \\
        &\geq \min_v [g_x(y) + \langle \nabla g_x(y), v - y \rangle + \frac{(1 - \zeta) \gamma_\ell}{2} \| v - y \|_{P_k}^2] \\
        &= g_x(y) - \frac{1}{2 (1 - \zeta) \gamma_\ell} \| \nabla g_x(y) \|_{P_k^{-1}}^2 \\
        &= F(y) - F(x) - \langle \nabla F(x), y - x \rangle - \frac{1}{2 (1 - \zeta) \gamma_\ell} \| \nabla F(y) - \nabla F(x) \|_{P_k^{-1}}^2,
    \end{align*}
    where the first inequality follows from $g_x$ being $\gamma_\ell$ lower-quadratically regular, and the second inequality follows from \cref{assm:GoodPrecond}.
    Rearranging the final display yields the result.
\end{proof}

\begin{lemma}\label[lemma]{lemma:sketchysaga_inner_product_ub}
    Instate the hypotheses of \cref{assm:SmoothSC,assm:GoodPrecond}. 
    Then for any iteration $k$ and for all $w \in \R^p$,
    \begin{align*}
         \langle \nabla F(w), w_\star - w \rangle \leq {} & \frac{\Lmc_P - \gammatlm}{\Lmc_P} [F(w_\star) - F(w)] - \frac{\gammatlm}{2} \| w_\star - w \|_{P_k}^2 \\
         & - \frac{1}{2 \Lmc_P n} \sum_{i = 1}^n \| \nabla F_i (w_\star) - \nabla F_i(w) \|_{P_k^{-1}}^2.
    \end{align*}
\end{lemma}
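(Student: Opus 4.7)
The plan follows the preconditioned adaptation of the SAGA-style inequality of \citep{defazio2014saga}: I combine smoothness-based co-coercivity for each $F_i$ with strong convexity of the aggregate $F$, both expressed in the $P_k$-norm, and then take a weighted sum in the Defazio style to isolate $\langle \nabla F(w), w_\star - w\rangle$.

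The first ingredient comes from \cref{lemma:sketchysaga_uqr_ub}, which gives that each $F_i$ is $\Lmc_P$-smooth with respect to $\|\cdot\|_{P_k}$. Applying \cref{lem:A-norm-smooth} with $A = P_k$ converts this smoothness in the $P_k$-norm into the co-coercivity bound $F_i(w_\star) \geq F_i(w) + \langle \nabla F_i(w), w_\star - w\rangle + \frac{1}{2\Lmc_P}\|\nabla F_i(w_\star) - \nabla F_i(w)\|_{P_k^{-1}}^2$ for every $i \in [n]$. Averaging over $i$ and using $\nabla F = \frac{1}{n}\sum_i \nabla F_i$ yields the preconditioned smoothness bound
\[
F(w_\star) - F(w) - \langle \nabla F(w), w_\star - w\rangle \geq \frac{1}{2\Lmc_P n} \sum_{i=1}^n \| \nabla F_i(w_\star) - \nabla F_i(w) \|_{P_k^{-1}}^2.
\]

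The second ingredient is strong convexity of $F$ in the $P_k$-norm with constant $\gammatlm$. Since each $F_i$ is $\gamma_{\ell_i}$-lower quadratically regular with $\gamma_{\ell_i} \geq \gammaMin$, averaging yields that $F$ is $\gammaMin$-lower quadratically regular in the $\nabla^2 F(w_j)$-norm, i.e.\ $F(w_\star) \geq F(w) + \langle \nabla F(w), w_\star - w\rangle + \tfrac{\gammaMin}{2}\|w_\star - w\|_{\nabla^2 F(w_j)}^2$. Invoking \cref{assm:GoodPrecond} to substitute $\nabla^2 F(w_j) \succeq (1-\zeta)P_k$ converts this into the $P_k$-norm strong convexity inequality
\[
F(w_\star) - F(w) - \langle \nabla F(w), w_\star - w\rangle \geq \frac{\gammatlm}{2} \| w_\star - w \|_{P_k}^2,
\]
where $\gammatlm = (1-\zeta)\gammaMin$.

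To conclude, I would combine the two displayed inequalities in the classical SAGA manner: multiply the smoothness bound by $\tfrac{\Lmc_P - \gammatlm}{\Lmc_P}$ and the strong-convexity bound by $\tfrac{\gammatlm}{\Lmc_P}$, sum, and rearrange for $\langle \nabla F(w), w_\star - w\rangle$. Because the weights add to one, the left-hand factor $F(w_\star) - F(w) - \langle \nabla F(w), w_\star - w\rangle$ collapses into a single copy, and splitting the resulting $F(w_\star) - F(w)$ term using $\tfrac{\Lmc_P - \gammatlm}{\Lmc_P} = 1 - \tfrac{\gammatlm}{\Lmc_P}$ delivers the target coefficient on the function-value gap. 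The principal obstacle is the coefficient arithmetic on the $\|w_\star - w\|_{P_k}^2$ and $\sum_i \|\nabla F_i(w_\star) - \nabla F_i(w)\|_{P_k^{-1}}^2$ terms: the weighted sum must be tracked carefully so that these recover the target weights $\tfrac{\gammatlm}{2}$ and $\tfrac{1}{2\Lmc_P n}$ after all rearrangements.
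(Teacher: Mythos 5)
Your two ingredient inequalities are both correct, but the final step---taking the convex combination with weights $\frac{\Lmc_P - \gammatlm}{\Lmc_P}$ and $\frac{\gammatlm}{\Lmc_P}$---cannot produce the stated bound, and this is a genuine gap rather than coefficient bookkeeping. Write $D = F(w_\star) - F(w) - \langle\nabla F(w), w_\star - w\rangle$, $S = \frac{1}{2\Lmc_P n}\sum_i\|\nabla F_i(w_\star)-\nabla F_i(w)\|_{P_k^{-1}}^2$, and $Q = \frac{\gammatlm}{2}\|w_\star - w\|_{P_k}^2$. Any convex combination of your two displays yields $D \geq \lambda S + (1-\lambda)Q$, i.e.\ after rearranging, coefficient $1$ on $F(w_\star)-F(w)$ and coefficients $\lambda$ and $1-\lambda$ on $-S$ and $-Q$. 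The target instead carries the \emph{reduced} coefficient $\frac{\Lmc_P-\gammatlm}{\Lmc_P}$ on the function gap and \emph{full} coefficients on both $-S$ and $-Q$; splitting off $\frac{\gammatlm}{\Lmc_P}[F(w_\star)-F(w)]\le 0$ does not rescue this, since you still retain only fractions of $S$ and $Q$. Concretely, take all $F_i$ equal to $\frac{\Lmc_P}{2}\|\cdot\|^2$ with $P_k = I$ and $w_\star = 0$: the target holds with equality (both sides equal $-\Lmc_P\|w\|^2$), whereas your combination gives the strictly larger right-hand side $-\Lmc_P\|w\|^2 + \frac{\gammatlm}{2}\bigl(1-\frac{\gammatlm}{\Lmc_P}\bigr)\|w\|^2$. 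So the proposed combination proves a valid but strictly weaker inequality, not the lemma.

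The missing idea is that this is an interpolation-type bound in which smoothness and strong convexity must be coupled \emph{before} co-coercivity is invoked, not afterwards. The paper's proof applies the co-coercivity form of smoothness to the shifted functions $g_i(w) = F_i(w) - \frac{\gammatlm}{2}\|w\|_{P_k}^2$, which are convex (by lower quadratic regularity of the $F_i$ together with the $\zeta$-spectral approximation assumption) and $(\Lmc_P - \gammatlm)$-smooth in the $P_k$-norm. Expanding $\nabla g_i(w) = \nabla F_i(w) - \gammatlm P_k w$ inside the resulting inequality produces, besides the $\|w_\star - w\|_{P_k}^2$ and gradient-difference terms, the cross term $\frac{\gammatlm}{\Lmc_P - \gammatlm}\langle\nabla F_i(w_\star)-\nabla F_i(w),\, w - w_\star\rangle$; after averaging over $i$ and using $\nabla F(w_\star)=0$, this inflates the coefficient of $\langle\nabla F(w), w_\star - w\rangle$ to $\frac{\Lmc_P}{\Lmc_P-\gammatlm}$, and dividing through by that factor is precisely what yields the asymmetric coefficient $\frac{\Lmc_P-\gammatlm}{\Lmc_P}$ on the function gap while keeping full weights $\frac{\gammatlm}{2}$ and $\frac{1}{2\Lmc_P n}$ on the other two terms. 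Your first ingredient can be kept essentially as is; it is the second ingredient that must be folded into the co-coercivity step via the shift.
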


\begin{proof}
    Define $g_i(w) = F_i(w) - \frac{(1 - \zeta) \gammaMin}{2} \| w \|_{P_k}^2 = F_i(w) - \frac{\gammatlm}{2} \| w \|_{P_k}^2$. Since $F_i$ is $\gamma_{\ell_i}$ lower-quadratically regular and \cref{assm:GoodPrecond} holds, $g_i$ is convex. Furthermore, since $F_i$ is $\gammaMax$ upper-quadratically regular, we use \cref{lemma:sketchysaga_uqr_ub} to conclude $g_i$ is $(\Lmc_P - \gammatlm)$-smooth in the $P_k$-norm, i.e.,
    \begin{align*}
        g_i(x) \leq g_i(y) + \langle \nabla g_i(y), x - y \rangle + \frac{\Lmc_P - \gammatlm}{2} \| x - y \|_{P_k}^2
    \end{align*}
    for all $x, y \in \R^p$.
    Hence by \cref{lem:A-norm-smooth}, we have 
    \begin{align}\label{eq:sketchysaga_inner_product_ub-lb}
        g_i(x) \geq g_i(y) + \langle \nabla g_i(y), x - y \rangle + \frac{1}{2 (\Lmc_P - \gammatlm)} \| \nabla g_i(x) - \nabla g_i(y) \|_{P_k^{-1}}^2.
    \end{align}
    Substituting the definition of $g_i$ into \cref{eq:sketchysaga_inner_product_ub-lb}, we find
    \begin{align*}
        F_i(x) \geq {} & \frac{\gammatlm}{2} \|x\|_{P_k}^2 + F_i(y) - \frac{\gammatlm}{2} \|y\|_{P_k}^2 + \langle \nabla F_i(y) - \gammatlm P_k y, x - y \rangle \\
        & + \frac{1}{2 (\Lmc_P - \gammatlm)} \| \nabla F_i(x) - \nabla F_i(y) - \gammatlm P_k (x - y) \|_{P_k^{-1}}^2 \\
        = {} & F_i(y) + \langle \nabla F_i(y), x - y \rangle + \frac{\gammatlm}{2}\left[  \|x\|_{P_k}^2 -  \|y\|_{P_k}^2 - 2 \langle P_k y, x - y \rangle \right] \\
        & + \frac{1}{2 (\Lmc_P - \gammatlm)} \| \nabla F_i(x) - \nabla F_i(y) \|_{P_k^{-1}}^2 + \frac{\gammatlm}{\Lmc_P - \gammatlm} \langle \nabla F_i(x) - \nabla F_i(y), y - x \rangle \\
        & + \frac{(\gammatlm)^2}{2 (\Lmc_P - \gammatlm)} \|x - y\|_{P_k}^2 \\
        = {} & F_i(y) + \langle \nabla F_i(y), x - y \rangle + \frac{1}{2 (\Lmc_P - \gammatlm)} \| \nabla F_i(x) - \nabla F_i(y) \|_{P_k^{-1}}^2 \\
        & + \frac{\gammatlm \Lmc_P}{2 (\Lmc_P - \gammatlm)} \|y - x\|_{P_k}^2 + \frac{\gammatlm}{\Lmc_P - \gammatlm} \langle \nabla F_i(x) - \nabla F_i(y), y - x \rangle.
    \end{align*}
    Setting $x = w_\star$, $y = w$, averaging over the $F_i$'s, and using $\nabla F(w_\star) = 0$, we obtain the claimed result.
\end{proof}

\begin{lemma}\label[lemma]{lemma:sketchysaga_variance_reduction}
    Instate the hypotheses of \cref{assm:SmoothSC,assm:GoodPrecond}.
    Suppose each $F_i$ is $\gamma_{u_i}$ upper-quadratically regular. Then for any iteration $k$, and for all $\psi^i \in \R^p$, 
    \begin{align*}
        \frac{1}{n} \sum_{i = 1}^n \| \nabla F_i(\psi^i) - \nabla F_i(w_\star) \|_{P_k^{-1}}^2 \leq 2 \Lmc_P \left[ \frac{1}{n} \sum_{i = 1}^n F_i(\psi^i) - F(w_\star) - \frac{1}{n} \sum_{i = 1}^n \langle \nabla F_i(w_\star), \psi^i - w_\star \rangle \right].
    \end{align*}
\end{lemma}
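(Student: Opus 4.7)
The plan is to apply the preconditioned smoothness bound to each $F_i$ individually, and then average. The key observation is that under the hypotheses of \cref{assm:SmoothSC,assm:GoodPrecond}, combining quadratic regularity of $F_i$ with the $\zeta$-spectral approximation property yields (via \cref{lem:approx_hess_good_model}) that each $F_i$ is smooth in the $P_k$-norm, with smoothness constant at most $(1+\zeta)\gamma_{u_i} \leq (1+\zeta)\gammaMax$. Since the SketchySAGA analysis is performed with gradient batchsize $b_g = 1$, we have $\mathcal L_P = (1+\zeta)\tau_\star^{\nu}\gammaMax$, and because $\tau_\star^{\nu}\geq 1$ by \cref{lem:hess_sim}, we obtain $(1+\zeta)\gamma_{u_i}\leq \mathcal L_P$ for every $i\in[n]$. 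Hence each $F_i$ is $\mathcal L_P$-smooth in the $P_k$-norm.

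Given $P_k$-norm smoothness, \cref{lem:A-norm-smooth} provides the dual lower bound
\[
F_i(x) \geq F_i(y) + \langle \nabla F_i(y), x - y\rangle + \frac{1}{2\mathcal L_P}\|\nabla F_i(x) - \nabla F_i(y)\|_{P_k^{-1}}^2
\]
for all $x,y\in\R^p$. I would then set $x = \psi^i$ and $y = w_\star$. Rearranging the resulting inequality gives
\[
\frac{1}{2\mathcal L_P}\|\nabla F_i(\psi^i) - \nabla F_i(w_\star)\|_{P_k^{-1}}^2 \leq F_i(\psi^i) - F_i(w_\star) - \langle \nabla F_i(w_\star), \psi^i - w_\star\rangle.
\]

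Averaging this inequality over $i\in[n]$ and using $\frac{1}{n}\sum_{i=1}^{n} F_i(w_\star) = F(w_\star)$ yields the claim after multiplying through by $2\mathcal L_P$. The only mildly subtle point is verifying that $(1+\zeta)\gamma_{u_i}\leq \mathcal L_P$, which as noted above follows from $b_g=1$ and $\tau_\star^{\nu}\geq 1$; without this, one could not replace the per-term smoothness constant $(1+\zeta)\gamma_{u_i}$ by the uniform constant $\mathcal L_P$ needed on the right-hand side.
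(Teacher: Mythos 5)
Your overall route is the same as the paper's: establish that each $F_i$ is $\mathcal L_P$-smooth in the $P_k$-norm (this is the paper's \cref{lemma:sketchysaga_uqr_ub}), dualize via \cref{lem:A-norm-smooth} to get the cocoercivity-type lower bound, specialize to $x=\psi^i$, $y=w_\star$, and average over $i$. The back half of your argument is exactly the paper's proof.

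However, the justification of the smoothness step contains a misstep. You claim that \cref{lem:approx_hess_good_model} gives each $F_i$ a $P_k$-norm smoothness constant of $(1+\zeta)\gamma_{u_i}$. That lemma requires $P_k$ to be a $\zeta$-spectral approximation of the Hessian of the \emph{same} function whose quadratic regularity you are invoking; but \cref{assm:GoodPrecond} only says $P_k$ approximates $\nabla^2 F(w_j)=\nabla^2 f(w_j)+\nu I$, the Hessian of the full objective, not $\nabla^2 F_i(w_j)$. The upper quadratic regularity of $F_i$ is stated in the $\nabla^2 F_i(w_0)$-norm, so to convert to the $P_k$-norm you must first pass from $\|\cdot\|^2_{\nabla^2 F_i}$ to $\|\cdot\|^2_{\nabla^2 F}$, which costs a factor of $\tau_\star^\nu$ by the definition of Hessian dissimilarity ($\nabla^2 F_i(w)\preceq \tau_\star^\nu\,\nabla^2 F(w)$). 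The correct per-term constant is therefore $(1+\zeta)\tau_\star^\nu\gamma_{u_i}$, not $(1+\zeta)\gamma_{u_i}$. Your conclusion survives anyway, because $(1+\zeta)\tau_\star^\nu\gamma_{u_i}\leq(1+\zeta)\tau_\star^\nu\gammaMax=\mathcal L_P$ for $b_g=1$ --- in fact, with the corrected constant you no longer need the auxiliary observation $\tau_\star^\nu\geq 1$ at all. So the proof is repairable with a one-line fix, but as written the appeal to \cref{lem:approx_hess_good_model} for the individual $F_i$ is not valid, and the factor of $\tau_\star^\nu$ you introduce via ``$\tau_\star^\nu\geq 1$'' is doing the work of the Hessian-dissimilarity bound without acknowledging it.
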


\begin{proof}
    Using \cref{lemma:sketchysaga_uqr_ub} and \cref{lem:A-norm-smooth}, we have
    \begin{align*}
        F_i(y) \geq F_i(x) + \langle \nabla F_i(x), y - x \rangle + \frac{1}{2 \Lmc_P} \| \nabla F_i(y) - \nabla F_i(x) \|_{P_k^{-1}}^2
    \end{align*}
    for all $x, y \in \R^p$.
    Rearranging this inequality and using $y = \psi^i$ and $x = w_\star$ gives
    \begin{align*}
        \| \nabla F_i(\psi^i) - \nabla F_i(w_\star) \|_{P_k^{-1}}^2 \leq 2 \Lmc_P [F_i(\psi^i) - F_i(w_\star) - \langle \nabla F_i(w_\star), \psi^i - w_\star \rangle].
    \end{align*}
    Summing over $i = 1, 2, \ldots, n$ and dividing by $n$ yields
    \begin{align*}
        \frac{1}{n} \sum_{i = 1}^n \| \nabla F_i(\psi^i) - \nabla F_i(w_\star) \|_{P_k^{-1}}^2 \leq 2 \Lmc_P \left[ \frac{1}{n} \sum_{i = 1}^n F_i(\psi^i) - F(w_\star) - \frac{1}{n} \sum_{i = 1}^n \langle \nabla F_i(w_\star), \psi^i - w_\star \rangle \right].
    \end{align*}
\end{proof}

\begin{lemma}\label[lemma]{lemma:sketchysaga_iteration_distance}
    For any $\psi_k^i, w_\star, w_k$ and $\alpha > 0$, with $w_{k+1}$ defined as in \cref{alg:sksaga}, we have
    \begin{align*}
        \E \|w_{k+1} - w_k\|_{P_k}^2 \leq {} & \eta^2 (1 + \alpha^{-1}) \E \| \nabla F_j(\psi_k^j) - \nabla F_j(w_\star) \|_{P_k^{-1}}^2  \\
        & + \eta^2 (1 + \alpha) \E \| \nabla F_j(w_k) - \nabla F_j(w_\star) \|_{P_k^{-1}}^2 - \eta^2 \alpha \|\nabla F(w_k) \|_{P_k^{-1}}^2.
    \end{align*}
\end{lemma}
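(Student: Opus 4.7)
The plan is to reduce the claim to a variance bound on the SAGA stochastic gradient $g_k$, then apply a Young-type inequality in a carefully chosen decomposition. Since $w_{k+1} = w_k - \eta P_k^{-1} g_k$, the preconditioner norms conveniently cancel to give $\|w_{k+1}-w_k\|_{P_k}^2 = \eta^2 \|g_k\|_{P_k^{-1}}^2$, so everything reduces to showing
\[
\E \|g_k\|_{P_k^{-1}}^2 \leq (1+\alpha)\,\E\|\nabla F_j(w_k)-\nabla F_j(w_\star)\|_{P_k^{-1}}^2 + (1+\alpha^{-1})\,\E\|\nabla F_j(\psi_k^j)-\nabla F_j(w_\star)\|_{P_k^{-1}}^2 - \alpha\|\nabla F(w_k)\|_{P_k^{-1}}^2.
\]

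Conditioned on the history through iteration $k$, $g_k$ is an unbiased estimator of $\nabla F(w_k)$: this is the defining property of SAGA and follows from $\E_j[\nabla F_j(w_k)] = \nabla F(w_k)$ and $\E_j[\nabla F_j(\psi_k^j)] = \frac{1}{n}\sum_i \nabla F_i(\psi_k^i)$. Hence the standard variance identity gives
\[
\E\|g_k\|_{P_k^{-1}}^2 = \|\nabla F(w_k)\|_{P_k^{-1}}^2 + \E\|g_k - \nabla F(w_k)\|_{P_k^{-1}}^2.
\]
The next step is to find the right way to write $g_k - \nabla F(w_k)$ so that the Young inequality coefficients come out to $(1+\alpha)$ and $(1+\alpha^{-1})$ as required. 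Introducing $X_j := \nabla F_j(w_k)-\nabla F_j(w_\star)$ and $Y_j := \nabla F_j(\psi_k^j)-\nabla F_j(w_\star)$, and using $\nabla F(w_\star)=0$ so $\E X_j = \nabla F(w_k)$ and $\E Y_j = \frac{1}{n}\sum_i[\nabla F_i(\psi_k^i)-\nabla F_i(w_\star)]$, a direct rearrangement of the SAGA definition of $g_k$ yields
\[
g_k - \nabla F(w_k) = (X_j - \E X_j) - (Y_j - \E Y_j).
\]

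Now I apply $\|a-b\|_{P_k^{-1}}^2 \leq (1+\alpha)\|a\|_{P_k^{-1}}^2 + (1+\alpha^{-1})\|b\|_{P_k^{-1}}^2$ (standard AM-GM) pointwise in $j$ and take expectations:
\[
\E\|g_k-\nabla F(w_k)\|_{P_k^{-1}}^2 \leq (1+\alpha)\,\E\|X_j - \E X_j\|_{P_k^{-1}}^2 + (1+\alpha^{-1})\,\E\|Y_j - \E Y_j\|_{P_k^{-1}}^2.
\]
Each term on the right is a variance, so by \cref{lem:AVarBnd} (or direct computation) it equals $\E\|X_j\|_{P_k^{-1}}^2 - \|\E X_j\|_{P_k^{-1}}^2$ and $\E\|Y_j\|_{P_k^{-1}}^2 - \|\E Y_j\|_{P_k^{-1}}^2$ respectively. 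Substituting $\|\E X_j\|_{P_k^{-1}}^2 = \|\nabla F(w_k)\|_{P_k^{-1}}^2$, dropping the non-negative $\|\E Y_j\|_{P_k^{-1}}^2$ term, and adding back $\|\nabla F(w_k)\|_{P_k^{-1}}^2$ from the variance identity produces a coefficient $1-(1+\alpha) = -\alpha$ on $\|\nabla F(w_k)\|_{P_k^{-1}}^2$, which is exactly what the lemma requires. Multiplying through by $\eta^2$ finishes the proof.

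The main conceptual obstacle, and the step that has to be gotten right, is choosing the decomposition of $g_k - \nabla F(w_k)$. The naive split $g_k = [\nabla F_j(w_k)-\nabla F_j(\psi_k^j)] + V$ fails because applying Young's inequality directly yields a residual term involving $V = \tfrac{1}{n}\sum_i \nabla F_i(\psi_k^i)$, not $\nabla F(w_k)$, so the sign on $\|\nabla F(w_k)\|_{P_k^{-1}}^2$ comes out wrong. Centering both components around their respective means before invoking Young is what converts the bound into one involving only $\|\nabla F(w_k)\|_{P_k^{-1}}^2$ with the correct negative coefficient $-\alpha$; everything else is routine manipulation.
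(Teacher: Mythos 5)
Your proposal is correct and follows essentially the same route as the paper's own proof: both reduce to $\eta^2\E\|g_k\|_{P_k^{-1}}^2$, apply the bias--variance identity $\E\|X\|_{A}^2=\E\|X-\E X\|_{A}^2+\|\E X\|_{A}^2$, decompose $g_k-\nabla F(w_k)$ as the difference of the centered variables $\nabla F_j(w_k)-\nabla F_j(w_\star)-\nabla F(w_k)$ and $\nabla F_j(\psi_k^j)-\nabla F_j(w_\star)-\tfrac{1}{n}\sum_i\nabla F_i(\psi_k^i)$, then use Young's inequality with coefficients $(1+\alpha)$ and $(1+\alpha^{-1})$, keeping the exact variance identity on the $w_k$-term to produce the $-\alpha\|\nabla F(w_k)\|_{P_k^{-1}}^2$ and dropping the nonnegative mean term on the $\psi$-side. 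No gaps.
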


\begin{proof}
    By the definition of the SketchySAGA update,
    \begin{align*}
        & \E \|w_{k+1} - w_k\|_{P_k}^2 \\
        = {} & \E \left\| P_k^{-1} \left( -\frac{\eta}{n} \sum_{i = 1}^n \nabla F_i(\psi_k^i) + \eta [\nabla F_j(\psi_k^j) - \nabla F_j(w_k)] \right) \right\|_{P_k}^2 \\
        = {} & \E \left\| -\frac{\eta}{n} \sum_{i = 1}^n \nabla F_i(\psi_k^i) + \eta [\nabla F_j(\psi_k^j) - \nabla F_j(w_k)] \right\|_{P_k^{-1}}^2 \\
        = {} & \eta^2 \E \left\| \left( \nabla F_j (\psi_k^j) - \nabla F_j(w_\star) - \frac{1}{n} \sum_{i = 1}^n \nabla F_i(\psi_k^i) \right) - (\nabla F_j(w_k) - \nabla F_j(w_\star) - \nabla F(w_k))\right\|_{P_k^{-1}}^2 \\
        & + \eta^2 \| \nabla F(w_k) \|_{P_k^{-1}}, 
    \end{align*}
    where the final equality follows from using $\E[\nabla F_j (\psi_k^j) - \frac{1}{n} \nabla F_j (\psi_k^j) - \nabla F_j(w_k)] = -\nabla F(w_k)$ and the identity $\E \| X \|_A^2 = \E \| X - \E X \|_A^2 + \| \E X\|_A^2$. Using the facts (1) $\|x+y\|_A^2 \leq (1 + \alpha^{-1}) \|x\|_A^2 + (1 + \alpha) \|y\|_A^2$, (2) $\E \| X - \E X \|_A^2 \leq \E \| X \|_A^2$, and (3) $\E \| X \|_A^2 = \E \| X - \E X \|_A^2 + \| \E X\|_A^2$, we find 
    \begin{align*}
        &\eta^2 \E \left\| \left( \nabla F_j (\psi_k^j) - \nabla F_j(w_\star) - \frac{1}{n} \sum_{i = 1}^n \nabla F_i(\psi_k^i) \right) - (\nabla F_j(w_k) - \nabla F_j(w_\star) - \nabla F(w_k))\right\|_{P_k^{-1}}^2 \\
        & + \eta^2 \| \nabla F(w_k) \|_{P_k^{-1}} \\
        \stackrel{\textrm{(1)}}{\leq} {} & \eta^2 (1 + \alpha^{-1}) \E \left\| \nabla F_j (\psi_k^j) - \nabla F_j(w_\star) - \frac{1}{n} \sum_{i = 1}^n \nabla F_i(\psi_k^i) \right\|_{P_k^{-1}}^2 \\
        & + \eta^2 (1 + \alpha) \E \left\| \nabla F_j(w_k) - \nabla F_j(w_\star) - \nabla F(w_k) \right\|_{P_k^{-1}}^2 + \eta^2 \| \nabla F(w_k) \|_{P_k^{-1}} \\ 
        \stackrel{\textrm{(2)}}{\leq} {} & \eta^2 (1 + \alpha^{-1}) \E \left\| \nabla F_j (\psi_k^j) - \nabla F_j(w_\star) \right\|_{P_k^{-1}}^2 \\
        & + \eta^2 (1 + \alpha) \E \left\| \nabla F_j(w_k) - \nabla F_j(w_\star) - \nabla F(w_k) \right\|_{P_k^{-1}}^2 + \eta^2 \| \nabla F(w_k) \|_{P_k^{-1}} \\
        \stackrel{\textrm{(3)}}{=} {} & \eta^2 (1 + \alpha^{-1}) \E \| \nabla F_j(\psi_k^j) - \nabla F_j(w_\star) \|_{P_k^{-1}}^2 \\
        & + \eta^2 (1 + \alpha) \E \| \nabla F_j(w_k) - \nabla F_j(w_\star) \|_{P_k^{-1}}^2 - \eta^2 \alpha \|\nabla F(w_k) \|_{P_k^{-1}}^2.
    \end{align*}
\end{proof}

\begin{lemma}[Contraction lemma]
\label[lemma]{lem:sksaga_contraction}
    Instate the hypotheses of \cref{assm:SmoothSC,assm:GoodPrecond}. Then
    \begin{align*}
        % \E_k [T_{k+1}] \leq {} & \left( 1 - \frac{1}{\kappa} \right)\beta_k T_k + c (1 - \eta \gammatlm - \beta_k + \frac{1}{\kappa} \beta_k) B_k  \| w_k - w_\star \|_{P_{k-1}}^2 \\
        % & + B_{k+1} \left( (1 + \alpha) c \eta^2 - \frac{c \eta}{\Lmc_P} \right) \E \| \nabla F_j(w_k) - \nabla F_j(w_\star) \|_{P_k^{-1}}^2 \\
        % & + B_{k+1} \left(\frac{1}{n} -\frac{2 c \eta (\Lmc_P - \gammatlm)}{\Lmc_P} - 2 c \eta^2 \gammatlm \alpha \right) [F(w_k) - F(w_\star)] \\
        % & + B_{k+1} \left(2c \eta^2 (1 + \alpha^{-1}) \Lmc_P  - \frac{1}{n} + \frac{1}{\kappa} \right) \mathsf{Breg}_k
        \E_k [T_{k+1}] \leq \left( 1 - \frac{1}{2 (n + \kappa_P)} \right) \beta_k T_k,
    \end{align*}
    where $\kappa_P = \frac{\Lmc_P}{\gammatlm} = \frac{\Lmc_P}{(1 - \zeta) \gammaMin}$ and $\beta_k$ is defined as in \cref{prop:stable_precond}.
\end{lemma}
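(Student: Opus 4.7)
The plan is to adapt the SAGA Lyapunov-function argument of Defazio et al.\ (2014) to the preconditioned, lazy-update setting. Decompose $T_k = B_k (A_k + c\|w_k - w_\star\|_{P_{k-1}}^2)$, where $A_k$ collects the three table-dependent terms. Since $B_{k+1} = \beta_k B_k$, the $\beta_k$ factor in the claim is absorbed by this normalization; modulo the norm conversion between $P_{k-1}$ and $P_k$ at iterate $w_k$ (handled at the end via \cref{prop:stable_precond}), the statement reduces to the one-step contraction
\begin{align*}
\E_k\bigl[A_{k+1} + c\|w_{k+1} - w_\star\|_{P_k}^2\bigr] \leq \bigl(1 - \tfrac{1}{2(n+\kappa_P)}\bigr)\bigl(A_k + c\|w_k - w_\star\|_{P_k}^2\bigr).
\end{align*}

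I would first handle the table term. Because SketchySAGA with $b_g = 1$ replaces exactly one uniformly chosen column of $\psi$ per iteration with $\nabla F_j(w_k)$, a direct calculation yields
\begin{align*}
\E_k[A_{k+1}] = \bigl(1 - \tfrac{1}{n}\bigr) A_k + \tfrac{1}{n}\bigl[F(w_k) - F(w_\star) - \langle \nabla F(w_\star), w_k - w_\star\rangle\bigr].
\end{align*}
For the distance term, I would expand $w_{k+1} = w_k - \eta P_k^{-1} g_k$ in the $P_k$-norm and use $\E_k g_k = \nabla F(w_k)$ to write
\begin{align*}
\E_k\|w_{k+1} - w_\star\|_{P_k}^2 = \|w_k - w_\star\|_{P_k}^2 - 2\eta\langle \nabla F(w_k), w_k - w_\star\rangle + \E_k\|w_{k+1} - w_k\|_{P_k}^2.
\end{align*}
Applying \cref{lemma:sketchysaga_inner_product_ub} with $w = w_k$ bounds the inner product, contributing (i) a strong-convexity-type recession $-\gammatlm\eta\|w_k - w_\star\|_{P_k}^2$, (ii) a suboptimality term proportional to $F(w_\star) - F(w_k)$ that combines with the corresponding term from $\E_k[A_{k+1}]$, and (iii) a negative gradient-variance term at $w_k$. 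The squared-step term $\E_k\|w_{k+1} - w_k\|_{P_k}^2$ is controlled by \cref{lemma:sketchysaga_iteration_distance}, and the $\psi$-side of the resulting gradient variance is converted back into $A_k$ via \cref{lemma:sketchysaga_variance_reduction}.

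The main technical challenge is balancing the three free parameters $\alpha$ (from \cref{lemma:sketchysaga_iteration_distance}), $\eta$, and $c$ so that (a) all gradient-variance terms at $w_k$ cancel, (b) the coefficients of $F(w_k) - F(w_\star)$ line up with the $(1 - 1/n)A_k$ piece, and (c) both $A_k$ and the distance contribution contract at the same rate $1 - 1/[2(n+\kappa_P)]$. The stated choices $\eta = 1/[2(n\gammatlm + \Lmc_P)]$ and $c = 1/[2\eta(1-\eta\gammatlm)n]$, combined with an appropriate $\alpha$, reproduce the classical SAGA balance: the table-side coefficient contracts at rate $1 - 1/n$ while the distance coefficient contracts at rate $1 - 2\eta\gammatlm = 1 - 1/(n + \kappa_P)$, whose worse common factor (after accounting for the Young-inequality cross term) is the stated $1 - 1/[2(n+\kappa_P)]$. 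Finally, replacing $\|w_k - w_\star\|_{P_k}^2$ by $\|w_k - w_\star\|_{P_{k-1}}^2$ on the right incurs a multiplicative factor bounded almost surely by $\beta_k$ via \cref{prop:stable_precond}, which is exactly the $\beta_k$ appearing in the claim and completes the argument.
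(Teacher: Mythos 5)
Your proposal is correct and follows essentially the same route as the paper: the same Lyapunov decomposition into table terms plus a preconditioned distance term, the same three helper lemmas (\cref{lemma:sketchysaga_inner_product_ub}, \cref{lemma:sketchysaga_iteration_distance}, \cref{lemma:sketchysaga_variance_reduction}) together with \cref{lemma:sketchysaga_lqr_ub} and \cref{prop:stable_precond}, and the same parameter choices $\eta = 1/[2(n\gammatlm + \Lmc_P)]$, $c = 1/[2\eta(1-\eta\gammatlm)n]$, and $\alpha$ to drive the coefficients of the four nonnegative residual quantities to zero or below. The only difference is cosmetic: you factor out $B_k$ up front and reinstate $\beta_k$ at the end, whereas the paper carries the $B_{k+1}/B_k$ factors through the computation.
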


\begin{proof}
    By the definition of $T_j$ for general $j$, we can write $T_{k+1}$ as 
    \begin{align*}
        T_{k+1} = B_{k+1} \left( \frac{1}{n} \sum_{i = 1}^n F_i(\psi_{k+1}^i) - F(w_\star) - \frac{1}{n} \sum_{i = 1}^n \langle \nabla F_i(w_\star), \psi_{k+1}^i - w_\star \rangle + c \| w_{k+1} - w_\star \|_{P_k}^2 \right).
    \end{align*}
    The expectations of the first and third terms of $T_{k+1}$ simplify as follows:
    \begin{align*}
        \E \left[ \frac{1}{n} \sum_{i = 1}^n F_i(\psi_{k+1}^i) \right] &= \frac{1}{n} F(w_k) + \left( 1 - \frac{1}{n} \right) \frac{1}{n} \sum_{i = 1}^n F_i(\psi_k^i) \\
        \E \left[ \frac{1}{n} \sum_{i = 1}^n \langle \nabla F_i(w_\star), \psi_{k+1}^i - w_\star \rangle \right] &= \left( 1 - \frac{1}{n} \right) \frac{1}{n} \sum_{i = 1}^n \langle \nabla F_i(w_\star), \psi_k^i - w_\star \rangle.
    \end{align*}
    We now bound the expectation of the final term in $T_{k+1}$,
    \begin{align*}
        & c \E \| w_{k+1} - w_\star \|_{P_k}^2 \\
        = {} & c \E \|w_k - w_\star + w_{k+1} - w_k\|_{P_k}^2 \\
        = {} & c \| w_k - w_\star \|_{P_k}^2 + 2c \E \langle w_{k+1} - w_k, w_k - w_\star \rangle_{P_k} + c \E \| w_{k+1} - w_k\|_{P_k}^2 \\
        = {} & c \| w_k - w_\star \|_{P_k}^2 + 2c \langle -\eta P_k^{-1} \nabla F(w_k), w_k - w_\star \rangle_{P_k} + c \E \| w_{k+1} - w_k\|_{P_k}^2 \\
        = {} & c \| w_k - w_\star \|_{P_k}^2 - 2c \eta \langle \nabla F(w_k), w_k - w_\star \rangle + c \E \| w_{k+1} - w_k\|_{P_k}^2 \\
        \leq {} & c \| w_k - w_\star \|_{P_k}^2 - 2c \eta \langle \nabla F(w_k), w_k - w_\star \rangle - c \eta^2 \alpha \| \nabla F(w_k) \|_{P_k^{-1}}^2 \\
        & + c \eta^2 (1 + \alpha^{-1}) \E \| \nabla F_j(\psi_k^j) - \nabla F_j(w_\star) \|_{P_k^{-1}}^2 + c \eta^2 (1 + \alpha) \E \| \nabla F_j(w_k) - \nabla F_j(w_\star) \|_{P_k^{-1}}^2,
    \end{align*}
    where the third equality follows from the definition of the SketchySAGA update, and the inequality follows from applying \cref{lemma:sketchysaga_iteration_distance}. We will set the value of $\alpha$ later in the proof to reach the claimed result.
    Using \cref{lemma:sketchysaga_inner_product_ub} to bound $- 2c \eta \langle \nabla F(w_k), w_k - w_\star \rangle$ and \cref{lemma:sketchysaga_variance_reduction} to bound $\E \| \nabla F_j(\psi_k^j) - \nabla F_j(w_\star) \|_{P_k^{-1}}^2$, we obtain
    \begin{align*}
        c \E \| w_{k+1} - w_\star \|_{P_k}^2 \leq {} & c(1 - \eta \gammatlm) \| w_k - w_\star \|_{P_k}^2 + \left( (1 + \alpha) c \eta^2 - \frac{c \eta}{\Lmc_P} \right) \E \| \nabla F_j(w_k) - \nabla F_j(w_\star) \|_{P_k^{-1}}^2 \\
        & - \frac{2 c \eta (\Lmc_P - \gammatlm)}{\Lmc_P} [F(w_k) - F(w_\star)] - c \eta^2 \alpha \| \nabla F(w_k) \|_{P_k^{-1}}^2 \\
        & + 2 c \eta^2 (1 + \alpha^{-1}) \Lmc_P \mathsf{Breg}_k
    \end{align*}
    where we have defined $\mathsf{Breg}_k \coloneqq \frac{1}{n} \sum_{i = 1}^n F_i(\psi_k^i) - F(w_\star) - \frac{1}{n} \sum_{i = 1}^n \langle \nabla F_i(w_\star), \psi_k^i - w_\star \rangle$ for notational convenience.
    We now apply backwards stable evolution (\cref{prop:stable_precond}) and \cref{lemma:sketchysaga_lqr_ub} with $x = w_\star,y = w_k, \gamma = \gamma_{\ell_{\min}}$, from which we find
\begin{align*}
      c \E \| w_{k+1} - w_\star \|_{P_k}^2 \leq {} & \frac{c (1 - \eta \gammatlm)}{\beta_k} \| w_k - w_\star \|_{P_{k-1}}^2 \\
        & + \left( (1 + \alpha) c \eta^2 - \frac{c \eta}{\Lmc_P} \right) \E \| \nabla F_j(w_k) - \nabla F_j(w_\star) \|_{P_k^{-1}}^2 \\
        & + \left(-\frac{2 c \eta (\Lmc_P - \gammatlm)}{\Lmc_P} - 2 c \eta^2 \gammatlm \alpha \right)[F(w_k) - F(w_\star)] \\
        & + 2 c \eta^2 (1 + \alpha^{-1}) \Lmc_P \mathsf{Breg}_k.
\end{align*}
Now we combine the bounds we have computed for the terms in $\E T_{k+1}$:
\begin{align*}
        \E T_{k+1} \leq {} & B_{k+1} \left( \frac{1}{n} F(w_k) + \left( 1 - \frac{1}{n} \right) \frac{1}{n} \sum_{i = 1}^n F_i(\psi_k^i) - F(w_\star) - \left( 1 - \frac{1}{n} \right) \frac{1}{n} \sum_{i = 1}^n \langle \nabla F_i(w_\star), \psi_k^i - w_\star \rangle \right) \\
        & + B_{k+1} \frac{c (1 - \eta \gammatlm)}{\beta_k} \| w_k - w_\star \|_{P_{k-1}}^2 + B_{k+1} \left( (1 + \alpha) c \eta^2 - \frac{c \eta}{\Lmc_P} \right) \E \| \nabla F_j(w_k) - \nabla F_j(w_\star) \|_{P_k^{-1}}^2 \\
        & + B_{k+1} \left(-\frac{2 c \eta (\Lmc_P - \gammatlm)}{\Lmc_P} - 2 c \eta^2 \gammatlm \alpha \right)[F(w_k) - F(w_\star)] \\
        & + 2 B_{k+1} c \eta^2 (1 + \alpha^{-1}) \Lmc_P \mathsf{Breg}_k.
\end{align*}
Applying the definitions of $T_k$ and $B_k$ in the previous display, we obtain
\begin{align*}
        \E T_{k+1} \leq {} & \frac{B_{k+1}}{n} F(w_k) - \frac{B_{k+1}}{n} F(w_\star) + \frac{B_{k+1}}{B_k} T_k \\
        & - \frac{B_{k+1}}{n} \left( \frac{1}{n} \sum_{i = 1}^n F_i(\psi_k^i) - F(w_\star) - \frac{1}{n} \sum_{i = 1}^n \langle \nabla F_i(w_\star), \psi_k^i - w_\star \rangle \right) \\
        & + c \left( (1 - \eta \gammatlm) \frac{B_{k+1}}{\beta_k} - B_{k+1} \right) \| w_k - w_\star \|_{P_{k-1}}^2 \\
        & + B_{k+1} \left( (1 + \alpha) c \eta^2 - \frac{c \eta}{\Lmc_P} \right) \E \| \nabla F_j(w_k) - \nabla F_j(w_\star) \|_{P_k^{-1}}^2 \\
        & + B_{k+1} \left(-\frac{2 c \eta (\Lmc_P - \gammatlm)}{\Lmc_P} - 2 c \eta^2 \gammatlm \alpha \right)[F(w_k) - F(w_\star)] \\
        & + 2 B_{k+1} c \eta^2 (1 + \alpha^{-1}) \Lmc_P \mathsf{Breg}_k \\
        = {} & \frac{B_{k+1}}{B_k} T_k + c \left( (1 - \eta \gammatlm) \frac{B_{k+1}}{\beta_k} - B_{k+1} \right) \| w_k - w_\star \|_{P_{k-1}}^2 \\
        & + B_{k+1} \left( (1 + \alpha) c \eta^2 - \frac{c \eta}{\Lmc_P} \right) \E \| \nabla F_j(w_k) - \nabla F_j(w_\star) \|_{P_k^{-1}}^2 \\
        & + B_{k+1} \left(\frac{1}{n} -\frac{2 c \eta (\Lmc_P - \gammatlm)}{\Lmc_P} - 2 c \eta^2 \gammatlm \alpha \right)[F(w_k) - F(w_\star)] \\
        & + B_{k+1} \left(2c \eta^2 (1 + \alpha^{-1}) \Lmc_P  - \frac{1}{n} \right) \mathsf{Breg}_k \\
        = {} & \beta_k T_k + c (1 - \eta \gammatlm - \beta_k) B_k  \| w_k - w_\star \|_{P_{k-1}}^2 \\
        & + B_{k+1} \left( (1 + \alpha) c \eta^2 - \frac{c \eta}{\Lmc_P} \right) \E \| \nabla F_j(w_k) - \nabla F_j(w_\star) \|_{P_k^{-1}}^2 \\
        & + B_{k+1} \left(\frac{1}{n} -\frac{2 c \eta (\Lmc_P - \gammatlm)}{\Lmc_P} - 2 c \eta^2 \gammatlm \alpha \right)[F(w_k) - F(w_\star)] \\
        & + B_{k+1} \left(2c \eta^2 (1 + \alpha^{-1}) \Lmc_P  - \frac{1}{n} \right) \mathsf{Breg}_k \\
        = {} & \left( 1 - \frac{1}{\kappa} \right)\beta_k T_k + c (1 - \eta \gammatlm - \beta_k + \frac{1}{\kappa} \beta_k) B_k  \| w_k - w_\star \|_{P_{k-1}}^2 \\
        & + B_{k+1} \left( (1 + \alpha) c \eta^2 - \frac{c \eta}{\Lmc_P} \right) \E \| \nabla F_j(w_k) - \nabla F_j(w_\star) \|_{P_k^{-1}}^2 \\
        & + B_{k+1} \left(\frac{1}{n} -\frac{2 c \eta (\Lmc_P - \gammatlm)}{\Lmc_P} - 2 c \eta^2 \gammatlm \alpha \right) [F(w_k) - F(w_\star)] \\
        & + B_{k+1} \left(2c \eta^2 (1 + \alpha^{-1}) \Lmc_P  - \frac{1}{n} + \frac{1}{\kappa} \right) \mathsf{Breg}_k
\end{align*}
Notice that the four quantities $\| w_k - w_\star \|_{P_{k-1}}^2, \E \| \nabla F_j(w_k) - \nabla F_j(w_\star) \|_{P_k^{-1}}^2, F(w_k) - F(w_\star)$, and $\mathsf{Breg}_k$ in the preceding set of inequalities are all non-negative. By setting $\eta = \frac{1}{2(\gammatlm n + \Lmc_P)}$, $c = \frac{1}{2 \eta (1 - \eta \gammatlm) n}, \kappa = \frac{1}{\eta \gammatlm}$, and $\alpha = \frac{2 \gammatlm n + \Lmc_P}{\Lmc_P}$, we guarantee that the coefficients corresponding to these four quantities are all $\leq 0$. Therefore, 
\begin{align*}
        \E_k T_{k+1} \leq \left( 1 - \frac{1}{\kappa} \right) \beta_k T_k = \left( 1 - \frac{1}{2 (n + \kappa_P)} \right) \beta_k T_k,
\end{align*}
where $\kappa_P = \frac{\Lmc_P}{\gammatlm} = \frac{\Lmc_P}{(1 - \zeta) \gammaMin}$.
\end{proof}

\subsubsection{SketchySAGA convergence: Proof of \Cref{thm:conv_sksaga}}
\begin{proof}
    By the contraction lemma, we have 
    \[
    \E_k [T_{k}] \leq \left( 1 - \frac{1}{2(n + \kappa_P)}\right)\beta_{k-1}T_{k-1}.  
    \]
    Hence taking the total expectation over all iterations and recursing, we reach
    \[
    \E [T_{k}]\leq \left( 1 - \frac{1}{2(n + \kappa_P)}\right)^{k}\left(\prod^{k-1}_{j=0} \beta_{j}\right)T_0\leq  \left( 1 - \frac{1}{2(n + \kappa_P)}\right)^{k}B_PT_0.
    \]
    
\end{proof}
Now, by definition $T_k\geq c\|w_k-w_\star\|_{P_k}^2$ for all $k$, so we obtain
\[
\E \|w_k-w_\star\|_{P_k}^2 \leq \left( 1 - \frac{1}{2(n + \kappa_P)}\right)^{k}\frac{B_P}{c}T_0.
\]
Now using $c = \frac{1}{2\eta(1-\eta \gammatlm)n}$ and $\eta = \frac{1}{2(n\gammatlm+\mathcal L_P)}$, we find
\[
\frac{1}{c} = 2\eta(1-\eta \gammatlm)n = \frac{n}{n\gammatlm+\mathcal L_P}\left(1-\frac{\gammatlm}{2(n \gammatlm+\mathcal L_P)}\right) = \frac{(2n-1)\gammatlm+\mathcal L_P}{2(n\gammatlm+\mathcal L_P)^2}n \leq \frac{n}{n\gammatlm+\mathcal L_P}
\]
Therefore
\begin{align*}
    \E \|w_k-w_\star\|_{P_k}^2  \leq \left( 1 - \frac{1}{2(n + \kappa_P)}\right)^{k}B_P\left(\| w_0 - w_\star \|^2-\frac{n}{n\gammatlm+\mathcal L_P}\left[F(w_0) - F(w_\star)\right]\right).
\end{align*}
Hence it immediately follows that
\[
\E \|w_k-w_\star\|_{P_k}^2  \leq \epsilon,
\]
after $k = \bigO((n + \kappa_P) \log \left( \frac{1}{\epsilon} \right))$ iterations.

\subsection{SketchyKatyusha}
\label{subsec:skkat_convergence}
In this subsection, we prove \cref{thm:conv_skkat}, which shows linear convergence of SketchyKatyusha (\cref{alg:skkat}).
The argument is based on the Lyapunov function introduced in the main text. 
Similar to the previous theorems, we break down the proof of \cref{thm:conv_skkat} into a series of lemmas, which together lead to an easy proof of the theorem.
Unfortunately, similar to the analysis of classical accelerated gradient descent, the intuition behind the overall proof is somewhat opaque. 

\subsubsection{Notation}
Throughout the analysis, we shall make use of the following quantities:
\[
 \tilde \gamma_\ell \coloneqq (1-\zeta)\gamma_\ell,
\]
\[
\kappa_P \coloneqq \frac{\mathcal L_P}{\tilde \gamma_\ell} = \frac{\qbar}{1-\zeta}, \quad \sigma \coloneqq \frac{1}{\kappa_P}.
\]

\subsubsection{Preliminary lemmas}
\begin{lemma}[Variance bound]
\label[lemma]{lem:skkat_var_bnd}
Let $v_k = \widehat{\nabla}F(x_k) - \widehat{\nabla} F(y_k) + \nabla F(y_k)$ be the variance-reduced stochastic gradient at iteration $k$. Then
\begin{align*}
\E \|v_k-\nabla F(x_k)\|^2_{P^{-1}_k} &\leq 2 \mathcal L_P[F(y_k) - F(x_k) -\langle \nabla F(x_k),y_k-x_k\rangle].
\end{align*}
\end{lemma}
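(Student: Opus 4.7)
The plan is to derive this as a direct corollary of the preconditioned expected smoothness bound (Proposition~\ref{prop:precond_grad_var}), combined with the elementary variance inequality $\E\|X - \E X\|_A^2 \leq \E\|X\|_A^2$ for any psd $A$ (this is the $A$-norm variance decomposition lemma stated in the preliminaries).

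First I would observe that the full gradient $\nabla F(y_k)$ is deterministic conditional on the current state, and the stochastic gradients $\widehat{\nabla}F(x_k)$ and $\widehat{\nabla}F(y_k)$ are unbiased, i.e.\ $\E[\widehat{\nabla}F(x_k)] = \nabla F(x_k)$ and $\E[\widehat{\nabla}F(y_k)] = \nabla F(y_k)$. From the definition of $v_k$, this yields $\E[v_k] = \nabla F(x_k)$, and therefore
\[
v_k - \nabla F(x_k) \;=\; \bigl(\widehat{\nabla}F(x_k) - \widehat{\nabla}F(y_k)\bigr) \;-\; \E\bigl[\widehat{\nabla}F(x_k) - \widehat{\nabla}F(y_k)\bigr].
\]

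Next I would apply the variance inequality in the $P_k^{-1}$ norm (treating $P_k$ as fixed in the conditional expectation, since the preconditioner update schedule in Algorithm~\ref{alg:skkat} is independent of the current minibatch $\mathcal{B}_k$). This gives
\[
\E \|v_k - \nabla F(x_k)\|_{P_k^{-1}}^2 \;\leq\; \E \bigl\|\widehat{\nabla}F(x_k) - \widehat{\nabla}F(y_k)\bigr\|_{P_k^{-1}}^2.
\]

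Finally I would invoke Proposition~\ref{prop:precond_grad_var} with the choice $w = y_k$ and $w' = x_k$, which produces exactly
\[
\E \bigl\|\widehat{\nabla}F(y_k) - \widehat{\nabla}F(x_k)\bigr\|_{P_k^{-1}}^2 \;\leq\; 2\mathcal{L}_P\bigl[F(y_k) - F(x_k) - \langle \nabla F(x_k), y_k - x_k\rangle\bigr].
\]
Chaining the two inequalities gives the claim. There is no real obstacle here; the only subtlety to be careful about is ensuring that the expectation is taken conditional on $P_k$ (so that $P_k^{-1}$ can be treated as a fixed psd matrix when applying the variance inequality), and that the roles of $w$ and $w'$ in Proposition~\ref{prop:precond_grad_var} are assigned so that the Bregman-type quantity on the right-hand side is centered at $x_k$, matching the target statement.
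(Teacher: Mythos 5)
Your proposal is correct and follows essentially the same route as the paper's proof: rewrite $v_k - \nabla F(x_k)$ as a centered random variable, apply $\E\|X-\E X\|_{P_k^{-1}}^2 \leq \E\|X\|_{P_k^{-1}}^2$, and invoke \cref{prop:precond_grad_var} with $w = y_k$, $w' = x_k$. The roles of $w$ and $w'$ are assigned exactly as the paper does, so the Bregman term is correctly centered at $x_k$.
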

\begin{proof}
    The proof is via direct calculation:
    \begin{align*}
        & \E\|v_k-\nabla F(x_k)\|_{P^{-1}_k}^2 \overset{(1)}{=} \E\left\|\widehat \nabla F(y_k)-\widehat \nabla F(x_k)-\E\left[\widehat \nabla F(y_k)-\widehat\nabla F(x_k)\right]\right\|^2_{P^{-1}_k}\overset{(2)}{\leq} \\ 
        & \E\|\widehat \nabla F(y_k)-\widehat \nabla F(x_k)\|_{P^{-1}_k}^2 \overset{(3)}{\leq} 2 \mathcal L_P[F(y_k) - F(x_k) -\langle \nabla F(x_k),y_k-x_k\rangle],
    \end{align*}
    where (1) simply substitutes in the value of $v_k$, (2) uses $\E\|X-\E[X]\|^2_A\leq \E\|X\|_A^2$, and (3) invokes \cref{prop:precond_grad_var}.
\end{proof}

The next two lemmas are somewhat mysterious in their content.
However, they serve a concrete purpose, which is to couple various quantities involved with the Lyapunov function together in a way that will be useful for establishing contraction in \cref{lem:skkat_contract}. 
Given that this is the role they play, we adopt the language of \cite{allenzhu2018katyusha}(for whom two similar lemmas appear), and refer to \cref{lem:skkat_coup_1} and \cref{lem:skkat_coup_2} as coupling lemmas.

\begin{lemma}[Coupling Lemma 1]
\label[lemma]{lem:skkat_coup_1}
    Let $\sigma = \frac{\tilde\gamma_\ell}{\mathcal L_P}$. Then for any iteration $k$, the following inequality holds
    \begin{equation*}
        \langle v_k, w_\star-x_k \rangle +\frac{\tilde\gamma_\ell}{2}\|x_k-w_\star\|_{P_k}^2\geq \frac{\mathcal L_P}{2\eta}\|z_k-z_{k+1}\|_{P_k}^2+
        \frac{1}{\xi_k}\mathcal{Z}_{k+1}-\frac{1}{(1+\eta \sigma)}\mathcal Z_{k}.
    \end{equation*}
\end{lemma}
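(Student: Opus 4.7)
The plan is to exploit that $z_{k+1}$ is the unique minimizer of a strongly convex auxiliary objective dictated by SketchyKatyusha's $z$-update. Rearranging the update rule, and recalling $\sigma = \tilde\gamma_\ell/\mathcal L_P$, first-order optimality shows
\[
z_{k+1} = \arg\min_{z}\Bigl\{\eta\langle v_k,z\rangle + \tfrac{\mathcal L_P}{2}\|z-z_k\|_{P_k}^2 + \tfrac{\eta\tilde\gamma_\ell}{2}\|z-x_k\|_{P_k}^2\Bigr\},
\]
and this objective is $\mathcal L_P(1+\eta\sigma)$-strongly convex in the $P_k$-norm. Applying strong convexity at the reference point $w_\star$, expanding, and dividing by $\eta$ yields
\[
\langle v_k,w_\star - z_{k+1}\rangle + \tfrac{\tilde\gamma_\ell}{2}\|w_\star - x_k\|_{P_k}^2 \geq \tfrac{\mathcal L_P}{2\eta}\|z_{k+1}-z_k\|_{P_k}^2 + \tfrac{\tilde\gamma_\ell}{2}\|z_{k+1}-x_k\|_{P_k}^2 + \tfrac{\mathcal L_P(1+\eta\sigma)}{2\eta}\|w_\star-z_{k+1}\|_{P_k}^2 - \tfrac{\mathcal L_P}{2\eta}\|w_\star-z_k\|_{P_k}^2.
\]
The last term is precisely $\mathcal Z_k/(1+\eta\sigma)$, so this is already close to the desired inequality.

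Next, I would convert the inner product on the left from $\langle v_k, w_\star - z_{k+1}\rangle$ to the stated $\langle v_k, w_\star - x_k\rangle$ by adding $\langle v_k, z_{k+1}-x_k\rangle$ to both sides. The first-order optimality condition gives the identity $\eta v_k = -\mathcal L_P P_k(z_{k+1}-z_k) - \eta\tilde\gamma_\ell P_k(z_{k+1}-x_k)$, so
\[
\eta\langle v_k, z_{k+1}-x_k\rangle = -\mathcal L_P\langle z_{k+1}-z_k, z_{k+1}-x_k\rangle_{P_k} - \eta\tilde\gamma_\ell\|z_{k+1}-x_k\|_{P_k}^2.
\]
I then expand the cross term via the three-point identity $2\langle z_{k+1}-z_k,z_{k+1}-x_k\rangle_{P_k} = \|z_{k+1}-z_k\|_{P_k}^2 + \|z_{k+1}-x_k\|_{P_k}^2 - \|z_k-x_k\|_{P_k}^2$, which, when combined with the quadratic terms already on the right, collapses to leave exactly the $\tfrac{\mathcal L_P}{2\eta}\|z_k-z_{k+1}\|_{P_k}^2$ term required by the lemma (after discarding the non-negative residual involving $\|z_{k+1}-x_k\|_{P_k}^2$). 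The remaining piece, $\tfrac{\mathcal L_P(1+\eta\sigma)}{2\eta}\|w_\star-z_{k+1}\|_{P_k}^2$, is lower-bounded by $\mathcal Z_{k+1}/\xi_k$ using \cref{prop:stable_precond}; here $\xi_k$ plays the role of the stability constant that quantifies how the metric changes when the preconditioner is updated from $P_k$ to $P_{k+1}$.

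The main obstacle is the bookkeeping in converting from $\langle v_k,w_\star-z_{k+1}\rangle$ to $\langle v_k,w_\star-x_k\rangle$: one has to ensure the extra cross-term produced by the optimality condition combines with the slack terms left over from strong convexity to yield exactly $\tfrac{\mathcal L_P}{2\eta}\|z_k-z_{k+1}\|_{P_k}^2$ rather than, say, $\tfrac{\mathcal L_P(1+\eta\sigma)}{2\eta}\|z_k-z_{k+1}\|_{P_k}^2$ (the factor $(1+\eta\sigma)$ appears in several places and must be tracked carefully through the polarization step). A secondary subtlety is that \cref{prop:stable_precond} is stated for the iterate sequence $\{w_j\}$, whereas here the relevant vector is $z_{k+1}$; invoking the stability bound for $\|w_\star-z_{k+1}\|_{P_{k+1}}^2 \leq \xi_k\|w_\star-z_{k+1}\|_{P_k}^2$ requires a minor adaptation of that proposition to the auxiliary sequence produced by SketchyKatyusha, but the argument is essentially unchanged since it only uses $\zeta$-spectral approximation and the smoothness of $F$.
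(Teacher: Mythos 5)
Your first display is correct, and in substance it is the same argument as the paper's: because the auxiliary objective is an exact quadratic with minimizer $z_{k+1}$ and Hessian $\mathcal L_P(1+\eta\sigma)P_k$, your ``strong convexity at $w_\star$'' inequality is precisely the paper's two polarization-identity expansions packaged together (it even holds with equality before you drop the nonnegative term $\frac{\tilde\gamma_\ell}{2}\|z_{k+1}-x_k\|_{P_k}^2$). At that point you have already proved
\[
\langle v_k, w_\star - z_{k+1}\rangle + \frac{\tilde\gamma_\ell}{2}\|x_k - w_\star\|_{P_k}^2 \;\geq\; \frac{\mathcal L_P}{2\eta}\|z_k - z_{k+1}\|_{P_k}^2 + \frac{1}{\xi_k}\mathcal Z_{k+1} - \frac{1}{1+\eta\sigma}\mathcal Z_k,
\]
which is what the paper's own proof actually derives (its opening identity only parses if the statement's $x_k$ is read as $z_{k+1}$) and is the form consumed in the contraction lemma, where the inner product that appears is $\langle v_k, w_\star - z_{k+1}\rangle$ while $\langle v_k, z_{k+1}-z_k\rangle$ is handled separately by Coupling Lemma 2. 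Your remark about $\xi_k$ is also right: the stability constant is being applied to $z_{k+1}$ rather than to an iterate $w_j$, and the paper does exactly the same thing without comment.

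The genuine gap is your conversion step, and it is not recoverable. Substituting the optimality identity and the three-point identity into $\langle v_k, z_{k+1}-x_k\rangle$ produces a term $-\frac{\mathcal L_P}{2\eta}\|z_{k+1}-z_k\|_{P_k}^2$ that exactly cancels the $+\frac{\mathcal L_P}{2\eta}\|z_{k+1}-z_k\|_{P_k}^2$ obtained from strong convexity, rather than preserving it; what remains in its place is $\frac{\mathcal L_P}{2\eta}\|z_k-x_k\|_{P_k}^2 - \frac{\mathcal L_P(1+\eta\sigma)}{2\eta}\|z_{k+1}-x_k\|_{P_k}^2$, and the $\|z_{k+1}-x_k\|_{P_k}^2$ contribution enters with a \emph{negative} sign, so it cannot be ``discarded'' from the right-hand side of a $\geq$. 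No bookkeeping fixes this, because the inequality with $\langle v_k, w_\star - x_k\rangle$ is false: take $P_{k+1}=P_k$ (so $\xi_k=1$) and $x_k = z_k$ (which occurs, e.g., at $k=0$); the exact identity then gives left side $=$ claimed right side $-\frac{(2+\eta\sigma)\mathcal L_P}{2\eta}\|z_k - z_{k+1}\|_{P_k}^2$, which is strictly smaller whenever $v_k \neq 0$. The resolution is that the printed statement contains a typo --- the inner product should be $\langle v_k, w_\star - z_{k+1}\rangle$, matching the corresponding lemma for Loopless Katyusha --- and your first step is already a complete and correct proof of that corrected statement; you should stop there rather than try to reach the $x_k$ form.
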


\begin{proof}
Observe the following equality
     \begin{align*}
         &\langle v_k, x_k-w_\star \rangle = \langle P_k^{-1} v_k, x_k-w_\star \rangle_{P_k} = \mathcal L_P \sigma \langle x_k-z_{k+1},x_k-w_\star\rangle_{P_k} +\frac{\mathcal L_P}{\eta}\langle z_k-z_{k+1},z_{k+1}-w_\star \rangle_{P_k}\\
         &=  \tilde\gamma_\ell \langle x_k-z_{k+1},x_k-w_\star\rangle_{P_k} +\frac{\mathcal L_P}{\eta}\langle z_k-z_{k+1},z_{k+1}-w_\star \rangle_{P_k},\\
     \end{align*}
     where the third equality uses 
     \[
     \frac{\eta}{\mathcal L_P}P_k^{-1}v_k = \eta\sigma\left(x_k-z_{k+1}\right)+z_k-z_{k+1}.
     \] 
     Invoking \cref{lem:A-norm-pol} twice with $A = P_k$, we reach 
     \begin{align*}
         &\langle v_k, x_k-w_\star \rangle = \frac{\tilde \gamma_\ell}{2}\left(\|x_k-w_\star\|_{P_k}^2-\|x_k-z_{k+1}\|_{P_k}^2-\|z_{k+1}-w_\star\|_{P_k}^2\right)\\
         &+\frac{\mathcal L_P}{2\eta}\left(\|z_k-w_\star\|_{P_k}^2-\|z_k-z_{k+1}\|_{P_k}^2-\|z_{k+1}-w_\star\|_{P_k}^2\right)\\
         &\leq \frac{\tilde \gamma_\ell}{2}\|x_k-w_\star\|_{P_k}^2 +\frac{\mathcal L_P}{2\eta}\left(\|z_k-w_\star\|_{P_k}^2-(1+\eta\sigma)\|z_{k+1}-w_\star\|_{P_k}^2\right)-\frac{\mathcal L_P}{2\eta}\|z_{k}-z_{k+1}\|_{P_k}^2.
     \end{align*}
     Using $\mathcal Z_k = \frac{\mathcal L_P(1+\eta \sigma)}{2\eta}\|z_k-w_\star\|_{P_k}^2$,  $\|z_{k+1}-w_\star\|_{P_{k+1}}^2\leq \xi_k \|z_{k+1}-w_\star\|^2_{P_k} $ rearranging, we find
     \begin{align*}
          \langle v_k, w_\star-x_k \rangle +\frac{\tilde \gamma_\ell}{2}\|x_k-w_\star\|_{P_k}^2&\geq \frac{\mathcal L_P}{2\eta}\|z_{k}-z_{k+1}\|_{P_k}^2 +\frac{1}{\xi_k}\mathcal Z_{k+1} - \frac{1}{1+\eta \sigma}\mathcal Z_k.
     \end{align*}
\end{proof}

\begin{lemma}[Coupling Lemma 2]
\label[lemma]{lem:skkat_coup_2}
    For each iteration $k$, we have
    \begin{align*}
        &\frac{1}{\theta_1}\left(F(w_{k+1})-F(x_k)\right)-\frac{\theta_2}{2\theta_1\mathcal L_P}\|v_k-\nabla F(x_k)\|^2_{P^{-1}_k} \leq \frac{\mathcal L_P}{2\eta}\|z_{k+1}-z_k\|^2_{P_k} +\langle v_k,z_{k+1}-z_k\rangle.
    \end{align*}
\end{lemma}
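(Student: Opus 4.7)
\textbf{Proof plan for \cref{lem:skkat_coup_2}.} The plan is to apply smoothness of $F$ in the preconditioned norm $\|\cdot\|_{P_k}$ at the point $x_k$, exploit the update rule $w_{k+1}=x_k+\theta_1(z_{k+1}-z_k)$ to convert the resulting bound into one involving only $z_{k+1}-z_k$, split the gradient term using the noise $v_k-\nabla F(x_k)$, and then apply a weighted Young's inequality calibrated to the algorithm's choice of $\eta$.

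First I would invoke smoothness: since $P_k$ is a $\zeta$-spectral approximation, \cref{lem:approx_hess_good_model} together with the bound $(1+\zeta)\gamma_u\leq\mathcal L_P$ gives
\[
F(w_{k+1})\leq F(x_k)+\langle\nabla F(x_k),w_{k+1}-x_k\rangle+\frac{\mathcal L_P}{2}\|w_{k+1}-x_k\|_{P_k}^2.
\]
Substituting $w_{k+1}-x_k=\theta_1(z_{k+1}-z_k)$ and dividing by $\theta_1$,
\[
\frac{1}{\theta_1}\bigl(F(w_{k+1})-F(x_k)\bigr)\leq \langle\nabla F(x_k),z_{k+1}-z_k\rangle+\frac{\mathcal L_P\,\theta_1}{2}\|z_{k+1}-z_k\|_{P_k}^2.
\]

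Next I would write $\nabla F(x_k)=v_k-(v_k-\nabla F(x_k))$ to separate the computable term from the noise, so the first inner product on the right becomes $\langle v_k,z_{k+1}-z_k\rangle-\langle v_k-\nabla F(x_k),z_{k+1}-z_k\rangle$. To control the noise inner product I would apply the generalized Young inequality (\cref{lemma:GenYoungIneq}) with $A=P_k^{-1}$ and parameter $\lambda=\theta_2/(\theta_1\mathcal L_P)$, giving
\[
-\langle v_k-\nabla F(x_k),z_{k+1}-z_k\rangle\leq \frac{\theta_2}{2\theta_1\mathcal L_P}\|v_k-\nabla F(x_k)\|_{P_k^{-1}}^2+\frac{\theta_1\mathcal L_P}{2\theta_2}\|z_{k+1}-z_k\|_{P_k}^2.
\]

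Finally, the key algebraic step is to combine the two $\|z_{k+1}-z_k\|_{P_k}^2$ coefficients: $\tfrac{\theta_1\mathcal L_P}{2\theta_2}+\tfrac{\mathcal L_P\theta_1}{2}=\tfrac{\mathcal L_P\theta_1(1+\theta_2)}{2\theta_2}$, which by the algorithm's choice $\eta=\theta_2/((1+\theta_2)\theta_1)$ equals exactly $\mathcal L_P/(2\eta)$. Moving the noise term to the left-hand side yields the claim. I do not expect any real obstacle; the main subtlety is simply choosing the Young parameter $\lambda$ so that the leftover $\|z_{k+1}-z_k\|_{P_k}^2$ coefficient collapses through the identity $\eta(1+\theta_2)\theta_1=\theta_2$ built into the learning rate.
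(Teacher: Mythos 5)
Your proposal is correct and follows essentially the same route as the paper's proof: upper quadratic regularity plus the $\zeta$-spectral approximation to get $\mathcal L_P$-smoothness in the $P_k$-norm, the identity $w_{k+1}-x_k=\theta_1(z_{k+1}-z_k)$, the split $\nabla F(x_k)=v_k-(v_k-\nabla F(x_k))$, and the generalized Young inequality with the parameter fixed by $\eta=\theta_2/((1+\theta_2)\theta_1)$. The only difference is organizational (you start from the smoothness bound and build up to the right-hand side, while the paper starts from the right-hand side and lower-bounds it, working with $w_{k+1}-x_k$ instead of $z_{k+1}-z_k$), which is immaterial.
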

\begin{proof}
By construction of the algorithm, the following equality holds
\[
\frac{\mathcal{L}_P}{2\eta}\|z_{k+1}-z_k\|^2_{P_k}+\langle v_k,z_{k+1}-z_k\rangle = \frac{1}{\theta_1}\left(\frac{\mathcal L_P}{2\eta \theta_1}\|w_{k+1}-x_k\|_{P_k}^2+\langle v_k,w_{k+1}-x_k\rangle\right).
\]
Now, adding and subtracting zero twice, we reach 
\begin{align*}
    &\frac{\mathcal{L}_P}{2\eta}\|z_{k+1}-z_k\|^2_{P_k}+\langle v_k,z_{k+1}-z_k\rangle = 
    \\ & \frac{1}{\theta_1}\left(\frac{\mathcal L_P}{2\eta\theta_1}\|w_{k+1}-x_k\|^2_{P_k}+\langle \nabla F(x_k), w_{k+1}-x_k\rangle+\langle v_k-\nabla F(x_k),w_{k+1}-x_k\rangle\right)\\
    &= \frac{1}{\theta_1}\left(\frac{\mathcal L_P}{2}\|w_{k+1}-x_k\|^2_{P_k}+\langle \nabla F(x_k), w_{k+1}-x_k\rangle\right)\\ 
    &+\frac{\mathcal L_P}{2\theta_1}\left(\frac{1}{\eta \theta_1}-1\right)\|w_{k+1}-x_k\|_{P_k}^2+\frac{1}{\theta_1}\langle v_k-\nabla F(x_k),w_{k+1}-x_k\rangle.\\
\end{align*}
Now combining that $P$ is $\zeta$-spectral approximation with upper quadratic regularity, we have
\[
F(w_{k+1})-F(x_k)\leq \langle \nabla F(x_k), w_{k+1}-x_k\rangle+\frac{\mathcal L_P}{2}\|w_{k+1}-x_k\|^2_{P_k}.
\]
Using the preceding relation, we find
\begin{align*}
    & \frac{\mathcal{L}_P}{2\eta}\|z_{k+1}-z_k\|^2_{P_k}+\langle v_k,z_{k+1}-z_k\rangle\geq \\
    & \frac{1}{\theta_1}(F(w_{k+1})-F(x_k))+\frac{\mathcal L_P}{2\theta_1}\left(\frac{1}{\eta \theta_1}-1\right)\|w_{k+1}-x_k\|_{P_k}^2+\frac{1}{\theta_1}\langle v_k-\nabla F(x_k),w_{k+1}-x_k\rangle\\
    &\overset{(1)}{\geq} \frac{1}{\theta_1}(F(w_{k+1})-F(x_k))-\frac{\eta\theta_1}{2 \mathcal L_P(1-\eta\theta_1)}\|v_k-\nabla F(x_k)\|^2_{P^{-1}_k}\\
    &\overset{(2)}{=} \frac{1}{\theta_1}\left(F(w_{k+1})-F(x_k)\right)-\frac{\theta_2}{2\theta_1\mathcal L_P}\|v_k-\nabla F(x_k)\|^2_{P^{-1}_k}.
\end{align*}
Where $(1)$ uses Lemma \ref{lemma:GenYoungIneq} with $c_1 = \frac{1}{\theta_1}, c_2 = \frac{\mathcal L_P(1-\eta \theta_1)}{\eta \theta^2_1}$, and $(2)$ uses $\theta_2 = \frac{\eta \theta_1}{1-\eta \theta_1}$.
\end{proof}

With the coupling lemmas in hand, we now establish that conditioned on the $k$th iterate, the Lyapunov function contracts in expectation. 
\begin{lemma}[Contraction Lemma]
\label[lemma]{lem:skkat_contract}
    \begin{align*}
        \E_k \left[\mathcal W_{k+1}+\mathcal Y_{k+1}+\frac{1}{\xi_k}\mathcal Z_{k+1}\right] \leq \left(1-\frac{\pi\theta_1}{1+\theta_1}\right)\mathcal W_k +\left(1-\theta_1(1-\theta_2)\right)\mathcal Y_k +\frac{1}{1+\eta \sigma }\mathcal Z_k
    \end{align*} 
\end{lemma}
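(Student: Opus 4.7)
The plan is to add Coupling Lemmas 1 and 2 to produce a single deterministic one-step inequality, take the conditional expectation $\E_k$, and then collapse the right-hand side into a contraction of $\mathcal W + \mathcal Y + \mathcal Z$ using the variance bound, the ``negative-momentum'' identity defining $x_k$, convexity of $F$, and lower quadratic regularity in the $P_k$-norm.

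First I would add the inequalities in \cref{lem:skkat_coup_1} and \cref{lem:skkat_coup_2}; the $\frac{\mathcal L_P}{2\eta}\|z_{k+1}-z_k\|^2_{P_k}$ terms cancel and one is left with
\[
\frac{1}{\theta_1}(F(w_{k+1})-F(x_k)) + \frac{1}{\xi_k}\mathcal Z_{k+1} - \frac{1}{1+\eta\sigma}\mathcal Z_k \leq \langle v_k, z_{k+1}-z_k\rangle + \langle v_k, w_\star-x_k\rangle + \frac{\theta_2}{2\theta_1\mathcal L_P}\|v_k-\nabla F(x_k)\|^2_{P_k^{-1}} + \frac{\tilde{\gamma}_\ell}{2}\|x_k-w_\star\|^2_{P_k}.
\]
Taking $\E_k$, the variance bound (\cref{lem:skkat_var_bnd}) turns the $\|v_k-\nabla F(x_k)\|^2_{P_k^{-1}}$ term into $\frac{\theta_2}{\theta_1}\bigl[F(y_k)-F(x_k)-\langle\nabla F(x_k),y_k-x_k\rangle\bigr]$. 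Unbiasedness $\E_k v_k = \nabla F(x_k)$ together with lower quadratic regularity applied in the $P_k$-norm (i.e.\ \cref{lem:approx_hess_good_model} specialised to $w' = x_k$, $w'' = w_\star$, with $\gamma_\ell$ and $\zeta$ producing the factor $\tilde{\gamma}_\ell$) lets me absorb $\langle\nabla F(x_k),w_\star-x_k\rangle + \frac{\tilde{\gamma}_\ell}{2}\|x_k-w_\star\|^2_{P_k}$ into $F(w_\star) - F(x_k)$.

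The main technical step is controlling $\E_k\langle v_k, z_{k+1}-z_k\rangle$, since $z_{k+1}$ is a function of $v_k$ through the update
\[
z_{k+1} = \frac{1}{1+\eta\sigma}\Bigl(\eta\sigma x_k + z_k - \frac{\eta}{\mathcal L_P}P_k^{-1}v_k\Bigr).
\]
Writing $v_k = \nabla F(x_k) + (v_k-\nabla F(x_k))$ and computing the covariance explicitly yields a deterministic piece $\frac{\eta\sigma}{1+\eta\sigma}\langle\nabla F(x_k),x_k-z_k\rangle$ plus a non-positive remainder of the form $-\frac{\eta}{\mathcal L_P(1+\eta\sigma)}\E_k\|v_k\|^2_{P_k^{-1}}$, which I would drop for the upper bound. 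Next I would use the negative-momentum identity $x_k = \theta_1 z_k + \theta_2 y_k + (1-\theta_1-\theta_2)w_k$, which implies
\[
x_k - z_k = \frac{\theta_2}{\theta_1}(y_k - x_k) + \frac{1-\theta_1-\theta_2}{\theta_1}(w_k - x_k),
\]
and apply convexity of $F$ to replace $\langle\nabla F(x_k), y_k-x_k\rangle$ and $\langle\nabla F(x_k), w_k-x_k\rangle$ by the corresponding function-value gaps; the non-negativity of both coefficients uses $\theta_1 + \theta_2 \le 1$, which is enforced by $\theta_2 = 1/2$ and $\theta_1 \le 1/2$.

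Finally I would fold in the snapshot update, which gives $\E_k[\mathcal Y_{k+1}] = \frac{\theta_2(1+\theta_1)}{\theta_1}\E_k[F(w_{k+1})-F(w_\star)] + (1-\pi)\mathcal Y_k$, and rearrange everything so that $\mathcal W_{k+1}, \mathcal Y_{k+1}, \frac{1}{\xi_k}\mathcal Z_{k+1}$ appear on the left and $\mathcal W_k, \mathcal Y_k, \mathcal Z_k$ on the right. The parameter choices $\sigma = \tilde{\gamma}_\ell/\mathcal L_P$ and $\eta = \theta_2/((1+\theta_2)\theta_1)$ are precisely those that make the $\frac{\eta\sigma}{1+\eta\sigma}$ coefficients in front of the $\langle\nabla F(x_k), y_k-x_k\rangle$ and $\langle\nabla F(x_k), w_k-x_k\rangle$ terms collapse to the advertised factors. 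The principal obstacle will be this last bookkeeping step: one must verify both that the coefficient of $F(x_k)$ cancels exactly (the target Lyapunov function contains no $F(x_k)$) and that the residual coefficients in front of $F(w_k)-F(w_\star)$ and $F(y_k)-F(w_\star)$ telescope into precisely the claimed constants $1 - \frac{\pi\theta_1}{1+\theta_1}$ and $1-\theta_1(1-\theta_2)$.
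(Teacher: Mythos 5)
Your overall strategy---chain the two coupling lemmas, take $\E_k$, invoke the variance bound, unbiasedness, lower quadratic regularity and the momentum identity, then fold in the snapshot update---is the same family of argument as the paper's, and the combined inequality you obtain by adding Coupling Lemmas 1 and 2 is correct. The gap is in how you dispose of $\E_k\langle v_k, z_{k+1}-z_k\rangle$. Expanding the $z$-update as you propose gives
\begin{align*}
\E_k\langle v_k, z_{k+1}-z_k\rangle \le \frac{\eta\sigma}{1+\eta\sigma}\langle \nabla F(x_k), x_k-z_k\rangle,
\end{align*}
so the negative-momentum inner products arrive \emph{damped} by $\frac{\eta\sigma}{1+\eta\sigma}$: you obtain $\frac{\eta\sigma}{1+\eta\sigma}\frac{\theta_2}{\theta_1}\langle\nabla F(x_k), y_k-x_k\rangle$, whereas the variance bound contributes $-\frac{\theta_2}{\theta_1}\langle\nabla F(x_k), y_k-x_k\rangle$ at full strength. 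The residue $-\frac{\theta_2}{\theta_1(1+\eta\sigma)}\langle\nabla F(x_k), y_k-x_k\rangle$ does not cancel and cannot be absorbed: because its coefficient is negative you need a \emph{lower} bound on $\langle\nabla F(x_k), y_k-x_k\rangle$, but convexity only bounds it from above by $F(y_k)-F(x_k)$, and the available lower bound (via smoothness) introduces $\frac{\mathcal L_P}{2}\|y_k-x_k\|_{P_k}^2$, which is not controlled by the Lyapunov function. The same damping prevents the $F(x_k)$ coefficients from cancelling: one would need $1=\theta_1+\theta_2+\frac{\eta\sigma}{1+\eta\sigma}(1-\theta_1-\theta_2)$, which forces $\theta_1+\theta_2=1$. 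So the ``principal obstacle'' you flag at the end is in fact fatal to this route.

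The paper's proof avoids the damping by splitting the inner product \emph{before} passing to the stochastic gradient. Starting from lower quadratic regularity at $x_k$, it writes $\langle\nabla F(x_k), w_\star-x_k\rangle = \langle\nabla F(x_k), w_\star-z_k\rangle + \langle\nabla F(x_k), z_k-x_k\rangle$, applies the momentum identity to $z_k-x_k$ with the \emph{undamped} coefficients $\frac{\theta_2}{\theta_1}$ and $\frac{1-\theta_1-\theta_2}{\theta_1}$ (so that $\frac{\theta_2}{\theta_1}\langle\nabla F(x_k), x_k-y_k\rangle$ cancels exactly against the term produced by the variance bound, while convexity handles the $w_k$ term), and only then uses unbiasedness to rewrite $\langle\nabla F(x_k), w_\star-z_k\rangle = \E_k\left[\langle v_k, w_\star-z_{k+1}\rangle + \langle v_k, z_{k+1}-z_k\rangle\right]$, feeding the first piece (together with $\frac{\tilde{\gamma}_\ell}{2}\|x_k-w_\star\|_{P_k}^2$) into Coupling Lemma 1 and the second (together with $\frac{\mathcal L_P}{2\eta}\|z_k-z_{k+1}\|_{P_k}^2$) into Coupling Lemma 2. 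If you reorganize your argument along these lines, the bookkeeping closes and the claimed contraction factors drop out.
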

\begin{proof}
By lower quadratic regularity of $F$ and $P_k$ being a $\zeta$-spectral approximation, we have 
\begin{align*}
    &F(w_\star) \geq F(x_k)+\langle \nabla F(x_k),w_\star-y_k\rangle +\frac{\tilde\gamma_\ell}{2}\|x_k-w_\star\|_{P_k}^2.
\end{align*}
Rearranging, we reach
\begin{align*}
    F(w_\star) &\overset{(1)}{\geq} F(x_k)+\frac{\tilde \gamma_\ell}{2}\|x_k-w_\star\|_{P_k}^2+\langle \nabla F(x_k),w_\star-z_k+z_k-x_k\rangle \\
    &\overset{(2)}{=} F(x_k)+\frac{\tilde \gamma_\ell}{2}\|x_k-w_\star\|_{P_k}^2+\langle \nabla F(x_k),w_\star -z_k\rangle +\frac{\theta_2}{\theta_1}\langle \nabla F(x_k),x_k-y_k\rangle \\
    &+\frac{1-\theta_1-\theta_2}{\theta_1}\langle \nabla F(x_k),x_k-w_k\rangle \\
    &\overset{(3)}{\geq} F(x_k)+\frac{\tilde \gamma_\ell}{2}\|x_k-w_\star\|_{P_k}^2 +\langle \nabla F(x_k),w_\star -z_k\rangle\\
    &+\frac{\theta_2}{\theta_1}\langle \nabla F(x_k),x_k-y_k\rangle +\frac{1-\theta_1-\theta_2}{\theta_1}(F(x_k)-F(w_k)).\\
\end{align*}
Where $(1)$ adds and subtracts zero, $(2)$ uses 
\[
z_k-x_k = -\frac{(1-\theta_1)}{\theta_1}x_k-\frac{\theta_2}{\theta_1}y_k-\frac{1-\theta_1-\theta_2}{\theta_2}w_k = -\frac{\theta_2}{\theta_1}(x_k-y_k)-\frac{(1-\theta_1-\theta_2)}{\theta_1}(x_k-w_k),
\]
and $(3)$ invokes convexity of $F$.
Now, observe that
\begin{align*}
    \frac{\tilde \gamma_\ell}{2}\|x_k-w_\star\|_{P_k}^2+\langle \nabla F(x_k),w_\star -z_k\rangle &= \E_k\ \left[\frac{\tilde \gamma_\ell}{2}\|x_k-w_\star\|_{P_k}^2+\langle v_k,w_\star-z_{k+1}\rangle +\langle v_k, z_{k+1}-z_k\rangle\right].
\end{align*}
Hence, we can use the preceding relation with \cref{lem:skkat_coup_1} and \cref{lem:skkat_coup_2}, to reach 
\begin{align*}
        &F(w_\star)\geq f(x_k)+\frac{\theta_2}{\theta_1}\langle \nabla F(x_k),x_k-w_k\rangle +\frac{1-\theta_1-\theta_2}{\theta_1}(F(x_k)-F(w_k))+ \E_k\left[\frac{1}{\xi_k}\mathcal Z_{k+1}-\frac{1}{1+\eta \sigma} \mathcal Z_k\right]\\
        &+\E_k\left[\langle v_k,z_{k+1}-z_k\rangle +\frac{\mathcal L_P}{2\eta}\|z_k-z_{k+1}\|_{P_k}^2\right] \\
        &\geq F(x_k)+\frac{\theta_2}{\theta_1}\langle \nabla F(x_k),x_k-w_k\rangle +\frac{1-\theta_1-\theta_2}{\theta_1}(F(x_k)-F(w_k))+ \E_k\left[\frac{1}{\xi_k}\mathcal Z_{k+1}-\frac{1}{1+\eta \sigma} \mathcal Z_k\right]\\
        &+ \E_k\left[\frac{1}{\theta_1}\left(F(w_{k+1})-F(x_k)\right)-\frac{\theta_2}{2\theta_1\mathcal L_P}\|v_k-\nabla F(x_k)\|^2_{P^{-1}_k}\right]. \\
\end{align*}
Now, invoking the variance bound in \cref{lem:skkat_var_bnd}, we find 
\begin{align*}
        F(w_\star) &\geq F(x_k)+\frac{\theta_2}{\theta_1}\langle \nabla F(x_k),x_k-y_k\rangle +\frac{1-\theta_1-\theta_2}{\theta_1}(F(x_k)-F(w_k))+ \E_k\left[\frac{1}{\xi_k}\mathcal Z_{k+1}-\frac{1}{1+\eta \sigma} \mathcal Z_k\right]\\
        &+\frac{1}{\theta_1}\E_k\left[F(w_{k+1})-F(x_k)\right]-\frac{\theta_2}{\theta_1}\left(F(y_k)-F(x_k)\right)-\frac{\theta_2}{\theta_1}\langle\nabla F(x_k),x_k-y_k\rangle.\\
\end{align*}
Hence,
\begin{align*}
    F(w_\star)&\geq F(x_k)+\frac{1-\theta_1-\theta_2}{\theta_1}(F(x_k)-F(w_k))-\frac{1}{1+\eta \sigma} \mathcal Z_k-\frac{\theta_2}{\theta_1}\left(F(y_k)-F(x_k)\right) \\
    &+\E_k\left[\frac{1}{\theta_1}(F(w_{k+1})-F(x_k))+\frac{1}{\xi_k}\mathcal Z_{k+1}\right].
\end{align*}
Rearranging, and using that $w_\star$ is the optimum, we reach
\begin{align*}
    &\frac{1-\theta_1-\theta_2}{\theta_1}(F(w_k)-F(y_k))+\frac{\theta_2}{\theta_1}\left(F(y_k)-F(x_k)\right)+\frac{1}{1+\eta \sigma} \mathcal Z_k +\left(\frac{1}{\theta_1}-1\right)(F(x_k)-F(w_\star))\\
    &\geq \E_k\left[\frac{1}{\theta_1}(F(w_{k+1})-F(w_\star))+\frac{1}{\xi_k}\mathcal Z_{k+1}\right].
\end{align*}

Adding and subtracting $F(w_\star)$ in the first two terms in parentheses on the left handside, combining the resulting terms, and recalling the definitions of $\mathcal W_k, \mathcal Y_k, \mathcal Z_k$, we obtain
    \begin{align*}
        \E_k\left[\mathcal W_{k+1}+\frac{1}{\xi_k}\mathcal Z_{k+1}\right] \leq (1-\theta_1-\theta_2)\mathcal W_k+\frac{\pi}{1+\theta_1}\mathcal Y_k +\frac{1}{1+\eta \sigma}\mathcal Z_k.
    \end{align*}
Now, using the relation $\E_k[\mathcal Y_{k+1}] = (1-\pi)\mathcal Y_k+\theta_2(1+\theta_1)\mathcal W_k$, which holds by construction of the algorithm, we conclude
\begin{align*}
        \E_k\left[\mathcal W_{k+1}+ \mathcal Y_{k+1}+\frac{1}{\xi_{k}}\mathcal Z_{k+1}\right]\leq \left(1-\theta_1(1-\theta_2)\right)\mathcal W_k+\left(1-\frac{\pi\theta_1}{1+\theta_1}\right)\mathcal Y_k +\frac{1}{1+\eta \sigma }\mathcal Z_k. 
    \end{align*}
\end{proof}

\subsubsection{SketchyKatyusha convergence: Proof of \cref{thm:conv_skkat}}

\begin{proof}
    \cref{lem:skkat_contract} shows that
    \begin{align*}
        \E_k\left[\mathcal W_{k+1}+\mathcal Y_{k+1}+\mathcal Z_{k+1}\right] \leq \xi_k\alpha \left(\mathcal W_k+\mathcal Y_k +\mathcal Z_k\right),
    \end{align*}
    where $\alpha = \max\left\{\left(1-\theta_1(1-\theta_2)\right),\left(1-\frac{\pi\theta_1}{1+\theta_1}\right),\frac{1}{1+\eta\sigma}\right\}$.
    Hence, taking the total expectation over all iterations and recursing, we find 
    \[
    \E\left[\Psi_{k}\right]\leq \epsilon\Psi_0,
    \]
    after $k\geq \max\{\frac{1}{\theta_1(1-\theta_2)},\pi^{-1}(1+\theta_1^{-1}),1+(\eta\sigma)^{-1}\}\log\left(\frac{\mathcal E_P}{\epsilon}\right)$
    Now, recall $\sigma = 1/\kappa_P$ and $\pi = b_g/n$, so that $\theta_1 = \min\{\sqrt{\frac{n}{b_g\kappa_P}}\theta_2,1/2\}$, and $\pi = b_g/n$.
    We see the the analysis breaks down to two cases: $\kappa_P \leq \frac{n}{b_g}$ and $\kappa_P>\frac{n}{b_g}$.
    \paragraph{Case 1: $\left(\kappa_P \leq \frac{n}{b_g}\right)$}
    Let us set $\theta_2 = 1/2$. 
    Then for Case 1, the minimum defining $\theta_1$ is achieved by $1/2$. 
    Using this in conjunction with $\eta = \theta_1/[(1+\theta_2)\theta_1]$, we find
     \[
    \max\left\{\frac{1}{\theta_1(1-\theta_2)},\pi^{-1}(1+\theta_1^{-1}),1+(\eta\sigma)^{-1}\right\} = \max\left\{4,3\frac{n}{b_g},1+3/2\kappa_P\right\} = 3\frac{n}{b_g},
    \]
    Hence, as $1\leq \kappa_P\leq n/b_g$, we conclude
    \[
     \E \left[\Psi_k\right] \leq \epsilon\Psi_0
    \]
    after $k\geq \frac{3n}{b_g}\log\left(\frac{\mathcal E_P}{\epsilon}\right)$ iterations. 
    \paragraph{Case 2: $\left(\kappa_P \geq \frac{n}{b_g}\right)$}
    Once again set $\theta_2 = 1/2$.
    Given the hypothesis on $\kappa_P$, we have $\theta_1 = \sqrt{n/(b_g\kappa_P)}\theta_2$.
    Using the value of $\theta_1$, and performing some straightforward computations, we find 
    \begin{align*}
    &\max\left\{\frac{1}{\theta_1(1-\theta_2)},\pi^{-1}(1+\theta_1^{-1}),1+(\eta\sigma)^{-1}\right\} \\
    & = \max\left\{1/2\sqrt{\kappa_P b_g/n},n/b_g+2\sqrt{n\kappa_P/b_g},1+3/4\sqrt{n\kappa_P/b_g}\right\} \\
    & = n/b_g+2\sqrt{n\kappa_P/b_g}.
    \end{align*}
    
    Combining both cases, we see that with $\pi = b_g/n$, we have
    \[
        \E[\Psi_{k}] \leq \epsilon\Psi_0,
    \]
    after $k = \max\left\{3n/b_g,n/b_g+2\sqrt{n\kappa_P/b_g}\right\}\log(\frac{\mathcal E_P}{\epsilon})$ iterations. 
    Plugging in $\kappa_P = \qbar/(1-\zeta)$, we obtain the desired claim. 
\end{proof}
\else
\fi
\section{Experimental details}
\label{appndx:experiment_details}
Here we provide additional experimental details that were omitted from the main text.

\subsection{Computational resources}
The experiments in this paper are run on servers with the computational resources listed in \cref{table:computational_resources}.
Although these servers have 64-core CPUs, calls to BLAS are limited to 16 threads, i.e., we only use 16 cores for any given experiment.

\begin{table}[H]
\centering
    \begin{tabular}{C{4cm}C{3cm}C{3cm}}
        Experiment section & CPU & Memory (RAM) \\ \hline
        Performance experiments \newline (\cref{subsection:performance}) & 64-core Xeon E5-2698 v3 @ 2.30GHz & 1007.8 GB \\ \hline
        Showcase experiments \newline (\cref{subsection:showcase}) & 64-core Xeon E5-2698 v3 @ 2.30GHz & 755.8 GB \\ \hline
        Streaming experiments \newline (\cref{subsection:large_scale}) & 64-core Xeon E5-2698 v3 @ 2.30GHz & 755.8 GB \\ \hline
        Sensitivity study \newline (\cref{subsection:sensitivity}) & 64-core Xeon E5-2698 v3 @ 2.30GHz & 1007.8 GB \\ \hline
        Regularity study \newline (\cref{subsection:regularity}) & 64-core Xeon E5-2698 v3 @ 2.30GHz & 755.8 GB \\ \hline
    \end{tabular}
    \caption{Computational resources used for each set of experiments.}
    \label{table:computational_resources}
\end{table}

\subsection{Optimizer details}
\label{subsection:optimizer_appdx}
\paragraph{Initialization} All optimizers are initialized at $w_0 = 0$.

\paragraph{Default hyperparameters for SVRG/SAGA/L-Katyusha} The theoretical analyses of SVRG, SAGA, and L-Katyusha all yield recommended learning rates that lead to linear convergence.
In practical implementations such as scikit-learn \citep{pedregosa2011scikit}, these recommendations are used to compute the default learning rate. 
For SAGA, the theoretical learning rate is given by
\[
\eta = \max\left\{\frac{1}{3L_{\max}}, \frac{1}{2\left(L_{\max}+n_{\textrm{tr}}\mu\right)}\right\},
\]
where $L_{\max} \coloneqq \max_i L_i$ is the maximum smoothness constant among the $f_i$ s and $\mu$ is the strong convexity constant.

Now, standard computations show that the smoothness constants for least-squares and logistic regression satisfy 
\[
L_{\textrm{least-squares}} \leq \frac{1}{n_{\textrm{tr}}}\sum_{i=1}^{n_{\textrm{tr}}}\|a_i\|^2 \eqqcolon \hat{L}_{\textrm{avg}} \leq \max_i \|a_i\|^2 \eqqcolon \hat{L}_{\max},  
\]
\[
L_{\textrm{logistic}} \leq \frac{1}{4n_{\textrm{tr}}}\sum_{i=1}^{n_{\textrm{tr}}}\|a_i\|^2 \eqqcolon \hat{L}_{\textrm{avg}} \leq \frac{1}{4} \max_i \|a_i\|^2 \eqqcolon \hat{L}_{\max}.
\]

The scikit-learn software package uses the preceding upper-bound $\hat{L}_{\max}$ in place of $L_{\max}$ to set $\eta$ in their implementation of SAGA. 
However, $\hat{L}_{\max}$ can lead to a very conservative learning rate (especially if the data is poorly scaled), so we use $\hat{L}_{\textrm{avg}}$ to calculate the default learning rate instead.
% We adopt this convention for setting the hyperparameters of SAGA, SVRG and L-Katyusha.

The theoretical analysis of SVRG suggests a step-size of $\eta = \frac{1}{10\mathcal L}$, where $\mathcal L$ is the expected-smoothness constant. 
We have found this setting to pessimistic relative to the SAGA default, so we use the same default for SVRG as we do for SAGA. 
For L-Katyusha the hyperparameters $\theta_1$ and $\theta_2$ are controlled by how we specify $L^{-1}$, the reciprocal of the smoothness constant; we use $\hat{L}_{\textrm{avg}}$ in place of $L$ as the default. 

\paragraph{Grid search parameters (\cref{subsection:performance,subsection:subopt,subsection:showcase})} 
For ridge regression, we set $[10^{-3}, 10^{2}]$ as the search range for the learning rate in SGD, SVRG and SAGA, and $[10^{-2}, 10^{0}]$ as the search range for the smoothness parameter $L$ in L-Katyusha. 
For SLBFGS, we set the search range to be $[10^{-5}, 10^{0}]$ in order to have the same log-width as the search range for SGD, SVRG, and SAGA. 
In logistic regression, the search ranges for SGD/SVRG/SAGA, L-Katyusha, and SLBFGS become $[4 \cdot 10^{-3}, 4 \cdot 10^{2}], [2.5 \cdot 10^{-3}, 2.5 \cdot 10^{-1}]$, and $[4 \cdot 10^{-5}, 4 \cdot 10^{0}]$, respectively. 
The grid corresponding to each range samples $10$ equally spaced values in log space.

\paragraph{Grid search parameters (\cref{subsection:large_scale})} Instead of using a search range of $[4 \cdot 10^{-3}, 4 \cdot 10^{2}]$ for SGD/SAGA, we narrow the range to $[4 \cdot 10^{-2}, 4 \cdot 10^{1}]$ and sample $4$ equally spaced values in log space. 
The reason for reducing the search range and grid size is to reduce the total computational cost of running the experiments on the HIGGS and SUSY datasets. 
Furthermore, we find that $4 \cdot 10^{0}$ is the best learning rate for HIGGS and SUSY, while $4 \cdot 10^{1}$ leads to non-convergent behavior, meaning these search ranges are appropriate.

\paragraph{Additional hyperparameters} For SVRG and SLFBGS we perform a full gradient computation at every epoch. 
For SLFBGS we update the inverse Hessian approximation every epoch and set the Hessian batchsize to $\sqrt{n_{\mathrm{tr}}}$, which matches the Hessian batchsize hyperparameter in our proposed methods. 
In addition, we follow \cite{moritz2016linearly} and set the memory size of SLFBGS to $10$. 
For L-Katyusha, we initialize the update probability $p = b_g / n_{\mathrm{tr}}$ to ensure the average number of iterations between full gradient computations is equal to one epoch. 
We follow \cite{kovalev2020lkatyusha} and set $\mu$ equal to the $l^2$-regularization parameter, $\sigma = \frac{\mu}{L}, \theta_1 = \min\{\sqrt{2\sigma n_{\mathrm{tr}}/3}, \frac{1}{2} \}$, and $\theta_2 = \frac{1}{2}$. 
In \cref{subsection:performance,subsection:subopt,subsection:sensitivity,subsection:regularity} all algorithms use a batchsize of $256$ for computing stochastic gradients; in \cref{subsection:showcase,subsection:large_scale} all algorithms use a batchsize of 4,096 for computing stochastic gradients.

\subsection{Performance experiments (additional)}
\label{subsection:performance_exp_appdx}

\subsubsection{Datasets and preprocessing}
\label{subsubsection:performance_exp_data_appdx}
The ridge regression experiments in \Cref{subsubsection:ridge} use datasets from both LIBSVM and OpenML. 
The details regarding dimensions and random features are provided in \Cref{table:performance_least_squares_data_libsvm,table:performance_least_squares_data_openml}.
All OpenML datasets that are used do not have a designated training and test set, so we use a random 80/20 split.
We preprocess each dataset from LIBSVM to have unit row norm.
We standardize each dataset from OpenML, i.e., every feature has mean $0$ and standard deviation $1$ after preprocessing; we also standardize the labels if they are non-binary.
A bandwidth of $1$ is used in the cases where Gaussian random features are applied (performed after normalization/standardization).
For simplicity, we remove non-numeric features from the datasets Santander and Airlines\_DepDelay\_1M before running experiments.

\begin{table}[H]
\centering

    \begin{tabular}{C{4cm}C{1.5cm}C{1.5cm}C{1.5cm}C{4cm}}
        Dataset & $n_{\textrm{tr}}$ & $n_{\textrm{tst}}$ & $p$ & RF type/dimension \\ \hline
        E2006-tfidf &16,087 &3,308 &150,360 & \\
        YearPredictionMSD &463,715 &51,630 &90 &ReLU/4,367 \\
    \end{tabular}
    \caption{LIBSVM datasets used in ridge regression experiments from \Cref{subsubsection:ridge}. $n_{\textrm{tr}}$ denotes the number of training samples, $n_{\textrm{tst}}$ denotes the number of test samples, and $p$ denotes the number of features.}
    \label{table:performance_least_squares_data_libsvm}
\end{table}

\begin{table}[H]
\centering
\scriptsize
    \begin{tabular}{C{3cm}C{1.5cm}C{2cm}C{1.5cm}C{1cm}C{3cm}}
        Dataset & ID & Binary labels? & $n$ & $p$ &RF type/dimension \\ \hline
        Santander &42395 &\cmark &200,000 &201 &Gaussian/1,000 \\
        Jannis &44079 &\cmark &57,580 &54 &Gaussian/460 \\
        Yolanda &42705 &\xmark &400,000 &100 &Gaussian/1,000 \\
        MiniBoone &41150 &\cmark &130,064 &50 &Gaussian/1,000 \\
        Guillermo &41159 &\cmark &20,000 &4,296 & \\
        creditcard &1597 &\cmark &284,807 &30 &Gaussian/1,000 \\
        ACSIncome &43141 &\xmark &1,664,500 &11 &Gaussian/1,000 \\
        Medical-Appointment &43617 &\cmark &61,214 &18 &Gaussian/489 \\
        Airlines\_DepDelay\_1M &42721 &\xmark &1,000,000 &9 &Gaussian/1,000 \\
        Click\_prediction\_small &1218 &\cmark &1,997,410 &11 &Gaussian/1,000 \\
        mtp &405 &\xmark &4,450 &202 & \\
        elevators &216 &\xmark &16,599 &18 &Gaussian/132 \\
        ailerons &296 &\xmark &13,750 &40 &Gaussian/110 \\
        superconduct &44006 &\xmark &21,263 &79 &Gaussian/170 \\
        sarcos &44976 &\xmark &48,933 &21 &Gaussian/391 \\
    \end{tabular}
    \caption{OpenML datasets used in ridge regression experiments from \Cref{subsubsection:ridge}. ID is a unique identifier for the dataset on OpenML, $n$ is the number of samples before splitting into training/test sets, and $p$ is the number of features.}
    \label{table:performance_least_squares_data_openml}
\end{table}

All datasets used in the logistic regression experiments in \Cref{subsubsection:logistic} are from LIBSVM.
The details regarding dimensions, splits, and random features are provided in \Cref{table:performance_logistic_data}. 
The covtype.binary, german.numer, mushrooms, news20.binary, phishing, real-sim, sonar, and websample-unigram datasets do not have a designated training and test set, so we use a random 80/20 split.
We preprocess each dataset to have unit row norm.
A bandwidth of $1$ is used in the cases where Gaussian random features are applied (performed after normalization). 
We note that websample-unigram is listed on LIBSVM as having 16,609,143 features, but has only $254$ non-zero columns.

\begin{table}[H]
\centering
\scriptsize
    \begin{tabular}{C{2.7cm}C{1.5cm}C{1.5cm}C{1.5cm}C{2.7cm}}
        Dataset & $n_{\textrm{tr}}$ & $n_{\textrm{tst}}$ & $p$ & RF type/dimension \\ \hline
        a1a &1,605 &30,956 &123 & \\
        a2a &2,265 &30,296 &123 & \\
        a3a &3,185 &29,376 &123 & \\
        a4a &4,781 &27,780 &123 & \\
        a5a &6,414 &26,147 &123 & \\
        a6a &11,220 &21,341 &123 & \\
        a7a &16,100 &16,461 &123 & \\
        a8a &22,696 &9,865 &123 & \\
        a9a &32,561 &16,281 &123 & \\
        covtype.binary &581,012 & &54 &Gaussian/100 \\
        epsilon &400,000 &100,000 &2,000 & \\
        german.numer &1,000 & &24 &Gaussian/100 \\
        gisette &6,000 &1,000 &5,000 & \\
        HIGGS &10,500,000 &500,000 &28 &Gaussian/500 \\
        ijcnn1 &49,990 &91,701 &22 &Gaussian/2,500 \\
        madelon &2,000 &600 &500 & \\
        mushrooms &8,124 & &112 & \\
        news20.binary &19,996 & &1,355,191 & \\
        phishing &11,055 & &68 &Gaussian/100 \\
        rcv1.binary &20,242 &677,399 &47,236 & \\
        real-sim &72,309 & &20,958 & \\
        splice &1,000 &2,175 &60 &Gaussian/100 \\
        sonar &208 & &60 &Gaussian/100 \\
        SUSY &4,500,000 &500,000 &18 &Gaussian/1,000 \\
        svmguide3 &1,243 &41 &21 &Gaussian/100 \\
        w1a &2,477 &47,272 &300 & \\
        w2a &3,470 &46,279 &300 & \\
        w3a &4,912 &44,837 &300 & \\
        w4a &7,366 &42,383 &300 & \\
        w5a &9,888 &39,861 &300 & \\
        w6a &17,188 &32,561 &300 & \\
        w7a &24,692 &25,057 &300 & \\
        w8a &49,749 &14,951 &300 & \\
        websample-unigram &350,000 & &16,609,143 & \\
    \end{tabular}
    \caption{LIBSVM datasets used in logistic regression experiments from \Cref{subsubsection:logistic}. $n_{\textrm{tr}}$ denotes the number of training samples, $n_{\textrm{tst}}$ denotes the number of test samples, and $p$ denotes the number of features.}
    \label{table:performance_logistic_data}
\end{table}

\subsubsection{Performance plots for $\nu = 10^{-1}/n_{\mathrm{tr}}$}
\label{subsubsection:performance_mu_1e-1_appdx}
\cref{fig:prop_solved_least_squares_nu_1e-1,fig:prop_solved_logistic_nu_1e-1} compare PROMISE methods to competitor methods using the same setting as the performance experiments in \cref{subsection:performance}; the only difference is that $\nu = 10^{-1}/n_{\mathrm{tr}}$ instead of $10^{-2}/n_{\mathrm{tr}}$.
% proportion of problems solved by PROMISE methods and competitor methods for ridge and $l^2$-regularized logistic regression with $\nu = 10^{-1}/n_{\mathrm{tr}}$.
PROMISE methods still outperform the competition on both ridge and $l^2$-regularized logistic regression.

\begin{figure}[p]
    \centering
    \includegraphics[scale=0.45]{figs/performance/least_squares_proportion_solved_combined_main.pdf}
    \caption{Proportion of ridge regression problems solved by our proposed methods and competitor methods when $\nu = 10^{-1}/n_{\mathrm{tr}}$.}
    \label{fig:prop_solved_least_squares_nu_1e-1}
\end{figure}
\begin{figure}[p]
    \centering
    \includegraphics[scale=0.45]{figs/performance/logistic_proportion_solved_combined_main.pdf}
    \caption{Proportion of $l^2$-regularized logistic regression problems solved by our proposed methods and competitor methods when $\nu = 10^{-1}/n_{\mathrm{tr}}$.}
    \label{fig:prop_solved_logistic_nu_1e-1}
\end{figure}

\subsubsection{Comparison of \ssn{} and \nyssn{} on sparse/dense logistic regression}
\label{subsubsection:logistic_sparse_dense_appdx}

Here we compare the performance of the \ssn{} and \nyssn{} preconditioners on both sparse and dense logistic regression problems.
We consider a problem to be sparse if $< 20 \%$ of the entries in the (preprocessed) data matrix are nonzero. According to this metric, the datasets a1a, a2a, a3a, a4a, a5a, a6a, a7a, a8a, a9a, mushrooms, news20, rcv1, real-sim, w1a, w2a, w3a, w4a, w5a, w6a, w7a, and w8a are sparse, while the datasets covtype, epsilon, german.numer, gisette, HIGGS, ijcnn1, madelon, phishing, splice, sonar, SUSY, svmguide3, and webspam are dense.

On sparse problems (\cref{fig:prop_solved_logistic_sparse,fig:ranking_logistic_sparse}), the \ssn{} preconditioner outperforms the \nyssn{} preconditioner with respect to both wall-clock time and full data passes.
On the other hand, the \nyssn{} preconditioner slightly outperforms the \ssn{} preconditioner on dense problems (\cref{fig:prop_solved_logistic_dense,fig:ranking_logistic_dense}).
These findings are in line with our preconditioner recommendations in \cref{table:precond_when_to_use}.

\begin{figure}[p]
    \centering
    \includegraphics[scale=0.5]{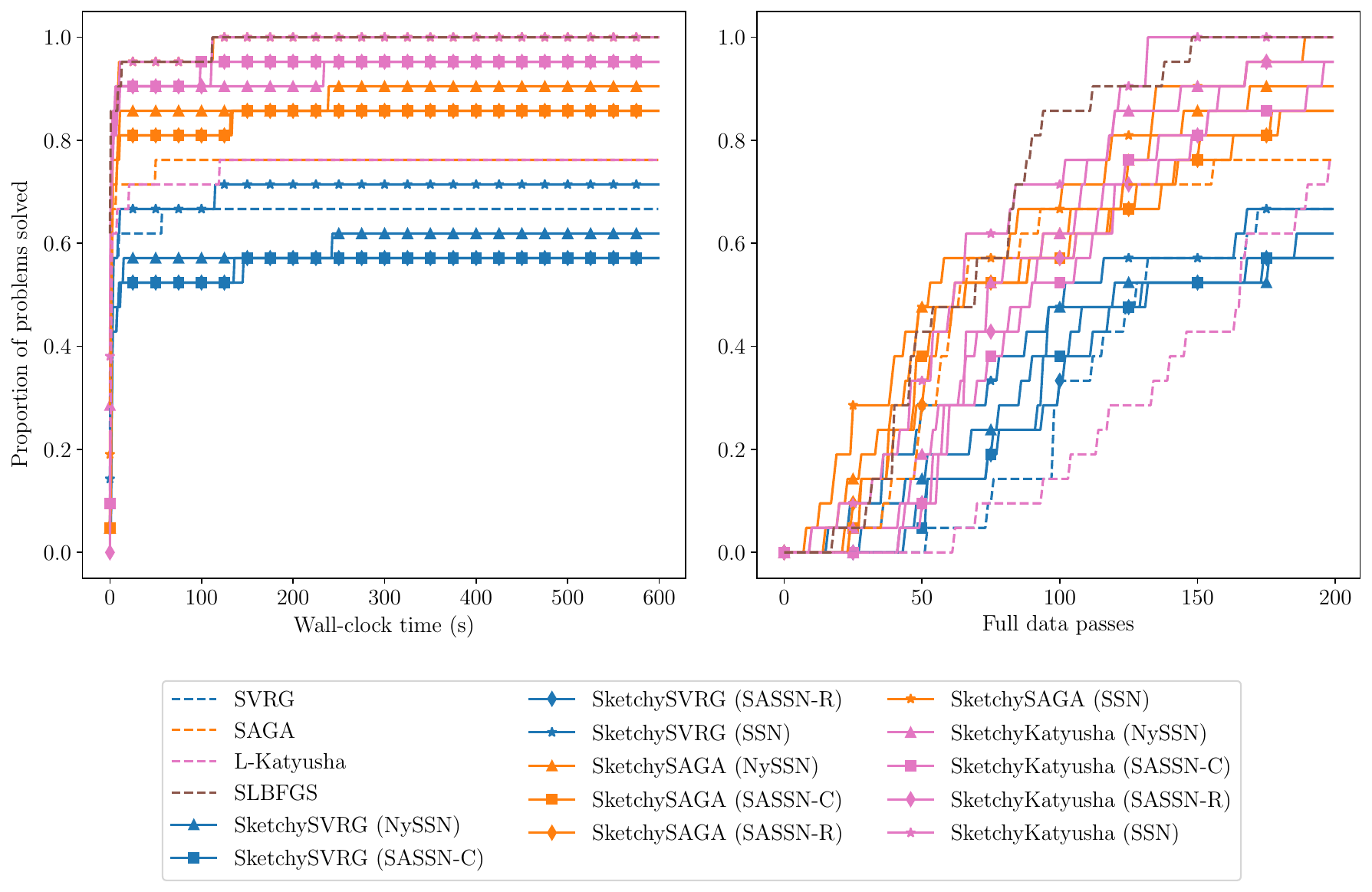}
    \caption{Proportion of $l^2$-regularized logistic regression problems, with sparse data, solved by our proposed methods and competitor methods.}
    \label{fig:prop_solved_logistic_sparse}
\end{figure}

\begin{figure}[p]
    \centering
    \includegraphics[scale=0.5]{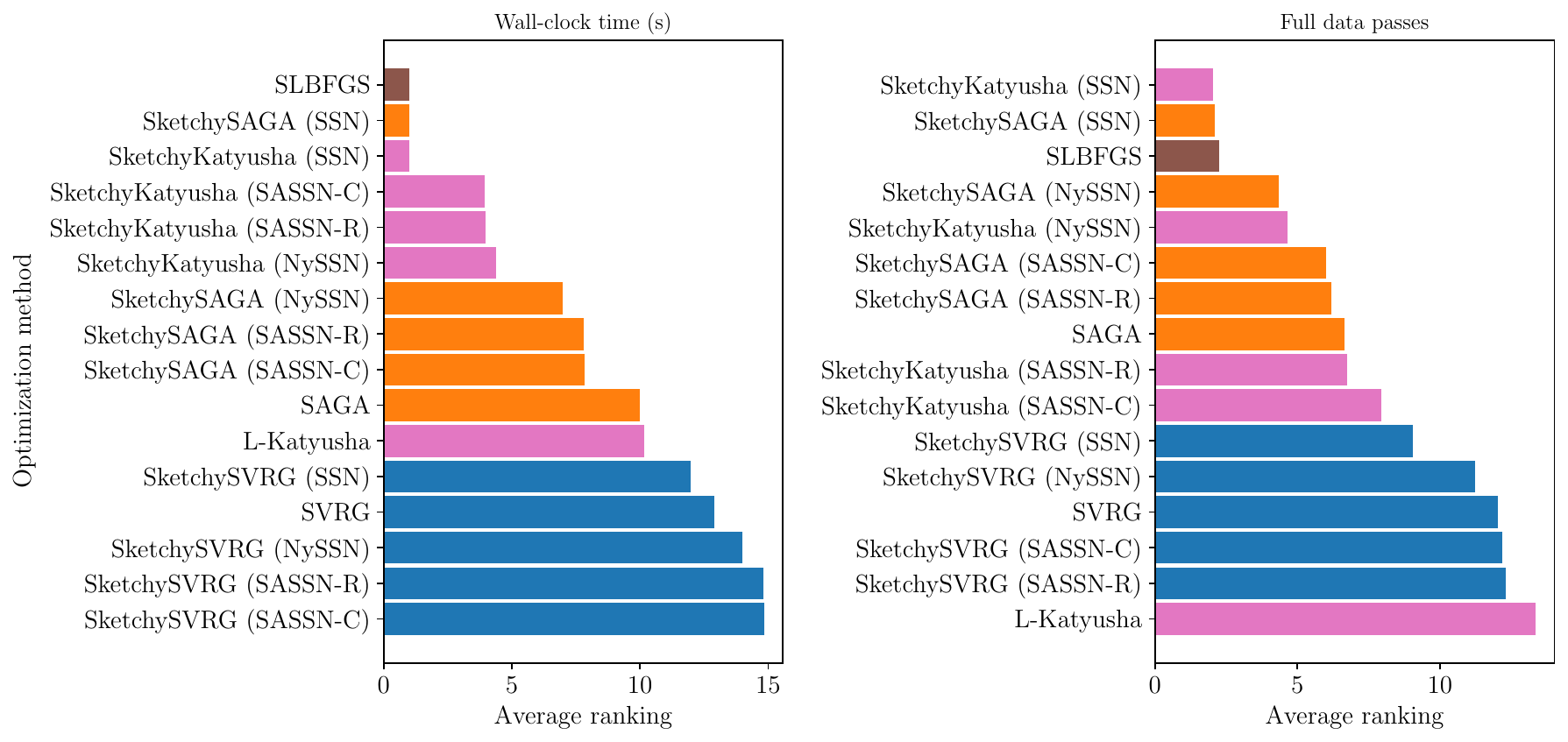}
    \caption{Average ranking of each method by number of $l^2$-regularized logistic regression problems, with sparse data, solved with respect to wall-clock time (left) and full gradient computations (right).}
    \label{fig:ranking_logistic_sparse}
\end{figure}

\begin{figure}[p]
    \centering
    \includegraphics[scale=0.5]{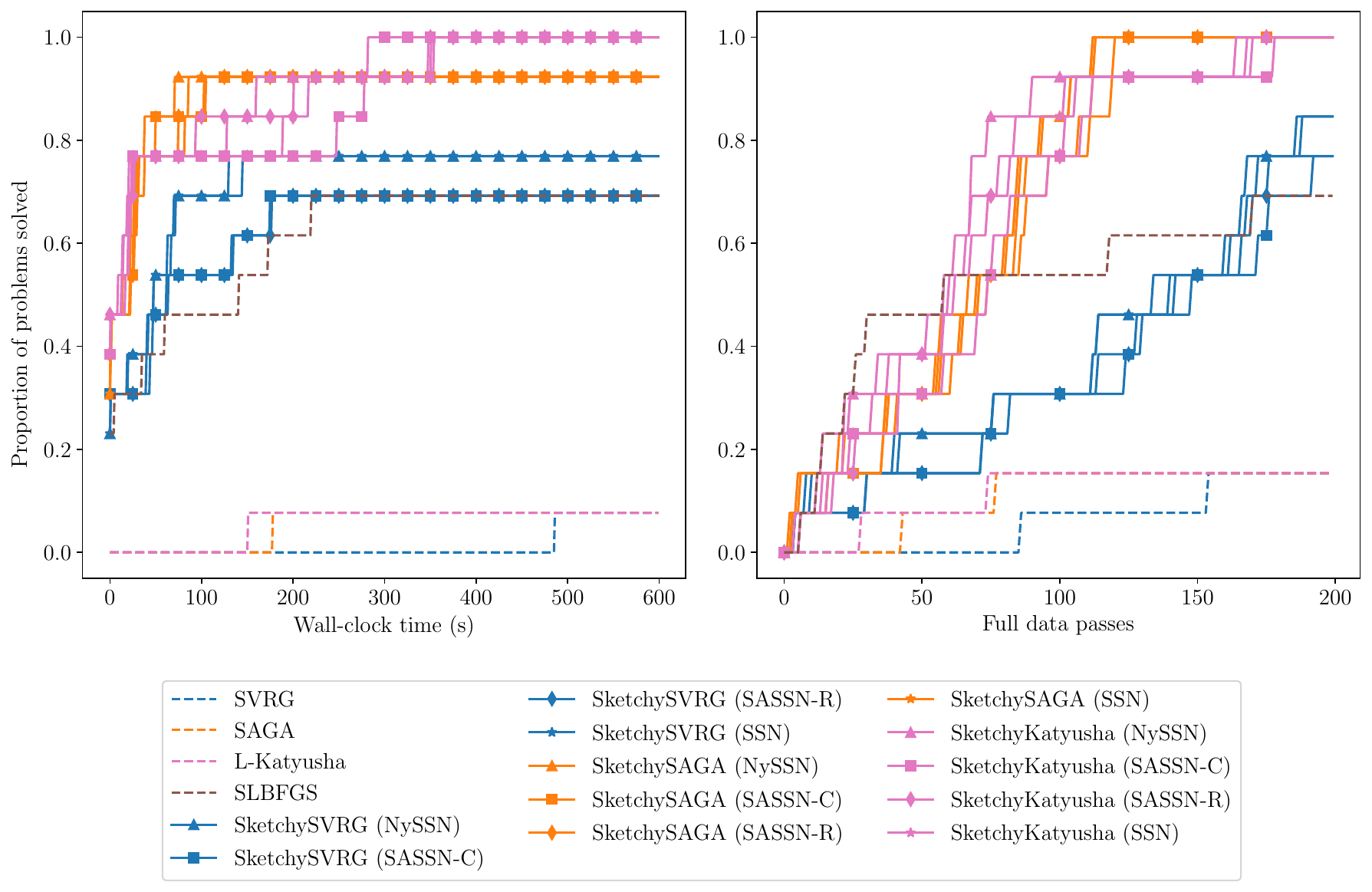}
    \caption{Proportion of $l^2$-regularized logistic regression problems, with dense data, solved by our proposed methods and competitor methods.}
    \label{fig:prop_solved_logistic_dense}
\end{figure}

\begin{figure}[p]
    \centering
    \includegraphics[scale=0.5]{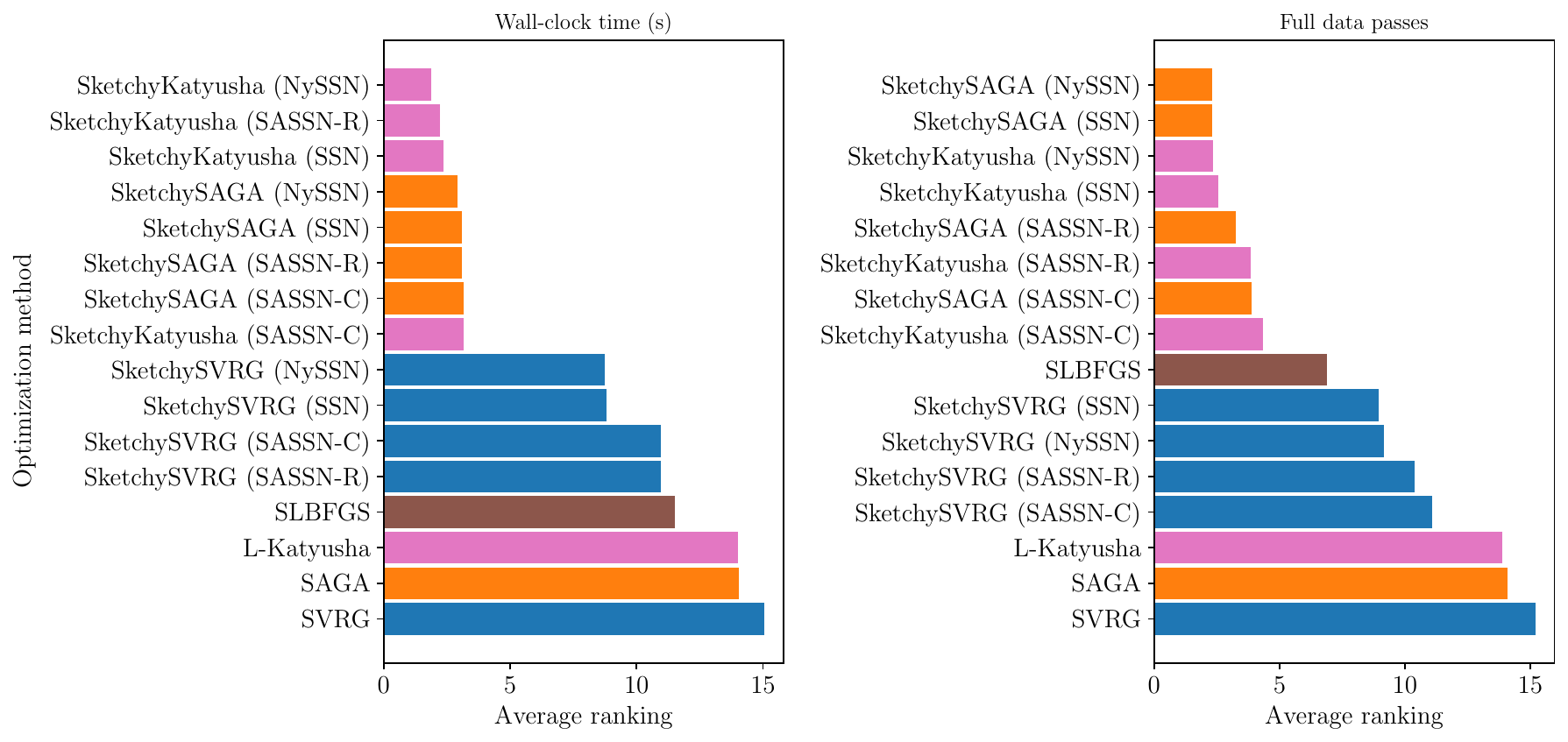}
    \caption{Average ranking of each method by number of $l^2$-regularized logistic regression problems, with dense data, solved with respect to wall-clock time (left) and full gradient computations (right).}
    \label{fig:ranking_logistic_dense}
\end{figure}

\subsubsection{\diagssn{} vs. \ssn{}/\nyssn{}/\sassnc{}/\sassnr{}}
\label{subsubsection:precond_comparison_appdx}

\cref{fig:prop_solved_least_squares_appdx,fig:prop_solved_logistic_appdx} compare the performance of the \diagssn{} preconditioner to the other preconditioners proposed in the main text. 
We observe that using the \diagssn{} preconditioner significantly degrades the performance of our proposed methods.

\begin{figure}[p]
    \centering
    \includegraphics[scale=0.45]{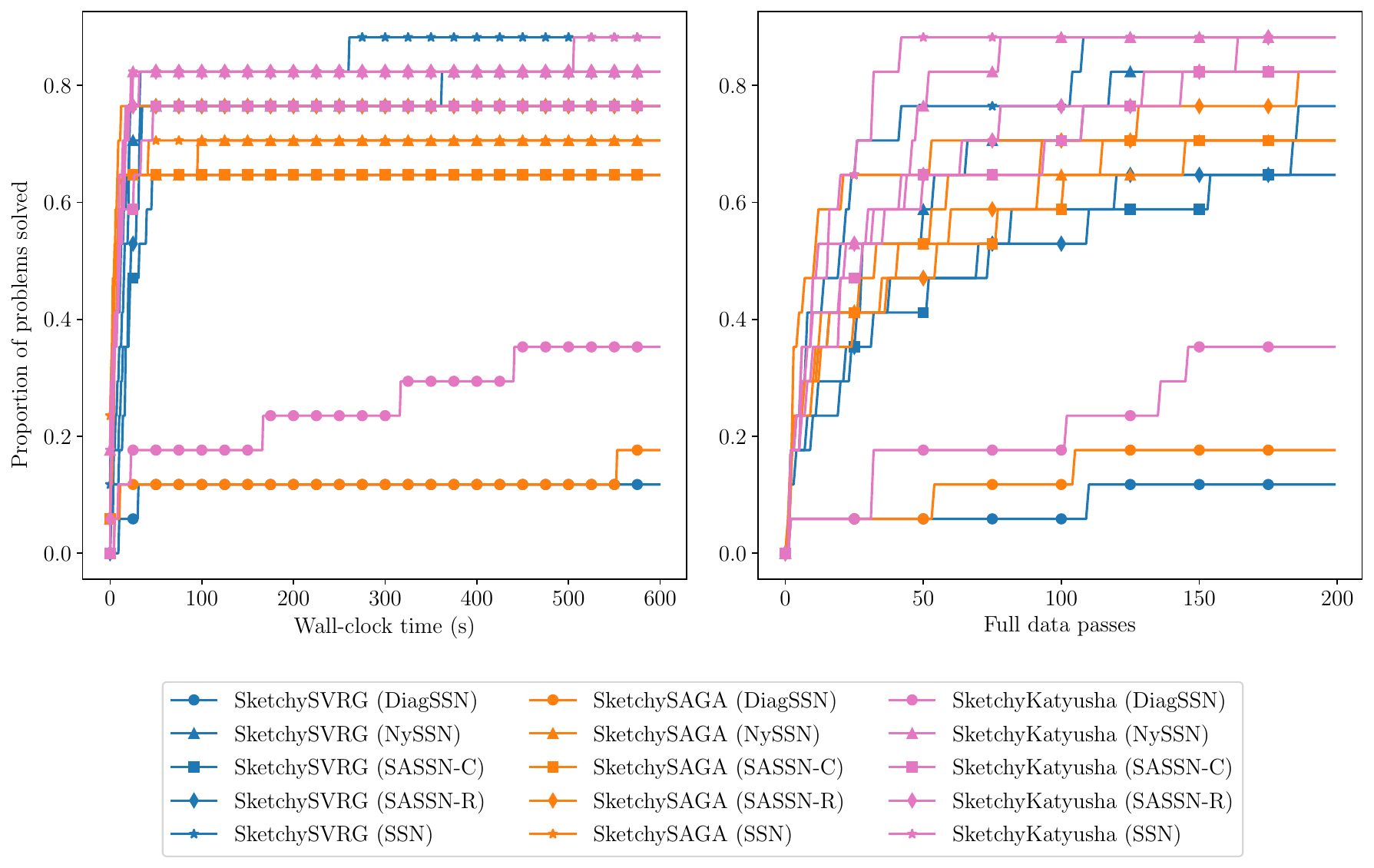}
    \caption{Proportion of ridge regression problems solved by \diagssn{} versus the other preconditioning methods.}
    \label{fig:prop_solved_least_squares_appdx}
\end{figure}

\begin{figure}[p]
    \centering
    \includegraphics[scale=0.45]{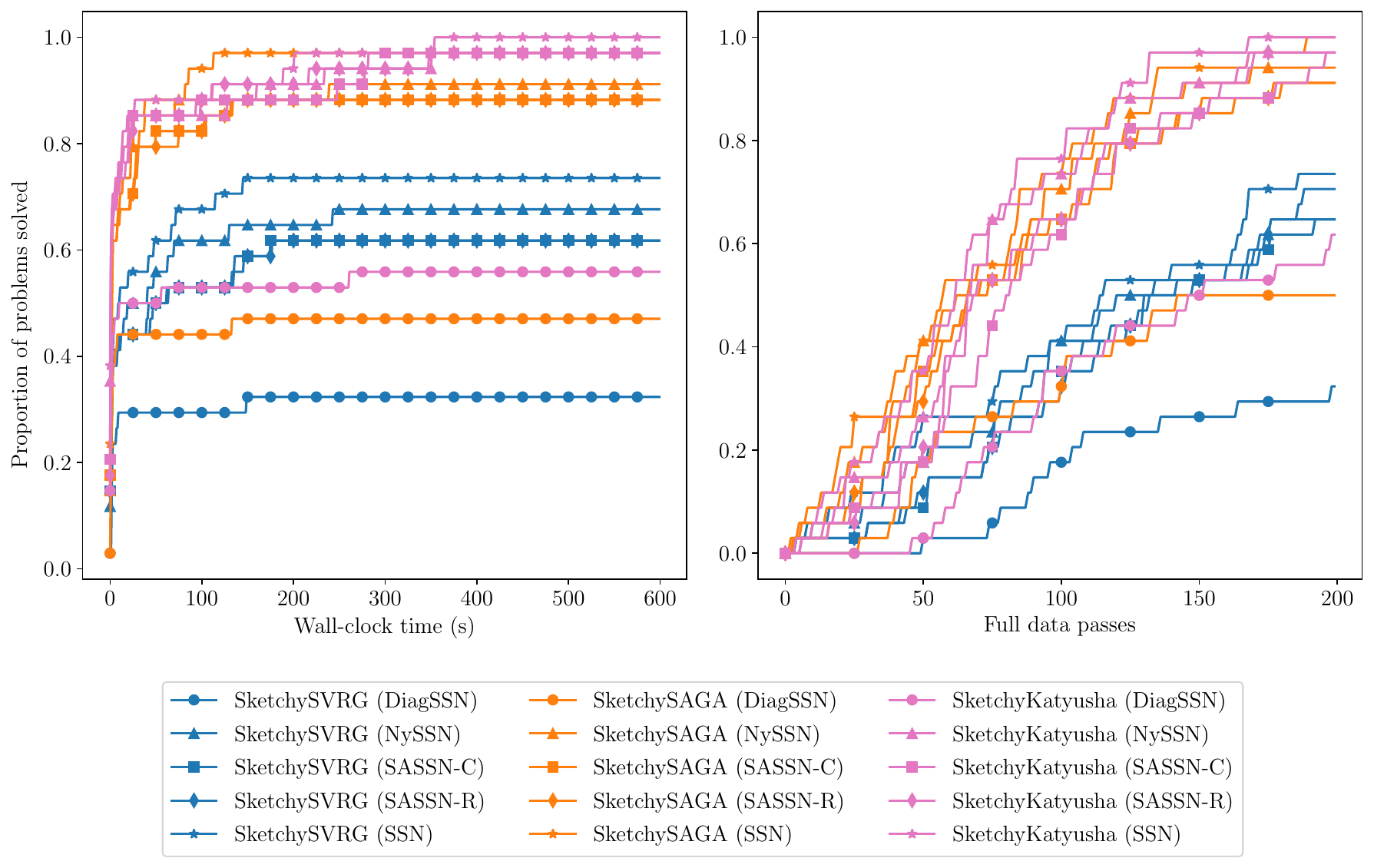}
    \caption{Proportion of $l^2$-regularized logistic regression problems solved by \diagssn{} versus the other preconditioning methods.}
    \label{fig:prop_solved_logistic_appdx}
\end{figure}

\subsection{Suboptimality experiments (additional)}
\label{subsection:subopt_exp_appdx}
The suboptimality experiments in \cref{subsection:subopt} use the E2006-tfidf, YearPredictionMSD, yolanda, ijcnn1, real-sim, and SUSY datasets.
We preprocess these datasets in the same way as \cref{subsection:performance}; see \cref{subsubsection:performance_exp_data_appdx} for details.

\subsubsection{SketchySGD vs. SketchySVRG/SketchySAGA/SketchyKatyusha}
\label{subsubsection:sketchy_opt_comparison_appdx}
We investigate how SketchySVRG, SketchySAGA, and SketchyKatyusha, which have linear convergence guarantees, provide improved convergence in comparison to SketchySGD.
We use the same datasets and run experiments using the same approach as in \cref{subsection:subopt}.
% Each curve shows the the suboptimality (measured with respect to the lowest attained training loss) over $200$ full data passes, which is equivalent to $200$ epochs of SketchySGD/SketchySAGA or $100$ epochs of SketchySVRG/SketchyKatyusha.
% median performance of a given $(r, u)$ pair across several random seeds ($10$ for E2006-tfidf, ijcnn1, and real-sim, $5$ for YearPredictionMSD and yolanda, and $3$ for SUSY) and over $80$ full data passes, which is equivalent to $80$ epochs of SketchySGD/SketchySAGA or $40$ epochs of SketchySVRG/SketchyKatyusha.

The results are shown in \cref{fig:opt_least_squares} and \cref{fig:opt_logistic}. 
SketchySGD consistently converges to a ball of noise, while SketchySVRG, SketchySAGA, and SketchyKatyusha attain lower suboptimality due to variance reduction. 
In the case of YearPredictionMSD, yolanda, real-sim, and SUSY, SketchySGD does not reach a suboptimality near the other three methods.
% Furthermore, the convergence of SketchySGD is less stable than that of SketchySVRG, SketchySAGA, and SketchyKatyusha for yolanda and susy.

\begin{figure}[h]
    \centering
    \includegraphics[scale=0.5]{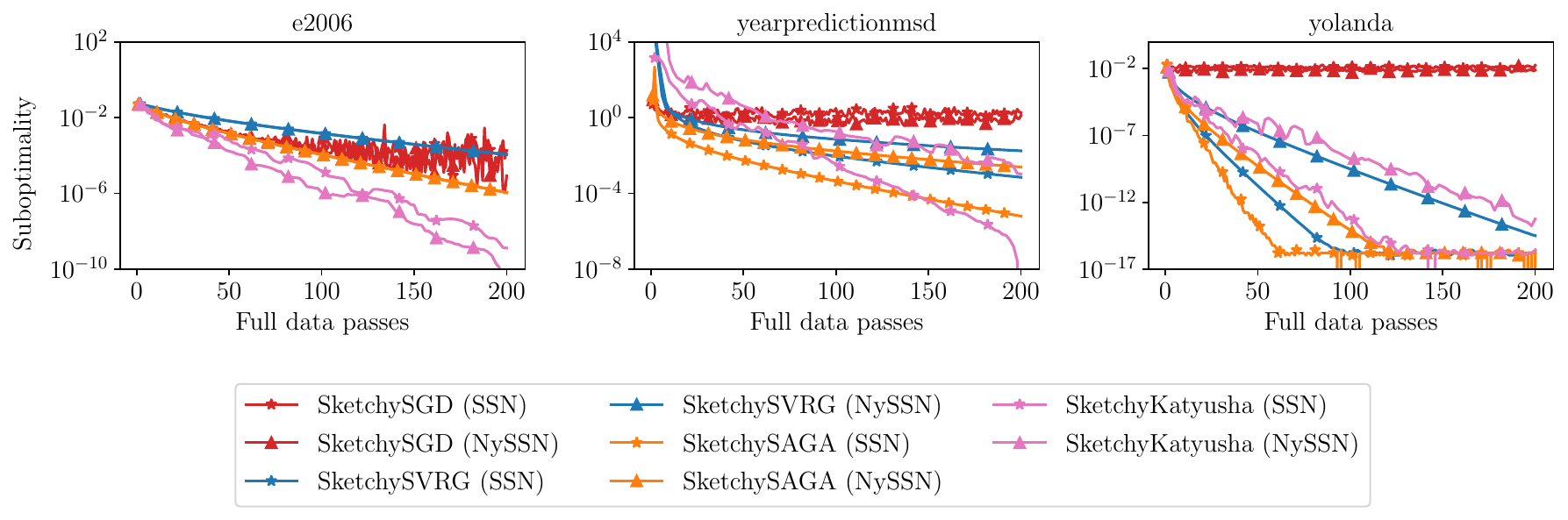}
    \caption{Comparison of SketchySGD, SketchySVRG, SketchySAGA, and SketchyKatyusha with default hyperparameters on ridge regression.}
    \label{fig:opt_least_squares}
\end{figure}

\begin{figure}[h]
    \centering
    \includegraphics[scale=0.5]{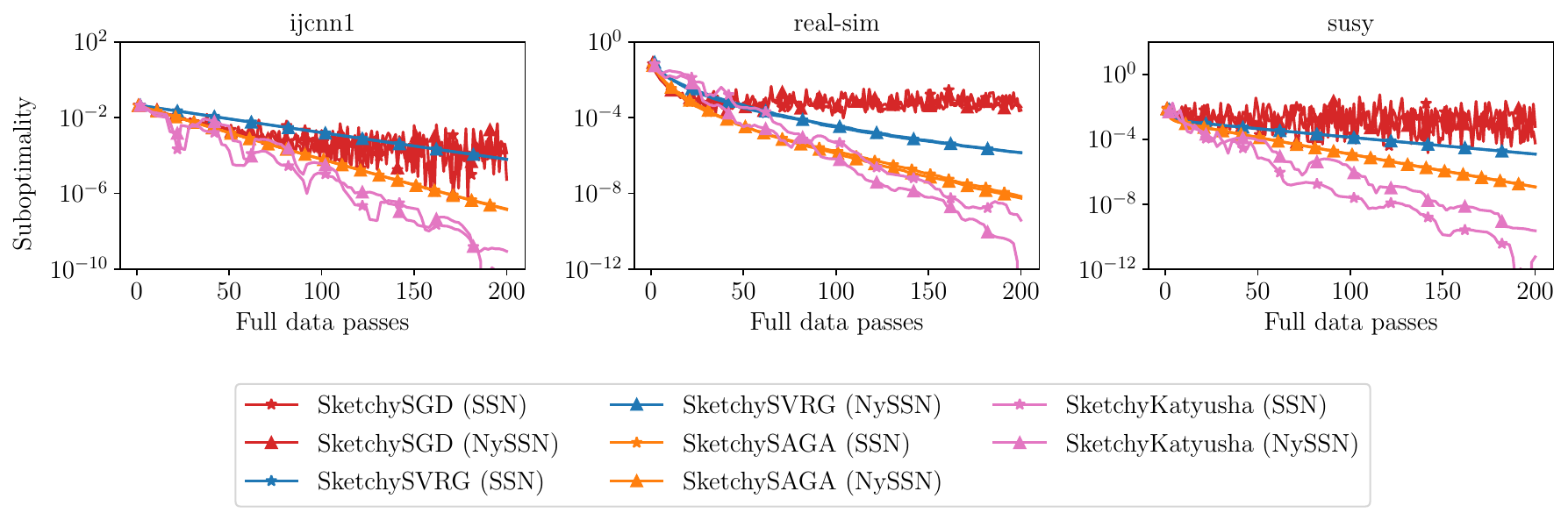}
    \caption{Comparison of SketchySGD, SketchySVRG, SketchySAGA, and SketchyKatyusha with default hyperparameters on $l^2$-regularized logistic regression.}
    \label{fig:opt_logistic}
\end{figure}

\subsection{Showcase experiments (additional)}
\label{subsection:showcase_exp_appdx}
The showcase experiments in \cref{subsection:showcase} use the url, yelp, and acsincome datasets.
The details regarding dimensions and random features are provided in \cref{table:showcase_data}.
None of the datasets have a designated training and test set, so we use a random 80/20 split.
The url dataset is preprocessed to have unit row norm.
The yelp dataset is created by taking the file \texttt{yelp\_academic\_dataset\_review.json} (accessed on May 20, 2023), preprocessing each review into unigrams and bigrams, assigning positive (4 or 5 star) reviews a label of $+1$, assigning negative (1 or 2 star) reviews a label of $-1$, and removing neutral (3 star) reviews.
The acsincome dataset is preprocessed by first removing outliers, i.e., samples whose labels are more than $3$ standard deviations from the mean. 
We then standardize the features and labels to have mean $0$ and standard deviation $1$.  
After standardization, we apply Gaussian random features with a bandwidth of $1$.

\begin{table}[H]
\centering
\scriptsize
    \begin{tabular}{C{2.7cm}C{3cm}C{1.5cm}C{1.5cm}C{2.7cm}}
        Dataset & Source & $n$ & $p$ & RF type/dimension \\ \hline
        url & LIBSVM & 2,396,130 & 3,231,961 &  \\
        yelp & \citep{yelp2023} & 5,038,676 & 2,084,724 & \\
        ACSIncome & OpenML (ID: 43141) & 1,632,867 & 11 & Gaussian/7,500 \\
    \end{tabular}
    \caption{Datasets used in showcase experiments from \cref{subsection:showcase}. $n$ is the number of samples before splitting into training/test sets (after appropriate preprocessing) and $p$ is the number of features.} 
    \label{table:showcase_data}
\end{table}

\subsection{Streaming experiments (additional)}
\label{subsection:streaming_exp_appdx}

\subsubsection{Datasets and preprocessing}
\label{subsubsection:streaming_exp_data_appdx}
We use the HIGGS and SUSY datasets from LIBSVM; details regarding dimensions, splits, and random features are provided in \cref{table:streaming_logistic_data}. 
We preprocess each dataset to have unit row norm, and then apply Gaussian random features with bandwidth $1$ to both datasets.

\begin{table}[H]
\centering
\scriptsize
    \begin{tabular}{C{2.7cm}C{1.5cm}C{1.5cm}C{1.5cm}C{2.7cm}}
        Dataset & $n_{\textrm{tr}}$ & $n_{\textrm{tst}}$ & $p$ & RF type/dimension \\ \hline
        HIGGS &10,500,000 &500,000 &28 &Gaussian/10,000 \\
        SUSY &4,500,000 &500,000 &18 &Gaussian/20,000
    \end{tabular}
    \caption{Details for HIGGS and SUSY datasets in \cref{subsection:large_scale}. $n_{\textrm{tr}}$ denotes the number of training samples, $n_{\textrm{tst}}$ denotes the number of test samples, and $p$ denotes the number of features.}
    \label{table:streaming_logistic_data}
\end{table}

\subsubsection{Comparison to SAGA with default hyperparameters}
\label{subsubsection:streaming_exp_def_comp_appdx}
The comparison to SAGA with its default hyperparameter is presented in \cref{fig:higgs_auto,fig:susy_auto}.
The plots are constructed using the same approach as in \cref{subsection:large_scale}.
We find that SAGA makes little to no progress in decreasing the test loss, while SketchySGD and SketchySAGA, when combined with either the \ssn{} or \nyssn{} preconditioner, decrease the test loss significantly.

\begin{figure}[h]
    \centering
    \includegraphics[scale=0.5]{figs/streaming_auto/test_loss/higgs_combined.pdf}
    \caption{Comparison between SAGA with default learning rate, SketchySGD, and SketchySAGA on HIGGS.}
    \label{fig:higgs_auto}
\end{figure}

\begin{figure}[h]
    \centering
    \includegraphics[scale=0.5]{figs/streaming_auto/test_loss/susy_combined.pdf}
    \caption{Comparison between SAGA with default learning rate, SketchySGD, and SketchySAGA on SUSY.}
    \label{fig:susy_auto}
\end{figure}

\subsection{Gradient batchsize relative to dataset size}
In this subsection show how the gradient batchsize used for each experiment class compares to $n$, and the baseline $\sqrt{n}$ suggested by \cref{cor:sksvrg_fast_glm}. 
Specifically, we report the median value of $b_g/n$ (percentage) and $b_g/\sqrt{n}$ for all the datasets used in each class of experiment. 
The results are reported in \cref{table:grad_batchsize}.
\label{subsection:grad_batchsize}
\begin{table}[htbp]
\centering
\scriptsize
    \begin{tabular}{C{3cm}C{3cm}C{3cm}}
        Experiment & $b_g/n~(\%)$ & $b_g/\sqrt{n}$ \\ \hline 
        Performance (Ridge)  & 0.418 & 1.03 \\ \hline 
        Performance ($l^2$-Logistic) & 2.31 & 2.43 \\ \hline
        Showcase & 0.171 & 2.65 \\ \hline
        Streaming & 0.065 & 1.60 \\ \hline
    \end{tabular}
    \caption{Comparison of $b_g$ relative to $n$ and $\sqrt{n}$. PROMISE does not require large gradient to achieve the excellent results observed \cref{section:experiments}. Moreover, the batchsizes used by PROMISE are modest multiples of $\sqrt{n}$, which agrees with \cref{cor:sksvrg_fast_glm}, and supports setting $b_g$ to be $\lfloor C\sqrt{n}\rfloor $, where $C\in\{1,2,3\}$.}
    \label{table:grad_batchsize}
\end{table}

\subsection{Sensitivity study (additional)}
\label{subsection:sensitivity_exp_appdx}

\subsubsection{Additional sensitivity plots}
\label{subsubsection:sensitivity_exp_plots_appdx}
We present additional plots that were omitted from the sensitivity study (\cref{subsection:sensitivity}) in the main text. \Cref{fig:sensitivity_r_appdx} shows how changing the rank $r$ impacts the performance of SketchySAGA on the yolanda, ijcnn1, and SUSY datasets. We again observe that the effect of increasing the rank depends on the spectrum (\cref{fig:spectrums}) of each dataset. Both yolanda and YearPredictionMSD have similar patterns of spectral decay, and increasing the rank for yolanda leads to faster convergence. On the other hand, the spectra of ijcnn1 and SUSY are highly concentrated in the first singular value, just like E2006-tfidf, and increasing the rank does not improve convergence. \cref{fig:sensitivity_u_appdx} demonstrates that adjusting the update frequency $u$ does not impact convergence on ridge regression problems.

\begin{figure}[t]
    \centering
    \includegraphics[scale=0.5]{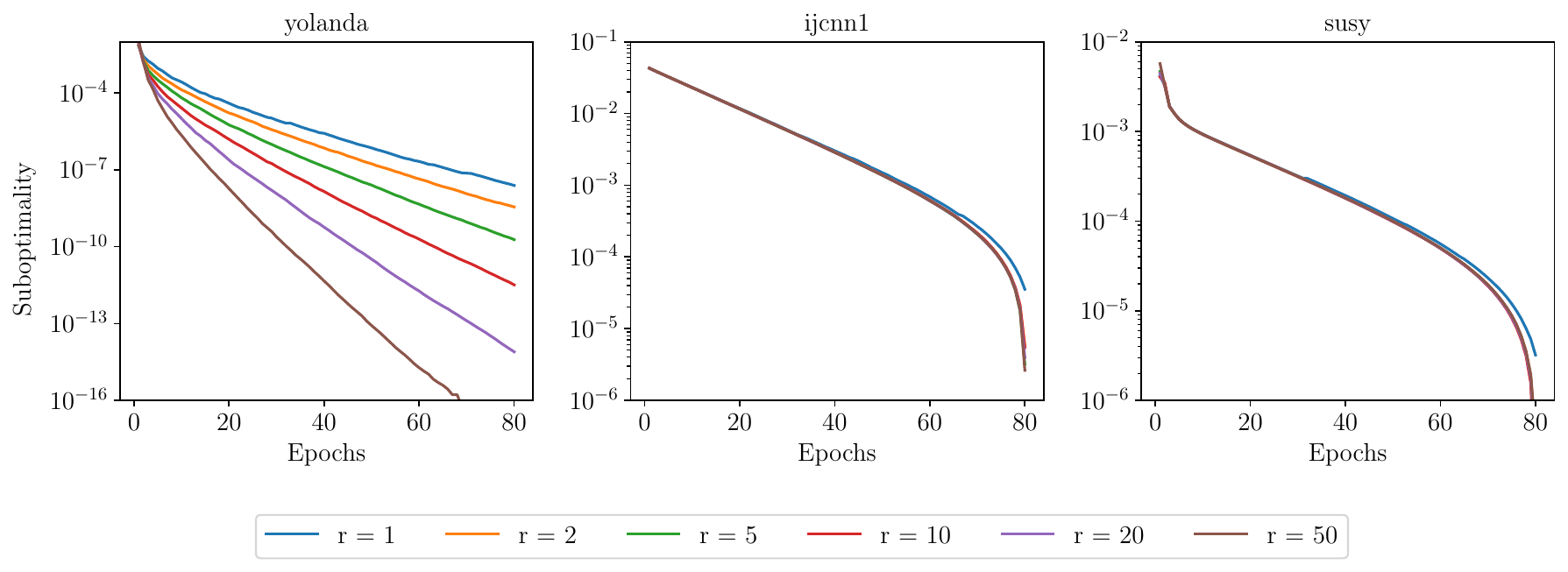}
    \caption{Sensitivity of SketchySAGA to rank $r$.}
    \label{fig:sensitivity_r_appdx}
\end{figure}

\begin{figure}[h]
    \centering
    \includegraphics[scale=0.5]{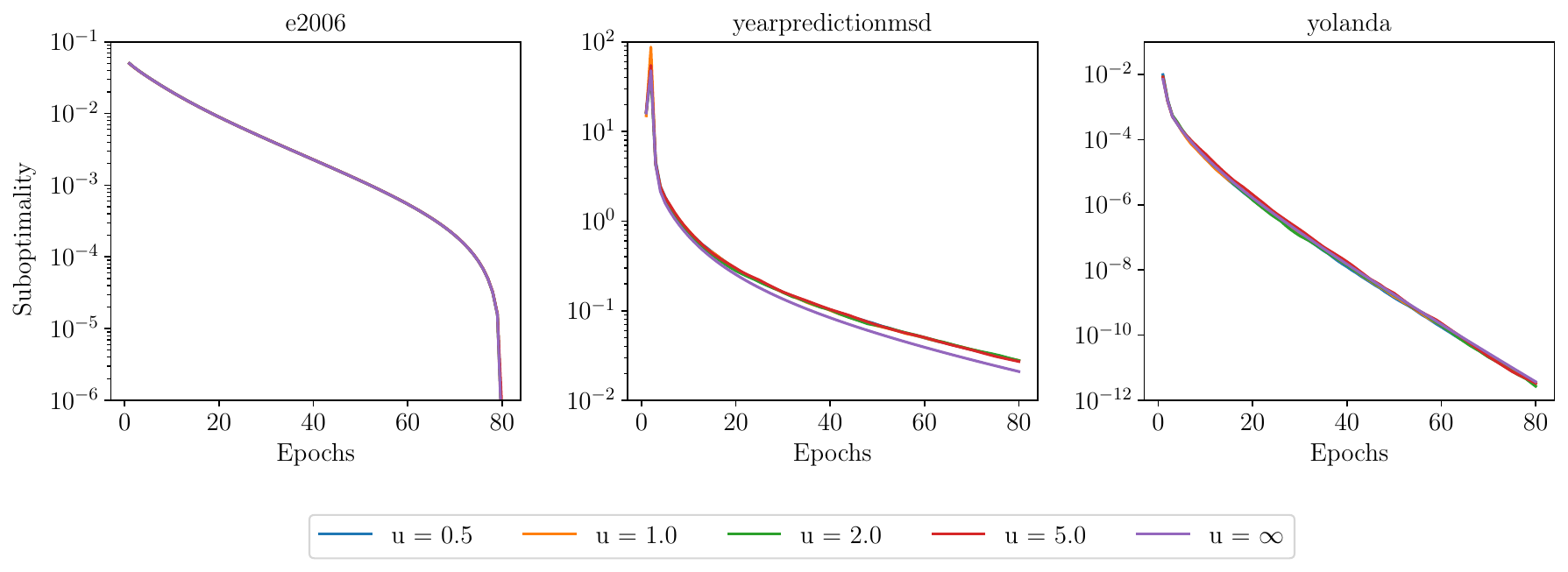}
    \caption{Sensitivity of SketchySAGA to update frequency $u$.}
    \label{fig:sensitivity_u_appdx}
\end{figure}

% The number of random seeds used for each dataset is shown in \cref{table:sensitivity_seeds}.

% \begin{table}[!htp]
% \centering
% % \scriptsize
%     \begin{tabular}{C{4cm}C{3cm}}
%         Dataset & \# of Seeds \\ \hline
%         e2006 & 10 \\
%         yearpredictionmsd & 5 \\
%         yolanda & 5 \\
%         ijcnn1 & 10 \\
%         real-sim & 10 \\
%         susy & 3 \\
%     \end{tabular}
%     \caption{Number of seeds used for each dataset in \cref{subsection:sensitivity}.} 
%     \label{table:sensitivity_seeds}
% \end{table}

\subsection{Regularity study (additional)}
\label{subsection:regularity_exp_appdx}
The regularity study in \cref{subsection:regularity} uses the a9a, gisette, ijcnn1, mushrooms, phishing, rcv1, real-sim, and w8a datasets.
We preprocess these datasets in the same way as \cref{subsection:performance}; see \cref{subsubsection:performance_exp_data_appdx} for details.

\clearpage

\else

\fi

\bibliography{references}

\end{document}